\newtheorem{theorem}{Theorem}[section]
\newtheorem{lemma}[theorem]{Lemma}
\newtheorem{proposition}[theorem]{Proposition}
\newtheorem{corollary}[theorem]{Corollary}
\newtheorem{example}[theorem]{Example}
\theoremstyle{plain}
\theoremstyle{definition}
\newtheorem{definition}[theorem]{Definition}
\newtheorem{remark}[theorem]{Remark}
\numberwithin{equation}{section}
\renewcommand{\labelenumi}{\textup{(\theenumi)}}
\title{Simple purely infinite $C^*$-algebras associated with normal subshifts}
\author{Kengo Matsumoto \\
Department of Mathematics \\
Joetsu University of Education \\
Joetsu, 943-8512, Japan
}
\begin{document}
\maketitle

\date{}

\begin{abstract}
We will introduce a notion of normal subshifts.
A subshift $(\Lambda,\sigma)$ is said to be normal
if it satisfies a certain synchronizing property called 
$\lambda$-synchronizing and is infinite as a set.
We have lots of purely infinite simple $C^*$-algebras 
from normal subshifts including irreducible infinite sofic shifts,
Dyck shifts, $\beta$-shifts, and so on.
Eventual conjugacy of one-sided normal subshifts
and topological conjugacy of two-sided normal subshifts 
are characterized in terms of the associated $C^*$-algebras
and the associated stabilized $C^*$-algebras 
with its diagonals and gauge actions,
respectively.
\end{abstract}

{\it Mathematics Subject Classification}:
Primary 46L35, 37B10; Secondary 28D20.

Keywords: subshifts, sofic shifts,
normal subshifts, $C^*$-algebras, $\lambda$-graph systems.



\newcommand{\R}{\mathbb{R}}
\newcommand{\T}{\mathbb{T}}
\newcommand{\Z}{\mathbb{Z}}
\newcommand{\N}{\mathbb{N}}
\newcommand{\Zp}{{\mathbb{Z}}_+}

\def\LLL{{ {\frak L}^{\lambda(\Lambda)} }}
\def\LLTL{{ {\frak L}^{\lambda(\widetilde{\Lambda})} }}

\def\OA{{ {\mathcal{O}}_A }}

\def\OL{{ {\mathcal{O}}_{\frak L} }}
\def\DL{{ {\mathcal{D}}_{\frak L}}}
\def\AL{{ {\mathcal{A}}_{\frak L}}}
\def\OLmin{{ {\mathcal{O}}_{{\Lambda}^{\operatorname{min}}} }}
\def\LLmin{{ {{\frak L}_\Lambda^{\operatorname{min}}} }}
\def\OLamonemin{{ {\mathcal{O}}_{{\Lambda_1}^{\operatorname{min}}} }}
\def\LLamonemin{{ {{\frak L}_{\Lambda_1}^{\operatorname{min}}} }}
\def\OLamtwomin{{ {\mathcal{O}}_{{\Lambda_2}^{\operatorname{min}}} }}
\def\LLamtwomin{{ {{\frak L}_{\Lambda_2}^{\operatorname{min}}} }}
\def\LLamptwomin{{ {{\frak L}_{\Lambda'_2}^{\operatorname{min}}} }}

\def\DLLamtwomin{{ {\mathcal{D}}_{{\frak L}_{\Lambda_2}^{\operatorname{min}} } }}
\def\DLamtwo{{ {\mathcal{D}}_{{\Lambda_2} } }}
\def\DLamtwomin{{ {\mathcal{D}}_{{\Lambda_2}^{\operatorname{min}}} }}

\def\OLamptwomin{{ {\mathcal{O}}_{{\Lambda'_2}^{\operatorname{min}}} }}
\def\DLLamptwomin{{ {\mathcal{D}}_{{\frak L}_{\Lambda'_2}^{\operatorname{min}}} }}
\def\DLamptwo{{ {\mathcal{D}}_{{\Lambda'_2}  } }}
\def\DLamptwomin{{ {\mathcal{D}}_{{\Lambda'_2}^{\operatorname{min}}} }}
\def\ALLamptwomin{{ {\mathcal{A}}_{{\frak L}_{\Lambda'_2}^{\operatorname{min}}} }}

\def\OhatLamtwomin{{ \widehat{\mathcal{O}}_{{\Lambda'_2}^{\operatorname{min}}} }}
\def\DhatLamtwo{{ \widehat{\mathcal{D}}_{{\Lambda'_2}} }}
\def\DhatLamtwomin{{ \widehat{\mathcal{D}}_{{\Lambda'_2}^{\operatorname{min}}} }}
\def\DhatLLamtwomin{{ 
\widehat{\mathcal{D}}_{{\frak L}_{\Lambda'_2}^{\operatorname{min}}} 
}}

\def\ALLamptwomin{{ {\mathcal{A}}_{{\frak L}_{\Lambda'_2}^{\operatorname{min}}} }}
\def\AhatLLamtwomin{{ 
\widehat{\mathcal{A}}_{{\frak L}_{\Lambda'_2}^{\operatorname{min}}} 
}}

\def\rhoLamtwo{ \rho^{{\Lambda_2}} }
\def\rhoLamptwo{ \rho^{{\Lambda'_2}} }
\def\rhohatLamtwo{{ {\hat{\rho}}^{{\Lambda'_2}} }}
\def\rhohatLamtwomin{{ {\hat{\rho}}^{{\Lambda'_2}^{\operatorname{min}}} }}
\def\rhoLamtwomin{ {\rho}^{{\Lambda_2}^{\operatorname{min}}} }
\def\rhoLamptwomin{ {\rho}^{{\Lambda'_2}^{\operatorname{min}}} }
    
\def\FLamtwomin{{ {\mathcal{F}}_{{\Lambda_2}^{\operatorname{min}}} }}
\def\FhatLamtwomin{{ \widehat{\mathcal{F}}_{{\Lambda'_2}^{\operatorname{min}}} }}
\def\FLLamptwomin{{ {\mathcal{F}}_{{\frak L}_{\Lambda'_2}^{\operatorname{min}}} }}

\def\FL{{{\mathcal{F}}_{\frak L}}}
\def\FLmin{{ {\mathcal{F}}_{{\Lambda}^{\operatorname{min}}} }}

\def\DLmin{{ {\mathcal{D}}_{{\frak L}_\Lambda^{\operatorname{min}}} }}

\newcommand{\K}{\mathcal{K}}
\newcommand{\C}{\mathcal{C}}
\def\DLam{{ {\mathcal{D}}_{\Lambda}}}

\def\L{{\frak L}}

\def\OLL{{ {\cal O}_{\lambda(\Lambda)}  }}
\def\OLTL{{ {\cal O}_{\lambda(\widetilde{\Lambda})}  }}
\def\ALL{{ {\cal A}_{\lambda(\Lambda)}  }}
\def\ALTL{{ {\cal A}_{\lambda(\widetilde{\Lambda})}  }}
\def\DLL{{ {\cal D}_{\lambda(\Lambda)}  }}
\def\DLTL{{ {\cal D}_{\lambda(\widetilde{\Lambda})}  }}

\def\FKL{{ {\cal F}_k^{l} }}
\def\A{{ {\mathcal{A}} }}
\def\D{{ {\mathcal{D}} }}
\def\F{{ {\mathcal{F}} }}
\def\Ext{{{\operatorname{Ext}}}}
\def\Im{{{\operatorname{Im}}}}
\def\Hom{{{\operatorname{Hom}}}}
\def\Ker{{{\operatorname{Ker}}}}
\def\dim{{{\operatorname{dim}}}}
\def\min{{{\operatorname{min}}}}
\def\id{{{\operatorname{id}}}}
\def\OLF{{{\cal O}_{{\frak L}^{Ch(D_F)}}}}
\def\OLN{{{\cal O}_{{\frak L}^{Ch(D_N)}}}}
\def\OLA{{{\cal O}_{{\frak L}^{Ch(D_A)}}}}
\def\LCHDA{{{{\frak L}^{Ch(D_A)}}}}
\def\LCHDF{{{{\frak L}^{Ch(D_F)}}}}
\def\LCHLA{{{{\frak L}^{Ch(\Lambda_A)}}}}
\def\LWA{{{{\frak L}^{W(\Lambda_A)}}}}


\bigskip

Contents:

\begin{enumerate}
\renewcommand{\theenumi}{\arabic{enumi}}
\renewcommand{\labelenumi}{\textup{\theenumi}}
\item Introduction
\item $\lambda$-synchronization and normal subshifts
\item Structure and simplicity of $\OLmin$
\item Irreducible sofic shifts
\item Other examples of normal subshifts
\item Continuous orbit equivalence
\item One-sided topological conjugacy  
\item One-sided eventual conjugacy
\item Two-sided topological conjugacy
\end{enumerate}

\newpage

\section{Introduction}

In \cite{KMAAA2013} (see also \cite{MaIsrael2013}, \cite{MaJAMS2013}), 
W. Krieger and the author introduced the notion of  $\lambda$-synchronization 
for subshifts. 
The class of $\lambda$-synchronizing  subshifts contains a lot of important and interesting subshifts such as irreducible shifts of finite type, irreducible sofic shifts, synchronizing subshifts, Dyck shifts, $\beta$-shifts, substitution minimal shifts and so on.
In this paper,  we will introduce a notion of normal subshifts.
A subshift $\Lambda$ is said to be {\it normal}\/
if it is  a $\lambda$-synchronizing subshift  with its infinite cardinality as a set. 
 The class of normal subshifts is closed under topological conjugacy, 
and consists of irreducible $\lambda$-synchronizing subshifts 
excluded trivial subshifts. 
An important property of $\lambda$-synchronization is to have a minimal $\lambda$-graph system as its presentation.  
A $\lambda$-graph system $\frak L$ introduced in \cite{MaDocMath1999}
is a generalization of finite labeled graph.
It presents a subshift.
Any $\lambda$-graph system presents a subshift, conversely 
any subshift can be presented by a $\lambda$-graph system in a canonical way. 
 The $\lambda$-graph system from a subshift in the canonical construction 
is called the canonical $\lambda$-graph system for the subshift.
Not only the canonical $\lambda$-graph system for the subshift,
there are many $\lambda$-graph systems in general that present the subshift.
The canonical $\lambda$-graph system corresponds to its left Krieger cover graph.
We in fact see that the  canonical $\lambda$-graph system for a sofic shift
is the $\lambda$-graph system associated to the  left Krieger cover graph.
Hence the canonical $\lambda$-graph system in general does not have certain irreducibility
unless the subshift is an irreducible  shift of finite type.
An irreducible sofic shift has an irreducible minimal presentation as a labeled graph.
The presentation is called the left (or right) Fischer cover graph. 
It is an irreducible ergodic component of its left Krieger cover graph.
To catch Fisher cover analogue of general subshifts, 
we introduced in \cite{KMAAA2013} the notion of $\lambda$-synchronization of subshift.
It was shown that any $\lambda$-synchronizing subshift 
has a minimal presentation of 
$\lambda$-graph system corresponding to Fisher cover (\cite{KMAAA2013}).
In \cite{MaDocMath2002},
the author introduced a $C^*$-algebra associated with a $\lambda$-graph system
as a generalization of Cuntz--Krieger algebras.
The $C^*$-algebra is written $\OL$ for a $\lambda$-graph system $\frak L$
and has a universal property subject to certain operator relations encoded by structure of the $\lambda$-graph system $\frak L$.
If a $\lambda$-graph system is the canonical $\lambda$-graph system 
${\frak L}^\Lambda$ for a subshift $\Lambda$,
the $C^*$-algebra in general is far from simple, namely has nontrivial ideals, 
unless the subshift is a shift of finite type or special kinds of subshifts,
because the canonical $\lambda$-graph system corresponds to left Krieger cover,
that is not irreducible in general.

On the other hand, 
 if a subshift is normal, that is, $\lambda$-synchronizing,
we may construct a minimal $\lambda$-graph system as its presentation
called the $\lambda$-synchronizing $\lambda$-graph system
written $\LLmin$.
It is called the minimal presentation (see \cite{MaIsrael2013}), 
so that the associated $C^*$-algebra are simple and purely infinite in many cases
(see \cite{MaJAMS2013}). 
 For a normal subshift $\Lambda$, we write the $C^*$-algebra
as $\OLmin$. 
Let us denote by $X_\Lambda$ the associated right one-sided subshift
for a two-sided subshift $\Lambda$.
As in the previous papers \cite{KMAAA2013} and \cite{MaJAMS2013},
the $C^*$-algebra $\OLmin$ has a natural action of the circle group
$\T$ called  gauge action written $\rho^\Lambda$.
The fixed point algebra $\FLmin$ of $\OLmin$ under $\rho^\Lambda$
is an AF-algebra having its diagonal
algebra denoted by $\DLmin$.
The commutative $C^*$-algebra $C(X_\Lambda)$
of continuous functions on the right one-sided subshift $X_\Lambda$ 
is naturally regarded as a subalgebra of $\DLmin$ 
denoted by $\DLam$.
We know that the relative commutant $\DLam^\prime \cap \OLmin$
of  $\DLam$ in $\OLmin$ coincides with
$\DLmin$ (Proposition \ref{prop:relative}).
Hence we have a triplet 
$(\OLmin, \DLam, \rho^\Lambda)$
from a normal subshift $\Lambda$.

In the first half of the paper, we will summarize the $\lambda$-synchronization of subshifts and describe simplicity condition of the $C^*$-algebras
$\OLmin$ so that we have
\begin{theorem}\label{thm:main1}
Let $\Lambda$ be a normal subshift.
If $\Lambda$ is $\lambda$-irreducible, then the $C^*$-algebra
$\OLmin$ is simple.
If in addition $\Lambda$ satisfies  $\lambda$-condition (I), then  
 the $C^*$-algebra
$\OLmin$ is simple and purely infinite.
\end{theorem}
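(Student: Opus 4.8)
The plan is to exploit the gauge action $\rho^\Lambda$ of $\T$ on $\OLmin$, its AF fixed point algebra $\FLmin$ with diagonal $\DLmin$, and the faithful conditional expectation $E : \OLmin \to \FLmin$ obtained by averaging over $\rho^\Lambda$, together with the maximal abelian / relative commutant property $\DLam' \cap \OLmin = \DLmin$ from Proposition \ref{prop:relative}. Writing $S_\alpha$ for the generating partial isometries and $E_i^l$ for the vertex projections of $\LLmin$, I would reduce simplicity to the single assertion that every non-zero ideal $I$ contains a vertex projection $E_i^l$. Once this is known, $\lambda$-irreducibility of $\LLmin$ — that any vertex is reachable from any other along labeled paths — lets me transport $E_i^l \in I$ to every other vertex projection using path operators of the form $S_\mu E_j^m S_\nu^*$, so that $I$ contains each $E_i^l$; since $\sum_i E_i^l = 1$, this forces $I = \OLmin$.

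To produce a vertex projection inside $I$, I would first split the problem along the gauge action. For a non-zero $x \in I$, faithfulness of $E$ gives $0 \neq E(x^* x) \in \FLmin \cap I$, reducing matters to ideals meeting the AF-core. The key point, and the reason $\lambda$-irreducibility alone suffices for simplicity, is that the diagonal $\DLam \cong C(X_\Lambda)$ detects ideals: using the synchronizing structure of the minimal presentation together with the relative commutant result, I would show that $I \cap \DLam \neq 0$ whenever $I \neq 0$. This is the effectiveness (topological-principality) input, and for the minimal presentation it flows from the $\lambda$-synchronizing property rather than from condition (I). A non-zero positive element of $I \cap \DLam$ then dominates, after cutting down by a cylinder projection, a scalar multiple of some $E_i^l$, placing $E_i^l$ in $I$ and completing the proof of simplicity.

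For pure infiniteness I would add $\lambda$-condition (I) to upgrade each vertex projection to a properly infinite one. Condition (I) supplies, at every level, genuine branching of admissible paths issuing from each vertex; concretely it yields for each $E_i^l$ path operators $S_\mu E_\bullet^m S_\nu^*$ that assemble into an isometry $V$ with $V^* V = E_i^l$ and $V V^* \lneq E_i^l$, exhibiting $E_i^l$ as infinite. Combining this with the already-established simplicity, I would argue that every non-zero hereditary subalgebra of $\OLmin$ contains a projection Murray--von Neumann equivalent to some $E_i^l$: starting from a non-zero positive $a$, the faithful expectation $E$ and the diagonal-detection argument locate a vertex projection in the hereditary subalgebra generated by $a$, up to equivalence and small perturbation. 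As that projection is infinite, every non-zero hereditary subalgebra contains an infinite projection, which for a simple $C^*$-algebra is precisely pure infiniteness.

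The main obstacle I expect is the diagonal-detection step, i.e. proving $I \cap \DLam \neq 0$ for every non-zero ideal $I$ of $\OLmin$ from $\lambda$-irreducibility and the synchronizing nature of $\LLmin$ alone, without invoking condition (I). This is the analogue of showing the underlying \'etale groupoid is topologically principal, and it is delicate precisely because general subshifts, unlike shifts of finite type, have complicated follower-set dynamics; the minimal ($\lambda$-synchronizing) presentation is what makes it tractable, since its vertices correspond to $\lambda$-synchronizing follower sets with a clean refinement structure. For pure infiniteness the secondary difficulty is uniformity: ensuring the branching furnished by condition (I) produces proper infiniteness of the vertex projections at a single fixed level rather than only in a limiting sense, and then propagating infiniteness to arbitrary hereditary subalgebras using simplicity.
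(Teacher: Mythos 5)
Your reduction contains a genuine gap at its first step: for $0\ne x\in I$ you claim that faithfulness of the expectation $E$ gives $0\ne E(x^*x)\in \FLmin\cap I$. That is false in general. $E$ is the average over the gauge action, and an arbitrary nonzero ideal need not be gauge-invariant, so $E$ does not map $I$ into $I$; this is the classical pitfall already present for the Cuntz algebras and Cuntz--Krieger algebras \cite{CK}. Producing a nonzero element of the core (or of the diagonal) \emph{inside} $I$ is exactly the hard part of the proof, and in this setting it is carried out with the condition (I) machinery: the projections $Q_k^l$ constructed in Section 3, which commute with $\F_{k,l}$, satisfy $\|Q_k^l X\|\to\|X\|$ for $X$ in the core, and kill the off-diagonal terms $S_\mu E_i^l S_\nu^*$ with $|\mu|\ne|\nu|$, so that $Q x Q$ is close to $Q E(x) Q$ while $\|QE(x)Q\|$ stays bounded below (this is the same mechanism used in Lemma \ref{lem:3.6}). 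Your suggestion that the detection step should flow from $\lambda$-synchronization ``rather than from condition (I)'' discards precisely the tool that makes it work; moreover there is nothing to avoid, since Proposition \ref{prop:normal(I)} shows that the minimal presentation $\LLmin$ of a normal subshift automatically satisfies condition (I). A further imprecision: the subalgebra that detects ideals on groupoid-theoretic grounds is the unit-space algebra $\DLmin\cong C(X_{\LLmin})$, not the smaller $\DLam\cong C(X_\Lambda)$; detection by $\DLam$ is a strictly stronger claim and would itself need an argument.

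For comparison, the paper does not reprove these analytic facts at all: its proof of Theorem \ref{thm:main1} is by translation plus citation. Lemma \ref{lem:2.10} identifies the hypothesis on $\Lambda$ with $\lambda$-irreducibility of $\LLmin$, Proposition \ref{prop:normal(I)} supplies condition (I), Lemma \ref{lem:lambdacond(I)} transfers $\lambda$-condition (I) from $\Lambda$ to $\LLmin$, and then simplicity and pure infiniteness are quoted from \cite{KMAAA2013}, \cite{MaJAMS2013} and \cite{MathScand2005} (cf.\ Proposition \ref{prop:lambdaC}). The architecture you outline (ideal detection, transporting vertex projections by $\lambda$-irreducibility and using $\sum_i E_i^l=1$, then proper infiniteness of the $E_i^l$ from the branching given by $\lambda$-condition (I)) is indeed the standard one underlying those references, and those later steps of your sketch are sound; but as written the proposal is not a proof, because the one concrete mechanism you offer for the crucial detection step is incorrect and the step is otherwise only asserted.
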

As a corollary,
we have
\begin{corollary}[{Proposition \ref{prop:4.2}}]
Let $\Lambda$ be an irreducible sofic shift such that $\Lambda$ is not of finite set.
The $C^*$-algebra $\OLmin$ is simple, purely infinite.
It is isomorphic to the Cuntz--Krieger algebra for the transition matrix of the
left Fischer cover graph of the sofic shift $\Lambda$.     
\end{corollary}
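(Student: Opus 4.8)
The plan is to read the corollary off Theorem \ref{thm:main1}, so the first task is to check that an infinite irreducible sofic shift $\Lambda$ satisfies all three hypotheses of that theorem. Normality is the conjunction of $\lambda$-synchronization with infinite cardinality: irreducible sofic shifts lie in the class of $\lambda$-synchronizing subshifts recalled in the Introduction, and the standing assumption that $\Lambda$ is not a finite set supplies the infiniteness, so $\Lambda$ is normal. Next I would verify $\lambda$-irreducibility. For a sofic shift the relevant $\lambda$-synchronizing presentation is carried by the left Fischer cover graph, which by construction is an irreducible labeled graph (an irreducible ergodic component of the left Krieger cover); I expect $\lambda$-irreducibility of $\Lambda$ to be precisely the translation of this irreducibility into the language of $\lambda$-graph systems. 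Finally, $\lambda$-condition (I) should follow from infiniteness together with irreducibility: an infinite irreducible sofic shift cannot be presented by a single periodic cycle, so the underlying transition matrix $A$ of the Fischer cover is irreducible and is not a permutation matrix, which is the classical Cuntz--Krieger condition (I) and should yield $\lambda$-condition (I) for the presentation. Granting these three verifications, Theorem \ref{thm:main1} immediately gives that $\OLmin$ is simple and purely infinite.

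For the identification with a Cuntz--Krieger algebra, the key structural input is that for an irreducible sofic shift the $\lambda$-synchronizing $\lambda$-graph system $\LLmin$ coincides with the $\lambda$-graph system associated to the left Fischer cover graph. Since the Fischer cover is a \emph{finite} labeled graph, its associated $\lambda$-graph system is of finite type: the increasing sequence of vertex sets stabilizes and the structure maps become eventually constant, so that all of the defining combinatorial data collapse to a single finite $0$--$1$ matrix, namely the transition matrix $A$ of the underlying directed graph obtained by forgetting the labels. I would then show that for such a stationary $\lambda$-graph system the universal operator relations defining $\OL$ reduce exactly to the Cuntz--Krieger relations for $A$, producing a canonical isomorphism $\OLmin \cong \OA$.

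The main obstacle I anticipate is the second step rather than the first: carefully establishing that the $\lambda$-graph system of a finite labeled graph is stationary and that the resulting $C^*$-algebra genuinely coincides with the Cuntz--Krieger algebra of the forgotten-label transition matrix $A$, rather than with some larger algebra attached to the labeled (sofic) structure. This amounts to matching generators and relations---the partial isometries and projections produced by the $\lambda$-graph system must be shown to satisfy no more and no fewer relations than the Cuntz--Krieger generators of $\OA$---and to checking that passing to the Fischer cover, as opposed to the full Krieger cover, does not reintroduce the nontrivial ideals that the canonical (Krieger-cover) $\lambda$-graph system would carry. Once stationarity and the relation-matching are in place, the isomorphism $\OLmin \cong \OA$ is formal, and simplicity and pure infiniteness are inherited either from Theorem \ref{thm:main1} or, redundantly, from the irreducibility of $A$ and the fact that $A$ is not a permutation matrix, via the classical Cuntz--Krieger theory.
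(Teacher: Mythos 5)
Your overall strategy coincides with the paper's: show that the minimal $\lambda$-graph system $\LLmin$ of an infinite irreducible sofic shift eventually stabilizes to the left Fischer cover, invoke the universality and uniqueness of $\OLmin$ (Proposition \ref{prop:universalunique}), and then match generators and relations against a Cuntz--Krieger algebra; simplicity and pure infiniteness then come either from Theorem \ref{thm:main1} or from classical Cuntz--Krieger theory applied to an irreducible matrix satisfying condition (I). The paper takes essentially this route (its Section 4 proves the stabilization and then the relation matching in Proposition \ref{prop:4.2}), and your sketched verifications of $\lambda$-irreducibility are consistent with the Remark following Lemma \ref{lem:irreducible}, though they become redundant once the Cuntz--Krieger identification is in hand.

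However, there is a concrete flaw in your second step: you identify the target as ``the transition matrix $A$ of the underlying directed graph obtained by forgetting the labels'' and claim the data collapse to a single finite $0$--$1$ matrix. This is false in general, because the Fischer cover may have parallel edges: for the full $N$-shift (an infinite irreducible sofic shift) the cover is a single vertex with $N$ loops, so the forgotten-label vertex matrix is $[N]$, not a $0$--$1$ matrix, and the Cuntz--Krieger algebra of it is not even defined in the strict sense. Moreover, even when the vertex matrix happens to be $0$--$1$, the canonical generators of $\OLmin$ --- partial isometries $S_\alpha$ indexed by \emph{symbols} and projections $E_i$ indexed by \emph{vertices} --- do not satisfy vertex-matrix Cuntz--Krieger relations, so your proposed ``relation matching'' cannot proceed directly. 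The correct target, used in Proposition \ref{prop:4.2}, is the $0$--$1$ matrix $\widehat{A}$ indexed by the symbol--vertex pairs $(\alpha,i)\in\widehat{\Sigma}$ (equivalently by the edges of the cover, since it is left-resolving), namely $\widehat{A}((\alpha,i),(\beta,j))=\sum_{k}A(k,\alpha,i)A(i,\beta,j)$, and the essential trick you are missing is to pass to the composite partial isometries $S_{(\alpha,i)}=S_\alpha E_i$, verify that these satisfy the Cuntz--Krieger relations for $\widehat{A}$, and then recover the original generators from them via $E_i=\sum_{(\beta,j)\in\widehat{\Sigma}}A(i,\beta,j)S_{(\beta,j)}S_{(\beta,j)}^*$ and $S_\alpha=\sum_{i}S_{(\alpha,i)}$, so that the two families generate the same $C^*$-algebra. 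With that replacement your argument closes; without it, the proof fails for every sofic shift whose Fischer cover has two edges with distinct labels between the same pair of vertices.
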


We will present several examples  of simple purely infinite $C^*$-algebras associated 
with normal subshifts in Section 5.
They are the $C^*$-algebras associated with Dyck shifts, Markov-Dyck shifts, Motzkin shifts and $\beta$-shifts.

In the second half of the paper,
we will study relationship between several kinds of topological conjugacy of normal subshifts  
and structure of the associated $C^*$-algebras.
Let ${\frak L}_1, {\frak L}_2$
be left-resolving $\lambda$-graph systems that present 
the subshifts $\Lambda_1, \Lambda_2$, respectively. 
In \cite{MaDynam2020},
the notion of $({\frak L}_1,{\frak L}_2)$-continuous orbit equivalence
between their one-sided subshifts 
$(X_{\Lambda_1}, \sigma_{\Lambda_1})$ and
$(X_{\Lambda_2}, \sigma_{\Lambda_2})$
was introduced.
The author then proved that 
$(X_{\Lambda_1},\sigma_{\Lambda_1})$
and
$(X_{\Lambda_2},\sigma_{\Lambda_2})$
are 
$({\frak L}_1,{\frak L}_2)$-continuously orbit equivalent 
if and only if 
there exists an isomorphism
$\Phi:{\mathcal{O}}_{{\frak L}_1}\longrightarrow {\mathcal{O}}_{{\frak L}_2}
$ 
of $C^*$-algebras such that 
$
\Phi({\mathcal{D}}_{{\Lambda}_1})={\mathcal{D}}_{{\Lambda}_2}
$
where 
${\mathcal{D}}_{{\Lambda}_i}$ 
is a canonical commutative $C^*$-subalgebra of 
${\mathcal{O}}_{{\frak L}_i}$
isomorphic to $C(X_{\Lambda_i})$ for $ i=1,2$.
We will see that, under the condition that 
$(X_{\Lambda_1},\sigma_{\Lambda_1})$
and
$(X_{\Lambda_2},\sigma_{\Lambda_2})$
are 
$({\frak L}_1,{\frak L}_2)$-continuously orbit equivalent,
if $\Lambda_1$ is a normal subshift 
and
${\frak L}_1$ is its minimal presentation,
then 
$\Lambda_2$ is a normal subshift 
and
${\frak L}_2$ is its minimal presentation
(Lemma \ref{lem:coe6.2}).
We then define 
the one-sided subshifts 
$(X_{\Lambda_1}, \sigma_{\Lambda_1})$ and
$(X_{\Lambda_2}, \sigma_{\Lambda_2})$
for normal subshifts $\Lambda_1$ and $\Lambda_2$ 
to be {\it continuously orbit equivalent}\/ if they are 
$(\LLamonemin,\LLamtwomin)$-continuously orbit equivalent
(Definition \ref{def:coefornormal}).
We thus know that for normal subshifts
$\Lambda_1$ and $\Lambda_2$,
their 
one-sided subshifts 
$(X_{\Lambda_1},\sigma_{\Lambda_1})$
and
$(X_{\Lambda_2},\sigma_{\Lambda_2})$
are continuously orbit equivalent
if and only if there exists an isomorphism
$\Phi: \OLamonemin
\longrightarrow \OLamtwomin
$ of $C^*$-algebras
such that 
$\Phi({\mathcal{D}}_{\Lambda_1})={\mathcal{D}}_{\Lambda_2}
$ 
(Proposition \ref{prop:onesidednormalcoe}).


In \cite{MaDynam2020}, the author also introduced 
the notion of $({\frak L}_1,{\frak L}_2)$-eventual conjugacy 
between their one-sided subshifts 
$(X_{\Lambda_1}, \sigma_{\Lambda_1})$ and
$(X_{\Lambda_2}, \sigma_{\Lambda_2})$
and proved that 
$(X_{\Lambda_1},\sigma_{\Lambda_1})$
and
$(X_{\Lambda_2},\sigma_{\Lambda_2})$
are 
$({\frak L}_1,{\frak L}_2)$-eventually conjugate
if and only if 
there exists an isomorphism
$\Phi:{\mathcal{O}}_{{\frak L}_1}\longrightarrow {\mathcal{O}}_{{\frak L}_2}
$ 
of $C^*$-algebras such that 
\begin{equation*}
\Phi({\mathcal{D}}_{{\Lambda}_1})={\mathcal{D}}_{{\Lambda}_2}
\quad
\text{ and }
\quad
\Phi \circ \rho_t^{{\frak L}_1} =\rho_t^{{\frak L}_2}\circ \Phi,
\quad
t \in \T,
\end{equation*}
where 
${\mathcal{D}}_{{\Lambda}_i}$ 
is a canonical commutative $C^*$-subalgebra of 
${\mathcal{O}}_{{\frak L}_i}$
isomorphic to $C(X_{\Lambda_i})$,
and 
$\rho_t^{{\frak L}_i}$ is the gauge action on
${\mathcal{O}}_{{\frak L}_i}$ for $ i=1,2$.

Let us denote by $\K$ the $C^*$-algebra of compact operators on the separable 
infinite dimensional Hilbert space $\ell^2(\N)$ and
$\C$ its commutative $C^*$-subalgebra of diagonal operators.
For two-sided topological conjugacy, the notion of  
$({\frak L}_1,{\frak L}_2)$-conjugacy
between two-sided subshifts 
$({\Lambda_1}, \sigma_{\Lambda_1}),$
$({\Lambda_2}, \sigma_{\Lambda_2})$
were introduced in 
\cite{MaYMJ2010} and \cite{MaDynam2020}. 
It was proved in \cite{MaDynam2020} that 
$(\Lambda_1, \sigma_1)$ and 
$(\Lambda_2, \sigma_2)$
are $({\frak L}_1,{\frak L}_2)$-conjugate
if and only if   
there exists an isomorphism
$\widetilde{\Phi}:{\mathcal{O}}_{{\frak L}_1}\otimes\K
\longrightarrow 
{\mathcal{O}}_{{\frak L}_2}\otimes\K
$ of $C^*$-algebras
such that 
\begin{equation*}
\widetilde{\Phi}({\mathcal{D}}_{{\Lambda}_1}\otimes\C)
={\mathcal{D}}_{{\Lambda}_2}\otimes\C, \qquad
\widetilde{\Phi}\circ (\rho^{{\frak L}_1}_t\otimes\id)
 =(\rho^{{\frak L}_2}_t\otimes\id)\circ \widetilde{\Phi},\quad t \in \T.
\end{equation*} 

In \cite{KMAAA2013}, it was proved that $\lambda$-synchronization 
is invariant under topological conjugacy of two-sided subshifts.
Hence if a normal subshift $\Lambda_1$
is topologically conjugate to another  subshift $\Lambda_2$, then
 $\Lambda_2$ is normal.
We will first show the following theorems concerning one-sided conjugacies. 
\begin{theorem}\label{thm:main1.3}
Let $\Lambda_1$ and $\Lambda_2$ be normal subshifts.
If their one-sided subshifts 
$(X_{\Lambda_1},\sigma_{\Lambda_1})$ 
and 
$(X_{\Lambda_2},\sigma_{\Lambda_2})$ 
are topologically conjugate, then there exists an isomorphism
$\Phi:\OLamonemin\longrightarrow\OLamtwomin$ of $C^*$-algebras such that 
$\Phi({\mathcal{D}}_{\Lambda_1}) ={\mathcal{D}}_{\Lambda_2}$
and
$\Phi\circ\rho^{\Lambda_1}_t = \rho^{\Lambda_2}_t\circ\Phi, t \in \T$.
\end{theorem}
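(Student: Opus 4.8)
The plan is to exhibit the given one-sided topological conjugacy as a $(\LLamonemin,\LLamtwomin)$-eventual conjugacy and then invoke the $C^*$-algebraic characterization of $({\frak L}_1,{\frak L}_2)$-eventual conjugacy from \cite{MaDynam2020} recalled above. The conceptual point is that a one-sided topological conjugacy is exactly the lag-zero instance of eventual conjugacy: it is a homeomorphism that commutes \emph{on the nose} with the shift, so the uniform-lag condition responsible for gauge-equivariance holds automatically. The genuine work is to promote a conjugacy of the underlying one-sided subshift spaces to one that is compatible with the intrinsic minimal $\lambda$-graph-system structure, after which the desired $\Phi$ is produced by a result already available to us.

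First I would fix a topological conjugacy $\varphi\colon X_{\Lambda_1}\to X_{\Lambda_2}$, so that $\varphi\circ\sigma_{\Lambda_1}=\sigma_{\Lambda_2}\circ\varphi$. By the Curtis--Hedlund--Lyndon theorem both $\varphi$ and $\varphi^{-1}$ are sliding block codes; let $K\in\N$ bound their windows. Since $\Lambda_1$ and $\Lambda_2$ are normal, each carries its canonical minimal presentation $\LLamonemin$ and $\LLamtwomin$, whose vertices at each level and whose labeled edges are determined intrinsically by the $\lambda$-synchronizing follower-set structure of the subshift. Because $\varphi$ is a shift-commuting homeomorphism realized by a block code of window $K$, it transports $\lambda$-synchronizing words of $\Lambda_1$ to those of $\Lambda_2$ and respects the relevant predecessor/follower relations up to the bounded shift $K$; this is precisely the data needed to present $(X_{\Lambda_1},\sigma_{\Lambda_1})$ and $(X_{\Lambda_2},\sigma_{\Lambda_2})$ as $(\LLamonemin,\LLamtwomin)$-eventually conjugate with lag governed by $K$.

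The main obstacle is exactly this middle step: verifying that the block-code description of $\varphi$ satisfies the defining compatibility (cocycle) conditions of $({\frak L}_1,{\frak L}_2)$-eventual conjugacy at the level of the minimal $\lambda$-graph systems, rather than merely at the level of the subshift dynamics. Here I would exploit the canonicity of the minimal presentation, namely that its vertex sets, its connecting maps between levels, and its labeled edges are functorially determined by the $\lambda$-synchronizing structure of the subshift (cf.\ the role of the minimal presentation and Lemma~\ref{lem:coe6.2}, together with the conjugacy-invariance of $\lambda$-synchronization established in \cite{KMAAA2013}). Using this, I would transport the level structure of $\LLamonemin$ through $\varphi$ and check that the induced identifications intertwine the two $\lambda$-graph-system structures with the prescribed uniform lag; the equality $\varphi\circ\sigma_{\Lambda_1}=\sigma_{\Lambda_2}\circ\varphi$, as opposed to a mere orbit equivalence, is what forces the lag to be uniform and hence secures compatibility with the length grading.

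Finally, once $(X_{\Lambda_1},\sigma_{\Lambda_1})$ and $(X_{\Lambda_2},\sigma_{\Lambda_2})$ are shown to be $(\LLamonemin,\LLamtwomin)$-eventually conjugate, the characterization of $({\frak L}_1,{\frak L}_2)$-eventual conjugacy from \cite{MaDynam2020} yields an isomorphism $\Phi\colon\OLamonemin\to\OLamtwomin$ with $\Phi({\mathcal{D}}_{\Lambda_1})={\mathcal{D}}_{\Lambda_2}$ and $\Phi\circ\rho^{\Lambda_1}_t=\rho^{\Lambda_2}_t\circ\Phi$ for all $t\in\T$, where I use that the gauge action $\rho^{{\frak L}_i}_t$ coincides with $\rho^{\Lambda_i}_t$ when ${\frak L}_i$ is the minimal presentation $\LLamonemin$ (resp.\ $\LLamtwomin$). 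This is exactly the assertion of the theorem. I expect the bookkeeping in the third paragraph, relating the window $K$ to the $\lambda$-graph-system levels, to be the only nontrivial verification; the rest is a direct appeal to the intrinsic, conjugacy-invariant nature of the minimal presentation and to the already-proved $C^*$-algebraic characterization.
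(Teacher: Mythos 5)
Your proposal follows essentially the same route as the paper's own proof: the paper proves Theorem \ref{thm:main1.3} (restated as Theorem \ref{thm:one-sidedconj4}) by lifting the given conjugacy $h$ to a $(\LLamonemin,\LLamtwomin)$-conjugacy (Proposition \ref{prop:conjeventconj}), noting that such a conjugacy is in particular an $(\LLamonemin,\LLamtwomin)$-eventual conjugacy, and then invoking the $C^*$-algebraic characterization of eventual conjugacy from \cite{MaDynam2020} (recalled as Proposition \ref{prop:eventconj}). The ``main obstacle'' you single out --- making the sliding block code compatible with the minimal $\lambda$-graph-system structure --- is precisely what the paper's Lemmas preceding Proposition \ref{prop:conjeventconj} carry out, by showing that each $\mu\in S_l(\Lambda_1)$ is sent to a well-defined $l$-past-equivalence class in $S_l(\Lambda_2)$ compatibly with the $\iota$-maps, so your plan and the paper's proof coincide in both strategy and the key technical step.
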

Theorem \ref{thm:main1.3} is a generalization of Cuntz--Krieger's theorem
\cite[Proposition 2.17]{CK}. 
Related results are seen in \cite{BC2017},\cite{BC2019},\cite{MaPAMS2017}, etc.)
 
The following theorem is a generalization of the results for 
irreducible topological Markov shifts in \cite{MaPAMS2017} (cf.  \cite{BC2017},\cite{BC2019}).

\begin{theorem}\label{thm:main1.4}
Let $\Lambda_1$ and $\Lambda_2$ be normal subshifts.
Their one-sided subshifts 
$(X_{\Lambda_1},\sigma_{\Lambda_1})$ 
and 
$(X_{\Lambda_2},\sigma_{\Lambda_2})$ 
are eventually  conjugate if and only if there exists an isomorphism
$\Phi:\OLamonemin\longrightarrow\OLamtwomin$ of $C^*$-algebras such that 
$\Phi({\mathcal{D}}_{\Lambda_1}) ={\mathcal{D}}_{\Lambda_2}$
and
$\Phi\circ\rho^{\Lambda_1}_t = \rho^{\Lambda_2}_t\circ\Phi, t \in \T$.
\end{theorem}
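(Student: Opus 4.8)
The plan is to derive Theorem~\ref{thm:main1.4} from the general equivalence of \cite{MaDynam2020} between $({\frak L}_1,{\frak L}_2)$-eventual conjugacy and the existence of a diagonal- and gauge-preserving isomorphism ${\mathcal{O}}_{{\frak L}_1}\cong{\mathcal{O}}_{{\frak L}_2}$, specialized to the minimal presentations ${\frak L}_1=\LLamonemin$, ${\frak L}_2=\LLamtwomin$ (so that ${\mathcal{O}}_{{\frak L}_1}=\OLamonemin$ and ${\mathcal{O}}_{{\frak L}_2}=\OLamtwomin$). With this choice that result reads: there is an isomorphism $\Phi:\OLamonemin\to\OLamtwomin$ with $\Phi({\mathcal{D}}_{\Lambda_1})={\mathcal{D}}_{\Lambda_2}$ and $\Phi\circ\rho^{\Lambda_1}_t=\rho^{\Lambda_2}_t\circ\Phi$ for $t\in\T$ if and only if $(X_{\Lambda_1},\sigma_{\Lambda_1})$ and $(X_{\Lambda_2},\sigma_{\Lambda_2})$ are $(\LLamonemin,\LLamtwomin)$-eventually conjugate. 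The theorem therefore reduces to the purely dynamical assertion that, for normal subshifts, ordinary eventual conjugacy of the one-sided subshifts is the same as $(\LLamonemin,\LLamtwomin)$-eventual conjugacy.

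One implication is immediate. A $(\LLamonemin,\LLamtwomin)$-eventual conjugacy is by definition built on a homeomorphism $h:X_{\Lambda_1}\to X_{\Lambda_2}$ together with bounded lags realizing eventual conjugacy of the underlying shifts, so forgetting its compatibility with the $\lambda$-graph system structure leaves exactly an eventual conjugacy of $(X_{\Lambda_1},\sigma_{\Lambda_1})$ and $(X_{\Lambda_2},\sigma_{\Lambda_2})$. Hence the direction from a $C^*$-isomorphism $\Phi$ to eventual conjugacy follows at once by combining this observation with \cite{MaDynam2020}.

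For the converse I would begin from an eventual conjugacy: a homeomorphism $h:X_{\Lambda_1}\to X_{\Lambda_2}$ and $k_1,k_2\in\Zp$ with $\sigma_{\Lambda_2}^{k_1+1}(h(x))=\sigma_{\Lambda_2}^{k_1}(h(\sigma_{\Lambda_1}(x)))$ for all $x\in X_{\Lambda_1}$ and the symmetric identity for $h^{-1}$. Since $\Lambda_1$ and $\Lambda_2$ are normal, both minimal presentations $\LLamonemin$ and $\LLamtwomin$ exist, and the task is to promote $h$ to a $(\LLamonemin,\LLamtwomin)$-eventual conjugacy; \cite{MaDynam2020} then yields the isomorphism $\Phi$ with the required diagonal- and gauge-compatibility. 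The crux, and the step I expect to be the main obstacle, is to verify that $h$ respects the $\lambda$-graph system data. Here I would use the intrinsic description of the minimal presentation: its level-$l$ vertex set is canonically the set of $l$-past equivalence classes of $\lambda$-synchronizing words of the subshift, a purely dynamical invariant, with structure maps $\iota$ given by restriction of pasts. Because $h$ differs from a genuine conjugacy only by the bounded lags $k_1,k_2$, carrying a $\lambda$-synchronizing word across $h$ again produces a $\lambda$-synchronizing word, and the eventual-conjugacy identities force the induced map on past-equivalence classes to shift the level by a constant controlled by $k_1,k_2$ alone. This is the same computation as in the topological-conjugacy case Theorem~\ref{thm:main1.3}, except that the level offset is now a fixed nonzero constant rather than zero, which is precisely the flexibility built into $(\LLamonemin,\LLamtwomin)$-eventual conjugacy. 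The difficulty is to make these level-by-level identifications compatibly with the maps $\iota$ and uniformly over the infinitely many levels $l$, simultaneously for $h$ and $h^{-1}$.
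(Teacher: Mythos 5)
Your overall reduction is sound as far as it goes: granting Proposition \ref{prop:eventconj} (that is, \cite[Theorem 1.3]{MaDynam2020}), the theorem is equivalent to the purely dynamical statement that eventual conjugacy of $(X_{\Lambda_1},\sigma_{\Lambda_1})$ and $(X_{\Lambda_2},\sigma_{\Lambda_2})$ coincides with $(\LLamonemin,\LLamtwomin)$-eventual conjugacy, and one of the two implications is indeed trivial, exactly as you say. The gap is the other implication, which you dispose of by asserting that lifting $h$ to the covers is ``the same computation as in the topological-conjugacy case'' with a nonzero level offset. It is not, and you never construct the required homeomorphism $h_{\frak L}:X_{\LLamonemin}\to X_{\LLamtwomin}$. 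The word-level machinery behind Theorem \ref{thm:one-sidedconj4} (the lemmas through Lemma \ref{lem:4.4} feeding into Proposition \ref{prop:conjeventconj}) uses in an essential way that both $h$ and $h^{-1}$ are sliding block codes: one pulls a configuration $\eta\tilde{\mu}y$ back through $h^{-1}$, invokes synchronization of $\mu$, and pushes forward through $h$, with all positions preserved. For an eventual conjugacy only $h_1=\sigma_{\Lambda_2}^K\circ h$ and $h_2=\sigma_{\Lambda_1}^K\circ h^{-1}$ are sliding block codes, and they are not mutually inverse: the round trips satisfy $h_2\circ h_1=\sigma_{\Lambda_1}^{2K}$ and $h_1\circ h_2=\sigma_{\Lambda_2}^{2K}$, so every transfer of a word truncates $2K$ initial symbols. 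Since $l$-synchronization and $l$-past equivalence concern precisely which words can be \emph{prepended}, this truncation lands exactly where eventual conjugacy gives no control, and the conjugacy-case argument does not transfer the vertex sets $S_l(\Lambda_1)/\underset{l}{\sim}$, the $\iota$-maps, or the edge structure by any fixed level offset ``controlled by $k_1,k_2$ alone.'' That unproved assertion is essentially the whole theorem, not an observation.

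The paper's actual proof shows how much is being skipped, and it is a genuinely different route. From the eventual conjugacy it builds an auxiliary normal subshift $\Lambda_2'$ over the alphabet $\Sigma_2'\subset B_K(\Lambda_2)\times B_L(\Lambda_2)$, encoding the lag data in the symbols via the map $\tau^{\varphi_1}$, and proves that $X_{\Lambda_2'}$ is topologically conjugate to $X_{\Lambda_1}$ (Lemma \ref{lem:LamptwoLamone}); Theorem \ref{thm:one-sidedconj4} then yields an isomorphism $\OLamonemin\cong\OLamptwomin$ with the required compatibilities. The main work is then operator-algebraic rather than dynamical: the generators of $\OLamptwomin$ are represented inside $\OLamtwomin$ by the partial isometries $\widehat{S}_{(\xi,y_{[1,L]})}=S_{\xi_1 y_{[1,L]}}S_{\xi_2 y_{[2,L]}}^*$, the relations $(\LLamptwomin)$ are verified for them, and the crucial absorption facts $\widehat{\mathcal{D}}_{\Lambda_2'}=\mathcal{D}_{\Lambda_2}$ and $\OhatLamtwomin=\OLamtwomin$ are established (Lemma \ref{lem:AF}, Lemma \ref{lem:OhatOlam}), giving Proposition \ref{prop:mainevent2} and hence Theorem \ref{thm:eventconj}; combining with Proposition \ref{prop:eventconj} finishes the proof. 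It is in this second isomorphism that the lag $K$ is actually absorbed, and your proposal has no counterpart to it.
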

The if part of Theorem \ref{thm:main1.4} follows from a result in 
\cite{MaDynam2020}.
The proof of its only if part is a main body in the second half of this paper.
To prove the only if part, we provide an auxiliay subshft written $\Lambda'_2$
whose one-sided subshift $X_{\Lambda'_2}$ is topologically conjugate to $X_{\Lambda_1}$.
We will then prove that there exists an isomorphism of $C^*$-algebras
$\Phi_2:
{\mathcal{O}}_{{\Lambda'_2}^{\operatorname{min}}}
\longrightarrow
\OLamtwomin
$ 
satisfying 
$\Phi_2({\mathcal{D}}_{\Lambda'_2}) ={\mathcal{D}}_{\Lambda_2}$
and
$\Phi_2\circ\rho^{\Lambda'_2}_t = \rho^{\Lambda_2}_t\circ\Phi_2, t \in \T$,
so that
we will obtain Theorem \ref{thm:main1.4}
by using Theorem \ref{thm:main1.3}.

We will second show the following theorem concerning two-sided conjugacies,
that is a generalization of the case of topological Markov shifts 
proved by Cuntz--Krieger \cite{CK} and Carlsen--Rout \cite{CR}. 
\begin{theorem}\label{thm:main1.5}
Let $\Lambda_1$ and $\Lambda_2$ be normal subshifts.
The two-sided subshifts 
$({\Lambda_1},\sigma_{\Lambda_1})$ 
and 
$({\Lambda_2},\sigma_{\Lambda_2})$ 
are topologically   conjugate if and only if there exists an isomorphism
$\widetilde{\Phi}:\OLamonemin\otimes\K\longrightarrow\OLamtwomin\otimes\K$ of $C^*$-algebras such that 
$\widetilde{\Phi}({\mathcal{D}}_{\Lambda_1}\otimes\C) ={\mathcal{D}}_{\Lambda_2}\otimes\C$
and
$\widetilde{\Phi}\circ(\rho^{\Lambda_1}_t\otimes\id) 
= (\rho^{\Lambda_2}_t\otimes\id) \circ \widetilde{\Phi}, t \in \T$.
\end{theorem}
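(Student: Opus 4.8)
The plan is to deduce the theorem from the stabilized characterization of $({\frak L}_1,{\frak L}_2)$-conjugacy proved in \cite{MaDynam2020}, by showing that, for normal subshifts, two-sided topological conjugacy is the same relation as $(\LLamonemin,\LLamtwomin)$-conjugacy of the minimal presentations. Recall from \cite{MaDynam2020} that $({\Lambda_1},\sigma_{\Lambda_1})$ and $({\Lambda_2},\sigma_{\Lambda_2})$ are $({\frak L}_1,{\frak L}_2)$-conjugate precisely when there is an isomorphism $\widetilde{\Phi}\colon{\mathcal{O}}_{{\frak L}_1}\otimes\K\to{\mathcal{O}}_{{\frak L}_2}\otimes\K$ with $\widetilde{\Phi}(\mathcal{D}_{\Lambda_1}\otimes\C)=\mathcal{D}_{\Lambda_2}\otimes\C$ and $\widetilde{\Phi}\circ(\rho^{{\frak L}_1}_t\otimes\id)=(\rho^{{\frak L}_2}_t\otimes\id)\circ\widetilde{\Phi}$ for $t\in\T$. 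Specializing to ${\frak L}_i=\LLamonemin$ or $\LLamtwomin$, which are left-resolving and present $\Lambda_i$, the theorem reduces to the equivalence ``topologically conjugate $\Longleftrightarrow$ $(\LLamonemin,\LLamtwomin)$-conjugate.''

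For the \emph{if} part I would argue as follows. Given $\widetilde{\Phi}$ with the stated properties, \cite{MaDynam2020} produces a $(\LLamonemin,\LLamtwomin)$-conjugacy between the two-sided subshifts. Since, by definition, such a conjugacy carries an underlying bi-infinite conjugacy of $({\Lambda_1},\sigma_{\Lambda_1})$ onto $({\Lambda_2},\sigma_{\Lambda_2})$, these two-sided subshifts are topologically conjugate. This direction uses only the existence of the minimal presentations, already established in the first half of the paper.

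The substance lies in the \emph{only if} part. Suppose $({\Lambda_1},\sigma_{\Lambda_1})$ and $({\Lambda_2},\sigma_{\Lambda_2})$ are topologically conjugate. By the invariance of $\lambda$-synchronization under two-sided topological conjugacy \cite{KMAAA2013}, $\Lambda_2$ is again normal, so $\LLamtwomin$ is defined and left-resolving and both minimal presentations are available. I would then promote the topological conjugacy to a $(\LLamonemin,\LLamtwomin)$-conjugacy, following the scheme of Theorems~\ref{thm:main1.3} and \ref{thm:main1.4}: pass to a higher block presentation so that the conjugacy is realized by a one-block map, and track how the $\lambda$-synchronizing minimal $\lambda$-graph system transforms under the resulting elementary state-splitting operations. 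As in the proof of Theorem~\ref{thm:main1.4}, I expect to interpose the auxiliary subshift $\Lambda'_2$, whose one-sided subshift is topologically conjugate to $X_{\Lambda_1}$, and to feed the forward one-sided data through $\LLamptwomin$, so as to match the bi-infinite conjugacy with a gauge-equivariant, diagonal-preserving isomorphism before stabilizing. Once the $(\LLamonemin,\LLamtwomin)$-conjugacy is in hand, \cite{MaDynam2020} supplies the desired $\widetilde{\Phi}$.

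The hard part will be this promotion step: realizing a genuine two-sided topological conjugacy at the level of the minimal presentations. The minimal $\lambda$-graph system $\LLamtwomin$ is canonically attached to $\Lambda_2$, but it is not manifestly functorial under an arbitrary sliding block code, so one must verify that the defining data of an $({\frak L}_1,{\frak L}_2)$-conjugacy---a compatible pair of block maps together with their interaction with the structure maps of the two $\lambda$-graph systems---can actually be extracted from the conjugacy. Controlling the $\lambda$-synchronizing vertices and the predecessor equivalence under the conjugacy, and confirming that minimality is preserved at each elementary step, is where the real work is concentrated; the stabilization, by contrast, is exactly what the $\otimes\K$ and $\otimes\C$ in the \cite{MaDynam2020} criterion absorb, in parallel with Carlsen--Rout \cite{CR} and Cuntz--Krieger \cite{CK} for topological Markov shifts.
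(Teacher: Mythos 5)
Your \emph{if} direction is correct and coincides with the paper's: an isomorphism $\widetilde{\Phi}$ as in the statement yields, by \cite[Theorem 1.4]{MaDynam2020}, an $(\LLamonemin,\LLamtwomin)$-conjugacy, which by definition contains a topological conjugacy $\psi_\Lambda\colon\Lambda_1\to\Lambda_2$. The gap is in your \emph{only if} direction, and it is not a matter of missing details: the mechanism you propose cannot work. The auxiliary subshift $\Lambda'_2$ of Section~8 is manufactured from a one-sided \emph{eventual conjugacy} $h\colon X_{\Lambda_1}\to X_{\Lambda_2}$ --- the maps $h_{[1,K]}$, $h_1$, the block maps $\varphi_1,\varphi_2$ and $\tau^{\varphi_1}$ defining $\Sigma'_2$ and $X_{\Lambda'_2}$ all presuppose such an $h$ --- whereas a two-sided topological conjugacy provides no homeomorphism of the one-sided shift spaces at all: a sliding block code with nontrivial memory does not restrict to $X_{\Lambda_1}$. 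Worse, your stated goal of matching the conjugacy with a gauge-equivariant, diagonal-preserving isomorphism \emph{before} stabilizing would, if achieved, prove via Theorem~\ref{thm:main1.4} that two-sided conjugacy implies one-sided eventual conjugacy. That implication is false already for irreducible infinite shifts of finite type, hence within the class of normal subshifts: the full $3$-shift presented by the $1\times 1$ matrix $[3]$ and by its in-splitting with adjacency matrix $\left[\begin{smallmatrix}1&2\\1&2\end{smallmatrix}\right]$ are two-sided conjugate, but the classes of the units in $K_0\cong\Z/2\Z$ of the two Cuntz--Krieger algebras are $1$ and $0$ respectively, so no unital isomorphism --- a fortiori no diagonal-preserving, gauge-intertwining one as in Theorem~\ref{thm:main1.4} --- can exist (cf. \cite{CK}, \cite{CR}). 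The tensoring by $\K$ and $\C$ is exactly where the two-sided relation differs from the one-sided one; it cannot be postponed to the last step.

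What the only-if direction actually requires --- and what your sketch never engages --- is the construction demanded by the definition of $(\LLamonemin,\LLamtwomin)$-conjugacy of \emph{two-sided} subshifts: a right asymptotic conjugacy $\psi_{\frak L}\colon \bar{X}_{\LLamonemin}\to \bar{X}_{\LLamtwomin}$ between the bi-infinite path spaces of the minimal $\lambda$-graph systems, compatible with the given conjugacy of $\Lambda_1$ and $\Lambda_2$. The paper (Proposition~\ref{prop:twosidedconjugate}) produces this by invoking Nasu's bipartite coding theorem \cite{Nasu}, \cite{Nasu2} to reduce to the case where $\Lambda_1$ and $\Lambda_2$ are bipartitely related via specifications $\kappa_1,\kappa_2$, and then lifting the specifications to the $\iota$-orbits by rewriting synchronizing words: a vertex $u_i^l=[\mu_i^l]_l$ of $\LLamonemin$ is sent to the vertex $[\kappa_2^{-1}(d_i\,\kappa_1(\mu_i^{l+1})\,c_i^{l+1})]_l$ of $\LLamtwomin$, and one checks that this is well defined on $l$-past equivalence classes and commutes with $\iota$. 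Your sentence about higher block presentations and state splittings gestures in this direction, but supplies no mechanism for lifting the conjugacy to the vertex data of the minimal $\lambda$-graph systems, and that lifting is the entire content of the hard direction. Once Proposition~\ref{prop:twosidedconjugate} is available, the theorem follows in both directions from \cite[Theorem 1.4]{MaDynam2020}, as you correctly observe.
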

The $C^*$-algebraic characterizations of eventual conjugacy and topological conjugacy
appeared in Theorem \ref{thm:main1.4} and Theorem \ref{thm:main1.5}
are rephrased in terms of the associated groupoids as seen in 
 \cite[Theorem 1.3]{MaDynam2020}
and \cite[Theorem 1.4]{MaDynam2020}, respetively.

We may apply the above theorems to irreducible sofic shifts.
Let $\Lambda$ be an irreducible sofic shift such that $\Lambda$ is infinite.
Let $G_\Lambda^F$
be its left Fischer cover graph, that is the unique left-resolving irreducible
minimal finite labeled graph that presents $\Lambda$ (\cite{Fischer}, cf. \cite{LM}).
Then the $C^*$-algebra
$\OLmin$ is a simple purely infinite $C^*$-algebra such that 
$\OLmin$ is isomorphic to the Cuntz--Krieger algebra 
$ {\mathcal{O}}_{\widehat{A}}
$
for the transition matrix of the topological Markov shift 
defined by the Fischer cover 
$G_\Lambda^F$ (Proposition \ref{prop:4.2}).
By  Proposition \ref{prop:onesidednormalcoe}, Theorem \ref{thm:main1.4} and Theorem \ref{thm:main1.5},
we have the following result.
\begin{corollary}
Let $\Lambda_1$ and $\Lambda_2$ be two irredicible sofic shifts such that $\Lambda_i, i=1,2$ are infinite.
\begin{enumerate}
\renewcommand{\theenumi}{\roman{enumi}}
\renewcommand{\labelenumi}{\textup{(\theenumi)}}
\item
Their one-sided sofic shifts
$(X_{\Lambda_1},\sigma_{\Lambda_1})$ 
and 
$(X_{\Lambda_2},\sigma_{\Lambda_2})$ 
are continuously orbit equivalent if and only if there exists an isomorphism
$\Phi:\OLamonemin\longrightarrow\OLamtwomin$ of simple $C^*$-algebras such that 
$\Phi({\mathcal{D}}_{\Lambda_1}) ={\mathcal{D}}_{\Lambda_2}$.
\item
Their one-sided sofic shifts
$(X_{\Lambda_1},\sigma_{\Lambda_1})$ 
and 
$(X_{\Lambda_2},\sigma_{\Lambda_2})$ 
are eventually  conjugate if and only if there exists an isomorphism
$\Phi:\OLamonemin\longrightarrow\OLamtwomin$ of simple $C^*$-algebras such that 
$\Phi({\mathcal{D}}_{\Lambda_1}) ={\mathcal{D}}_{\Lambda_2}$
and
$\Phi\circ\rho^{\Lambda_1}_t = \rho^{\Lambda_2}_t\circ\Phi, t \in \T$.
\item
Their two-sided sofic shifts 
$({\Lambda_1},\sigma_{\Lambda_1})$ 
and 
$({\Lambda_2},\sigma_{\Lambda_2})$ 
are topologically conjugate if and only if there exists an isomorphism
$\widetilde{\Phi}:\OLamonemin\otimes\K\longrightarrow\OLamtwomin\otimes\K$ of simple $C^*$-algebras such that 
$\widetilde{\Phi}({\mathcal{D}}_{\Lambda_1}\otimes\C) ={\mathcal{D}}_{\Lambda_2}\otimes\C$
and
$\widetilde{\Phi}\circ(\rho^{\Lambda_1}_t\otimes\id) 
= (\rho^{\Lambda_2}_t\otimes\id) \circ \widetilde{\Phi}, t \in \T$.
\end{enumerate}
\end{corollary}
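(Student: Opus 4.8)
The plan is to recognize this corollary as a direct specialization of the three main characterization results to the subclass of infinite irreducible sofic shifts, the only additional content being the qualifier ``simple'' attached to the isomorphisms. So the proof will consist of a membership check, a simplicity observation, and then invocation of the corresponding general results.

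First I would verify that an infinite irreducible sofic shift is a normal subshift. By the discussion in the introduction (following \cite{KMAAA2013}), every irreducible sofic shift is $\lambda$-synchronizing; combined with the standing hypothesis that each $\Lambda_i$ is infinite as a set, this places $\Lambda_1$ and $\Lambda_2$ in the class of normal subshifts. Consequently the minimal presentations $\LLamonemin$ and $\LLamtwomin$, the gauge actions $\rho^{\Lambda_i}_t$, and the diagonal subalgebras ${\mathcal{D}}_{\Lambda_1}, {\mathcal{D}}_{\Lambda_2}$ are all defined, and the three main results of the paper apply verbatim to $\Lambda_1$ and $\Lambda_2$.

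Next I would record the simplicity. By Proposition \ref{prop:4.2}, for each infinite irreducible sofic shift the $C^*$-algebra $\OLmin$ is simple and purely infinite, being isomorphic to the Cuntz--Krieger algebra of the transition matrix of its left Fischer cover graph. Hence both $\OLamonemin$ and $\OLamtwomin$ are simple, and in part (iii) the stabilizations $\OLamonemin\otimes\K$ and $\OLamtwomin\otimes\K$ are simple as well. This is precisely what upgrades each ``isomorphism of $C^*$-algebras'' appearing in the general theorems to an ``isomorphism of simple $C^*$-algebras''.

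Finally I would assemble the three equivalences: statement (i) is Proposition \ref{prop:onesidednormalcoe} applied to the normal subshifts $\Lambda_1,\Lambda_2$; statement (ii) is Theorem \ref{thm:main1.4}; and statement (iii) is Theorem \ref{thm:main1.5}. In each case the existence of the diagonal-preserving isomorphism $\Phi$ (respectively the diagonal- and gauge-preserving $\Phi$, or the stabilized $\widetilde{\Phi}$ with ${\mathcal{D}}_{\Lambda_1}\otimes\C \mapsto {\mathcal{D}}_{\Lambda_2}\otimes\C$ and $\rho^{\Lambda_1}_t\otimes\id$ intertwined with $\rho^{\Lambda_2}_t\otimes\id$) is already equivalent to the stated dynamical relation, so the simplicity observation only refines the description of that isomorphism. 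I do not anticipate a genuine obstacle; the sole point requiring care is the membership of infinite irreducible sofic shifts in the normal class, which rests on the cited $\lambda$-synchronization property, together with checking via Proposition \ref{prop:4.2} that the $\lambda$-irreducibility and $\lambda$-condition (I) underlying simplicity hold automatically for such shifts.
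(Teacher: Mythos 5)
Your proposal is correct and follows essentially the same route as the paper: the corollary is obtained by noting that infinite irreducible sofic shifts are normal, that $\OLamonemin$ and $\OLamtwomin$ are simple (and purely infinite) by Proposition \ref{prop:4.2}, and then invoking Proposition \ref{prop:onesidednormalcoe}, Theorem \ref{thm:main1.4} and Theorem \ref{thm:main1.5} verbatim.
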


\medskip

We have to remark that in a recent paper \cite{BC2019} by Brix--Carlsen,
similar results to the present paper are seen. 
The  $C^*$-algebras treated by Brix--Carlsen are different from
our $C^*$-algebras. In fact, their $C^*$-algebras in \cite{BC2019} are not simple 
in many cases unless the subshifts are irreducible shifts of finite type,    
whereas our $C^*$-algebras in the present paper are simple in many cases
including infinite irreducible sofic shifts.

In what follows,
the set of nonnegative integers and the set of positive integers
are denoted by $\Zp$ and $\N$, respectively.


\section{$\lambda$-synchronization and normal subshifts}

{\bf 1. $\lambda$-synchronization of subshifts.}

Let $\Sigma$ be a finite set with its discrete topology.
Denote by $\Sigma^\Z$ (resp. $\Sigma^\N$) the set of bi-infinite  (resp. right one-sided)
sequences of $\Sigma$.
We endow $\Sigma^\Z$ (resp. $\Sigma^\N$) 
 with infinite product topology, so that they are compact Hausdorff spaces.
The shift homeomorphism $\sigma: \Sigma^\Z\longrightarrow \Sigma^\Z$
is defined by $\sigma((x_n)_{n \in \Z}) =(x_{n+1})_{n \in \Z}$.
A continuous surjection
$\sigma:\Sigma^\N \longrightarrow \Sigma^\N$ is similarly defined.
Let $\Lambda\subset\Sigma^\Z$ be a closed $\sigma$-invariant subset, that is,
$\sigma(\Lambda) = \Lambda$. 
We denote the restriction $\sigma|_{\Lambda}$ of $\sigma$ to $\Lambda$ 
by $\sigma_{\Lambda}$.
The topological dynamical system
$(\Lambda,\sigma_\Lambda)$ is called a subshift over alphabet $\Sigma$.
It is often written as $\Lambda$ for short.
Let $X_\Lambda$ be the set of right infinite sequence $(x_n)_{n\in \N}$
of $\Sigma$ such that $(x_n)_{n\in \Z} \in \Lambda$.
The set $X_\Lambda$ is a closed subset of $\Sigma^\N$ such that 
$\sigma(X_\Lambda) = X_\Lambda$.
We similarly denote $\sigma|_{X_{\Lambda}}$ by $\sigma_{\Lambda}$.
The topological dynamical system
$(X_\Lambda,\sigma_\Lambda)$ is called the right one-sided subshift for $\Lambda$.
For an introduction to the theory of subshifts, we refer to
text books of symbolic dynamical systems  \cite{Kitchens}, \cite{LM}.
 For $l \in \Zp$, 
denote  by $B_l(\Lambda)$
the admissible words
$\{(x_1,\dots,x_l) \in \Sigma^l \mid (x_n)_{n\in \Z} \in \Lambda\}$
 of $\Lambda$ with its length $l$.
Denote by 
$B_*(\Lambda)$ the set $\cup_{l=0}^\infty B_l(\Lambda)$
of admissible words of $\Lambda$, where 
$B_0(\Lambda)$ denotes the empty word.
The length $m$ of a word 
$\mu =(\mu_1,\dots,\mu_m)$
is denoted by $|\mu|$.
For two words
$\mu =(\mu_1,\dots,\mu_m), \nu =(\nu_1,\dots,\nu_n) \in B_*(\Lambda)$
denote by $\mu\nu$ the concatenation
$(\mu_1,\dots,\mu_m,\nu_1,\dots,\nu_n)$.
For $\mu =(\mu_1,\dots,\mu_m) \in B_*(\Lambda)$ and $x=(x_n)_{n\in \N}\in X_\Lambda$,
we put
$\mu x = (\mu_1,\dots,\mu_m,x_1,x_2,\dots)\in \Sigma^\N$. 
For a word $\mu =(\mu_1,\dots,\mu_m) \in B_m(\Lambda)$,
the cylinder set $U_\mu \subset X_\Lambda$ is defined by
$$
U_\mu = \{(x_n)_{n\in \N} \in X_\Lambda \mid x_1=\mu_1,\dots,x_m =\mu_m \}.
$$
For $x=(x_n)_{n\in \N} \in X_\Lambda$
and $k, l\in \N$ with $k \le l$,
we put 
$x_{[k,l]} = (x_k,\dots,x_l) \in B_{l-k+1}(\Lambda),$
$ x_{[k,l)} = (x_k,\dots,x_{l-1}) \in B_{l-k}(\Lambda)$
and
$x_{[k,\infty)} = (x_k, x_{k+1},\dots) \in X_\Lambda.$

A subshift $\Lambda$ is said to be irreducible
if for any $\mu, \nu \in B_*(\Lambda)$, there exists a word 
$\eta \in B_*(\Lambda)$ such that 
$\mu\eta\nu \in B_*(\Lambda)$ (cf. \cite{LM}).
We note the following lemma.
Although it is well-known, the author has not been able to find a suitable reference,
so that the proof is given.
\begin{lemma}[{cf. \cite[p. 142]{Kurka}}]\label{lem:Cantor}
If a subshift $\Lambda$ is irreducible and the cardinality of $\Lambda$ is infinite,
then the  subshift $\Lambda$ and its right one-sided subshift $X_\Lambda$ 
are both homeomorphic to a Cantor set.
\end{lemma}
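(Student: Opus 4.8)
The plan is to verify the classical Brouwer characterization: a nonempty, compact, metrizable, totally disconnected space with no isolated point (i.e.\ a perfect space) is homeomorphic to the Cantor set. For both $\Lambda\subset\Sigma^\Z$ and $X_\Lambda\subset\Sigma^\N$ the first properties are immediate from the ambient structure: since $\Sigma$ is finite and discrete, $\Sigma^\Z$ and $\Sigma^\N$ are compact and metrizable, and the cylinder sets form a basis of clopen sets, so these product spaces are totally disconnected; as closed subsets, $\Lambda$ and $X_\Lambda$ inherit compactness, metrizability and total disconnectedness, and they are nonempty because $\Lambda$ is infinite. Thus the entire content is to show that neither space has an isolated point. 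I would also record the easy reduction that $\Lambda$ infinite forces $X_\Lambda$ infinite: if $X_\Lambda$ were finite then $\sigma_\Lambda$, being a surjection of a finite set, is a bijection, so every one-sided point is periodic with some common period $P$; shift-invariance then propagates this periodicity to every two-sided point, making $\Lambda$ finite, a contradiction.

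Next I would convert the word-combinatorial irreducibility into topological transitivity, i.e.\ produce a point with dense forward orbit. Enumerate the countable language $B_*(\Lambda)=\{w^{(1)},w^{(2)},\dots\}$ and build an increasing sequence of admissible words $v_1=w^{(1)}$, $v_{k+1}=v_k\,\eta_k\,w^{(k+1)}$, where at each stage irreducibility, applied to the single admissible word $v_k$ and the word $w^{(k+1)}$, supplies a connector $\eta_k$ with $v_k\eta_k w^{(k+1)}\in B_*(\Lambda)$. The crucial point is that irreducibility is quantified over \emph{all} pairs of admissible words, so it may be reapplied to the whole prefix built so far; this sidesteps the fact that in a general (non shift-of-finite-type) subshift, admissibility of $\alpha$ and of $\beta$ need not imply admissibility of $\alpha\beta$. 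The nested words $v_k$ determine a point $z\in X_\Lambda$ (every prefix of $z$ is admissible, hence $z\in X_\Lambda$), and since each $w^{(i)}$ occurs as a block of $z$, the orbit $\{\sigma_\Lambda^n z\}$ meets every basic cylinder $U_{w^{(i)}}$; as these cylinders form a basis, $z$ has dense forward orbit. The identical construction over $\Sigma^\Z$ gives a two-sided transitive point for $\Lambda$.

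The heart of the argument is then the implication that a topologically transitive subshift which is infinite has no isolated point. Suppose $y\in X_\Lambda$ were isolated, so that $\{y\}$ is open. The dense orbit of $z$ must meet this open set, hence $y=\sigma_\Lambda^n z$ lies on a dense orbit and is itself a transitive point. The subtle step, which I expect to be the main obstacle, is to upgrade ``the orbit returns to $\{y\}$'' into a genuine period $k\ge 1$ with $\sigma_\Lambda^k y=y$; a priori the orbit might meet $\{y\}$ only at time $0$. Here I would use that $\sigma_\Lambda$ is a continuous surjection of the compact space $X_\Lambda$: since a continuous closed surjection satisfies $\sigma_\Lambda(\overline{A})=\overline{\sigma_\Lambda(A)}$, the tail orbit $\{\sigma_\Lambda^k y : k\ge 1\}=\sigma_\Lambda(\{\sigma_\Lambda^k y : k\ge 0\})$ is again dense, so it meets $\{y\}$ at some $k\ge 1$. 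Then $y$ is periodic, its orbit is finite and dense, forcing $X_\Lambda$ to be finite and contradicting the infiniteness established above; hence $X_\Lambda$ is perfect. For the two-sided $\Lambda$ the same argument applies verbatim (now $\sigma$ is a homeomorphism, so the surjectivity is automatic), and the contradiction is directly with the hypothesis that $\Lambda$ is infinite. Combining perfectness with the three elementary properties, Brouwer's characterization shows that both $\Lambda$ and $X_\Lambda$ are homeomorphic to the Cantor set.
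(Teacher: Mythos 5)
Your proof is correct and follows essentially the same route as the paper's: irreducibility yields a point with dense forward orbit, an exact return of that orbit would force periodicity and hence finiteness of the space, and Brouwer's characterization of the Cantor set finishes the argument. The difference is one of completeness rather than strategy: you explicitly construct the transitive point from irreducibility, justify the density of the tail orbit via the closed-surjection identity $\sigma_\Lambda(\overline{A})=\overline{\sigma_\Lambda(A)}$, and prove that $\Lambda$ infinite forces $X_\Lambda$ infinite --- three steps the paper's proof asserts or uses implicitly.
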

\begin{proof}
We will show that $X_\Lambda$ does not have any isolated point. 
Since $\Lambda$ is irreducible,
one may find a point $z \in X_\Lambda$ such that 
its orbit $\{\sigma_\Lambda^n(z) \mid n \in \Zp\}$ is dense in $X_\Lambda$.
For any point $x \in X_\Lambda$ and word $\mu\in B_m(\Lambda)$
with $x \in U_\mu$,
there exists $n_1 \in \Zp$ such that 
$\sigma_\Lambda^{n_1}(z) \in U_\mu$
As $\{\sigma_\Lambda^n(\sigma_\Lambda^{n_1}(z)) \mid n \in \N \}$ 
is also dense in $X_\Lambda$,
there exists $n_2 \in \N$ such that 
$\sigma_\Lambda^{n_2}(\sigma_\Lambda^{n_1}(z)) \in U_\mu$.
If 
$\sigma_\Lambda^{n_2}(\sigma_\Lambda^{n_1}(z)) =\sigma_\Lambda^{n_1}(z),
$
then 
$\sigma_\Lambda^{n_1}(z)
$ is periodic, so that  $\{\sigma_\Lambda^n(z) \mid n \in \Zp\}$ 
is finite, and $X_\Lambda$ becomes a finite set, a contradiction.
Therefore   $\sigma_\Lambda^{n_2 +n_1}(z)\ne \sigma_\Lambda^{n_1}(z)$,
and hence $U_\mu$ contains two distinct points
$\sigma_\Lambda^{n_2 +n_1}(z), \sigma_\Lambda^{n_1}(z)$
so that $x$ is not isolated.
As $X_\Lambda$ is totally disconnected compact metric space,
it is homeomorphic to a Cantor set.
Similarly we know that $\Lambda$ does not have any isolated points.
\end{proof}

We define predecessor sets and follower sets of a word 
$\mu\in B_m(\Lambda)$ as follows:
\begin{equation*}
\Gamma_l^-(\mu) =\{\nu \in B_l(\Lambda) \mid \nu\mu\in B_{l+m}(\Lambda)\},
\qquad
\Gamma_l^+(\mu) =\{\nu \in B_l(\Lambda) \mid \mu\nu\in B_{l+m}(\Lambda)\}
\end{equation*}
and
$\Gamma_*^-(\mu) =\bigcup_{l=0}^\infty \Gamma_l^-(\mu),
\Gamma_*^+(\mu) =\bigcup_{l=0}^\infty \Gamma_l^+(\mu).
$

Following \cite{KMAAA2013}, \cite{MaIsrael2013}, \cite{MaJAMS2013},
a word $\mu\in B_*(\Lambda)$ for $l \in \Zp$ is said to be
$l$-{\it synchronizing}\/
if the equality $\Gamma_l^-(\mu) = \Gamma_l^-(\mu\omega)$
holds for all $\omega \in \Gamma_*^+(\mu)$.
Let us denote by $S_l(\Lambda)$ the set of $l$-synchronizing words of $\Lambda$.

\begin{definition}[{\cite{KMAAA2013}, \cite{MaIsrael2013}, \cite{MaJAMS2013}}] \label{def:synchro} 
An irreducible subshift $\Lambda$ is said to be $\lambda$-synchronizing if
for any word $\eta \in B_l(\Lambda)$ and positive integer $k >l$,
there exists $\nu \in S_k(\Lambda)$ such that 
$\eta \nu \in S_{k-l}(\Lambda)$.
\end{definition}
As in \cite{KMAAA2013}, \cite{MaIsrael2013} and \cite{MaJAMS2013}, 
the following subshifts are $\lambda$-synchronizing:

\medskip

$\bullet$
irreducible shifts of finite type, 

$\bullet$
irreducible sofic shifts, 

$\bullet$
synchronizing systems, 

$\bullet$
Dyck shifts,

$\bullet$
Motzkin shifts, 

$\bullet$
irreducible Markov-Dyck shifts, 

$\bullet$
primitive substitution subshifts, 

$\bullet$
$\beta$-shifts for every $\beta>1$, etc.

\medskip

There is an example of a coded system that is not $\lambda$-synchronizing
(cf. \cite{KMAAA2013}).

Following \cite{MaIsrael2013}, 
two admissible words $\mu, \nu \in B_*(\Lambda)$ are said to be $l$-past equivalent
if $\Gamma_l^-(\mu) = \Gamma_l^-(\nu)$. In this case we write 
$\mu \underset{l}{\sim} \nu$. 
\begin{definition}
A $\lambda$-synchronizing subshift $\Lambda$ is said to be $\lambda$-{\it transitive}\/
if for any two admissible words $\mu, \nu \in S_l(\Lambda)$, there exists 
$k_{\mu,\nu} \in \N$ such that for any $\eta \in S_{l +k_{\mu,\nu}}(\Lambda)$
satisfying $\nu \underset{l}{\sim} \eta$, there exists $\xi \in B_{k_{\mu,\nu}}(\Lambda)$
such that $\mu \underset{l}{\sim} \xi\eta$. 
\end{definition}
In \cite{KMAAA2013}, the term "synchronized irreducible" was used for the above $\lambda$-transitivity.
\begin{definition}
A subshift $\Lambda$ is said to be {\it normal}\/ if it is $\lambda$-synchronizing and 
its cardinality $| \Lambda|$ is not finite. 
\end{definition}
Hence the class of normal subshifts contains a lot of important nontrivial subshifts.

\medskip

\noindent
{\bf 2.  $\lambda$-graph systems}

A $\lambda$-graph system $\frak L$ over alphabet $\Sigma$ 
consists of a quadruple
$(V,E,\lambda,\iota),$ 
where 
$(V,E,\lambda)$
is a labeled Bratteli diagram with 
its vertex set
$V=\cup_{l \in \Zp} V_l$,
edge set 
$E = \cup_{l \in \Zp} E_{l,l+1}$
and labeling map
$\lambda: E \longrightarrow \Sigma$.
For an edge $e \in E_{l,l+1}$, 
denote by $s(e)\in V_l$ and $ t(e) \in V_{l+1}$ 
its source vertex and terminal vertex, respectively. 
The additional object $\iota$ is a surjection
$\iota(=\iota_{l,l+1}): V_{l+1}\longrightarrow V_l$
for each $\l \in \Zp$.  
The quadruple
$(V,E,\lambda,\iota)$
is needed to satisfy the following local property. 
Put for 
$u \in V_{l-1}$ and $v \in V_{l+1},$
\begin{align*}
E^{\iota}_{l,l+1}(u, v)
& = \{ e \in E_{l,l+1} \ \mid \ t(e) = v, \, \iota(s(e)) = u \},\\
E_{\iota}^{l-1,l}(u, v)
& = \{ e \in E_{l-1,l} \ \mid \ s(e) = u, \, t(e) = \iota(v) \}.
\end{align*}
The local property requires a bijective correspondence preserving their labels between 
$E^{\iota}_{l,l+1}(u, v)$
and
$E_{\iota}^{l-1,l}(u, v)$
for every  pair of vertices $u, v$.
For $k < l$, we put
$$
E_{k,l} =\{(e_1, \dots, e_{l-k}) \in E_{k,k+1}\times\cdots\times E_{l-1,l} \mid
t(e_i) =s(e_{i+1}), i=1,\dots, l-k-1\}.
$$ 
A member of $E_{k,l}$ is called a labeled path.
For $\gamma = (e_1, \dots, e_{l-k}) \in E_{k,l}$, we put
$s(\gamma):= s(e) \in V_k$, $t(\gamma) := t(e_{l-k}) \in V_l$
and
$\lambda(\gamma) := (\lambda(e_1), \dots, \lambda(e_{l-k})) \in \Sigma^{l-k}.
$
For $v\in V_l$, we put
\begin{equation}
\Gamma_l^-(v) = \{(\lambda(e_1), \dots, \lambda(e_l)) \in \Sigma^{l} \mid 
(e_1, \dots, e_l) \in E_{0,l}, t(e_l) = v \}. \label{eq:Gammalv}
\end{equation}

A $\lambda$-graph system $\frak L$ is said to be {\it predecessor-separated}
if $\Gamma_l^-(v) \ne
\Gamma_l^-(u)$ for every distinct pair $u,v \in V_l$.  
A $\lambda$-graph system $\frak L$ is said to be {\it left-resolving}
 if  $e, f \in E_{l,l+1}$ satisfy $t(e) = t(f), \lambda(e) = \lambda(f)$, 
then $e=f$.

Let us denote by $\Lambda_{\frak L}$ the two-sided subshift
over $\Sigma$, whose admissible words $B_*(\Lambda_{\frak L})$
are defined by the set of words appearing in the finite labeled sequences 
in the labeled Bratteli diagram $(V,E,\lambda)$ of the $\lambda$-graph system 
${\frak L} =(V,E,\lambda,\iota)$.
We say that a subshift $\Lambda$ is presented by a  $\lambda$-graph system 
${\frak L}$ or ${\frak L}$ presents $\Lambda$
 if $ \Lambda = \Lambda_{\frak L}.$

Let $\mathcal{G} =(\mathcal{V}, \mathcal{E},\lambda)$ be 
a predecessor-separated left-resolving finite labeled graph over alphabet 
$\Sigma$ with finite vertex set $\mathcal{V}$, finite edge set $\mathcal{E}$
and labeling $\lambda: \mathcal{E} \longrightarrow \Sigma$.
It naturally gives rise to a $\lambda$-graph system
${\frak L}_{\mathcal{G}}$ by setting $V_l = \mathcal{V}, E_{l,l+1} =\mathcal{E}$
for all $l \in \Zp$ and $\iota = \id$.
The presented subshift $\Lambda_{{\frak L}_{\mathcal{G}}}$
by the $\lambda$-graph system ${\frak L}_{\mathcal{G}}$
is noting but the sofic shift $\Lambda_{\mathcal{G}}$
presented by the finite labeled graph $\mathcal{G}$.
Detail studies of $\lambda$-graph system are in \cite{MaDocMath1999}.
\begin{definition}[{\cite{MaJAMS2013}}] \label{def:2.5}
 Let ${\frak L} =(V,E,\lambda,\iota)$ 
be a $\lambda$-graph system over $\Sigma$.
 \begin{enumerate}
\renewcommand{\theenumi}{\roman{enumi}}
\renewcommand{\labelenumi}{\textup{(\theenumi)}}
\item
${\frak L}$ is said to be $\iota$-{\it irreducible}\/ if  for any two vertices 
$u,v \in V_l$ and a labeled path $\gamma$ leaving $u$,
there exist labeled paths $\eta$ and $\gamma'$ of length $n$ 
such that $\eta$ leaves $v$ and satisfies $\iota^n(t(\eta)) = u$,
and $\gamma'$ leaves $t(\eta)$ satisfies
$\iota^n(t(\gamma')) = t(\gamma)$ and $\lambda(\gamma') = \lambda(\gamma)$ 
\item
${\frak L}$ is said to be $\lambda$-{\it irreducible}\/ if  
for any ordered pair $u,v \in V_l$ of vertices,
there exists $L(u,v) \in \N$ such that 
for any vertex $w \in V_{l+L(u,v)}$ satisfying
$\iota^{L(u,v)}(w) = u$, there exists a labeled path
$\gamma $ such that $s(\gamma) = v$ and $t(\gamma) = w$.
\end{enumerate}
\end{definition}

\begin{lemma}\label{lem:irreducible}
Let ${\frak L} =(V,E,\lambda,\iota)$ 
be a $\lambda$-graph system that presents a subshift  $\Lambda$.
Consider the following three conditions.
 \begin{enumerate}
\renewcommand{\theenumi}{\roman{enumi}}
\renewcommand{\labelenumi}{\textup{(\theenumi)}}
\item ${\frak L}$ is  $\lambda$-irreducible.
\item ${\frak L}$ is  $\iota$-irreducible.
\item $\Lambda$ is irreducible.
\end{enumerate}
Then we have 
(i) $\Longrightarrow$  (ii) $\Longrightarrow$ (iii). 
\end{lemma}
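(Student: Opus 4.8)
The two implications both rest on the standard path-lifting property carried by the local rule of a $\lambda$-graph system (cf. \cite{MaDocMath1999}): given a labeled path $\gamma$ and a vertex $\tilde{t}$ lying above its terminal $t(\gamma)$ (that is, $\iota^{n}(\tilde{t}) = t(\gamma)$), the bijections between $E^{\iota}_{l,l+1}(u,v)$ and $E_{\iota}^{l-1,l}(u,v)$ let one lift $\gamma$ edge by edge to a labeled path $\tilde\gamma$ with $t(\tilde\gamma) = \tilde{t}$, $\lambda(\tilde\gamma) = \lambda(\gamma)$ and $\iota^{n}(s(\tilde\gamma)) = s(\gamma)$. Running this backward lift from the terminal, together with the surjectivity of $\iota$, yields the forward statement I will use repeatedly: for a labeled path $\gamma$ with $s(\gamma) = u \in V_l$ and each $n \in \N$, there is a vertex $w \in V_{l+n}$ with $\iota^{n}(w) = u$ admitting a labeled path $\gamma'$ with $s(\gamma') = w$, $\lambda(\gamma') = \lambda(\gamma)$ and $\iota^{n}(t(\gamma')) = t(\gamma)$. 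I would record this as a preliminary observation (or cite it) before proving either implication.

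For (i) $\Longrightarrow$ (ii), fix $u,v \in V_l$ and a labeled path $\gamma$ leaving $u$, and set $n := L(u,v)$ furnished by $\lambda$-irreducibility. First I would apply the forward lift above with this $n$ to produce a vertex $w \in V_{l+n}$ with $\iota^{n}(w) = u$ and a labeled path $\gamma'$ leaving $w$ with $\lambda(\gamma') = \lambda(\gamma)$ and $\iota^{n}(t(\gamma')) = t(\gamma)$. Since $\iota^{n}(w) = u$ and $w \in V_{l+L(u,v)}$, $\lambda$-irreducibility applied to the ordered pair $(u,v)$ now supplies a labeled path $\eta$ with $s(\eta) = v$ and $t(\eta) = w$; as $\eta$ runs from level $l$ to level $l+n$ it has length exactly $n$, and $\iota^{n}(t(\eta)) = \iota^{n}(w) = u$. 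Thus $\eta$ and $\gamma'$ (which leaves $t(\eta) = w$) are precisely the paths required by $\iota$-irreducibility.

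For (ii) $\Longrightarrow$ (iii), take $\mu,\nu \in B_*(\Lambda)$. Using the lifting property and surjectivity of $\iota$, for all sufficiently large $l$ I can realize $\mu$ as the label of a path terminating at some $v \in V_l$ and realize $\nu$ as the label of a path $\gamma$ leaving some $u \in V_l$; I would fix one such common level $l$. Applying $\iota$-irreducibility to $(u,v)$ and the path $\gamma$ produces $\eta$ leaving $v$ with $\iota^{n}(t(\eta)) = u$, together with $\gamma'$ leaving $t(\eta)$ satisfying $\lambda(\gamma') = \lambda(\gamma) = \nu$. Then the concatenation of the $\mu$-path (ending at $v$), of $\eta$, and of $\gamma'$ is a genuine labeled path whose label is $\mu\,\lambda(\eta)\,\nu$, so with $\xi := \lambda(\eta)$ we get $\mu\xi\nu \in B_*(\Lambda)$, proving that $\Lambda$ is irreducible.

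The routine verifications (length bookkeeping and the edge-by-edge lift) are harmless; the one point deserving care, and the main obstacle, is the lifting lemma itself — that the local rule genuinely lets labeled paths be transported between levels with $\iota$ tracking both sources and terminals — since everything else is assembled from it. In particular I would make sure that the vertex $w$ produced by the forward lift in (i) $\Longrightarrow$ (ii) lies exactly at level $l + L(u,v)$, so that it is an admissible target for the ordered pair $(u,v)$ in the definition of $\lambda$-irreducibility.
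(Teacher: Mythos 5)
Your proof is correct. For (i) $\Longrightarrow$ (ii) your argument is essentially the paper's own: the paper likewise takes a vertex $u'$ above $t(\gamma)$ at distance $L(u,v)$, uses the local property to pull $\gamma$ back to a path $\gamma'$ starting at some $w \in V_{l+L(u,v)}$ with $\iota^{L(u,v)}(w)=u$, and then invokes $\lambda$-irreducibility to produce $\eta$ from $v$ to $w$; your ``forward lift'' is exactly this backward edge-by-edge lifting packaged as a preliminary lemma, and your care that $w$ lands precisely at level $l+L(u,v)$ is exactly what makes $w$ an admissible target for the $\lambda$-irreducibility hypothesis. Where you genuinely diverge is (ii) $\Longrightarrow$ (iii): the paper disposes of this implication by citing Lemma 3.5 of \cite{MaIsrael2013}, whereas you give a direct, self-contained argument --- realize $\mu$ as the label of a path terminating at a vertex $v\in V_l$ and $\nu$ as the label of a path $\gamma$ leaving a vertex $u\in V_l$ at a common sufficiently deep level $l$, apply $\iota$-irreducibility to $(u,v,\gamma)$, and read off $\mu\,\lambda(\eta)\,\nu$ as the label of the concatenated path, which is therefore admissible. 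This buys independence from the external reference at the modest cost of justifying the realization step (that an admissible word labels paths ending, respectively beginning, at every sufficiently deep level), which follows from the same lifting lemma together with surjectivity of $\iota$ that you already isolated; both versions of the implication are sound.
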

\begin{proof}
(i) $\Longrightarrow$ (ii):
Assume that ${\frak L}$ is $\lambda$-irreducible.
Let $u,v \in V_l$ be two vertices 
 and $\gamma$ a labeled path  leaving $u$,
Take $L(u,v) \in \N$ satisfying the $\lambda$-irreducibility condition
in Definition \ref{def:2.5} (ii). 
Let $k$ denote the length of the path $\gamma$
and
$u_{\gamma} =t(\gamma)\in V_{l+k}$.
Take $u' \in V_{l+k+L(u,v)}$
such that $\iota^{L(u,v)}(u') = u_\gamma$. 
By the local property of $\lambda$-graph system,
one may find $w \in V_{l+L(u,v)}$ and
a labeled path $\gamma' $ such that 
$$
\iota^{L(u,v)}(w) = u, \qquad s(\gamma') = w,\qquad t(\gamma') = u'.
$$
By the $\lambda$-irreducibility, there exists a labeled path
$\eta$ such that 
$s(\eta) = v, t(\eta) = w$.

(ii) $\Longrightarrow$ (iii): The assertion comes from \cite[Lemma 3.5]{MaIsrael2013}.
\end{proof}
\begin{remark}
\begin{enumerate}
\renewcommand{\theenumi}{\roman{enumi}}
\renewcommand{\labelenumi}{\textup{(\theenumi)}}
\item If $\frak L$ is a $\lambda$-graph system ${\frak L}_{\mathcal{G}}$
associated to a left-resolving finite labeled graph $\mathcal{G}$,
then the presented subshift $\Lambda_{{\frak L}_{\mathcal{G}}}$ 
by ${\frak L}_{\mathcal{G}}$ is a sofic shift defined by $\mathcal{G}$.
It is easy to see that for the $\lambda$-graph system
${\frak L}_{\mathcal{G}}$,  all of the conditions (i), (ii) and (iii) in Lemma \ref{lem:irreducible}
are mutually equivalent.
\item Let $\Lambda_C$ be the coded system defined by the code
$C =\{ a^n b^n \mid n=1,2,\dots \}$ for alphabet $\Sigma =\{a, b\}$
(see \cite{BH}).
Then the subshift $\Lambda_C$ has a synchronizing word
$\omega = aba$, 
so that it is an irreducible  synchronizing subshift.
Hence $\Lambda_C$ is a $\lambda$-synchronizing
(\cite{KMAAA2013}). 
Let ${\frak L}^{\lambda(\Lambda_C)}$ 
be its $\lambda$-synchronizing $\lambda$-graph system as in \cite{MaIsrael2013}. 
By \cite[Lemma 3.6]{MaIsrael2013}, irreducibility of $\Lambda_C$ implies
$\iota$-irreducibility, so that ${\frak L}^{\lambda(\Lambda_C)}$ is $\iota$-irreducible.
However, it is not difficult to see that 
${\frak L}^{\lambda(\Lambda_C)}$  is not $\lambda$-irreducible.
Hence there is an example of $\lambda$-graph system  such that the implication
(ii) $\Longrightarrow$ (i) above does not hold. 
\item Let $\Lambda_{\operatorname{ev}}$ be the even shift,
that is defined to be  a sofic shift over $\{0, 1\}$ whose admissible words 
are $ 1\overbrace{0\cdots 0}^{\text{even}}1$.
Let 
${\frak L}^{\Lambda_{\operatorname{ev}}}$ be the canonical $\lambda$-graph system for
$\Lambda^{\operatorname{ev}}$ (see \cite{MaDocMath1999}).
The subshift $\Lambda_{\operatorname{ev}}$ is irreducible, 
whereas  ${\frak L}^\Lambda_{\operatorname{ev}}$ is not $\iota$-irreducible.
Hence there is an example of $\lambda$-graph system  such that the implication
(iii) $\Longrightarrow$ (ii) above does not hold. 
\end{enumerate}
\end{remark}

\medskip

\noindent
{\bf 3.  $\lambda$-synchronizing $\lambda$-graph systems}

Let ${\frak L} =(V,E,\lambda,\iota)$ 
be a $\lambda$-graph system that presents a subshift  $\Lambda$.
Let
$v \in V_l$ and $\mu \in B_m(\Lambda), m\in \N$.
Following \cite{MaJAMS2013}, we say that 
$v$ {\it launches}\/ $\mu$ if the following two conditions 
are both satisfied:
\begin{enumerate}
\renewcommand{\theenumi}{\roman{enumi}}
\renewcommand{\labelenumi}{\textup{(\theenumi)}}
\item
There exists a labeled path $\gamma \in E_{l,l+m}$ such that 
$s(\gamma) = v, \lambda(\gamma) = \mu.$
\item There are no other vertices in $V_l$ than $v$ for $\mu$ leaving.
\end{enumerate}
The vertex $v$ is called the launching vertex for $\mu$.
\begin{definition}[{\cite{MaJAMS2013}}]
A $\lambda$-graph system
 ${\frak L} =(V,E,\lambda,\iota)$ is said to be $\lambda$-{\it synchronizing}\/
if any vertex of $V$ is a launching vertex for some word of $\Lambda$.
\end{definition}
A $\lambda$-synchronizing $\lambda$-graph system is $\iota$-irreducible
if and only if the presented subshift $\Lambda$ is irreducible (\cite[Proposition 3.7]{MaJAMS2013}).
It was shown that if $\frak L$ is $\iota$-irreducible and $\lambda$-synchronizing,
then the presented subshift $\Lambda$ is $\lambda$-synchronizing.
Conversely, as in \cite{MaJAMS2013}, one may construct 
a left-resolving, predecessor-separated $\iota$-irreducible
$\lambda$-synchronizing $\lambda$-graph system from 
a $\lambda$-synchronizing subshift $\Lambda$.
We briefly review its construction.
Let $\Lambda$ be a $\lambda$-synchronizing subshift. 
Recall that $S_l(\Lambda)$ denotes the set of $l$-synchronizing words of $\Lambda$.
Denote by $V^{\lambda(\Lambda)}_l$ the set of $l$-past equivalence classes of 
$S_l(\Lambda)$,
where $V_0^{\lambda(\Lambda)} =\{v_0\}$ a singleton.
Let us denote by $[\mu]_l$ the equivalence class of $\mu \in S_l(\Lambda)$.
For $\nu \in S_{l+1}(\Lambda)$ and $\alpha \in \Gamma_1^-(\nu)$,
an edge from $[\alpha\nu]_l \in V_l^{\lambda(\Lambda)}$ to 
 $[\nu]_{l+1} \in V_{l+1}^{\lambda(\Lambda)}$ 
with its label $\alpha$ is defined.
The set of such edges is denoted by $E_{l,l+1}^{\lambda(\Lambda)}$.
The labeling map from $E_{l,l+1}^{\lambda(\Lambda)}$ to $\Sigma$
is denoted by $\lambda^{\lambda(\Lambda)}$.
As $S_{l+1}(\Lambda) \subset S_l(\Lambda)$,
we have a natural map 
$\iota^{\lambda(\Lambda)}: 
[\nu]_{l+1} \in V_{l+1}^{\lambda(\Lambda)}\longrightarrow 
[\nu]_{l} \in V_{l}^{\lambda(\Lambda)}.
$
The quadruplet 
$(V^{\lambda(\Lambda)}, E^{\lambda(\Lambda)}, 
\lambda^{\lambda(\Lambda)}, \iota^{\lambda(\Lambda)})
$ 
defines 
a left-resolving, predecessor-separated, $\iota$-irreducible $\lambda$-graph system that presents the subshift $\Lambda$
(\cite[Proposition 3.2]{MaJAMS2013}).
The $\lambda$-graph system was denoted by ${\frak L}^{\lambda(\Lambda)}$
in \cite[Proposition 3.2]{MaJAMS2013}
and called the canonical $\lambda$-synchronizing $\lambda$-graph system
for $\Lambda$.
The following proposition was proved in \cite[Theorem 3.9]{MaJAMS2013}.
\begin{proposition}[{\cite[Theorem 3.9]{MaJAMS2013}}]
Let  $\Lambda$ be a $\lambda$-synchronizing subshift.
Then there uniquely exists a left-resolving, predecessor-separated, $\iota$-irreducible,
$\lambda$-synchronizing  
$\lambda$-graph system that presents the subshift $\Lambda$. 
The unique $\lambda$-synchronizing $\lambda$-graph system is 
the canonical $\lambda$-synchronizing $\lambda$-graph system 
${\frak L}^{\lambda(\Lambda)}$
for $\Lambda$.
\end{proposition}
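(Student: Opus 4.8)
The plan is to separate existence from uniqueness, the latter carrying the real content. For existence I would take the explicit construction ${\frak L}^{\lambda(\Lambda)}$ recalled immediately above: by \cite[Proposition 3.2]{MaJAMS2013} it is left-resolving, predecessor-separated, $\iota$-irreducible and presents $\Lambda$, so the only clause still to be checked is that it is $\lambda$-synchronizing. For this I would show that each vertex $[\mu]_l$ (with $\mu\in S_l(\Lambda)$) launches a word of $\Lambda$: the defining $l$-synchronizing identity $\Gamma_l^-(\mu)=\Gamma_l^-(\mu\omega)$ for $\omega\in\Gamma_*^+(\mu)$ says precisely that the length-$l$ past is frozen once $\mu$ has been read, and predecessor-separatedness then isolates $[\mu]_l$ as the unique level-$l$ source of a suitable word built from $\mu$; hence every vertex is a launching vertex.

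For uniqueness, let ${\frak L}=(V,E,\lambda,\iota)$ be any left-resolving, predecessor-separated, $\iota$-irreducible, $\lambda$-synchronizing $\lambda$-graph system with $\Lambda_{\frak L}=\Lambda$, and aim to produce an isomorphism $\Phi=(\Phi_l)_{l}\colon{\frak L}\to{\frak L}^{\lambda(\Lambda)}$. Since both systems are predecessor-separated, a vertex is completely recorded by its predecessor set, so I would work throughout with the injective assignments $v\in V_l\mapsto\Gamma_l^-(v)\subseteq B_l(\Lambda)$ and $[\mu]_l\mapsto\Gamma_l^-(\mu)$; the level-$l$ vertices of ${\frak L}^{\lambda(\Lambda)}$ are, by construction, exactly the distinct sets $\Gamma_l^-(\mu)$ for $\mu\in S_l(\Lambda)$. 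The whole of uniqueness reduces to the identity of subset-collections
$$
\{\Gamma_l^-(v)\mid v\in V_l\}=\{\Gamma_l^-(\mu)\mid\mu\in S_l(\Lambda)\}.
$$
The inclusion $\subseteq$ is the clean half: given $v\in V_l$, the $\lambda$-synchronizing hypothesis supplies a word $\mu$ launched by $v$; since $v$ is then the unique level-$l$ source of $\mu$, it is also the unique source of every $\mu\omega$ with $\omega$ an admissible continuation, whence $\Gamma_l^-(\mu\omega)=\Gamma_l^-(v)=\Gamma_l^-(\mu)$, so $\mu\in S_l(\Lambda)$ and $\Gamma_l^-(v)=\Gamma_l^-(\mu)$.

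For the reverse inclusion $\supseteq$ I would fix $\mu\in S_l(\Lambda)$ and realise $\Gamma_l^-(\mu)$ as the predecessor set of a single vertex of ${\frak L}$: reading the admissible words $w\mu$ (for $w\in\Gamma_l^-(\mu)$) along paths from $V_0$ produces level-$l$ vertices from which $\mu$ leaves, and the $l$-synchronizing property of $\mu$, together with left-resolvingness and predecessor-separatedness, forces all of these to be one and the same vertex $v$, with $\Gamma_l^-(v)=\Gamma_l^-(\mu)$; here $\iota$-irreducibility is what guarantees the relevant paths exist. This yields level-wise bijections $\Phi_l\colon V_l\to V_l^{\lambda(\Lambda)}$ preserving predecessor sets. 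Finally I would verify that $\Phi$ is an isomorphism of $\lambda$-graph systems: compatibility with $\iota$ follows from $S_{l+1}(\Lambda)\subseteq S_l(\Lambda)$ and the fact that truncating the predecessors in $\Gamma^-_{l+1}(\,\cdot\,)$ to length $l$ recovers $\Gamma_l^-(\iota(\,\cdot\,))$ in both systems, while compatibility with edges and labels is forced because in a left-resolving system an edge is determined by its terminal vertex and its label, and the admissible one-step extensions are encoded in the predecessor sets that $\Phi$ preserves.

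I expect the main obstacle to be the reverse inclusion $\supseteq$, i.e. showing that each $l$-synchronizing word is launched by a \emph{unique} vertex of ${\frak L}$, so that its predecessor set is genuinely realised; the opposite implication (unique launcher $\Rightarrow$ synchronizing) is immediate, but pinning down the launcher from the synchronizing condition requires careful use of left-resolvingness, the local property of $\lambda$-graph systems, and $\iota$-irreducibility, with attention to the bookkeeping of levels. Once the predecessor-set preserving bijection on vertices is in hand, the intertwining of $\Phi$ with $\iota$, the edge sets and the labeling is routine.
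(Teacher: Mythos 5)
This paper never proves the proposition: it is quoted from \cite[Theorem 3.9]{MaJAMS2013} (``The following proposition was proved in...''), so your argument can only be measured against the proof in that reference and against the machinery Section 2 recalls. Your architecture does match the cited proof's strategy: existence via the concrete construction ${\frak L}^{\lambda(\Lambda)}$, uniqueness by using predecessor-separation to identify a vertex with its predecessor set and then proving the two inclusions $\{\Gamma_l^-(v)\mid v\in V_l\}\subseteq\{\Gamma_l^-(\mu)\mid \mu\in S_l(\Lambda)\}$ and $\supseteq$, and finally transporting edges, labels and $\iota$ by left-resolvingness. Your inclusion $\subseteq$ is correct as sketched (any source of a path labeled $\mu\omega$ is a source of $\mu$, hence equals the launcher $v$, so $\Gamma_l^-(\mu\omega)=\Gamma_l^-(v)=\Gamma_l^-(\mu)$), and the edge/label/$\iota$ bookkeeping at the end is indeed routine once the level-wise bijections exist.

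The genuine gap is the inclusion $\supseteq$, which you yourself flag as the main obstacle but then only assert. You claim that the $l$-synchronizing property of $\mu$ ``together with left-resolvingness and predecessor-separatedness'' forces all level-$l$ vertices emitting $\mu$ to coincide, and your final paragraph lists as tools left-resolvingness, the local property and $\iota$-irreducibility. This toolkit omits the decisive hypothesis: that ${\frak L}$ itself is $\lambda$-\emph{synchronizing}, i.e.\ that its vertices launch words. From your tools alone one only gets that every emitting vertex $u$ satisfies $\Gamma_l^-(u)\subseteq\Gamma_l^-(\mu)$ and that $\Gamma_l^-(\mu)=\bigcup_u\Gamma_l^-(u)$ over such $u$; nothing you invoke prevents this union from being spread over several vertices with distinct, strictly smaller predecessor sets --- predecessor-separation distinguishes vertices, it does not merge them. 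The argument that works anchors a $\mu$-labeled path at its \emph{far} end: lift (by the local property) a path labeled $\mu$ so that it runs from level $l$ to a vertex $w\in V_{l+|\mu|}$; since ${\frak L}$ is $\lambda$-synchronizing, $w$ launches some word $\gamma$, with $\Gamma^-_{l+|\mu|}(w)=\Gamma^-_{l+|\mu|}(\gamma)$; for any $\xi\in\Gamma_l^-(\mu)$, synchronization of $\mu$ in $\Lambda$ gives $\xi\mu\gamma\in B_*(\Lambda)$, and reading $\xi\mu\gamma$ from level $0$ the $\gamma$-segment must leave $w$ (launching), the $\mu$-segment into $w$ is unique (left-resolving), so the $\xi$-segment ends at the source $u$ of that unique path; hence $\Gamma_l^-(u)=\Gamma_l^-(\mu)$. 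Note also that $\iota$-irreducibility is not what ``guarantees the relevant paths exist'' --- path existence at the right levels comes from lifting via the local property, and $\iota$-irreducibility is in fact automatic here, being equivalent to irreducibility of $\Lambda$ for $\lambda$-synchronizing systems. Your existence half has the mirror-image omission: to show a path labeled $\mu$ actually \emph{leaves} $[\mu]_l$ in ${\frak L}^{\lambda(\Lambda)}$ you must invoke $\lambda$-synchronization of the subshift (Definition \ref{def:synchro}) to produce $\nu\in S_{l+|\mu|}(\Lambda)$ with $\mu\nu\in S_l(\Lambda)$ and $[\mu\nu]_l=[\mu]_l$; the frozen-past identity for $\mu$ alone only handles the uniqueness of the source, not the existence of the outgoing path.
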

\begin{lemma}\label{lem:2.10}
Let  $\Lambda$ be a $\lambda$-synchronizing subshift.
\begin{enumerate}
\renewcommand{\theenumi}{\roman{enumi}}
\renewcommand{\labelenumi}{\textup{(\theenumi)}}
\item
$\Lambda$ is irreducible 
if and only if ${\frak L}^{\lambda(\Lambda)}$ is $\iota$-irreducible.
\item
$\Lambda$ is $\lambda$-transitive
if and only if ${\frak L}^{\lambda(\Lambda)}$ is $\lambda$-irreducible.
\end{enumerate}
\end{lemma}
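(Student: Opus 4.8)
The statement (i) requires essentially no new work: by construction ${\frak L}^{\lambda(\Lambda)}$ is a $\lambda$-synchronizing $\lambda$-graph system presenting $\Lambda$, so the equivalence of its $\iota$-irreducibility with the irreducibility of $\Lambda$ is exactly \cite[Proposition 3.7]{MaJAMS2013} (see also \cite[Lemma 3.6]{MaIsrael2013}), which I would simply quote. The real content therefore lies in (ii), and my plan is to reduce the $\lambda$-irreducibility of ${\frak L}^{\lambda(\Lambda)}$ — stated in Definition \ref{def:2.5}(ii) purely in terms of vertices, edges and $\iota$ — to a statement about $l$-synchronizing words and $l$-past equivalence, and then to observe that this statement is literally the defining condition of $\lambda$-transitivity.

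The first step is to set up the dictionary coming from the construction of ${\frak L}^{\lambda(\Lambda)}$. A vertex $u \in V_l^{\lambda(\Lambda)}$ is an $l$-past equivalence class $[\mu]_l$ of some $\mu \in S_l(\Lambda)$, so an ordered pair $u,v \in V_l^{\lambda(\Lambda)}$ corresponds to an ordered pair $\mu,\nu \in S_l(\Lambda)$ taken modulo $\underset{l}{\sim}$. Since $\iota^{\lambda(\Lambda)}$ sends $[\eta]_{l+1}$ to $[\eta]_l$, a vertex $w=[\eta]_{l+L}$ at level $l+L$ satisfies $\iota^{L}(w)=u=[\mu]_l$ precisely when $\eta \underset{l}{\sim} \mu$. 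The last ingredient is a description of labeled paths: for $\eta \in S_{l+L}(\Lambda)$ and $\xi \in B_L(\Lambda)$ with $\xi\eta \in B_*(\Lambda)$ there is exactly one labeled path $\gamma$ of length $L$ with $t(\gamma)=[\eta]_{l+L}$ and $\lambda(\gamma)=\xi$, and its source is $s(\gamma)=[\xi\eta]_l$; conversely every labeled path ending at $[\eta]_{l+L}$ arises this way. Consequently a labeled path from $v=[\nu]_l$ to $w=[\eta]_{l+L}$ exists if and only if there is $\xi \in B_L(\Lambda)$ with $\xi\eta \in B_*(\Lambda)$ and $\xi\eta \underset{l}{\sim} \nu$.

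The heart of the matter, and the step I expect to be the main obstacle, is justifying this path description, since one must check that all the intermediate vertices $[\xi_{i+1}\cdots\xi_L\eta]_{l+i}$ are genuine vertices, i.e.\ that their representatives are synchronizing at the correct level. This rests on the auxiliary fact that prepending letters lowers the synchronization index: if $\eta \in S_{l+L}(\Lambda)$ and $\beta\eta \in B_*(\Lambda)$ with $|\beta|=j$, then $\beta\eta \in S_{l+L-j}(\Lambda)$. I would prove this by a direct computation with predecessor sets: for a single letter $\beta$ one writes $\Gamma_{l+L-1}^-(\beta\eta)=\{\zeta : \zeta\beta \in \Gamma_{l+L}^-(\eta)\}$ and $\Gamma_{l+L-1}^-(\beta\eta\omega)=\{\zeta : \zeta\beta \in \Gamma_{l+L}^-(\eta\omega)\}$ for $\omega \in \Gamma_*^+(\beta\eta)\subseteq \Gamma_*^+(\eta)$, and these coincide by the defining identity $\Gamma_{l+L}^-(\eta)=\Gamma_{l+L}^-(\eta\omega)$ of $\eta \in S_{l+L}$; the general case follows by induction on $j$. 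Peeling off the labels of $\xi$ one at a time then produces the path and identifies its intermediate and source vertices, while left-resolvingness of ${\frak L}^{\lambda(\Lambda)}$ gives uniqueness. (This lemma is close to facts used in \cite{MaIsrael2013}, so it may be quotable rather than reproved.)

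With the dictionary in place, unwinding Definition \ref{def:2.5}(ii) for ${\frak L}^{\lambda(\Lambda)}$ yields exactly the following: for all $l$ and all $\mu,\nu \in S_l(\Lambda)$ there is $L \in \N$ such that for every $\eta \in S_{l+L}(\Lambda)$ with $\eta \underset{l}{\sim} \mu$ there is $\xi \in B_L(\Lambda)$ with $\xi\eta \underset{l}{\sim} \nu$. This is identical to the definition of $\lambda$-transitivity once the ordered pair $(\mu,\nu)$ is interchanged with $(\nu,\mu)$ and $L$ is identified with $k_{\mu,\nu}$; since both conditions are universally quantified over all ordered pairs of synchronizing words at every level, the interchange is harmless and the two properties coincide. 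Thus $\Lambda$ is $\lambda$-transitive if and only if ${\frak L}^{\lambda(\Lambda)}$ is $\lambda$-irreducible, which completes (ii).
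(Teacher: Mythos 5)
Your proof is correct and takes essentially the same route as the paper: part (i) is disposed of by citing \cite[Proposition 3.7]{MaJAMS2013}, exactly as you do, and for part (ii) the paper merely states that the equivalence of $\lambda$-transitivity with $\lambda$-irreducibility of ${\frak L}^{\lambda(\Lambda)}$ is ``direct by definition.'' Your dictionary between vertices, $\iota$-fibers and labeled paths of ${\frak L}^{\lambda(\Lambda)}$ on one hand and past-equivalence classes of synchronizing words on the other, together with the auxiliary fact that prepending a word of length $j$ to an element of $S_{l+L}(\Lambda)$ yields an element of $S_{l+L-j}(\Lambda)$, is precisely the routine unwinding the paper leaves implicit, so there is nothing to correct.
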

\begin{proof}
(i) The assertion comes from \cite[Proposition 3.7]{MaJAMS2013}.

(ii)
The equivalence between $\lambda$-transitivity of $\Lambda$
and $\lambda$-irreducibility of ${\frak L}^{\lambda(\Lambda)}$ is direct by definition.
\end{proof}
\begin{definition}[{\cite{MaJAMS2013}}]
A $\lambda$-graph system
 ${\frak L}$ is said to be {\it minimal} if ${\frak L}$ 
has no proper $\lambda$-graph subsystem of ${\frak L}$.
\end{definition}
It was proved that for a $\lambda$-synchronizing subshift $\Lambda$,
the canonical $\lambda$-synchronizing $\lambda$-graph system 
${\frak L}^{\lambda(\Lambda)}$ is minimal.

In what follows, 
for a $\lambda$-synchronizing subshift $\Lambda$,
the canonical $\lambda$-synchronizing $\lambda$-graph system 
${\frak L}^{\lambda(\Lambda)}$ is denoted by
$\LLmin$.
Recall that a subshift $\Lambda$ is said to be {\it normal}\/ if
it is $\lambda$-synchronizing and its cardinality $|\Lambda|$ is not finite
as a set. 
We call the $\lambda$-graph system
$\LLmin$ for a normal subshift $\Lambda$ the {\it minimal presentation of}\/ 
of a normal subshift $\Lambda$.
We often write
$\LLmin =(V^{\min}, E^{\min},\lambda^{\min},\iota^{\min})$
or 
$(V^{\Lambda^\min}, E^{\Lambda^\min},\lambda^{\Lambda^\min},\iota^{\Lambda^\min})$

\medskip

\noindent
{\bf 4.  Condition (I) for $\lambda$-graph systems}

Let ${\frak L}$ be a $\lambda$-graph system over $\Sigma$
and $\Lambda$ the presented subshift $\Lambda_{\frak L}$.
The condition (I) for a $\lambda$-graph system was introduced in
\cite{MaDocMath2002} that yields uniqueness of certain operator relations
of canonical generators of the associated $C^*$-algebra $\OL$.
\begin{definition}\label{def:cindition(I)}
A  $\lambda$-graph system ${\frak L}$ is said to satisfy {\it condition (I)} 
if for any vertex $v \in V_l$, 
the follower set $\Gamma_\infty^+(v)$ of $v$ defined by
$$
\Gamma_\infty^+(v) :=
\{(\lambda(e_1),\lambda(e_2),\dots ) \in X_\Lambda \mid 
s(e_1) = v, \, e_i \in E_{l+i-1,l+i}, \, t(e_i) = s(e_{i+1}), i=1,2,\dots \}
$$
contains at least two distinct sequences.
\end{definition}
In \cite[Lemma 5.1]{MaJMSJ1999}, the following lemma is shown for the case of the canonical $\lambda$-graph system 
${\frak L}^\Lambda$ for $\Lambda$.
\begin{lemma}[{cf. \cite[Lemma 5.1]{MaJMSJ1999}}] \label{lem:threecond}
Let  ${\frak L}$ be a left-resolving $\lambda$-graph system.
Consider the following three conditions:
\begin{enumerate}
\renewcommand{\theenumi}{\roman{enumi}}
\renewcommand{\labelenumi}{\textup{(\theenumi)}}
\item
${\frak L}$ satisfies condition (I).
\item For $l \in \Zp$, $v \in V_l$,
$(x_n)_{n\in \N} \in \Gamma^+_\infty(v)$ and  $m \in \N$,
there exists $(y_n)_{n\in \N} \in \Gamma^+_\infty(v)$ such that 
\begin{equation*}
x_j = y_j \text{ for all } j=1,2,\dots,m \text{ and }
x_N \ne y_N \text{ for some } N >m. 
\end{equation*} 
\item For $k,l\in \N$ with $k\le l$, 
there exists $y(i) \in \Gamma^+_\infty(v_i^l)$ for each 
$i=1,2,\dots,m(l)$ such that 
\begin{equation*}
\sigma_\Lambda^m(y(i)) \ne y(j) \text{ for all } i,j =1,2,\dots,m(l) \text{ and } m=1,2,\dots,k. 
\end{equation*}
\end{enumerate}
Then we have implications: (i) $\Longleftrightarrow $ (ii) $\Longrightarrow$ (iii).
If in particular, ${\frak L}$ is the minimal $\lambda$-graph system
$\LLmin$ for a normal subshift $\Lambda$, then the three conditions are
all equivalent. 
\end{lemma}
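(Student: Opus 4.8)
The plan is to prove the three implications separately and to isolate the only genuinely delicate point in the passage to $\LLmin$; throughout I use the standing facts that each $V_l$ is finite and that $\Gamma_\infty^+(v)\neq\emptyset$ for every vertex $v$, both of which are part of the definition of a $\lambda$-graph system. For (i) $\Longrightarrow$ (ii): given $v\in V_l$, a sequence $(x_n)_n\in\Gamma_\infty^+(v)$ and $m\in\N$, read the first $m$ labels of $x$ along a path from $v$ and let $w\in V_{l+m}$ be the terminal vertex. By condition (I) the set $\Gamma_\infty^+(w)$ contains two distinct sequences; since they cannot both equal the tail $(x_{m+n})_n\in\Gamma_\infty^+(w)$, at least one of them, say $(z_n)_n$, differs from that tail. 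Then $(y_n)_n:=(x_1,\dots,x_m,z_1,z_2,\dots)$ belongs to $\Gamma_\infty^+(v)$, agrees with $x$ on the first $m$ coordinates, and disagrees at some $N>m$. Conversely (ii) $\Longrightarrow$ (i) is immediate: take any $(x_n)_n\in\Gamma_\infty^+(v)$ and apply (ii) with $m=1$ to manufacture a second, distinct element of $\Gamma_\infty^+(v)$.

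For (ii) $\Longrightarrow$ (iii), I first note that (ii) says exactly that no point of the compact set $\Gamma_\infty^+(v)$ is isolated, so each $\Gamma_\infty^+(v)$ is perfect and in particular infinite. Fix $k\le l$, write $V_l=\{v_1^l,\dots,v_{m(l)}^l\}$, and build the sequences $y(1),\dots,y(m(l))$ one at a time, processing the constraint $\sigma_\Lambda^m(y(i))\ne y(j)$ at the stage $t=\max(i,j)$. At stage $t$ the set of forbidden choices of $y(t)$ coming from constraints with $\max(i,j)=t$ is finite: the self-constraint $i=j=t$ rules out only the finitely many sequences of period $m\le k$; a constraint with $i=t,\ j<t$ rules out the finitely many $y(t)$ whose shift $\sigma_\Lambda^m$ equals the already-chosen $y(j)$ (finitely many because there are only finitely many length-$m$ labeled paths leaving $v_t^l$); and a constraint with $j=t,\ i<t$ rules out the single sequence $\sigma_\Lambda^m(y(i))$. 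As $\Gamma_\infty^+(v_t^l)$ is infinite, a legal $y(t)$ exists at every stage, and the resulting family witnesses (iii).

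It remains to treat the case $\frak L=\LLmin$ with $\Lambda$ normal, where I claim condition (I) holds outright; then (i), (ii) and (iii) are simultaneously true and hence equivalent, so it suffices to exclude a vertex with a one-point follower set. Suppose, for contradiction, that some $v\in V_l$ satisfied $\Gamma_\infty^+(v)=\{y\}$. Since $\LLmin$ is $\lambda$-synchronizing, $v$ is the launching vertex of some word $\mu\in B_*(\Lambda)$, and because $y$ is the unique follower of $v$ we get $\mu=y_{[1,|\mu|]}$ with $v$ the only vertex of $V_l$ from which $\mu$ leaves. Using that $v$ is the class of a synchronizing word in the minimal presentation, I would produce an admissible word $\xi$ that can be read into $v$ in only one way; combined with $\Gamma_\infty^+(v)=\{y\}$ this forces every point of $X_\Lambda$ with prefix $\xi$ to equal $\xi y$, so the cylinder on $\xi$ is a single point and $X_\Lambda$ has an isolated point. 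This contradicts Lemma~\ref{lem:Cantor}, since a normal subshift is irreducible and infinite and therefore $X_\Lambda$ is a Cantor set. Hence no such vertex exists and condition (I) holds.

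The main obstacle is this last step: translating ``a vertex of $\LLmin$ has a single follower sequence'' into ``an admissible word has a unique forward extension in $X_\Lambda$.'' The difficulty is that a one-sided sequence can be the label of several distinct paths of the $\lambda$-graph system, passing through different vertices at level $l$, so routing is not unique in general. To force uniqueness of the follower I must invoke the synchronizing/launching structure of the minimal presentation—that a synchronizing word determines the vertex it reaches—so that every $z\in X_\Lambda$ sharing a sufficiently long prefix with $y$ is genuinely routed through $v$ and hence equals $y$. Making ``synchronization pins down the vertex'' precise is the technical heart of the argument; Parts A and B above are routine once the perfectness observation opening Part B is in place.
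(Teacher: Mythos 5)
Your proposal is correct, and it departs from the paper's proof in two places worth recording. The equivalence (i) $\Longleftrightarrow$ (ii) is argued exactly as in the paper. For (ii) $\Longrightarrow$ (iii) you keep the paper's greedy skeleton — choose $y(t)\in\Gamma^+_\infty(v_t^l)$ one vertex at a time, avoiding a finite forbidden set inside an infinite pool — but you obtain both ingredients more cheaply: infiniteness of the pool follows from your observation that (ii) says precisely that $\Gamma^+_\infty(v)$ has no isolated points (and a nonempty set without isolated points is infinite), while the self-constraint $\sigma_\Lambda^m(y)\ne y$, $m\le k$, excludes only the finitely many sequences fixed by some $\sigma_\Lambda^m$, each being determined by its first $m$ symbols. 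The paper instead establishes the existence of such aperiodic-up-to-$k$ followers by a separate induction on the period bound (assertion \eqref{eq:maruA}), applying (ii) at depth $M=\max\{n+k_n\}$; your route avoids that induction entirely. For the final claim the divergence is genuine: the paper proves the implication (iii) $\Longrightarrow$ (i) for $\LLmin$ — from $\Gamma^+_\infty(v_i^l)=\{y\}$ it constructs, via launching, the local property and $\iota$-surjectivity, a unique tower of vertices over $v_i^l$ with follower set $\{y\}$, then uses $\sigma_\Lambda(y)\ne y$ (supplied by (iii)) to produce a second level-$(l+1)$ vertex with singleton follower set $\{\sigma_\Lambda(y)\}$, contradicting (iii) — whereas you prove condition (I) holds outright for $\LLmin$, so that all three conditions are simultaneously true and the equivalence is trivial. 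Your unconditional claim is exactly the paper's Proposition \ref{prop:normal(I)}, proved there just as you sketch (launching plus Lemma \ref{lem:Cantor}); what the paper's route buys is a proof of (iii) $\Longrightarrow$ (i) not presupposing that proposition, and what yours buys is brevity.

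Two remarks to close the step you defer as the ``technical heart''. The routing fact you need is: if $v$ launches $\mu$ and $x\in U_\mu$, then $x\in\Gamma^+_\infty(v)$. This follows from surjectivity of the factor map $\pi_{\frak L}:X_{\frak L}\longrightarrow X_\Lambda$ (Section 3, from \cite{MaDocMath2002}): lift $x$ to a path of ${\frak L}$; since ${\frak L}$ is left-resolving this path has a well-defined level-$l$ starting vertex, which emits the word $\mu$ and hence equals $v$ by the launching property. Thus $U_\mu\subseteq\Gamma^+_\infty(v)=\{y\}$ is a one-point cylinder, contradicting the Cantor property of $X_\Lambda$ from Lemma \ref{lem:Cantor}. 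Note also that your wording drifts between a word ``read into $v$'' (all paths so labeled terminate at $v$) and the launching property ($\mu$ leaves $v$ and no other vertex of $V_l$); it is the latter — which you state correctly at the start of that paragraph — that is defined and available for $\LLmin$, and it suffices.
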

\begin{proof}
(i) $\Longrightarrow $ (ii):
For $x= (\lambda(e_n))_{n\in \N}\in \Gamma^+_\infty(v_i^l)$, 
put $ v_j^{l+m}=t(e_m) \in V_{l+m}.$
Since $\Gamma_\infty^+(v_j^{l+m})$ contains 
at least two distinct sequences, one may find
$y \in \Gamma_\infty^+(v_i^l)$ such that 
$x_j = y_j $ 
for all 
$ j=1,2,\dots,m $
 and
$x_N \ne y_N $ for some $ N >m. $

(ii) $\Longrightarrow$ (i): The assertion is clear.

(ii) $\Longrightarrow$ (iii):
Take and fix $k \le l$. 
We will first see that for a vertex $v_i^l \in V_l$, 
\begin{equation} \label{eq:maruA}
\text{there exists } y \in \Gamma^+_\infty(v_i^l) \text{ such that }
\sigma_\Lambda^n(y) \ne y \text{ for } 1 \le n \le k.
\end{equation}
Take $x \in \Gamma^+_\infty(v_i^l).$
If $\sigma_\Lambda(x) =x$, 
we may find $y \in \Gamma_\infty^+(v_i^l)$ such that 
$\sigma_\Lambda(y) \ne y$ by the assertion (ii).
We may assume that $\sigma_\Lambda(x) \ne x$.
Now suppose that 
 $\sigma_\Lambda^n(x) \ne x$
for all $n \in \N$ with $1\le n \le K$ for some $K \in \N$. 
We will show that 
\begin{equation} \label{eq:assertionB}
\text{there exists } y \in \Gamma^+_\infty(v_i^l) \text{ such that }
\sigma_\Lambda^n(y) \ne y \text{ for } 1 \le n \le K+1.
\end{equation}
Let $x = (x_i)_{i \in \N}$.
 As $\sigma_\Lambda^n(x) \ne x$
for all $n \in \N$ with $1\le n \le K$,
there exists $k_n\in \N$ such that 
$x_{k_n} \ne x_{n+k_n}$
for each $n \in \N$ with $1\le n \le K$.
Put 
$$
M = \max\{n + k_n \mid n=1,2,\dots, K \}
$$
so that $M \ge K+1$.
Suppose that $\sigma_\Lambda^{K+1}(x) =x$.
By the condition (ii) for $m =M$,
there exists 
 $y =(y_n)_{n\in \N} \in \Gamma^+_\infty(v_i^l)$ such that 
\begin{gather} 
x_j = y_j \text{ for all } j=1,2,\dots,M \text{ and } \label{eq:y2} \\ 
x_N \ne y_N \text{ for some } N >M.  \label{eq:y3}
\end{gather} 
As $x_{k_n} \ne x_{n+k_n}$ for each $n \in \N$ with $1\le n \le K$,
the equality \eqref{eq:y2} implies
$y_{k_n} \ne y_{n+k_n}$ for all $n$ with $1 \le n \le K.$ 
Hence we have 
\begin{equation} \label{eq:y4}
\sigma_\Lambda^n(y) \ne y \text{ for } 1 \le n \le K.
\end{equation}
Now $\sigma_\Lambda^{K+1}(x) =x$ so that 
$x_{K + 1+ i} = x_i$ for all $i \in \N$.
If  $\sigma_\Lambda^{K+1}(y) =y$, 
the equality \eqref{eq:y2} implies
$x_j = y_j$ for all $j \in \N$, 
a contradiction to \eqref{eq:y3}.
Hence we see that 
 $\sigma_\Lambda^{K+1}(y) \ne y$
so that by \eqref{eq:y4}, 
we obtain that 
$\sigma_\Lambda^{n}(y) \ne y$ for all $n \in \N$ with $1 \le n\le K+1$
and thus the assertion \eqref{eq:maruA}.

We will next show the following:
 for $i=1,2,\dots,m(l)$ and 
$k,l\in \N$ with $k\le l$, 
there exists $y_i^l \in \Gamma^+_\infty(v_i^l)$ such that 
\begin{equation}\label{eq:maruB}
\sigma_\Lambda^n(y_j^l) \ne y_i^l \text{ for all } i,j =1,2,\dots,m(l) \text{ and } n=1,2,\dots,k. 
\end{equation}
For $i=1$, by \eqref{eq:maruA}, 
there exists
$y_1^l \in \Gamma^+_\infty(v_1^l)$
such that
$
\sigma^n(y_1^l) \ne y_1^l 
$
for $ 1 \le n \le k.$
By the condition (ii),  it is easy to see that 
the set of $\Gamma^+_\infty(v_i^l)$ satisfying \eqref{eq:maruA} for each $i=1,2,\dots, m(l)$
is infinite.
We will show that for a fixed $k \le l$,

there exists $y_i^l \in \Gamma^+_\infty(v_i^l)$ for each 
$i=1,2,\dots,m\le m(l)$
such that  
\begin{equation}\label{eq:maruBprime}
\sigma_\Lambda^n(y_j^l) \ne y_i^l \text{ for all } i,j =1,2,\dots,m \text{ and } n=1,2,\dots,k 
\end{equation}
by induction on  $m$ with $1 \le m \le m(l)$.

As in the preceding argument, \eqref{eq:maruBprime} holds for $m=1$. 
Now assume that \eqref{eq:maruBprime} holds for all $i \le m$.
We will then prove that \eqref{eq:maruBprime} holds for all $i \le m+1$.
It is easy to see that the set 
\begin{equation*}
Y_i =
\{ y \in \Gamma^+(v_i^l) \mid \sigma_\Lambda^n(y) \ne y \text{ for } 1 \le n \le k \}
\end{equation*}
is infinite by the above argument. 
In particular, $Y_{m+1}$ is infinite.
Take
$y_i^l \in \Gamma^+_\infty(v_i^l)$
for $i=1,2,\dots,m$
such that 
\begin{equation*}
\sigma_\Lambda^n(y_j^l) \ne y_i^l \text{ for all } i,j =1,2,\dots,m \text{ and } n=1,2,\dots,k. 
\end{equation*}
We may take and fix the above 
$y_i^l \in \Gamma^+_\infty(v_i^l)$
for $i=1,2,\dots,m$ by the induction hypothesis.
Consider the following set for the 
$y_i^l, \, i=1,2,\dots,m$:
\begin{align*}
Z 
& =  \{ y \in \Gamma^+(v_{m+1}^l) \mid \sigma_\Lambda^n(y_j^l) =y 
\text{ for some }j =1,2,\dots,m \text{ and } n =1,2,\dots,k \} \\
& \bigcup  \{ y \in \Gamma^+(v_{m+1}^l) \mid \sigma_\Lambda^n(y) =y_j^l 
\text{ for some }j =1,2,\dots,m \text{ and } n =1,2,\dots,k \}. \\
 \end{align*}
As $Z$ is a finite set and $Y_{m+1}$ is an infinite set, 
the set $Y_{m+1}\cap Z^c$ is infinite.
Hence we may find  
an element $y_{m+1}^l \in Y_{m+1}\cap Z^c$ satisfying
\begin{equation*}
 \sigma_\Lambda^n(y_{m+1}^l) \ne y_{m+1}^l, \qquad
  \sigma_\Lambda^n(y_j^l) \ne y_{m+1}^l, \qquad
\sigma_\Lambda^n(y_{m+1}^l) \ne y_j^l
\end{equation*}
for all $j =1,2,\dots,m$
 and $ n =1,2,\dots,k.$
Therefore the assertion \eqref{eq:maruBprime} holds for $m+1$,
so that the induction completes.
We thus obtain the assertion  (iii).

(iii) $\Longrightarrow$ (i) : 
Assume that  ${\frak L}$ is the minimal $\lambda$-graph system $\LLmin$ 
for a normal subshift $\Lambda$.
Suppose that ${\frak L}$ does not satisfy condition (I), so that 
there exists a vertex $v_i^l \in V_l$ such that 
$\Gamma^+_\infty(v_i^l) =\{y \}$ a singleton for some $y \in X_\Lambda$.
Now we are assuming that ${\frak L}$ is minimal and hence $\lambda$-synchronizing,
so that 
there exists $N_0\in \N$ such that 
$v_i^l$  launches $y_{[1,N_0]}$. 
Let $v_j^{l+1} \in V_{l+1}$ be such that 
$\iota(v_j^{l+1}) = v_i^l$.
For any $y' \in \Gamma^+_\infty(v_j^{l+1})$,
the local property of $\lambda$-graph system 
$\frak L$ ensures us that $y' \in \Gamma^+_\infty(v_i^{l})$ and hence 
$y' = y$.
Hence we have
$\Gamma^+_\infty(v_j^{l+1})= \Gamma^+_\infty(v_i^{l})$ whenever
$v_j^{l+1} \in V_{l+1}$ with  
$\iota(v_j^{l+1}) = v_i^l$.
Since ${\frak L}$ is $\lambda$-synchronizing,
$y$ never leaves any other vertex than $v_j^{l+1}$ in $V_{l+1}$.
Hence a vertex $v_j^{l+1} \in V_{l+1}$
satisfying $\iota(v_j^{l+1}) = v_i^l$ is unique.
We may write $j$ as $i(l+1)$, so that
$\Gamma^+(v_{i(l+1)}^{l+1}) = \{ y \}$.
Similarly we have a unique sequence 
of vettices $v_{i(l+n)}^{l+n}, n=1,2,\dots $
satisfying 
$v_{i(l+n)}^{l+n} \in V_{l+n},\, 
\iota(v_{i(l+n)}^{l+n}) =v_{i(l+n-1)}^{l+n-1}$
for 
$n=1,2,\dots.$
Now by the assumption (iii),
we have
$\sigma_\Lambda(y) \ne y$,
and hence there exists 
$j_1= 1,2,\dots,m(l+1)$ such that 
$\sigma_\Lambda(y) \in \Gamma^+_\infty( v_{j_1}^{l+1})$.
Hence we have 
$j_1 \ne i(l+1)$.
As $y = y_1\sigma(y)$ and $\Gamma^+_\infty(v_i^l) = \{ y\}$,
we have
$\Gamma^+_\infty(v_{j_1}^{l+1}) = \{ \sigma_\infty(y)\}.$
Together with 
$\Gamma^+_\infty(v_{i(l+1)}^{l+1}) = \{ y\},$
we have a contradiction to the condition (iii). 
\end{proof}

\begin{proposition}\label{prop:normal(I)}
Let $\LLmin$  be the minimal presentation of a normal subshift 
$\Lambda$. Then 
the $\lambda$-graph system $\LLmin$
satisfies condition (I).
\end{proposition}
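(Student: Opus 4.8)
The plan is to argue by contradiction: I will show that a singleton follower set would force an isolated point in $X_\Lambda$, which is impossible because $X_\Lambda$ is a Cantor set. First I would record the two background facts I intend to exploit. Since $\Lambda$ is normal it is $\lambda$-synchronizing, hence irreducible, and it is infinite; so by Lemma \ref{lem:Cantor} the one-sided subshift $X_\Lambda$ is homeomorphic to a Cantor set and in particular has no isolated point. Writing $\LLmin =(V^{\min},E^{\min},\lambda^{\min},\iota^{\min})$, I would use that it is the canonical $\lambda$-synchronizing $\lambda$-graph system, so it is left-resolving, presents $\Lambda$, and every vertex is a launching vertex for some admissible word. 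I would also use the presentation identity $\bigcup_{v\in V_l^{\min}}\Gamma_\infty^+(v)=X_\Lambda$ for each $l$. This follows from surjectivity of $\iota^{\min}$ together with the local property already invoked in the proof of Lemma \ref{lem:threecond}: lifting and pushing forward paths shows $\Gamma_\infty^+(\iota(v))=\bigcup_{\iota(v')=\iota(v)}\Gamma_\infty^+(v')$, so the union over a level is independent of the level and equals $\Gamma_\infty^+(v_0)=X_\Lambda$.

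Now suppose, for contradiction, that condition (I) (Definition \ref{def:cindition(I)}) fails. Then there is a vertex $v_i^l\in V_l^{\min}$ whose follower set is a single point, $\Gamma_\infty^+(v_i^l)=\{y\}$ with $y\in X_\Lambda$. Because $\LLmin$ is $\lambda$-synchronizing, $v_i^l$ launches some word, realized by a labeled path $\gamma$ with $s(\gamma)=v_i^l$ and $|\gamma|=m$. Since $\Gamma_\infty^+(t(\gamma))\neq\emptyset$, concatenating $\gamma$ with an infinite forward path at $t(\gamma)$ produces an element of $\Gamma_\infty^+(v_i^l)=\{y\}$, and hence the launched word is the prefix $\mu:=y_{[1,m]}$ of $y$.

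The key step is to show that the cylinder $U_\mu$ reduces to $\{y\}$. Let $x\in U_\mu$, so $x\in X_\Lambda$ and $x_{[1,m]}=\mu$. By the presentation identity there is $v\in V_l^{\min}$ with $x\in\Gamma_\infty^+(v)$, and the first $m$ edges of the corresponding forward path show that a path labeled $\mu$ leaves $v$. But $v_i^l$ is the launching vertex of $\mu$, i.e. it is the only vertex of $V_l^{\min}$ from which $\mu$ leaves, so $v=v_i^l$ and therefore $x\in\Gamma_\infty^+(v_i^l)=\{y\}$, giving $x=y$. Thus $U_\mu=\{y\}$. Since $U_\mu$ is a nonempty open cylinder, $y$ is an isolated point of $X_\Lambda$, contradicting the fact that $X_\Lambda$ has no isolated point. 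Hence every follower set $\Gamma_\infty^+(v)$ contains at least two distinct sequences, i.e. $\LLmin$ satisfies condition (I).

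I expect the main obstacle to be purely bookkeeping rather than conceptual: pinning down that the launched word may be taken to be a genuine prefix of $y$, and that the presentation identity $\bigcup_{v\in V_l^{\min}}\Gamma_\infty^+(v)=X_\Lambda$ holds at the fixed level $l$, so that an arbitrary $x\in U_\mu$ is actually realized as a forward path from a level-$l$ vertex. Both reduce to the local property and surjectivity of $\iota^{\min}$ and avoid the more delicate shift-separation analysis of Lemma \ref{lem:threecond}; in effect this argument replaces the route (iii)$\Rightarrow$(i) by the direct observation that, in a $\lambda$-synchronizing presentation, a singleton follower set at a launching vertex manufactures an isolated point, which infiniteness forbids.
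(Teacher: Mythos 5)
Your proof is correct and is essentially the paper's own argument: both rest on Lemma \ref{lem:Cantor} (so that $X_\Lambda$ has no isolated points) together with the inclusion $U_\mu \subset \Gamma_\infty^+(v_i^l)$ for a word $\mu$ launched by $v_i^l$, which forces each follower set to contain at least two points. The differences are cosmetic: you run the argument contrapositively and spell out, via the presentation identity and the uniqueness clause in the launching property, the inclusion $U_\mu \subset \Gamma_\infty^+(v_i^l)$ that the paper simply asserts.
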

\begin{proof}
By Lemma \ref{lem:Cantor},
$X_\Lambda$ is homeomorphic to a Cantor set.
For $v_i^l \in V_l^{\min}$,
there exists an $l$-synchronizing word $\mu \in S_l(\Lambda)$
for which $v_i^l$ launches $\mu$.
Hence we have $U_\mu \subset \Gamma^+_\infty(v_i^l)$
the cylinder set for the word $\mu$.
 As $X_\Lambda$ is homeomorphic to a Cantor set, the cylinder set 
 $U_\mu$ contains at least two points, so that 
 $\LLmin$ satisfies condition (I).  
\end{proof}
The following definition have been already introduced in previously published  papers.
The first one was introduced in \cite{MathScand2005}, that is stronger 
than condition (I) for $\lambda$-graph system in Definition \ref{def:cindition(I)}.
The second one was introduced in \cite{KMAAA2013} that was named as synchronizing condition (I)
\cite[(5.1)]{KMAAA2013}. 
\begin{definition}
\begin{enumerate}
\renewcommand{\theenumi}{\roman{enumi}}
\renewcommand{\labelenumi}{\textup{(\theenumi)}}
\item
A $\lambda$-graph system ${\frak L}$ is said to satisfy $\lambda$-{\it condition (I)}\/
if for any vertex $v_i^l \in V_l$, 
there exists a vertex $v_j^{L'} \in V_{L'}$ for some $L' >l$ such that 
there exist labeled paths 
$\gamma_1, \gamma_2$ in $\frak L$ satisfying  
\begin{equation*}
s(\gamma_1)= s(\gamma_2) = v_i^l,\qquad
t(\gamma_1)= t(\gamma_2) = v_j^{L'},\qquad
\lambda(\gamma_1) \ne \lambda(\gamma_2).
\end{equation*}
\item
A normal subshift $\Lambda$ is said to satisfy $\lambda$-{\it condition (I)}\/
 if for any $l \in \N$ and $\mu \in S_l(\Lambda)$,
 there exist $\xi_1, \xi_2 \in B_k(\Lambda)$ and $\nu \in S_{l+K}(\Lambda)$
for some $K\in \N$ such that 
\begin{equation*}
\xi_1, \xi_2 \in \Gamma_K^-(\nu), \qquad 
\xi_1\ne \xi_2, \qquad 
[\xi_1 \nu]_l =[\xi_2 \nu]_l = [\mu]_l.
\end{equation*}
\end{enumerate}
\end{definition}
The $\lambda$-condition (I) for a normal subshift had been called synchronizing condition (I)
in \cite{KMAAA2013}. 
Hence we know the following lemma that was already shown in \cite{KMAAA2013}.
\begin{lemma}[{\cite[Lemma 5.1]{KMAAA2013}}] \label{lem:lambdacond(I)}
Let $\Lambda$ be a normal subshift.
Then the  following two conditions are equivalent.
 \begin{enumerate}
\renewcommand{\theenumi}{\roman{enumi}}
\renewcommand{\labelenumi}{\textup{(\theenumi)}}
\item $\Lambda$ satisfies $\lambda$-condition (I).
\item $\LLmin$ satisfies $\lambda$-condition (I).
\end{enumerate}
\end{lemma}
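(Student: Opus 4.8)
The plan is to show that the two conditions are literal translations of one another once labeled paths in $\LLmin = {\frak L}^{\lambda(\Lambda)}$ are identified with predecessor words. Recall that a vertex of $\LLmin$ in $V_l$ is an $l$-past equivalence class $[\mu]_l$ with $\mu \in S_l(\Lambda)$, and that the predecessor set $\Gamma_l^-(v)$ of such a vertex $v=[\mu]_l$ (the labels of paths from $V_0$ to $v$, as in \eqref{eq:Gammalv}) coincides with the word predecessor set $\Gamma_l^-(\mu)$. The heart of the argument is the following path--word correspondence: for a fixed vertex $[\nu]_{l+K}$ with $\nu \in S_{l+K}(\Lambda)$, the labeled paths $\gamma$ of length $K$ in $\LLmin$ with $t(\gamma)=[\nu]_{l+K}$ are in label-preserving bijection with the words $\xi \in \Gamma_K^-(\nu)$, and for such a $\gamma$ with $\lambda(\gamma)=\xi$ one has $s(\gamma)=[\xi\nu]_l$.

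First I would establish this correspondence. For the direction ``path determines word'' I argue by induction on $K$ using the edge rule of ${\frak L}^{\lambda(\Lambda)}$, namely that an edge labeled $\alpha$ runs from $[\alpha\rho]_m$ to $[\rho]_{m+1}$ for $\rho\in S_{m+1}(\Lambda)$ and $\alpha\in\Gamma_1^-(\rho)$. The only computation needed is that $\Gamma_m^-(P)=\Gamma_m^-(Q)$ implies $\Gamma_{m-1}^-(\alpha P)=\Gamma_{m-1}^-(\alpha Q)$ for a symbol $\alpha$, which is immediate since $w\in\Gamma_{m-1}^-(\alpha P)$ if and only if $w\alpha\in\Gamma_m^-(P)$; feeding the edges of $\gamma$ into this identity one at a time yields $s(\gamma)=[\xi\nu]_l$ with $\xi=\lambda(\gamma)$. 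That $\xi\in\Gamma_K^-(\nu)$ follows by prepending any $w\in\Gamma_l^-(\mu)$: the concatenated path shows $w\xi\in\Gamma_{l+K}^-(\nu)$, whence the suffix $\xi\nu$ is admissible. For the converse ``word determines path'', given $\xi=(\xi_1,\dots,\xi_K)\in\Gamma_K^-(\nu)$ I read $\xi$ off from the right, using the stability property from the construction in \cite{MaJAMS2013} that $a\rho\in S_{m}(\Lambda)$ whenever $\rho\in S_{m+1}(\Lambda)$ and $a\in\Gamma_1^-(\rho)$; this guarantees that each intermediate class $[\xi_{j}\cdots\xi_K\nu]$ is a genuine vertex, so the required edges exist, and left-resolvingness of $\LLmin$ makes the resulting path unique.

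With the correspondence in hand both implications are immediate. For (i)$\Rightarrow$(ii), take any vertex $v_i^l=[\mu]_l$ of $\LLmin$; $\lambda$-condition (I) for $\Lambda$ supplies $\xi_1\ne\xi_2$ in $\Gamma_K^-(\nu)$ with $[\xi_1\nu]_l=[\xi_2\nu]_l=[\mu]_l$ for some $\nu\in S_{l+K}(\Lambda)$, and the correspondence converts these into two labeled paths $\gamma_1,\gamma_2$ with common source $v_i^l$, common target $v_j^{L'}=[\nu]_{l+K}$ (so $L'=l+K>l$), and $\lambda(\gamma_1)=\xi_1\ne\xi_2=\lambda(\gamma_2)$, which is exactly $\lambda$-condition (I) for $\LLmin$. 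For (ii)$\Rightarrow$(i), fix $l$ and $\mu\in S_l(\Lambda)$, apply $\lambda$-condition (I) for $\LLmin$ to $v_i^l=[\mu]_l$ to obtain $L'>l$ and paths $\gamma_1,\gamma_2$ from $v_i^l$ to a common $v_j^{L'}=[\nu]_{L'}$ with $\lambda(\gamma_1)\ne\lambda(\gamma_2)$, set $K=L'-l$ and $\xi_i=\lambda(\gamma_i)$; the correspondence returns $\xi_1,\xi_2\in\Gamma_K^-(\nu)$ with $\xi_1\ne\xi_2$ and $[\xi_1\nu]_l=[\xi_2\nu]_l=[\mu]_l$, which is $\lambda$-condition (I) for $\Lambda$. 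The only real obstacle is the path--word correspondence, and within it the existence (converse) direction, since it rests on the synchronizing-word stability fact of the construction; the two implications themselves are then pure bookkeeping.
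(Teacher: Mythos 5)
Your proof is correct, but there is nothing in the paper to compare it against step by step: the paper does not prove this lemma at all, it merely remarks that the subshift-level $\lambda$-condition (I) is what was called ``synchronizing condition (I)'' in \cite{KMAAA2013} and cites \cite[Lemma 5.1]{KMAAA2013}. What you have written is a self-contained reconstruction of the argument that the citation hides, and it rests on the right mechanism: the dictionary between labeled paths in ${\frak L}^{\lambda(\Lambda)}$ ending at a vertex $[\nu]_{l+K}$ and predecessor words $\xi\in\Gamma_K^-(\nu)$, with source vertex $[\xi\nu]_l$. Your induction on $K$ via the edge rule is sound, and the two ingredients you isolate are exactly the load-bearing ones: the well-definedness computation $\Gamma_m^-(P)=\Gamma_m^-(Q)\Rightarrow\Gamma_{m-1}^-(\alpha P)=\Gamma_{m-1}^-(\alpha Q)$, and the stability fact that $\alpha\nu\in S_m(\Lambda)$ whenever $\nu\in S_{m+1}(\Lambda)$ and $\alpha\in\Gamma_1^-(\nu)$; citing the latter from the construction in \cite{MaJAMS2013} is legitimate, since without it the edge set of ${\frak L}^{\lambda(\Lambda)}$ is not even well defined. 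Given the dictionary, the two implications are, as you say, pure bookkeeping, and they match the two definitions exactly (the paths from level $l$ to level $L'=l+K$ automatically have equal length $K$, so the words produced lie in $B_K(\Lambda)$ as required). Two small points, neither affecting correctness: (a) your side remark deriving $\xi\in\Gamma_K^-(\nu)$ ``by prepending $w\in\Gamma_l^-(\mu)$'' tacitly uses the identification of the vertex predecessor set of $[\nu]_{l+K}$ with the word predecessor set $\Gamma_{l+K}^-(\nu)$, which is itself the base-level instance of the correspondence you are proving; it is cleaner to note that admissibility of $\xi\nu$ already falls out of the induction (the last edge gives $\xi_K\in\Gamma_1^-(\nu)$, and the inductive hypothesis gives $(\xi_1,\dots,\xi_{K-1})\in\Gamma_{K-1}^-(\xi_K\nu)$); (b) the graph-level condition quantifies over vertices of every level, including the singleton $V_0$, while the subshift condition starts at $l=1$; the level-zero vertex is disposed of by prepending a single edge from $v_0$ to a vertex $u\in V_1$ to the two paths obtained for $u$.
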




\section{Structure and simplicity of $\OLmin$}

{\bf 1. Construction of the $C^*$-algebras associated with $\lambda$-graph systems.}

Following \cite{MaDocMath2002},
let us recall the construction of the $C^*$-algebra $\OL$ 
associated with a left-resolving $\lambda$-graph system ${\frak L}$.
The $C^*$-algebra was first defined as a groupoid $C^*$-algebra $C^*(G_{\frak L})$ of an \'etale amenable groupoid
$G_{\frak L}$ defined by a continuous graph $E_{\frak L}$ in the sense of V. Deaconu
 (cf. \cite{De}, \cite{De2}).
 Let ${\frak L}=(V,E,\lambda,\iota)$ be a left-resolving $\lambda$-graph system over $\Sigma$
 and $\Lambda$ its presented subshift.
 The vertex set $\Omega_{\frak L}$ of the continuous graph is defined by 
the compact Hausdorff space of the projective limit:
\begin{equation*}
\Omega_{\frak L} = \{ (u^l)_{l \in \Zp}\in \prod_{l \in \Zp} V_l
 \mid \ \iota_{l,l+1}(u^{l+1}) = u^l, l\in \Zp \}.
\end{equation*}
of the system
$\iota_{l,l+1}: V_{l+1}\rightarrow V_l, l \in \Zp$
 of continuous surjections.
 It is endowed by its projective limit topology.
We call each element of  
$\Omega_{\frak L}$ a vertex or an $\iota$-orbit.
 The continuous graph 
 $E_{\frak L}$ for ${\frak L}$ is defined by
 the set of triplets 
$(u, \alpha,w) \in \Omega_{\frak L} \times \Sigma \times \Omega_{\frak L}$
where $u = (u^l)_{l \in \Zp}, w = (w^l)_{l \in \Zp}\in \Omega_{\frak L}$
such that
there exists an edge $e_{l,l+1} \in E_{l,l+1}$ satisfying
\begin{equation*}
u^l = s(e_{l,l+1}),\quad w^{l+1} = t(e_{l,l+1}) \quad \text{ and } \quad
  \alpha = \lambda(e_{l,l+1})
  \quad \text{ for each } l \in \Zp 
\end{equation*}
(\cite[Proposition 2.1]{MaDocMath2002}, cf. \cite{De}, \cite{De2}).
Let us denote by $X_{\frak L}$ 
 the set of one-sided paths of $E_{\frak L}$:
\begin{align*}
X_{\frak L}= \{ (\alpha_i,u_i)_{i\in\N} \in \prod_{i\in\N}
(\Sigma \times \Omega_{\frak L}) \mid \
& (u_{0},\alpha_1,u_{1}) \in E_{\frak L} \text{ for some } u_0 \in \Omega_{\frak L} \\ 
 \text{ and } 
& (u_{i},\alpha_{i+1},u_{i+1}) \in E_{\frak L} 
 \text{ for all } i\in \N \}.
\end{align*}
We endow $X_{\frak L}$ with the relative topology from the infinite product topology
of $\Pi_{i\in \N}(\Sigma \times \Omega_{\frak L})$,
that makes $X_{\frak L}$ a zero-dimensional compact Hausdorff space.   
The continuous surjection of the shift map 
$
\sigma_{\frak L} :(\alpha_i,u_i)_{i\in\N}\in X_{\frak L} 
\rightarrow 
(\alpha_{i+1},u_{i+1})_{i\in\N}\in
X_{\frak L}
$
is defined on $X_{\frak L}$.
Since  
the $\lambda$-graph system ${\frak L}$ is left-resolving,
so that 
 $\sigma_{\frak L}$ is a local homeomorphism on $X_{\frak L}$
(\cite[Lemma 2.2]{MaDocMath2002}).
Let us define a factor map 
$\pi_{\frak L}:(\alpha_i,u_i)_{i\in\N}\in X_{\frak L}
\longrightarrow 
(\alpha_i)_{i\in \N} \in \Sigma^\N.
$
The image
$\pi_{\frak L}(X_{\frak L})$ in $\Sigma^\N$
is the shift space $X_\Lambda$ of the one-sided subshift
$(X_\Lambda,\sigma_\Lambda)$
with shift transformation
$\sigma_\Lambda((\alpha_i)_{i\in \N}) =(\alpha_{i+1})_{i\in \N}.
$ 
We then have 
$\pi_{\frak L}\circ\sigma_{\frak L}
= \sigma_\Lambda\circ\pi_{\frak L}.$ 

For the shift dynamical system
$(X_{\frak L},\sigma_{\frak L})$, 
one may construct a locally compact \'etale groupoid 
$G_{\frak L}$, called a Deaconu--Renault groupoid
 as in the following way.
We put
\begin{equation*}
G_{\frak L} = \{ (x,n,z) \in X_{\frak L} \times {\Bbb Z} \times X_{\frak L} \ | \
\text{ there exist } k,l\in \Zp ; \ \sigma^k_{\frak L}(x) = \sigma^l_{\frak L}(z), 
n=k-l \}
\end{equation*}
 (cf. \cite{De}, \cite{De2}, \cite{Renault}, \cite{Renault2}, \cite{Renault3}). 
The unit space
$G_{\frak L}^0 
= \{ (x,0,x) \in G_{\frak L} \ | \ x \in X_{\frak L} \}
$ is identified with  the space
$
 X_{\frak L}
$
through the map
$x \in X_{\frak L}\longrightarrow 
(x,0,x) \in G_{\frak L}^0.
$
The range map and the domain  map of $G_{\frak L}$ are defined by
$
r(x,n,z) = x
$
and
$d(x,n,z) =z
$ for
$
(x,n,z) \in G_{\frak L}.
$
The multiplication and the inverse operation are defined by
$
(x,n,z)(z,m,w) = (x,n+m,w)
$
and
$
(x,n,z)^{-1} = (z,-n,x).
$
An open neighborhood basis  of $G_{\frak L}$ 
is given by
\begin{equation*}
Z(U,k,l,V) 
= \{ (x,k-l,z) \in G_{\frak L} \ | \  x \in U, z \in V, 
\sigma^k_{\frak L}(x) = \sigma^l_{\frak L}(z) \}
\end{equation*}
for open sets $U,V$ of $X_{\frak L}$ and $k,l$
nonnegative integers such that 
$\sigma^k_{\frak L}|_{U}$ and $\sigma^l_{\frak L}|_{V}$ 
are homeomorphisms with the same open range.
We then have an \'etale amenable groupoid
$G_{\frak L}$.  
We will describe the construction of the groupoid $C^*$-algebra $C^*(G_{\frak L})$ 
for the groupoid $G_{\frak L}$ as in the following way 
(\cite{Renault}, \cite{Renault2}, \cite{Renault3}, cf. \cite{De}, \cite{De2}).
Let us denote by 
$C_c(G_{\frak L}) $ the set of compactly supported continuous functions on
$G_{\frak L}$ that has a natural product structure and $*$-involution of 
$*$-algebra given by
\begin{align*}
(f*g)(s) &  
           = 
 \sum_{t_1,t_2  \in G_{\frak L},\ s = t_1 t_2} f(t_1) g(t_2)
          = 
 \sum_{t \in G_{\frak L}, \ r(t) = r(s)} f(t) g(t^{-1}s) , \\
  f^*(s) & = \overline{f(s^{-1})} 
  \qquad \text{ for } \quad f,g \in C_c(G_{\frak L}), \quad s \in G_{\frak L}.     
\end{align*}
Let us denote by $C_0(G_{\frak L}^0) $ 
the $C^*$-algebra  of  continuous functions on
$G_{\frak L}^0$ that vanish at infinity. 
The algebra
$C_c(G_{\frak L}) $
has a structure of  
$C_0(G_{\frak L}^0) $-right module with 
a $C_0(G_{\frak L}^0) $-valued inner product  by
\begin{equation*}
 (\eta f )(x,n,z) 
 =  \eta(x,n,z)f(z), 
 \qquad   
   < \xi, \eta >(z) 
 =  \sum_{ 
 { (x,n,z) \in G_{\frak L} }}
   \overline{ \xi (x,n,z)} \eta (x,n,z),
   \end{equation*}
for 
$\xi, \eta  \in C_c(G_{\frak L}), \, f \in C_0(G_{\frak L}^0),
   \, (x,n,z) \in G_{\frak L}, \, z \in X_{\frak L}.
$
The completion of the inner product 
$C_0(G_{\frak L}^0) $-right module
$C_c(G_{\frak L})$
is denoted by  $\ell^2(G_{\frak L})$, that
is a Hilbert $C^*$-right module over the commutative $C^*$-algebra 
$C_0(G_{\frak L}^0) $.
Let us  denote by
$B(\ell^2(G_{\frak L}))$
the $C^*$-algebra of all bounded adjointable
$C_0(G_{\frak L}^0) $-module maps on $\ell^2(G_{\frak L}).$
Let $\pi $ be the $*$-homomorphism of 
$C_c(G_{\frak L})$ into $B(\ell^2(G_{\frak L}))$
defined by
$\pi (f)\eta = f * \eta$
for $f, \eta \in 
C_c(G_{\frak L}).$
The (reduced) $C^*$-algebra of the groupoid $G_{\frak L}$
is defined by the closure of $\pi (C_c(G_{\frak L}))$ in
$B(\ell^2(G_{\frak L}))$,
that we denote by
$C^*_r(G_{\frak L}).$
General theory of $C^*$-algebras of groupoids says that 
for a Deaconu-- Renault groupoid $G$, 
the reduced $C^*$-algebra $C^*_r(G)$ and
the universal $C^*$-algebra $C^*(G)$ 
are canonically isomorphic and hence they are identified
(see for instance \cite[Proposition 2.4]{Renault2000}).
We denote them by $C^*(G)$.
\begin{definition}[{\cite{MaDocMath2002}}] 
The $C^*$-algebra $\OL$ associated with  a left-resolving $\lambda$-graph system 
$\frak L$ is defined to be the  $C^*$-algebra $C^*(G_{\frak L})$
of the groupoid $G_{\frak L}.$
\end{definition}
The vertex set
$V_l$ at level $l$ of ${\frak L}$ is denoted by
$\{{v}_1^l,\dots,{v}_{m(l)}^l \}$.
For 
$x = (\alpha_n,u_n)_{n\in \N} \in X_{\frak L},$
we put
$\lambda(x)_n = \alpha_n \in \Sigma,$
$v(x)_n = u_n \in \Omega_{\frak L}$
for $n \in \N,$ 
respectively.
The $\iota$-orbit
$v(x)_n$ is written as
$v(x)_n = {(v(x)^l_n)}_{l\in \Zp} 
\in \Omega_{\frak L}.$
Now $\frak L$ is left-resolving
so that there exists a unique vertex 
$v(x)_0 \in \Omega_{\frak L}$ satisfying
$(v(x)_0,\alpha_1,u_1) \in E_{\frak L}.$
Define 
$U(\alpha) \subset G_{\frak L}$ 
for 
$\alpha  \in \Sigma,$
and
$U(v_i^l)\subset G_{\frak L}$
for $v_i^l \in V_l$ by
\begin{gather*}
 U(\alpha) 
 = \{ (x, 1, z) \in G_{\frak L} \mid
 \sigma_{\frak L}(x) = z,   
\lambda(x)_1 =\alpha \}, \quad \text{ and}\\
 U({v}_i^l) 
 = \{ (x,0,x) \in G_{\frak L} \ | \ v(x)_0^l = {v}_i^l\}
\end{gather*}
where
$v(x)_0 = (v(x)^l_0)_{l\in \Zp}\in \Omega_{\frak L}.$
They are clopen sets of $G_{\frak L}$.
We define
\begin{equation*}
S_{\alpha} =\pi( \chi_{U(\alpha)} ),\qquad E_i^l = \pi(\chi_{U({v}_i^l) })
\qquad \text{ in }\quad \pi(C_c(G_{\frak L}))
\end{equation*}
where 
$ \chi_{F}
\in C_c(G_{\frak L})
$ denotes 
the characteristic function of a clopen set
$F$ on the groupoid 
$G_{\frak L}.$

The transition matrix system $(A_{l,l+1}, I_{l,l+1})_{l\in \Zp}$
for  the $\lambda$-graph system ${\frak{L}}$
determines the structure of the $\lambda$-graph system ${\frak{L}}$
that are defined by 
\begin{align*}
A_{l,l+1}(i,\alpha,j)
 & =
{\begin{cases}
1 &    \text{ if there exists } e \in E_{l,l+1}; 
 \ s(e) = v_i^l, \lambda(e) = \alpha,
                       t(e) = v_j^{l+1}, \\
0  & \text{ otherwise,}
\end{cases}} \\
I_{l,l+1}(i,j)
 & =
{\begin{cases}
1 &      \text{ if } \ \iota_{l,l+1}(v_j^{l+1}) = v_i^l, \\
0 & \text{ otherwise}
\end{cases}} 
\end{align*} 
for
$
i=1,2,\dots,m(l),\ j=1,2,\dots,m(l+1), \ \alpha \in \Sigma.
$ 
More generally for $v_i^l \in V_l, v_k^{l+n} \in V_{l+n}$
and $\nu =(\nu_1,\dots,\nu_n) \in B_n(\Lambda)$,
we define
\begin{align*}
A_{l,l+n}(i,\nu, k)
 & =
{\begin{cases}
1 &    \text{ if there exists } \gamma \in E_{l,l+n}; 
 \ s(\gamma) = v_i^l, \lambda(\gamma) = \nu,
                       t(\gamma) = v_k^{l+n}, \\
0  & \text{ otherwise,}
\end{cases}} \\
I_{l,l+n}(i,k)
 & =
{\begin{cases}
1 &      \text{ if } \ (\iota_{l,l+1}\circ\cdots\circ\iota_{l+n-1,l+n}) (v_k^{l+n}) = v_i^l, \\
0 & \text{ otherwise}
\end{cases}} 
\end{align*} 
so that 
\begin{align*}
A_{l,l+n}(i,\nu, k)
 & = \sum_{j_1,\dots,j_{n-1}}
 A_{l,l+1}(i,\nu_1, j_1)\cdots A_{l+n-1,l+n}(j_{n-1},\nu_n, k), \\
I_{l,l+n}(i,k)
 & =
 \sum_{j_1,\dots,j_{n-1}}
 I_{l,l+1}(i,j_1)\cdots I_{l+n-1,l+n}(j_{n-1}, k).
 \end{align*}
For a vertex $v_i^l \in V_l$, denote by $\Gamma_l^-(v_i^l)$ 
the predecessor set of $v_i^l$ that is defined in \eqref{eq:Gammalv}
as the set of words in $B_l(\Lambda)$
that are realized by  labeled edges in ${\frak L}$ whose terminal is $v_i^l$.
Recall that  ${\frak L}$ is predecessor-separated if   
$\Gamma_l^-(v_i^l) \ne \Gamma_l^-(v_j^l)$
for distinct $i,j =1,2, \dots,m(l)$.
We had proved the following theorem.
\begin{proposition}[{\cite[Theorem 3.6, Theorem 4.3]{MaDocMath2002}}] 
\label{prop:lambdaC}
Let ${\frak{L}}$ be a left-resolving $\lambda$-graph system.
The $C^*$-algebra $\OL$
is a universal unital $C^*$-algebra
generated by
partial isometries
$S_{\alpha}$ for $\alpha \in \Sigma$
and projections
$E_i^l$ for $v_i^l \in V_l 
$ subject to the  following relations called $({\frak{L}})$:
\begin{gather*}
\sum_{\beta \in \Sigma} S_{\beta}S_{\beta}^* 
= \sum_{i=1}^{m(l)} E_i^l   =1,  
\qquad S_\alpha S_\alpha^* E_i^l  =   E_i^{l} S_\alpha S_\alpha^* \\
E_i^l  =   \sum_{j=1}^{m(l+1)}I_{l,l+1}(i,j)E_j^{l+1},
\qquad
 S_{\alpha}^*E_i^l S_{\alpha}  =  
\sum_{j=1}^{m(l+1)} A_{l,l+1}(i,\alpha,j)E_j^{l+1} 
\end{gather*}
for $\alpha \in \Sigma,$
$i=1,2,\dots,m(l), l \in \Zp. $
 If in particular ${\frak L}$ satisfies condition (I),
 then any non-zero generators 
 satisfying the above relations $({\frak L})$
generate an isomorphic copy of $\OL$.
Hence $\OL$ is a unique $C^*$-algebra subject to the relations $({\frak L})$
if ${\frak{L}}$ satisfies condition (I).    
If in addition,
 $\frak{L}$ is $\lambda$-irreducible, 
the $C^*$-algebra $\OL$ is simple and purely infinite (\cite{MathScand2005}).
\end{proposition}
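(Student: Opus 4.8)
The plan is to argue entirely in the groupoid picture, since $\OL$ is by definition the groupoid $C^*$-algebra $C^*(G_{\frak L})$. First I would check by direct computation from the definitions of the clopen sets $U(\alpha)$ and $U(v_i^l)$ that the concrete elements $S_\alpha = \pi(\chi_{U(\alpha)})$ and $E_i^l = \pi(\chi_{U(v_i^l)})$ satisfy the relations $({\frak L})$. Here the left-resolving hypothesis is exactly what makes $\sigma_{\frak L}$ a local homeomorphism, so that the groupoid products $U(\alpha)U(\alpha)^{-1}$ and $U(v_i^l)\cap\sigma_{\frak L}^{-1}(\cdots)$ compute cleanly; the local property of ${\frak L}$ and the definitions of the transition matrices $A_{l,l+1}$ and $I_{l,l+1}$ translate into the commutation relation $S_\alpha S_\alpha^* E_i^l = E_i^l S_\alpha S_\alpha^*$ and the two summation relations. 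Next I would establish density: each basic clopen bisection $Z(U,k,l,V)$ is a finite union of supports of monomials $S_\mu E_i^l S_\nu^*$ with $\mu,\nu\in B_*(\Lambda)$, so the characteristic functions spanning $C_c(G_{\frak L})$ lie in the closed linear span of such monomials, and the commutative subalgebra generated by the $E_i^l$ together with the $S_\mu S_\mu^*$ is identified with $C_0(G_{\frak L}^0)=C(X_{\frak L})$.

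For the universal property I would introduce the abstract $C^*$-algebra presented by generators and relations $({\frak L})$, whose existence follows because the relations bound the norms of the generators, and observe that the concrete generators induce a surjection from it onto $C^*(G_{\frak L})$. The real content is injectivity. For this I would use the gauge action arising from the cocycle $c(x,n,z)=n$ on $G_{\frak L}$, namely the circle action $\rho_t$ with $\rho_t(S_\alpha)=t S_\alpha$, $\rho_t(E_i^l)=E_i^l$ for $t\in\T$. Averaging over $\T$ produces a faithful conditional expectation onto the fixed-point algebra, which on the groupoid side is the $C^*$-algebra of the open subgroupoid $c^{-1}(0)$, an AF-algebra; comparing the two fixed-point algebras reduces injectivity to the AF level, where it is automatic.

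The main obstacle is the uniqueness clause under condition (I): that \emph{any} nonzero family satisfying $({\frak L})$ generates a copy of $\OL$, without presupposing that the representing homomorphism is gauge-equivariant. The key is that condition (I) forces $G_{\frak L}$ to be essentially principal (topologically free). By Lemma \ref{lem:threecond}, condition (I) is equivalent to the eventual non-periodicity statement (ii)--(iii) there, and I would translate this combinatorial input into density in $G_{\frak L}^0=X_{\frak L}$ of the units with trivial isotropy, i.e. density of the $\sigma_{\frak L}$-aperiodic points. Topological freeness then yields, via Renault's uniqueness theorem, that any homomorphism injective on the diagonal $C(X_{\frak L})$ is injective on all of $C^*(G_{\frak L})$, while a Cuntz--Krieger style argument shows the relations $({\frak L})$ already force injectivity on the diagonal. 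I expect the passage from the combinatorial condition (I) to density of aperiodic points of $\sigma_{\frak L}$ to be the most delicate point.

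Finally, for simplicity and pure infiniteness under the additional $\lambda$-irreducibility hypothesis, I would first verify that $G_{\frak L}$ is minimal: $\lambda$-irreducibility (Definition \ref{def:2.5}(ii)) says every vertex is reachable by a labeled path modulo $\iota$, which I would convert into density of every $\sigma_{\frak L}$-orbit in $X_{\frak L}$. Minimality together with the topological freeness from condition (I) gives simplicity by Renault's theorem. For pure infiniteness I would show $G_{\frak L}$ is locally contracting, producing a properly infinite projection in every nonzero hereditary subalgebra: $\lambda$-irreducibility lets one return to any cylinder set while strictly increasing path length, giving a proper isometry, and condition (I) supplies the two distinct continuations needed to compress a projection onto a proper subprojection. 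This is the argument of \cite{MathScand2005}.
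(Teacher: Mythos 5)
Your outline reproduces the standard architecture, and its first half (checking that the concrete elements $\pi(\chi_{U(\alpha)})$, $\pi(\chi_{U(v_i^l)})$ satisfy $({\frak L})$, density of the monomials $S_\mu E_i^l S_\nu^*$, and gauge-invariant uniqueness via the expectation onto the AF core $\FL$) is essentially what the cited proofs do; note that the present paper does not reprove the proposition but quotes it from \cite{MaDocMath2002} and \cite{MathScand2005}, whose method is visible in the machinery redeveloped after Proposition \ref{prop:lambdaC}: condition (I) is used only to manufacture, for each pair $k\le l$, a projection $q_k^l\in\DLam$ with $q_k^l a\ne 0$ for $0\ne a\in\A_l$ and $q_k^l\phi_\Lambda^n(q_k^l)=0$ for $n\le k$, and the compression by $Q_k^l=\phi_\Lambda^k(q_k^l)$ then yields the Cuntz--Krieger-type estimates proving both the uniqueness clause and, in \cite{MathScand2005}, simplicity and pure infiniteness. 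Your uniqueness and simplicity clauses instead go through Renault's theory: condition (I) $\Rightarrow$ topological freeness of $G_{\frak L}$, $\lambda$-irreducibility $\Rightarrow$ minimality. That is a genuinely different route, and it is exactly where the proposal has a gap.

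The gap is the step you yourself flag and then do not supply: passing from condition (I) to density in $X_{\frak L}$ of points with trivial isotropy. Condition (I), and every statement in Lemma \ref{lem:threecond}, concerns the follower sets $\Gamma_\infty^+(v_i^l)$ of \emph{single finite-level vertices}, i.e.\ label sequences of paths in the Bratteli diagram issuing from $v_i^l$. A point of the unit space $X_{\frak L}$ is a much more rigid object: an $\iota$-compatible family of such paths at \emph{all} levels simultaneously, so the continuations available inside a cylinder of $X_{\frak L}$ have label set of the form $\bigcap_l\Gamma_\infty^+(u^l)$ for an $\iota$-orbit $u=(u^l)_l$. Lemma \ref{lem:threecond}(ii) makes each $\Gamma_\infty^+(u^l)$ perfect, but a nested intersection of perfect compact sets can degenerate to a single (eventually periodic) point, and the local property of ${\frak L}$ does not allow naive path lifting from level $l$ to level $l+1$ (the lifts of consecutive edges need not concatenate, since the bijections in the local property determine sources rather than letting you choose them). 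So perfectness at every finite level does not transfer to the unit space, and "condition (I) $\Rightarrow$ $G_{\frak L}$ essentially principal" is not a translation of Lemma \ref{lem:threecond} but a new assertion requiring proof. Worse, the only available route to it in this circle of ideas is through the masa/ideal-intersection property of $\DL$, i.e.\ through Proposition \ref{prop:relativecom} and the uniqueness theorem itself, both of which are proved by the very Cuntz--Krieger argument you are trying to replace; so as written your reduction is close to circular. (By contrast, the published argument needs condition (I) only through the finitary statement (iii) of Lemma \ref{lem:threecond}, which involves no $\iota$-coherence at all and is proved by elementary combinatorics.)

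The same finite-level versus all-level mismatch affects your simplicity argument: $\lambda$-irreducibility (Definition \ref{def:2.5}(ii)) is a statement about vertices and labeled paths at finite levels, and converting it into minimality of $G_{\frak L}$ (density of every $\sigma_{\frak L}$-orbit in $X_{\frak L}$) faces the identical lifting obstruction; \cite{MathScand2005} avoids this by arguing algebraically that every nonzero hereditary subalgebra contains a projection dominating one of the generators. In summary: the skeleton of your plan (groupoid uniqueness plus minimality plus local contractivity) could in principle be carried out, but the two crux implications are unproved and are not routine consequences of the cited lemmas; either supply independent proofs of essential principality and minimality of $G_{\frak L}$ from the finite-level hypotheses, or fall back on the Cuntz--Krieger-style argument that the references actually use.
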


It is nuclear and belongs to the UCT class (\cite[Proposition 5.6]{MaDocMath2002}).
If ${\frak L}$ is predecessor-separated, then
the projections $E_i^l$ are written by using the partial isometries
$S_\alpha, \alpha\in \Sigma$ 
in the following way: 
\begin{equation}
E_i^l = \prod_{\mu \in \Gamma_l^-(v_i^l)} S_\mu^* S_\mu
\cdot \prod_{\nu\not{\in} \Gamma_l^-(v_i^l) \cap B_l(\Lambda)}
(1- S_\nu^* S_\nu),
\qquad i=1,2, \dots,m(l) \label{eq:eil}
\end{equation}
where $S_\mu$ denotes $S_{\mu_1}\cdots S_{\mu_m}$ for 
$\mu =(\mu_1,\dots,\mu_m)\in B_*(\Lambda)$.
Hence the $C^*$-algebra $\OL$ is generated by only finite family of the
partial isometries $S_\alpha, \alpha \in \Sigma$.

By the above relation $({\frak{L}})$, 
one sees that 
the algebra of  finite linear combinations of the elements of the form
$$
S_{\mu}E_i^lS_{\nu}^* \quad \text{ for }
\quad \mu,\nu \in B_*(X_\Lambda), \quad i=1,\dots,m(l),\quad l \in \Zp
$$
forms a dense $*$-subalgebra of  $\OL$.
Let us  denote by
$\DL$ the $C^*$-subalgebra of $\OL$
generated by the projections of the form
$S_\mu E_i^l S_\mu^*, i=1, 2, \dots, m(l),\/ l \in \Zp, \, \mu\in B_*(\Lambda)$.
We also know that the algebra $\DL$ 
is canonically isomorphic to the commutative $C^*$-algebra 
$C(X_{\frak L})$ of continuous functions on $X_{\frak L}$.
The $C^*$-subalgebra of $\DL$ generated by the projections of the form
$S_\mu S_\mu^*, \mu\in B_*(\Lambda)$
is canonically isomorphic to the  commutative $C^*$-algebra 
$C(X_\Lambda)$ of continuous functions on the right one-sided subshift
$X_\Lambda$,
that is written $\DLam$.

Let us define several kinds of $C^*$-subalgebras of $\OL$
that will be useful in our further discussions.
For a subset $F \subset \OL$, we denote by $C^*(F)$
the $C^*$-subalgebra of $\OL$ generated by all elements of $F$.
Let $k,l\in \Zp$ with $k\le l$.
We deine $C^*$-subalgebras of $\OL$ by
\begin{align*}
\A_l 
& = C^*(E_i^l : i=1,2,\dots, m(l)), \\
\AL 
& = C^*(E_i^l : i=1,2,\dots, m(l), l\in \Zp), \\
\D_{k,l}
& =C^*(S_\mu E_i^l S_\mu^* : i=1,2,\dots, m(l), \mu \in B_k(\Lambda)), \\
\D_{k,{\frak L}}
& =C^*(S_\mu E_i^l S_\mu^* : i=1,2,\dots, m(l), \mu \in B_k(\Lambda), l\in \Zp), \\
\F_{k,l}
& =C^*(S_\mu E_i^l S_\nu^* : i=1,2,\dots, m(l), \mu, \nu \in B_k(\Lambda)), \\
\F_{k,{\frak L}}
& =C^*(S_\mu E_i^l S_\nu^* : i=1,2,\dots, m(l), \mu, \nu \in B_k(\Lambda), l\in \Zp), \\
\F_{\frak L}
& =C^*(S_\mu E_i^l S_\nu^* : i=1,2,\dots, m(l), \mu, \nu \in B_k(\Lambda), k, l\in \Zp).
\end{align*}
As in the papers \cite{MaDocMath2002}, \cite{KMAAA2013} and \cite{MaJAMS2013}, etc.,
the $C^*$-algebra $\OL$ has a natural action of the circle group
$\T$ called gauge action 
written $\rho^{\frak L}$, that is defined by for
$t \in {\mathbb{T}}=\mathbb{R}/\Z$, 
\begin{equation} \label{eq:gauge}
\rho^{\frak L}_t(S_\alpha)= e^{2\pi\sqrt{-1}} S_\alpha, \quad \alpha \in \Sigma,\qquad
\rho^{\frak L}_t(E_i^l)= E_i^l, \quad i=1,\dots,m(l), \, l\in \Zp.
\end{equation}
 The fixed point algebra $\FL$ of $\OL$ is an AF-algebra with its diagonal
algebra $\DL$.
Let us define
$\phi_{\frak L} : \DL \longrightarrow \DL$ by
$\phi(X) = \sum_{\alpha \in \Sigma} S_\alpha X S_\alpha^*, X \in \DL$.
The restriction of $\phi_{\frak L}$ to $\DLam$ is denoted by
$\phi_\Lambda$.
\begin{lemma}  
Let ${\frak L}$ be a left-resolving $\lambda$-graph system.
Then the following two conditions are equivalent:
\begin{enumerate}
\renewcommand{\theenumi}{\roman{enumi}}
\renewcommand{\labelenumi}{\textup{(\theenumi)}}
\item For $k,l \in \N$ with $k \le l$ and $i=1,2,\dots,m(l)$,
there exists $y(i) \in \Gamma^+_\infty(v_i^l)$ for each $i=1,2,\dots,m(l)$ such that 
\begin{equation}
\sigma_\Lambda^n(y(i)) \ne y(j) \quad
\text{ for all } i,j = 1,2,\dots, m(l), n=1,2,\dots,k. \label{eq:condition(I)3}
\end{equation}
\item For $k,l \in \N$ with $k \le l$, 
there exists a projection  $q_k^l \in \DLam$ such that 

(1) $q_k^l a \ne 0$ for all $0 \ne a \in \A_l,$

(2) $q_k^l \phi_\Lambda^n(q_k^l) = 0$ for $n=1,2,\dots k$.
\end{enumerate}
\end{lemma}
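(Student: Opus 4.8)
The plan is to translate both conditions into statements about clopen subsets of the one-sided subshift $X_\Lambda$ under $\sigma_\Lambda$, using the identification $\DLam\cong C(X_\Lambda)$ together with the fact that $\DLam$ and $\A_l$ both sit inside the commutative algebra $\DL\cong C(X_{\frak L})$. Since $\DLam$ is generated by the projections $S_\mu S_\mu^*=\chi_{U_\mu}$, every projection $q\in\DLam$ is $\chi_Q$ for a clopen $Q\subseteq X_\Lambda$, and conversely finite unions of cylinders give such projections. The computation $\phi_\Lambda(S_\mu S_\mu^*)=\sum_{\alpha\in\Sigma}S_{\alpha\mu}S_{\alpha\mu}^*=\chi_{\sigma_\Lambda^{-1}(U_\mu)}$ identifies $\phi_\Lambda$ with the composition operator $f\mapsto f\circ\sigma_\Lambda$, so $\phi_\Lambda^n(\chi_Q)=\chi_{\sigma_\Lambda^{-n}(Q)}$. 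Hence condition (ii)(2) reads $q_k^l\phi_\Lambda^n(q_k^l)=\chi_{Q\cap\sigma_\Lambda^{-n}(Q)}=0$, i.e. $\sigma_\Lambda^n(Q)\cap Q=\emptyset$ for $n=1,\dots,k$.

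The key geometric input is the description of supports inside $X_{\frak L}$. Under $\DL\cong C(X_{\frak L})$ the projection $E_i^l$ is $\chi_{U(v_i^l)}$ with $U(v_i^l)=\{x\in X_{\frak L}\mid v(x)_0^l=v_i^l\}$, while $q=\chi_Q$ becomes $\chi_{\pi_{\frak L}^{-1}(Q)}$. As $q$ and $E_i^l$ commute, $q E_i^l\ne0$ if and only if their supports meet, that is, if and only if there is a path $x\in X_{\frak L}$ with initial vertex $v(x)_0^l=v_i^l$ and $\pi_{\frak L}(x)\in Q$. Reading labels, I claim that the set of such $\pi_{\frak L}(x)$ is exactly $\Gamma_\infty^+(v_i^l)$: the forward inclusion is immediate from the definition of the continuous graph $E_{\frak L}$, and the converse amounts to lifting an infinite admissible path emanating from $v_i^l$ to an honest point of $X_{\frak L}$. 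Granting this, condition (ii)(1) reads $Q\cap\Gamma_\infty^+(v_i^l)\ne\emptyset$ for every $i=1,\dots,m(l)$.

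With the dictionary in hand, both implications are short. For (ii)$\Rightarrow$(i), choose $y(i)\in Q\cap\Gamma_\infty^+(v_i^l)$, possible by (1); if $\sigma_\Lambda^n(y(i))=y(j)$ for some $1\le n\le k$, then $y(i)\in Q\cap\sigma_\Lambda^{-n}(Q)$, contradicting (2), so the separation demanded by (i) holds. For (i)$\Rightarrow$(ii), given the points $y(i)$, set $Q=\bigcup_i U_{y(i)_{[1,M]}}$ and $q_k^l=\chi_Q\in\DLam$. Since $y(i)\in Q\cap\Gamma_\infty^+(v_i^l)$, property (1) holds. For property (2), note $\sigma_\Lambda^n(U_{y(i)_{[1,M]}})\subseteq U_{y(i)_{[n+1,M]}}$, and because $\sigma_\Lambda^n(y(i))\ne y(j)$ the words $y(i)_{[n+1,M]}$ and $y(j)_{[1,M]}$ disagree on their overlap once $M$ is large; choosing $M$ large enough for the finitely many triples $(i,j,n)$ makes $U_{y(i)_{[n+1,M]}}\cap U_{y(j)_{[1,M]}}=\emptyset$, whence $\sigma_\Lambda^n(Q)\cap Q=\emptyset$ and (2) follows.

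I expect the main obstacle to be the converse half of the identification $\{\pi_{\frak L}(x)\mid v(x)_0^l=v_i^l\}=\Gamma_\infty^+(v_i^l)$, namely promoting a one-sided admissible path leaving the single vertex $v_i^l$ to a genuine point of the continuous-graph path space $X_{\frak L}$, where one must assemble compatible $\iota$-orbits at every level. This is precisely where the local property of the $\lambda$-graph system and the surjectivity of $\iota$ (together with compactness of $\Omega_{\frak L}$) enter; everything else is a routine translation between the commutative algebras $\DLam$, $\A_l$, $\DL$ and their Gelfand spectra.
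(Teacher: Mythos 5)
Your proof is correct and takes essentially the same route as the paper's: both arguments translate conditions (i) and (ii) into statements about clopen subsets of $X_\Lambda$ and their shifted preimages, prove (ii)$\Rightarrow$(i) by picking points of $V\cap\Gamma^+_\infty(v_i^l)$, and prove (i)$\Rightarrow$(ii) by enclosing the finite set $\{y(i)\}$ in a clopen set disjoint from its first $k$ preimages under $\sigma_\Lambda$ --- the paper invokes Hausdorffness abstractly where you build that set explicitly out of cylinders of a sufficiently large length $M$. The path-lifting identification you flag (that every point of $\Gamma^+_\infty(v_i^l)$ equals $\pi_{\frak L}(x)$ for some $x\in X_{\frak L}$ with $v(x)_0^l=v_i^l$, which rests on the local property, surjectivity of $\iota$ and compactness) is used silently by the paper as well, so your explicit treatment of it is a refinement rather than a divergence.
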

\begin{proof}
(i) $\Longrightarrow$ (ii): 
By the condition (i), 
take $y(i) \in \Gamma^+_\infty(v_i^l)$ for each $i=1,2,\dots,m(l)$  satisfying  
\eqref{eq:condition(I)3}.
Put a finite subset of $X_\Lambda$
$$
Y = \{ y(i) \mid i=1,2,\dots,m(l) \} \subset X_\Lambda.
$$
We then have $\sigma^{-n}_\Lambda(Y) \cap Y =\emptyset$ for all $n=1,2,\dots,k$.
Now $X_\Lambda$ is Hausdorff 
so that we may take
a clopen set $V \subset X_\Lambda$ such that 
$Y \subset V$ and $\sigma^{-n}_\Lambda(V) \cap V =\emptyset$ for all $n=1,2,\dots,k$.
Define $q_k^l = \chi_V \in C(X_\Lambda)(=\DLam)$
the characteristic function of $V$ on $X_\Lambda$.
Since $y(i) \in Y \subset V$ 
and $y(i) \in \Gamma^+_\infty(v_i^l)$
we have $q_k^l \cdot E_i^l\ne 0$.
On the other hand, 
the condition $\sigma^{-n}_\Lambda(V) \cap V =\emptyset$ for all $n=1,2,\dots,k$
ensures us 
$q_k^l \phi_\Lambda^n(q_k^l) = 0$ for $n=1,2,\dots k$.
As the $C^*$-subalgebra $\A_l$ is the direct sum $\oplus_{i=1}^{m(l)} \mathbb{C}E_i^l$,
we see that $q_k^l a \ne 0$ for all $0 \ne a \in \A_l$.

(ii) $\Longrightarrow$ (i):
Assume the condition (ii).
 For $k,l \in \N$ with $k \le l$, 
there exists a projection  $q_k^l \in \DLam$ satisfying the conditions (1) and (2).
The condition (1) implies that 
$q_k^l E_i^l \ne 0$ for all
$ i=1,2,\dots, m(l)$.
One may take a clopen set $V \subset X_\Lambda$ such that 
$q_k^l = \chi_V$ and hence 
\begin{equation*}
V \cap \Gamma^+_\infty(v_i^l) \ne \emptyset \text{ for } i=1,2,\dots, m(l)
\quad  \text{ and } \quad
V \cap \sigma_\Lambda^{-n}(V) = \emptyset \text{ for } n=1,2,\dots, k.
\end{equation*}
Take 
$y(i) \in V \cap \Gamma^+_\infty(v_i^l)$ for each $i=1,2,\dots, m(l)$,
so that we have $\sigma^n_\Lambda(y(i)) \ne y(j)$ for all
$i,j = 1,2,\dots, m(l), n=1,2,\dots,k$.
Thus the assertion (i) holds.
\end{proof}
Since the condition (i) in the above lemma is the same as the condition (iii)
in Lemma \ref{lem:threecond},
the following lemma holds.
\begin{lemma}  
Let ${\frak L}$ be a left-resolving $\lambda$-graph system
satisfying condition (I).
Then for $k,l \in \N$ with $k \le l$, 
there exists a projection  $q_k^l \in \DLam$ such that 

(1) $q_k^l a \ne 0$ for all $0 \ne a \in \A_l$,

(2) $q_k^l \phi_\Lambda^n(q_k^l) = 0$ for $n=1,2,\dots k$.
\end{lemma}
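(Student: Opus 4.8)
The plan is to obtain this statement simply by composing the two lemmas that immediately precede it, so that no fresh construction is required. First I would observe that the hypothesis here — that ${\frak L}$ is left-resolving and satisfies condition (I) — is exactly hypothesis (i) of Lemma \ref{lem:threecond}. Invoking the implication (i) $\Longrightarrow$ (iii) established there, I obtain, for each pair $k \le l$ with $k,l \in \N$, points $y(i) \in \Gamma^+_\infty(v_i^l)$ for $i=1,\dots,m(l)$ such that $\sigma_\Lambda^n(y(i)) \ne y(j)$ for all $i,j$ and all $n=1,\dots,k$.

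Next I would note that this conclusion is \emph{verbatim} condition (i) of the preceding lemma establishing the equivalence of its conditions (i) and (ii) (the only cosmetic difference being that the dummy exponent of $\sigma_\Lambda$ is written $n$ rather than $m$). That lemma asserts that its condition (i) is equivalent to the existence of a projection $q_k^l \in \DLam$ with $q_k^l a \ne 0$ for all $0 \ne a \in \A_l$ and $q_k^l \phi_\Lambda^n(q_k^l) = 0$ for $n=1,\dots,k$. Applying the forward implication (i) $\Longrightarrow$ (ii) then yields precisely the projection $q_k^l$ demanded in the conclusion, which finishes the argument.

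I do not expect any genuine obstacle here: the substantive content has already been discharged in the two preceding lemmas, and the present statement merely records their concatenation (indeed this is exactly what the connecting remark above asserts). The single point that must be checked — and it is purely a matter of matching notation — is that condition (iii) of Lemma \ref{lem:threecond} and condition (i) of the preceding equivalence lemma are literally the same assertion. Should one prefer a self-contained proof rather than a citation chain, the implication (i) $\Longrightarrow$ (ii) can be unfolded inline: the finite set $Y = \{y(1),\dots,y(m(l))\} \subset X_\Lambda$ satisfies $\sigma_\Lambda^{-n}(Y) \cap Y = \emptyset$ for $1 \le n \le k$, and since $X_\Lambda$ is a zero-dimensional compact Hausdorff space one separates $Y$ by a clopen set $V$ with $\sigma_\Lambda^{-n}(V) \cap V = \emptyset$ for those $n$; setting $q_k^l := \chi_V \in C(X_\Lambda) = \DLam$, the fact that $V$ meets $\Gamma^+_\infty(v_i^l)$ for every $i$ gives (1), while the disjointness of each $\sigma_\Lambda^{-n}(V)$ from $V$ gives (2).
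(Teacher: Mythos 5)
Your proposal is correct and is exactly the paper's own argument: the paper justifies this lemma with the single remark that condition (iii) of Lemma \ref{lem:threecond} (obtained from condition (I) via the implication (i) $\Longrightarrow$ (iii)) is the same as condition (i) of the immediately preceding equivalence lemma, whose implication (i) $\Longrightarrow$ (ii) then yields the projection $q_k^l$. Your optional inline unfolding (a clopen set $V$ containing $Y=\{y(1),\dots,y(m(l))\}$ with $\sigma_\Lambda^{-n}(V)\cap V=\emptyset$ for $1\le n\le k$, and $q_k^l=\chi_V$) also reproduces the paper's proof of that implication verbatim.
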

Now we put
$Q_k^l := \phi_{\Lambda}^k(q_k^l) \in \DLam$
a projection in $\DLam$.
We note that each element of $\DL$ commutes with
elements of $\AL$. 
As we see  the identity
\begin{equation*}
S_\mu \phi^j_{\frak L}(X) = \phi^{j+|\mu|}_{\frak L}(X) S_\mu,
\qquad X \in \DL, \mu \in B_*(\Lambda), j \in \Zp,
\end{equation*}
where $|\mu|$ denotes the length of the word $\mu$,
a similar argument to \cite[2.9 Proposition]{CK} leads to the following lemma,
that was seen in \cite[Lemma 4.2]{MaDocMath2002}. 
\begin{lemma} Keep the above notation.\hspace{4cm}
\begin{enumerate}
\renewcommand{\theenumi}{\roman{enumi}}
\renewcommand{\labelenumi}{\textup{(\theenumi)}}
\item The correspondence 
$X \in \F_{k,l} \longrightarrow Q_k^l X Q_k^l \in Q_k^l \F_{k,l} Q_k^l$
extends to an isomorphism from $\F_{k,l} $ to $ Q_k^l \F_{k,l} Q_k^l$. 
\item For $X \in \FL$, we have $\| Q_k^l X - X Q_k^l \| \longrightarrow 0$ and
$\| Q_k^l X \| - \| X \| \longrightarrow 0$ as $ k,l\longrightarrow \infty$.
\item For $\mu \in B_*(\Lambda)$, 
we have $\| Q_k^l S_\mu \|, \| Q_k^l S_\mu^* Q_k^l \| \longrightarrow 0$ 
as $ k,l\longrightarrow \infty$.
\end{enumerate}
\end{lemma}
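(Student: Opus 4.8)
The plan is to follow the pattern of \cite[2.9 Proposition]{CK} and \cite[Lemma 4.2]{MaDocMath2002}, reducing all three assertions to the single commutation identity
\[
\phi_{\frak L}^k(q_k^l)\, S_\mu = S_\mu\, \phi_{\frak L}^{k-|\mu|}(q_k^l), \qquad \mu \in B_*(\Lambda),\ |\mu| \le k,
\]
which is the case $j=k-|\mu|$ of the stated relation $S_\mu \phi_{\frak L}^j(X) = \phi_{\frak L}^{j+|\mu|}(X) S_\mu$, together with the two properties of $q_k^l$ from the preceding lemma: (1) $q_k^l a \ne 0$ for $0 \ne a \in \A_l$, and (2) $q_k^l \phi_\Lambda^n(q_k^l)=0$ for $1\le n\le k$. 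Throughout I use that $\phi_{\frak L}$ restricts to an injective $*$-endomorphism of $\DL$ (and agrees with $\phi_\Lambda$ on $\DLam$): multiplicativity follows from $S_\alpha^* S_\beta = \delta_{\alpha\beta}S_\alpha^* S_\alpha$ and the commutation of $\DL$ with $\AL$, while injectivity follows from $\sum_\alpha S_\alpha S_\alpha^* =1$. Note also $Q_k^l=\phi_\Lambda^k(q_k^l)\in\DLam\subseteq\DL$, so $Q_k^l$ commutes with every $E_i^l$.

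For (i), I would first record that $\F_{k,l}$ is finite-dimensional with matrix units $S_\mu E_i^l S_\nu^*$ ($\mu,\nu\in B_k(\Lambda)$) and minimal central projections $z_i:=\phi_{\frak L}^k(E_i^l)=\sum_{|\mu|=k}S_\mu E_i^l S_\mu^*$, since $z_i\, S_\mu E_j^l S_\nu^* = \delta_{ij} S_\mu E_i^l S_\nu^*$ and $\sum_i z_i=\phi_{\frak L}^k(1)=1$. Applying the commutation identity with $|\mu|=|\nu|=k$ and the fact that $q_k^l$ commutes with $E_i^l$ gives $Q_k^l(S_\mu E_i^l S_\nu^*)=S_\mu q_k^l E_i^l S_\nu^*=(S_\mu E_i^l S_\nu^*)Q_k^l$, so $Q_k^l$ commutes with all of $\F_{k,l}$ and the compression $X\mapsto Q_k^l X Q_k^l = Q_k^l X$ is a $*$-homomorphism there. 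Faithfulness is the crux: as $\F_{k,l}$ is finite-dimensional it suffices to verify $Q_k^l z_i\ne 0$ for each $i$, and indeed $Q_k^l z_i=\phi_{\frak L}^k(q_k^l E_i^l)\ne 0$ because $q_k^l E_i^l\ne 0$ by property (1) with $a=E_i^l$, while $\phi_{\frak L}^k$ is injective. Hence the compression is an isometric isomorphism of $\F_{k,l}$ onto $Q_k^l\F_{k,l}Q_k^l$.

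For (ii), I would use that the algebras $\F_{k,l}$ increase with $k$ and $l$ and that $\bigcup_{k,l}\F_{k,l}$ is dense in $\FL$ (cf. \cite{MaDocMath2002}). For $Y\in\F_{k,l}$ the computation in (i) yields $Q_k^l Y = Y Q_k^l$ exactly and $\|Q_k^l Y\|=\|Q_k^l Y Q_k^l\|=\|Y\|$ by the isometry just established. Given $X\in\FL$ and $\varepsilon>0$, choose $Y\in\F_{k_0,l_0}$ with $\|X-Y\|<\varepsilon$; then for all $k\ge k_0$, $l\ge l_0$ one has $Y\in\F_{k,l}$, so $\|Q_k^l X - X Q_k^l\|\le \|Q_k^l(X-Y)\|+\|(Y-X)Q_k^l\|\le 2\varepsilon$ and $\big|\,\|Q_k^l X\|-\|X\|\,\big|\le \big|\,\|Q_k^l X\|-\|Q_k^l Y\|\,\big|+\big|\,\|Y\|-\|X\|\,\big|\le 2\varepsilon$, giving both limits.

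For (iii), property (2) is exactly what is needed. For $1\le|\mu|\le k$, the commutation identity and the multiplicativity of $\phi_{\frak L}^{k-|\mu|}$ give
\[
Q_k^l S_\mu Q_k^l = S_\mu\, \phi_{\frak L}^{k-|\mu|}(q_k^l)\,\phi_{\frak L}^{k}(q_k^l) = S_\mu\, \phi_{\frak L}^{k-|\mu|}\!\bigl(q_k^l\,\phi_\Lambda^{|\mu|}(q_k^l)\bigr) = 0,
\]
so $Q_k^l S_\mu Q_k^l=0$ and, taking adjoints, $Q_k^l S_\mu^* Q_k^l=0$ whenever $k\ge|\mu|$; thus the compressed norms in (iii) vanish for large $k$ and in particular tend to $0$. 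I expect the main obstacle to be the faithfulness step in (i): one must correctly identify the block structure of $\F_{k,l}$ (the central projections $z_i$) and combine property (1) with injectivity of $\phi_{\frak L}^k$, after which the almost-centrality and norm estimate in (ii) and the vanishing in (iii) are short consequences of the same single commutation identity and the orthogonality (2).
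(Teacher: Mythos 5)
Your proposal reconstructs exactly the argument that the paper leaves to a citation: the paper proves nothing here beyond recording the identity $S_\mu \phi_{\frak L}^j(X) = \phi_{\frak L}^{j+|\mu|}(X) S_\mu$ and the existence of $q_k^l$, and then invokes ``a similar argument to \cite[2.9 Proposition]{CK}'' and \cite[Lemma 4.2]{MaDocMath2002}; your block decomposition of $\F_{k,l}$ with central projections $\phi_{\frak L}^k(E_i^l)$, the commutation $Q_k^l\, S_\xi E_i^l S_\eta^* = S_\xi q_k^l E_i^l S_\eta^* = S_\xi E_i^l S_\eta^*\, Q_k^l$ (which is also how the paper computes inside its proof of Lemma \ref{lem:3.6}), the approximation argument for (ii), and the identity $Q_k^l S_\mu Q_k^l = S_\mu\, \phi_{\frak L}^{k-|\mu|}\bigl(q_k^l\, \phi_\Lambda^{|\mu|}(q_k^l)\bigr) = 0$ for (iii) are precisely that argument. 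However, two points need repair. The first concerns the step you yourself call the crux: injectivity of $\phi_{\frak L}^k$ on $\DL$ does \emph{not} follow from $\sum_{\alpha\in\Sigma} S_\alpha S_\alpha^* = 1$; that identity only says $\phi_{\frak L}$ is unital. From $\phi_{\frak L}(X) = 0$ with $X \in \DL$ one gets $X S_\alpha^* S_\alpha = 0$ for every $\alpha$, so what is actually needed is $\bigvee_{\alpha\in\Sigma} S_\alpha^* S_\alpha = 1$ in $\DL$, equivalently surjectivity of $\sigma_{\frak L}$ (on $\DL \cong C(X_{\frak L})$ one has $\phi_{\frak L}(f) = f\circ\sigma_{\frak L}$). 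Since the paper records that $\sigma_{\frak L}$ is a continuous surjection this is a one-line fix, but it --- or the equivalent direct observation that some $\mu \in B_k(\Lambda)$ satisfies $q_k^l E_i^l S_\mu^* S_\mu \ne 0$ --- is what must be invoked for faithfulness.

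The second point is a genuine mismatch in (iii). Your computation gives $Q_k^l S_\mu Q_k^l = 0$, hence $Q_k^l S_\mu^* Q_k^l = 0$, for $k \ge |\mu| \ge 1$; this settles the second norm in (iii) but says nothing about the first, $\|Q_k^l S_\mu\|$, which carries no compression on the right. That uncompressed assertion is in fact false in general: for the full $2$-shift, a normal subshift whose minimal $\lambda$-graph system has a single vertex at every level (so that $\OLmin \cong {\mathcal{O}}_2$, $\A_l = \mathbb{C}1$, and condition (1) reduces to $q_k^l \ne 0$), one has $S_1^* S_1 = 1$, whence $\|Q_k^l S_1\| = \|S_1\, \phi_{\frak L}^{k-1}(q_k^l)\| = \|\phi_{\frak L}^{k-1}(q_k^l)\| = 1$ for every $k$ and every choice of $q_k^l$ satisfying (1) and (2). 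So what you have proved is the corrected statement $\|Q_k^l S_\mu Q_k^l\| \to 0$, which is the form in \cite[Lemma 4.2]{MaDocMath2002}; note also that the paper's own later use of this lemma (in the proof of Lemma \ref{lem:3.6}) needs only part (i) together with $Q_k^l \phi_{\frak L}^{|\mu|}(Q_k^l) = 0$, so nothing downstream depends on the uncompressed norm. Your reading is the right one, but you should state explicitly that you are correcting the displayed statement rather than proving it as written.
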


The following lemma was seen in \cite[Lemma 2.5]{MaDynam2020} and \cite[Lemma 6.5]{MaYMJ2010}
without its detail proofs. 
We will give its detail proof here.
\begin{lemma}[{cf. \cite[Lemma 3.1, Lemma 3.2]{MaJOT2000}}] \label{lem:3.6}
Let ${\frak L}$ be a left-resolving $\lambda$-graph system
satisfying condition (I).
\begin{enumerate}
\renewcommand{\theenumi}{\roman{enumi}}
\renewcommand{\labelenumi}{\textup{(\theenumi)}}
\item
$\DLam^\prime \cap \OL \subset \FL$.
\item
$\DLam^\prime \cap \FL \subset \DL$.
\end{enumerate}
\end{lemma}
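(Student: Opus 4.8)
I would prove the two inclusions by different mechanisms. Inclusion (i) passes through the gauge action together with the approximating projections $Q_k^l$ (this is the only place condition (I) is used), while inclusion (ii) is a local matrix-unit computation that in fact does not need condition (I). For (i), let $\mathbb{E}_n(x)=\int_{\T}e^{-2\pi\sqrt{-1}\,nt}\,\rho^{\frak L}_t(x)\,dt$ denote the $n$-th spectral component for $\rho^{\frak L}$, so that $\mathbb{E}_0$ is the canonical faithful conditional expectation of $\OL$ onto $\FL$. Since every generator $S_\mu S_\mu^*$ of $\DLam$ has gauge weight $0$, the action $\rho^{\frak L}_t$ fixes $\DLam$ pointwise; hence for $a\in\DLam$ one has $\mathbb{E}_n(ax)=a\,\mathbb{E}_n(x)$ and $\mathbb{E}_n(xa)=\mathbb{E}_n(x)\,a$, so each $x_n:=\mathbb{E}_n(x)$ again lies in $\DLam'\cap\OL$. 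It therefore suffices to show $x_n=0$ for $n\neq0$, since then faithfulness of $\mathbb{E}_0$ (a Fej\'er-mean argument) gives $x=\mathbb{E}_0(x)\in\FL$; replacing $x$ by $x^*$ reduces to $n>0$.

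Fix $n>0$ and write $x_n=\sum_{|\mu|=n}S_\mu b_\mu$ with $b_\mu=S_\mu^* x_n$, using $\sum_{|\mu|=n}S_\mu S_\mu^*=1$; a gauge-weight count shows $\rho^{\frak L}_t(b_\mu)=b_\mu$, i.e. $b_\mu\in\FL$. The key observation is that $Q_k^l=\phi_\Lambda^k(q_k^l)$ lies in $\DLam$, because $q_k^l\in\DLam$ and $\phi_\Lambda$ maps $\DLam$ into itself; consequently $x_n$ commutes with each $Q_k^l$. From part (iii) of the preceding lemma on the $Q_k^l$ we have $\|Q_k^l S_\mu\|\to0$, whence $\|Q_k^l x_n\|\le\sum_{|\mu|=n}\|Q_k^l S_\mu\|\,\|b_\mu\|\to0$ as $k,l\to\infty$. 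On the other hand $x_nx_n^*\in\FL$, so part (ii) of that lemma gives $\|Q_k^l(x_nx_n^*)\|\to\|x_nx_n^*\|=\|x_n\|^2$, while simultaneously $\|Q_k^l(x_nx_n^*)\|\le\|Q_k^l x_n\|\,\|x_n\|\to0$. Comparing the two limits forces $\|x_n\|=0$.

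For (ii) I would argue locally. Let $x\in\DLam'\cap\FL$ and, for $|\mu|=k$, set $p_\mu=S_\mu S_\mu^*\in\DLam$. The $p_\mu$ are mutually orthogonal projections with $\sum_{|\mu|=k}p_\mu=1$, and $x$ commutes with each, giving the exact orthogonal-corner decomposition $x=\sum_{|\mu|=k}p_\mu x p_\mu$. Using $S_\mu^*S_\nu=\delta_{\mu\nu}S_\mu^*S_\mu$ for words of equal length, a direct computation with the relations $({\frak L})$ shows that $p_\mu\,\F_{k,l}\,p_\mu$ is precisely the linear span of the diagonal elements $S_\mu E_i^l S_\mu^*$, hence is contained in $\DL$. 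Now approximate: given $\varepsilon>0$ choose $y\in\F_{k,l}$ with $\|x-y\|<\varepsilon$. Then $\sum_{|\mu|=k}p_\mu y p_\mu\in\DL$, and because the corners are orthogonal, $\|x-\sum_{|\mu|=k}p_\mu y p_\mu\|=\max_{|\mu|=k}\|p_\mu(x-y)p_\mu\|\le\|x-y\|<\varepsilon$. As $\DL$ is closed and $\varepsilon$ is arbitrary, $x\in\DL$.

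I expect the main obstacle to be the norm confrontation in (i): one must show that $\|Q_k^l x_n\|\to0$, forced by $\|Q_k^l S_\mu\|\to0$ together with the decomposition $x_n=\sum S_\mu b_\mu$, is incompatible with the lower estimate $\|Q_k^l(x_nx_n^*)\|\to\|x_n\|^2$ coming from $x_nx_n^*\in\FL$, and for this the gauge bookkeeping placing each $b_\mu$ in $\FL$ must be exact. In (ii) the only genuine computation is the identity $p_\mu\F_{k,l}p_\mu\subset\DL$; once that is established, the block decomposition and the approximation are routine, and notably condition (I) plays no role there.
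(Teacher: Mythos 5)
Your proof of part (ii) is correct and is in substance the paper's own argument: the paper packages your block-diagonal compression $X\mapsto\sum_{|\mu|=k}P_\mu XP_\mu$ into the conditional expectations ${\mathcal{E}}_k^{\frak L}:\F_{k,{\frak L}}\to\D_{k,{\frak L}}$ and notes that they fix every element of $\DLam^\prime\cap\FL$; your remark that condition (I) plays no role there is also accurate.

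Part (i), however, has a genuine gap at the step
\begin{equation*}
\|Q_k^l x_n\|\le\sum_{|\mu|=n}\|Q_k^l S_\mu\|\,\|b_\mu\|\longrightarrow 0 .
\end{equation*}
The input $\|Q_k^l S_\mu\|\to 0$ is false. The preceding lemma of the paper does assert it literally, but that assertion is a misprint for the two-sided compression $\|Q_k^l S_\mu Q_k^l\|\to 0$. Indeed, $Q_k^l$ and $S_\mu S_\mu^*$ are commuting projections in $\DLam$, so
\begin{equation*}
\|Q_k^l S_\mu\|^2=\|Q_k^l S_\mu S_\mu^* Q_k^l\|=\|Q_k^l S_\mu S_\mu^*\|\in\{0,1\},
\end{equation*}
and for $k\ge|\mu|$, $l \ge k$ one computes
$Q_k^l S_\mu S_\mu^*=\sum_{\nu\,:\,\mu\nu\in B_k(\Lambda)}S_{\mu\nu}\,q_k^l\,S_{\mu\nu}^*\ne 0$,
because each $S_{\mu\nu}^*S_{\mu\nu}$ is a nonzero element of $\A_k\subset\A_l$ and property (1) of $q_k^l$ gives $q_k^l\,S_{\mu\nu}^*S_{\mu\nu}\ne 0$. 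Hence $\|Q_k^l S_\mu\|=1$ for all $k\ge|\mu|$, $l\ge k$, and the displayed estimate collapses. What is true, and in fact exactly zero once $k\ge|\mu|$, is the compressed quantity: $Q_k^l S_\mu Q_k^l=Q_k^l\,\phi_{\frak L}^{|\mu|}(Q_k^l)\,S_\mu=0$, using $S_\mu Q_k^l=\phi_{\frak L}^{|\mu|}(Q_k^l)S_\mu$ and $Q_k^l\phi_{\frak L}^{|\mu|}(Q_k^l)=\phi_{\frak L}^{k}(q_k^l\phi_{\frak L}^{|\mu|}(q_k^l))=0$.

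The repair is precisely the "key observation" you state but never deploy. Since $x_n\in\DLam^\prime$ and $Q_k^l\in\DLam$, we get $Q_k^l x_n(1-Q_k^l)=x_nQ_k^l(1-Q_k^l)=0$, hence $\|Q_k^l x_n\|=\|Q_k^l x_nQ_k^l\|$, and for $k\ge n$
\begin{equation*}
Q_k^l x_n Q_k^l=\sum_{|\mu|=n}\bigl(Q_k^l S_\mu Q_k^l\, b_\mu+Q_k^l S_\mu[b_\mu,Q_k^l]\bigr)
=\sum_{|\mu|=n}Q_k^l S_\mu[b_\mu,Q_k^l],
\end{equation*}
whose norm tends to $0$ because each $b_\mu\in\FL$ and part (ii) of the $Q_k^l$-lemma gives $\|[b_\mu,Q_k^l]\|\to 0$. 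With this correction your norm confrontation ($\|Q_k^l(x_nx_n^*)\|\to\|x_n\|^2$ against $\|Q_k^l(x_nx_n^*)\|\le\|Q_k^l x_n\|\,\|x_n\|\to 0$) does force $x_n=0$, and your route --- gauge spectral components killed by a norm clash --- becomes a valid argument whose bookkeeping genuinely differs from the paper's: there, one shows each coefficient $E(S_\mu^*X)$, $E(XS_\mu)$ vanishes by playing the derived relation $fX_\mu=X_\mu\phi_{\frak L}^{|\mu|}(f)$, $f\in\DLam$, against the isometry of $X\mapsto Q_k^l XQ_k^l$ on $\F_{k,l}$ together with the same orthogonality $Q_k^l\phi_{\frak L}^{|\mu|}(Q_k^l)=0$.
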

\begin{proof}
(i)
Let $E:\OL\longrightarrow \FL$ be the conditional expectation 
defined by 
$$
E(X) = \int_{\T}\rho^{\frak L}_t(X) dt \qquad X \in \OL
$$
where $dt$ denotes the normalized Lebesgue measure on $\T =\R/\Z$. 
For $X \in \DLam^\prime \cap \OL$, we put
$$
X_\mu = E(S_\mu^* X), \qquad
X_{-\mu} = E(X S_\mu) \quad \text{ for } \mu \in B_*(\Lambda).
$$  
We will show that $X_\mu = X_{-\mu} =0$ for $\mu \in B_k(\Lambda)$ with $k \ge 1$.
For $f \in \DLam$, we have
$$
X_\mu S_\mu f S_\mu^*= E(S_\mu^* X  S_\mu f S_\mu^*)= E(S_\mu^* S_\mu f S_\mu^* X) 
= E( f S_\mu^* X) = f X_\mu.
$$
It follows that
$$
X_\mu \phi_{\frak L}^{|\mu|}( f) 
= X_\mu S_\mu S_\mu^* \sum_{\nu \in B_{|\mu|}(\Lambda)} S_\nu f S_\nu^*
= X_\mu S_\mu S_\mu^* S_\mu f S_\mu^*
= f X_\mu.
$$
Now suppose that $X_\mu \ne 0$.
For $\epsilon >0$, there exist $k,l\in \Zp$ with $k \le l$ 
and $X_{k,l} \in \F_{k,l}$
such that 
$|\mu| \le k$ 
and
$\| X_\mu - X_{k,l} \| < \epsilon.$
We may assume that 
$\| X_\mu \| = \| X_{k,l} \| = 1.$
We then have for $f \in \DLam$,
$$
\| f X_{k,l} - X_{k,l} \phi_{\frak L}^{|\mu|}(f)\| < 2 \| f \| \epsilon.
$$
Now ${\frak L}$ satisfies condition (I), so that there exists a projection
$Q_k^l$  in $ \DLam$ defined by $Q_k^l = \phi^k_{{\frak L}}(q_k^l)$ satisfying 
the previous lemma. 
By considering $S_\xi S_\xi^* X_{k,l} S_\xi S_\xi^*$ instead of $X_{k,l}$,
we may suppose that $X_{k,l} $ is of the form $S_\xi E_i^l S_\eta^*$ for some $\xi,\eta \in B_k(\Lambda)$.
It then follows that 
\begin{equation*}
Q_k^l X_{k,l} 
=  \sum_{\nu \in B_{k}(\Lambda)} S_\nu q_k^l S_\nu^* S_\xi E_i^l S_\eta^* 
=  S_\xi q_k^l S_\xi^* S_\xi E_i^l S_\eta^* 
=  S_\xi E_i^l q_k^l  S_\eta^* 
\end{equation*}
and
\begin{equation*}
X_{k,l} Q_k^l  
=   S_\xi E_i^l S_\eta^* \sum_{\nu \in B_{k}(\Lambda)} S_\nu q_k^l S_\nu^* 
=   S_\xi E_i^l S_\eta^* S_\eta q_k^l S_\eta^* 
=   S_\xi E_i^l q_k^l S_\eta^* 
\end{equation*}
so  that $Q_k^l $ commutes with $X_{k,l}$.
Hence we have 
\begin{equation}
  \|  X_{k,l} Q_k^l - X_{k,l} \phi_{\frak L}^{|\mu|}(Q_k^l)\| 
= \|  Q_k^l X_{k,l} - X_{k,l} \phi_{\frak L}^{|\mu|}(Q_k^l)\| 
\le  2 \| Q_k^l \| \epsilon = 2 \epsilon. \label{eq:xklqkl}
\end{equation}
As  
$Q_k^l \phi_{\frak L}^{|\mu|}(Q_k^l)
=\phi_{\frak L}^{k}(q_k^l \phi_{\frak L}^{|\mu|}(q_k^l))
=0,
$
we have
$$
\|  X_{k,l} Q_k^l - X_{k,l} \phi_{\frak L}^{|\mu|}(Q_k^l)\| 
= \max\{ \|X_{k,l} Q_k^l\|, \| X_{k,l} \phi_{\frak L}^{|\mu|}(Q_k^l)\| \}.
$$
Since
the correspondence 
$X \in \F_{k,l} \longrightarrow Q_k^l X Q_k^l \in Q_k^l \F_{k,l} Q_k^l$
extends to an isomorphism from $\F_{k,l} $ to $ Q_k^l \F_{k,l} Q_k^l$
so that 
$\| X_{k,l} Q_k^l \| = \| X_{k,l}\| =1$.
Hence we have
$$
\|  X_{k,l} Q_k^l - X_{k,l} \phi_{\frak L}^{|\mu|}(Q_k^l)\|  \ge 1
$$
a contradiction to \eqref{eq:xklqkl}.
We thus have $X_\mu = 0$ and similarly $X_{-\mu} =0.$
This means that $X = E(X) \in \FL.$

(ii)
For $\mu \in B_k(\Lambda)$, we put $P_\mu = S_\mu S_\mu^*$
and define the map $\mathcal{E}_k^l: \F_{k,l} \longrightarrow \D_{k,l}$
by setting
$\mathcal{E}_k^l(X) = \sum_{\mu \in B_k(\Lambda)} P_\mu X P_\mu$ 
for $X \in \F_{k,l}$.
Since the restriction of
$\mathcal{E}_k^{l+1}$ to $\F_{k,l}$ coincides with
$\mathcal{E}_k^l$, the sequence 
$\{ \mathcal{E}_k^l \}_{k\le l}$
gives rise to an expectation 
${\mathcal{E}}_k^{\frak L} : \F_{k,{\frak L}} \longrightarrow 
\D_{k,{\frak L}}$ for $k \in \N$.
 Similarly the above sequence 
$\{ {\mathcal{E}}_k^{\frak L} \}_{k \in \N}$  
 of expectations yields an expectation
${\mathcal{E}}^{\frak L} : \FL \longrightarrow \DL$
such that 
the restriction of ${\mathcal{E}}^{\frak L}$ 
to $\F_{k,{\frak L}} $ coincides with
${\mathcal{E}}_k^{\frak L}$
for $k \in \N$.
 
For $X \in \DL^\prime \cap \FL$, we know that
${\mathcal{E}}_k^{\frak L}(X) = X$
for $k \in \N$, so that 
${\mathcal{E}}^{\frak L}(X) = X$.
Since ${\mathcal{E}}^{\frak L}(X) \in \DL$, we have 
${\mathcal{E}}^{\frak L}(X) \in \DL$.
\end{proof}
We thus have the following proposition. 
\begin{proposition}[{cf. \cite[Lemma 3.1, Lemma 3.2]{MaJOT2000}}]\label{prop:relativecom}
Let ${\frak L}$ be a left-resolving $\lambda$-graph system
satisfying condition (I). 
Then we have
$$\DLam^\prime \cap \OL = \DL.$$
\end{proposition}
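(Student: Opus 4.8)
The plan is to obtain the proposition as an immediate consequence of the two inclusions established in Lemma \ref{lem:3.6}, after first disposing of the easy reverse inclusion. Concretely, I would split the asserted equality $\DLam^\prime \cap \OL = \DL$ into its two set-theoretic containments and treat each separately, since one of them is essentially trivial and the other is exactly the content of the preceding lemma.

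For the inclusion $\DL \subseteq \DLam^\prime \cap \OL$, I would argue purely formally. By construction $\DL$ is a $C^*$-subalgebra of $\OL$, so $\DL \subseteq \OL$ holds automatically. Moreover $\DL$ is commutative, being canonically isomorphic to $C(X_{\frak L})$, and the generators $S_\mu S_\mu^*$ of $\DLam$ lie in $\DL$, since the relation $\sum_{i=1}^{m(l)} E_i^l = 1$ gives $S_\mu S_\mu^* = \sum_{i=1}^{m(l)} S_\mu E_i^l S_\mu^*$. Hence $\DLam \subseteq \DL$, and because $\DL$ is abelian every element of $\DL$ commutes with every element of $\DLam$; that is, $\DL \subseteq \DLam^\prime$. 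Combining the two observations yields $\DL \subseteq \DLam^\prime \cap \OL$.

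For the reverse inclusion $\DLam^\prime \cap \OL \subseteq \DL$, I would chain the two parts of Lemma \ref{lem:3.6}. Let $X \in \DLam^\prime \cap \OL$. Part (i) of the lemma gives $X \in \FL$, whence $X \in \DLam^\prime \cap \FL$; part (ii) then places $X$ in $\DL$. This establishes $\DLam^\prime \cap \OL \subseteq \DL$ and, together with the previous paragraph, completes the proof.

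The essential work—and the only place where the hypotheses that $\frak L$ be left-resolving and satisfy condition (I) are used—is entirely absorbed into Lemma \ref{lem:3.6}, so the proposition itself is a formal corollary and there is no genuine obstacle remaining at this stage. If I had to identify where the real difficulty lies, it is in part (i) of Lemma \ref{lem:3.6}, where condition (I) is invoked through the projections $Q_k^l = \phi_{\frak L}^k(q_k^l)$ to force the off-diagonal Fourier coefficients $X_{\pm\mu}$ of a commutant element to vanish; the present argument merely packages those two lemmas together.
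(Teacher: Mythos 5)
Your proposal is correct and follows essentially the same route as the paper: the paper likewise dismisses the inclusion $\DL \subseteq \DLam^\prime \cap \OL$ as obvious (you merely spell out why, via $\DLam \subseteq \DL$ and commutativity of $\DL$) and obtains the hard inclusion by applying parts (i) and (ii) of Lemma \ref{lem:3.6} in succession. All the substantive work indeed resides in that lemma, exactly as you note.
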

\begin{proof}
 The inclusion relation 
$\DLam^\prime \cap \OL \supset \DL$
is obvious.
For $X \in \DLam^\prime \cap \OL$
by the assertions (i) and (ii) in Lemma \ref{lem:3.6}, we know that 
$X$ belongs to 
$\FL$ and $\DL$
so that $\DLam^\prime \cap \OL \subset \DL$.
\end{proof}

\medskip

\noindent
{\bf 2. The $C^*$-algebras associated with normal subshifts.}

For a normal subshift $\Lambda$,
denote by $\LLmin$ its minimal presentation.

\begin{definition}
The $C^*$-algebra $\OLmin$ associated with the normal subshift
$\Lambda$ is defined by the $C^*$-algebra
 $\mathcal{O}_{\LLmin}$ associated with the minimal $\lambda$-graph system
 $\LLmin$.
\end{definition} 
Let  $(A^{\min}_{l,l+1}, I^{\min}_{l,l+1})_{l\in \Zp}$
be the transition matrix system for the minimal $\lambda$-graph system
$\LLmin$ that is defined before Proposition \ref{prop:lambdaC}. 
Then we have 
\begin{proposition}\label{prop:universalunique}
The $C^*$-algebra $\OLmin$ is the universal unique concrete $C^*$-algebra
generated by partial isometries $S_\alpha$ indexed by symbols 
$\alpha \in \Sigma$ and projections
$E_i^l$ indexed by vertices $v_i^l \in V_l^{\min}$ 
subject to the following operator relations called $(\LLmin)$:
\begin{gather*}
1 =\sum_{\alpha \in \Sigma}S_\alpha S_\alpha^* = \sum_{i=1}^{m(l)} E_i^l, 
\qquad S_\alpha S_\alpha^* E_i^l = E_i^l S_\alpha S_\alpha^*,\\
E_i^l = \sum_{j=1}^{m(l+1)} I^{\min}_{l,l+1}(i,j) E_j^{l+1}, \qquad
S_\alpha^* E_i^l S_\alpha = \sum_{j=1}^{m(l+1)} A^{\min}_{l,l+1}(i,\alpha, j) E_j^{l+1}
\end{gather*}
for $\alpha \in \Sigma,\ i=1, 2,\dots,m(l)$.
\end{proposition}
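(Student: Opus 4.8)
The plan is to obtain the statement as a direct specialization of the general structure theorem for $C^*$-algebras of left-resolving $\lambda$-graph systems (Proposition \ref{prop:lambdaC}) to the particular $\lambda$-graph system ${\frak L} = \LLmin$. First I would recall that, by its very construction reviewed in Section 2, the minimal presentation $\LLmin$ of a normal subshift $\Lambda$ is a left-resolving, predecessor-separated $\lambda$-graph system that presents $\Lambda$. In particular the transition matrix system $(A^{\min}_{l,l+1}, I^{\min}_{l,l+1})_{l\in\Zp}$ appearing in the statement is exactly the transition matrix system of $\LLmin$ defined before Proposition \ref{prop:lambdaC}, so the relations $(\LLmin)$ written in the statement are literally the relations $({\frak L})$ of that proposition with ${\frak L}=\LLmin$.

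Since $\LLmin$ is left-resolving, Proposition \ref{prop:lambdaC} applies verbatim: the $C^*$-algebra $\OLmin := \mathcal{O}_{\LLmin}$ is the universal unital $C^*$-algebra generated by partial isometries $S_\alpha$, $\alpha\in\Sigma$, and projections $E_i^l$, $v_i^l \in V_l^{\min}$, subject to the relations $(\LLmin)$. This already yields the universality half of the assertion. To upgrade universality to the uniqueness (concreteness) half, I would invoke condition (I): by Proposition \ref{prop:normal(I)} the minimal presentation $\LLmin$ of a \emph{normal} subshift satisfies condition (I), this being the only place where the infiniteness of $|\Lambda|$ is used, through Lemma \ref{lem:Cantor} and the launching property of $\lambda$-synchronizing systems. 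Consequently the uniqueness clause of Proposition \ref{prop:lambdaC} applies, so any family of nonzero partial isometries and projections satisfying $(\LLmin)$ generates a $C^*$-algebra canonically isomorphic to $\OLmin$; hence $\OLmin$ is the unique concrete $C^*$-algebra subject to $(\LLmin)$.

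I do not expect a genuine obstacle here: the proposition is a bookkeeping corollary of two already-established facts, namely that $\LLmin$ is left-resolving (so Proposition \ref{prop:lambdaC} gives universality) and that it satisfies condition (I) (so the same proposition gives uniqueness). The only points worth checking carefully are that the relations $(\LLmin)$ coincide with $({\frak L})$ for ${\frak L}=\LLmin$, which is merely a matter of matching the notation for the transition matrices, and that left-resolvingness and predecessor-separation are indeed part of the construction of the canonical $\lambda$-synchronizing $\lambda$-graph system, as recorded earlier in the excerpt.
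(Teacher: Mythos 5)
Your proposal is correct and follows essentially the same route as the paper: the paper's proof consists precisely of invoking Proposition \ref{prop:normal(I)} to conclude that $\LLmin$ satisfies condition (I), and then applying the universality-and-uniqueness clause of Proposition \ref{prop:lambdaC} to ${\frak L}=\LLmin$. Your additional remarks (matching the relations $(\LLmin)$ with $({\frak L})$, and noting that left-resolvingness and predecessor-separation are built into the construction of $\LLmin$) are just the bookkeeping the paper leaves implicit.
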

\begin{proof}
By Proposition \ref{prop:normal(I)}, the $\lambda$-graph system 
$\LLmin$ satisfies condition (I) so that we know that
the $C^*$-algebra $\OLmin$ is the universal unique concrete $C^*$-algebra
generated by partial isometries $S_\alpha$ indexed by symbols 
$\alpha \in \Sigma$ and projections
$E_i^l$ indexed by vertices $v_i^l \in V_l$ 
subject to the operator relations $(\LLmin)$.
\end{proof}
We thus have the following theorem,
that was already seen in \cite{KMAAA2013} and \cite{MaJAMS2013}.
\begin{theorem}
Let $\Lambda$ be a normal subshift.
\begin{enumerate}
\renewcommand{\theenumi}{\roman{enumi}}
\renewcommand{\labelenumi}{\textup{(\theenumi)}}
\item
If $\Lambda$ is $\lambda$-transitive, then the $C^*$-algebra 
$\OLmin$ is simple.
\item If $\Lambda$ is $\lambda$-transitive and satifies $\lambda$-condition (I), 
then the $C^*$-algebra $\OLmin$ is simple and purely infinite.
\end{enumerate}
\end{theorem}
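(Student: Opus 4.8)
The plan is to reduce both assertions to already-recorded structural facts about the $C^*$-algebra of a left-resolving $\lambda$-graph system, by translating the hypotheses on the subshift $\Lambda$ into the corresponding combinatorial properties of its minimal presentation $\LLmin$. First I would record the two ingredients that hold merely because $\Lambda$ is normal: the $\lambda$-graph system $\LLmin$ is left-resolving by construction, and it satisfies condition (I) by Proposition \ref{prop:normal(I)}. Consequently we are in the situation of Proposition \ref{prop:lambdaC} and Proposition \ref{prop:universalunique}, so that $\OLmin=\mathcal{O}_{\LLmin}$ is the unique $C^*$-algebra generated by the partial isometries $S_\alpha$ and vertex projections $E_i^l$ subject to the relations $(\LLmin)$, and its ideal structure is governed by the combinatorics of $\LLmin$.

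For part (i), I would invoke Lemma \ref{lem:2.10}(ii), which identifies $\lambda$-transitivity of $\Lambda$ with $\lambda$-irreducibility of $\LLmin$. Thus $\LLmin$ is a left-resolving, $\lambda$-irreducible $\lambda$-graph system satisfying condition (I), and the simplicity clause of Proposition \ref{prop:lambdaC} (established in \cite{MathScand2005}) applies verbatim to give that $\OLmin$ is simple.

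For part (ii), the additional hypothesis is that $\Lambda$ satisfies $\lambda$-condition (I); by Lemma \ref{lem:lambdacond(I)} this is equivalent to $\LLmin$ satisfying $\lambda$-condition (I), i.e.\ every vertex $v_i^l$ admits two labeled paths $\gamma_1,\gamma_2$ with common source $v_i^l$ and common terminal $v_j^{L'}$ but $\lambda(\gamma_1)\neq\lambda(\gamma_2)$. Writing $\mu_a=\lambda(\gamma_a)$, I would form the partial isometries $W_a=E_i^l S_{\mu_a} E_j^{L'}$. Using $S_{\mu_a}^*E_i^l S_{\mu_a}\ge E_j^{L'}$ one checks $W_a^*W_a=E_j^{L'}$, while $W_1^*W_2=0$ (because $S_{\mu_1}^*S_{\mu_2}=0$ for distinct words of equal length, together with the commutation of $E_i^l$ with each $S_\mu S_\mu^*$), so the range projections $W_1W_1^*,W_2W_2^*\le E_i^l$ are orthogonal and each Murray--von Neumann equivalent to $E_j^{L'}$. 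Combined with the $\lambda$-irreducibility of part (i), which lets one dominate $E_i^l$ by a subprojection of $E_j^{L'}$, this exhibits each vertex projection $E_i^l$ as properly infinite; since in the simple algebra $\OLmin$ every nonzero hereditary subalgebra contains a corner equivalent to some $E_i^l$, one concludes that $\OLmin$ is purely infinite. The same conclusion is also available by citing directly the purely infinite clause of Proposition \ref{prop:lambdaC}, and indeed the statement as a whole is already present in \cite{KMAAA2013} and \cite{MaJAMS2013}.

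The translation steps via Lemma \ref{lem:2.10} and Lemma \ref{lem:lambdacond(I)} are routine. I expect the substantive difficulty to lie entirely in the purely infinite assertion: namely in verifying that the orthogonal range projections $W_1W_1^*,W_2W_2^*$ genuinely witness $E_i^l\oplus E_i^l\precsim E_i^l$, and that this local proper infiniteness propagates to every nonzero hereditary subalgebra. This propagation is precisely the comparison-of-projections argument that relies on $\lambda$-irreducibility and is carried out in \cite{MathScand2005}, so the cleanest write-up defers the technical core to that reference once $\lambda$-condition (I) has been transferred to $\LLmin$.
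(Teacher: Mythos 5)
Your proposal is correct and takes essentially the same route as the paper: the paper's proof consists exactly of transferring $\lambda$-condition (I) to $\LLmin$ via Lemma \ref{lem:lambdacond(I)} and citing \cite{MathScand2005}, with part (i) deferred to \cite{KMAAA2013} and \cite{MaJAMS2013}. Your extra steps --- condition (I) from Proposition \ref{prop:normal(I)}, the translation of $\lambda$-transitivity into $\lambda$-irreducibility via Lemma \ref{lem:2.10}(ii), and the sketch of the properly infinite vertex projections --- merely unpack the content of those citations and of Proposition \ref{prop:lambdaC}, with the technical comparison argument still deferred to \cite{MathScand2005} just as in the paper.
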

\begin{proof}
(i) The assertion was already seen in \cite{KMAAA2013} and \cite{MaJAMS2013}.

(ii) By Lemma \ref{lem:lambdacond(I)}, the $\lambda$-graph system 
$\LLmin$ satisfies $\lambda$-condition (I).
By \cite{MathScand2005}, the $C^*$-algebra $\OLmin$ is simple and purely infinite.
\end{proof}
The following lemma is useful in our further discussions.
\begin{lemma}[{\cite[Proposition 3.3]{MaJAMS2013}}]
Let $\Lambda$ be a normal subshift.
For a vertex $v_i^l \in V_l^\min$ in $\LLmin$,
there exists $\mu \in S_l(\Lambda)$ such that 
$E_i^l \ge S_\mu S_\mu^*$ in  $\OLmin$.
That is, if $v_i^l$ launches $\mu$, 
the inequality
$E_i^l \ge S_\mu S_\mu^*$ holds. 
\end{lemma}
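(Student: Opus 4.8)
The plan is to produce the word $\mu$ from the $\lambda$-synchronizing structure of $\LLmin$ and then to derive $E_i^l \ge S_\mu S_\mu^*$ directly from the defining relations $(\LLmin)$ of Proposition \ref{prop:universalunique}. The inequality will in fact hold for \emph{any} word launched by $v_i^l$; the role of normality is only to guarantee that such a launched word may be chosen $l$-synchronizing, which is what makes the statement $\mu \in S_l(\Lambda)$ available.

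First I would fix the launched word. Since $\LLmin = {\frak L}^{\lambda(\Lambda)}$ is the canonical $\lambda$-synchronizing $\lambda$-graph system, it is $\lambda$-synchronizing, so every vertex of $V^{\min}$ launches some admissible word. As the level-$l$ vertices of $\LLmin$ are the $l$-past equivalence classes of $S_l(\Lambda)$ and $\Lambda$ is $\lambda$-synchronizing in the sense of Definition \ref{def:synchro}, one may select a word launched by $v_i^l$ that is itself $l$-synchronizing; fix such a $\mu \in B_m(\Lambda) \cap S_l(\Lambda)$.

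Next I would run the operator computation. Writing $S_\mu = S_{\mu_1}\cdots S_{\mu_m}$ and $P_\mu := S_\mu S_\mu^*$, the element $P_\mu$ is a projection lying in the diagonal $\DLmin$ (it equals $S_\mu E_1^0 S_\mu^*$ with $E_1^0 = 1$). Iterating the relation $S_\alpha^* E_j^l S_\alpha = \sum_{k} A^{\min}_{l,l+1}(j,\alpha,k) E_k^{l+1}$ along $\mu$ gives
\begin{equation*}
S_\mu^* E_j^l S_\mu = \sum_{k} A^{\min}_{l,l+m}(j,\mu,k)\, E_k^{l+m}, \qquad j=1,\dots,m(l).
\end{equation*}
Because $v_i^l$ launches $\mu$, condition (ii) of launching says no vertex of $V_l^{\min}$ other than $v_i^l$ is left by $\mu$, so $A^{\min}_{l,l+m}(j,\mu,k)=0$ for every $k$ whenever $j\ne i$; hence $S_\mu^* E_j^l S_\mu = 0$ for all $j \ne i$. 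Conjugating by $S_\mu$ yields $P_\mu E_j^l P_\mu = S_\mu (S_\mu^* E_j^l S_\mu) S_\mu^* = 0$ for $j \ne i$.

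Finally, since $P_\mu \in \DLmin$ commutes with each projection $E_j^l$ (every element of the diagonal commutes with the subalgebra generated by the $E_i^l$, as recorded for general left-resolving $\lambda$-graph systems) and $P_\mu$ is a projection, we have $P_\mu E_j^l P_\mu = E_j^l P_\mu$, so $E_j^l P_\mu = 0$ for every $j\ne i$. Summing over $j \ne i$ and using $\sum_{j=1}^{m(l)} E_j^l = 1$ gives $(1-E_i^l)P_\mu = 0$, i.e. $E_i^l P_\mu = P_\mu$; as $E_i^l$ and $P_\mu$ are commuting projections this forces $P_\mu \le E_i^l$, which is the assertion. The main obstacle I anticipate is the first step: verifying that the launched word can be taken $l$-synchronizing rather than merely admissible, which genuinely uses normality and the fine structure of $\LLmin$ from \cite{MaJAMS2013}. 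Once $\mu$ is in hand, the remaining computation is routine, resting only on the iterated transition relation and the commutation of the diagonal with the $E_i^l$.
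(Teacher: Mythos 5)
Your proposal cannot be checked against an internal argument, because the paper gives none: this lemma is stated as a quotation of \cite[Proposition 3.3]{MaJAMS2013}. Judged on its own merits, your proof is correct. The algebraic half is complete: iterating the relation $S_\alpha^* E_j^l S_\alpha = \sum_{k} A^{\min}_{l,l+1}(j,\alpha,k) E_k^{l+1}$ of Proposition \ref{prop:universalunique} along $\mu$, using condition (ii) of the launching definition to get $S_\mu^* E_j^l S_\mu = 0$ for $j \ne i$, and then combining this with $\sum_{j=1}^{m(l)} E_j^l = 1$ to obtain $(1-E_i^l)S_\mu S_\mu^* = 0$, is a valid derivation from the relations $(\LLmin)$. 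In fact the appeal to commutation of $\D_{\LLmin}$ with the $E_j^l$ is not even needed: writing $P_\mu = S_\mu S_\mu^*$, the identity $P_\mu E_j^l P_\mu = 0$ already forces $\| E_j^l P_\mu \|^2 = \| P_\mu E_j^l P_\mu \| = 0$. The only deferred point is your first step, that $v_i^l$ launches some word lying in $S_l(\Lambda)$; this is indeed the content of the cited reference, but it can be filled with the tools of Section 2 of this paper. Writing $v_i^l = [\mu]_l$ with $\mu \in S_l(\Lambda)$, the edge structure of ${\frak L}^{\lambda(\Lambda)}$ shows that any path labeled $\mu$ ending at a vertex $[\omega]_{l+m}$ must begin at $[\mu\omega]_l$, and $[\mu\omega]_l = [\mu]_l$ because $\Gamma_l^-(\mu\omega) = \Gamma_l^-(\mu)$ for every $\omega \in \Gamma_*^+(\mu)$; at least one such path exists by Definition \ref{def:synchro}, so $[\mu]_l$ launches its own representative $\mu$. (Alternatively, if $v_i^l$ launches an arbitrary word $\omega$, Definition \ref{def:synchro} supplies $\nu$ with $\omega\nu \in S_l(\Lambda)$; then $v_i^l$ also launches $\omega\nu$ and $S_{\omega\nu}S_{\omega\nu}^* \le S_\omega S_\omega^* \le E_i^l$.) So there is no genuine gap: your argument is a correct, essentially self-contained replacement for the external citation, with the operator inequality carried by exactly the computation one would expect.
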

The above algebraic property of the 
$C^*$-algebra $\OLmin$ characterizes the $C^*$-algebra $\OL$ to be $\OLmin$.

We note that the minimal $\lambda$-graph system 
$\LLmin$ is predecessor-separated, so that 
the projections $E_i^l$ are written in terms of the partial isometries
$S_\alpha, \alpha \in \Sigma$ as in \eqref{eq:eil}.
Hence $C^*$-algebra $\OLmin$ is generated by only the finite family of the partial isometries $S_\alpha, \alpha \in \Sigma$.

We will see  that irreducible sofic shifts $\Lambda$ 
such that $\Lambda$ is not finite as a set satisfy the condition
(ii) in the above theorem.   
We will study more detail in Section 4.

Recall that the $C^*$-algebras
$\D_{\LLmin}$ and $\DLam$ are both commutative $C^*$-subalgebras of $\OLmin$
defined by
\begin{align*}
\D_{\LLmin} =& C^*(S_\mu E_i^l S_\mu^* : \mu \in B_*(\Lambda), i=1,2,\dots, m(l), l\in \Zp), \\
\DLam =& C^*(S_\mu S_\mu^* : \mu \in B_*(\Lambda)).
\end{align*}
The former is isomorphic to $C(X_{\LLmin})$, and the latter is isomorphic to $C(X_\Lambda)$.
The natural factor map $\pi_{\frak L}: X_{\LLmin} \longrightarrow X_\Lambda$ induces the inclusion
$$
D_\Lambda (=C(X_\Lambda)) \subset \D_\LLmin (=C(X_\LLmin)).
$$ 
Since the minimal $\lambda$-graph system $\LLmin$
of a normal subshift $\Lambda$ satisfies condition (I) by 
Proposition \ref{prop:normal(I)},
we have the following proposition.
\begin{proposition}\label{prop:relative}
Let $\Lambda$ be a normal subshift and 
$\LLmin$ be its minimal presentation.
Then we have
$$
\DLam^\prime \cap \OLmin = \D_{\LLmin}.
$$
\end{proposition}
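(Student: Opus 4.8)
The plan is to recognize this as a direct specialization of Proposition \ref{prop:relativecom}, which asserts $\DLam^\prime \cap \OL = \DL$ for \emph{any} left-resolving $\lambda$-graph system ${\frak L}$ satisfying condition (I). I would simply apply it with ${\frak L} = \LLmin$, after checking the two hypotheses. First, $\LLmin$ is left-resolving: this is built into its construction as the canonical $\lambda$-synchronizing $\lambda$-graph system ${\frak L}^{\lambda(\Lambda)}$, which is left-resolving and predecessor-separated. Second, $\LLmin$ satisfies condition (I): this is exactly the content of Proposition \ref{prop:normal(I)}, proved via the fact that $X_\Lambda$ is a Cantor set (Lemma \ref{lem:Cantor}) together with the launching property of vertices in the minimal presentation. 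With ${\frak L} = \LLmin$ the symbols specialize to $\OL = \OLmin$ and $\DL = \D_{\LLmin}$, so the conclusion of Proposition \ref{prop:relativecom} reads precisely $\DLam^\prime \cap \OLmin = \D_{\LLmin}$.

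The inclusion $\D_{\LLmin} \subset \DLam^\prime \cap \OLmin$ is immediate, since $\D_{\LLmin}\cong C(X_{\LLmin})$ is commutative and contains $\DLam$; thus the entire force of the statement lies in the reverse inclusion, and that is where the genuine work resides, inside Lemma \ref{lem:3.6}, which feeds Proposition \ref{prop:relativecom}. There the argument is a Fourier-coefficient analysis with respect to the gauge action $\rho^{\frak L}$: for $X \in \DLam^\prime \cap \OL$ one shows every off-diagonal spectral component $X_\mu = E(S_\mu^* X)$ vanishes, so that $X$ already lies in the fixed-point algebra $\FL$, and then that $\DLam^\prime \cap \FL \subset \DL$.

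The main obstacle — were one to reprove rather than quote — is this first step, and it is precisely where condition (I) enters: it supplies the projections $Q_k^l = \phi_{\frak L}^k(q_k^l)$ in $\DLam$ which commute with elements of $\F_{k,l}$ while being orthogonal to their gauge-translates $\phi_{\frak L}^{|\mu|}(Q_k^l)$, forcing $\|X_{k,l} Q_k^l\|$ to simultaneously equal $\|X_{k,l}\|$ and tend to $0$, a contradiction unless $X_\mu = 0$. Consequently no new argument is needed beyond verifying the two hypotheses and invoking Proposition \ref{prop:relativecom}; the proof is short, and all its difficulty is inherited from the already-established lemmas.
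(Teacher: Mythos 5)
Your proof is correct and is exactly the paper's argument: the paper derives Proposition \ref{prop:relative} by noting that $\LLmin$ satisfies condition (I) (Proposition \ref{prop:normal(I)}) and is left-resolving, and then specializing Proposition \ref{prop:relativecom} to ${\frak L} = \LLmin$. Your additional remarks on where the real work sits (the gauge-action Fourier analysis in Lemma \ref{lem:3.6}) accurately reflect the paper's structure as well.
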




\section{Irreducible sofic shifts}

Let $\Lambda$ be  an irreducible sofic shift over alphabet $\Sigma$.
An irreducible sofic shift is defined by using an irreducible finite directed labeled graph.
It is realized as a factor of an irreducible shift of finite type.
The class of irreducible sofic shifts includes the class of irreducible shifts of finite type
 (see \cite{Fischer}, \cite{Kitchens}, \cite{Kr84}, \cite{Kr87}, \cite{LM}, \cite{Weiss}, etc.).
As in \cite{KMAAA2013}, \cite{MaIsrael2013} and \cite{MaJAMS2013}, 
irreducible sofic shifts are $\lambda$-synchronizing. 
Let
$G_\Lambda^F =(V_\Lambda^F, E_\Lambda^F, \lambda_\Lambda^F)$ 
be its irreducible left-resolving predecessor-separated finite labeled graph over $\Sigma$
that presents $\Lambda$,
where $(V_\Lambda^F, E_\Lambda^F)$ 
is a finite directed graph with vertex set $V_\Lambda^F$
and edge set $E_\Lambda^F$, 
and $\lambda_\Lambda^F: E_\Lambda^F \longrightarrow \Sigma$
is a labeling map.
It is well-known that 
such finite labeled graph always exists for $\Lambda$. 
It is minimal and unique up to graph isomorphism (\cite{Fischer}, \cite{LM}). 
The labeled graph is called the minimal left-resolving presentation of an irreducible 
sofic shift, or the left Fischer cover. 
Let 
$V_\Lambda^F =\{ v_1,\dots,v_N \}$ and 
$E_\Lambda^F =\{ e_1,\dots,e_M \}.$  
We will first define a labeled Bratteli diagram
$(V, E,\lambda)$ over $\Sigma$ as follows.
Let $V_0 =\{ v_0\}$ a singleton, and
$V_l = \{ v_1,\dots,v_N \}$ for $l \in \N$.
Let
$E_{0,1} = \{ f^0_1, \dots, f^0_M \}$
such that $s(f_i^0) = v_0, t(f^0_i) = t(e_i), \lambda(f^0_i) = \lambda^F(e_i)$    
for $i=1,2,\dots, M,$
and
$E_{l,l+1} = \{ f^l_1, \dots, f^l_M \}$ for $l\in \N$
such that $s(f^l_i) = s(e_i), t(f^l_i) = t(e_i), \lambda(f^l_i) = \lambda^F(e_i)$    
for $i=1,2,\dots, M.$

For $ v_i\in V_1$, put let $\Gamma_1^-(v_i)$ be its predecessor set for the vertex $v_i$,
that is defined by
$$
\Gamma_1^-(v_i) 
=\{ \lambda(f_n^0) \in \Sigma \mid t(f_n^0) = v_i\}, \qquad i=1,2,\dots, N.
$$
If $\Gamma^-_1(v_i) = \Gamma^-_1(v_j)$,
then the two vertices $v_i$ and $v_j$ are identified with each other in $V_1$,
and we have a new vertex set written $V^F_1$.
The sources $\{ s(f^0_1), \dots, s(f^0_M) \}$
of edges $\{ f^0_1, \dots, f^0_M \}$ are identified
following the identification in $V_1$, 
so that we obtain a new edge set 
written $E^F_{0,1}$.
Similarly, for $ v_i, v_j \in V_2,$
 if $\Gamma^-_2(v_i) =\Gamma^-_2(v_j)$,
then the two vertices
$v_i$ and $v_j$ are identified in $V_2$,
and   
the sources $\{ s(f^1_1), \dots, s(f^1_M) \}$
of edges $\{ f^1_1, \dots, f^1_M \}$ are identified
following the identification in $V_2$, so that we obtain a new edge set 
written $E^F_{1,2}$.
Like this way, we continue this procedure to get new vertex sets
$V^F_l, l=0,1,2,\dots $ and edge sets
$E^F_{l,l+1}, l=0,1,2,\dots. $ 
 Since $\Lambda$ is sofic and the original labeled graph
$G_\Lambda^F =(V_\Lambda^F, E_\Lambda^F, \lambda_\Lambda^F)$ 
is predecessor-separated,
there exists $K \in \N$ such that 
$\Gamma^-_k(v_i) \ne \Gamma^-_k(v_j)$ in $B_k(\Lambda)$
 for all $k \ge K$ and $i,j=1,2,\dots,N$ with $i\ne j,$
so that we have
$$
V_l^F = V_l (=V^F_\Lambda), \qquad 
E^F_{l,l+1} = E_{l,l+1} (=E_\Lambda^F)  \quad \text{ for all } l \ge K.
$$
We thus have a labeled Bratteli diagram
$(V^F_l, E^F_{l,l+1}, \lambda^F_{l,l+1})_{l\in \Zp}$ 
over $\Sigma$.
We let
$V^F_l = \{v_1^l, \dots,v_{m(l)}^l\}$.
Since $\Gamma^-_{l+1}(v_i) = \Gamma^-_{l+1}(v_j)$
implies 
$\Gamma^-_{l}(v_i) = \Gamma^-_{l}(v_j)$,
we have a natural surjective map
$V_{l+1}^F \longrightarrow V_l^F$ written $\iota^F_{l+1,l}$
for $l \le K$.
For $l\ge K$, the identity map 
$V^F_{l+1} \longrightarrow V^F_l$ written $\iota^F_{l+1,l}$
is defined.
We thus have a $\lambda$-graph system
$$
{\frak L}_\Lambda^F =(V^F, E^F, \lambda^F, \iota^F)
$$
that presents the original sofic shift $\Lambda$.
As the original labeled graph
$G_\Lambda^F =(V_\Lambda^F, E_\Lambda^F, \lambda_\Lambda^F)$
is minimal, left-resolving and hence predecessor-separated,
our $\lambda$-graph system
$
{\frak L}_\Lambda^F =(V^F, E^F, \lambda^F, \iota^F)
$
is left-resolving and predecessor-separated and presents $\Lambda$.
And also, every vertex $v_i$ of the directed graph 
$G_\Lambda^F $ has a word $\mu$ such that any directed labeled 
path labeled $\mu$ in $G_\Lambda^F $ must leave the vertex $v_i$
(cf. \cite[Proposition 3.3.17]{LM}), so that every vertex of
the $\lambda$-graph system ${\frak L}_\Lambda^F$
launches some word (see \cite[Section 3]{MaIsrael2013}).
Therefore the $\lambda$-graph system ${\frak L}_\Lambda^F$
is  $\lambda$-synchronizing. 
As $\Lambda$ is irreducible,
${\frak L}_\Lambda^F$ is $\iota$-irreducible
by Lemma \ref{lem:2.10} (i).
Hence ${\frak L}_\Lambda^F$ is nothing but the minimal $\lambda$-graph system 
${\frak L}^{\min}$ of $\Lambda$.
Therefore we have
\begin{proposition}
For an irreducible sofic shift $\Lambda$, 
let 
${\frak L}^{\min} =(V^{\min}, E^{\min}, \lambda^{\min}, \iota^{\min})$
be the minimal $\lambda$-graph system for $\Lambda$.
Let $G_\Lambda^F =(V^F, E^F,\lambda^F)$
be its Fischer cover graph for $\Lambda$.
Then there exists $L \in \N$ such that 
\begin{equation*}
V^{\min}_l = V_\Lambda^F, \quad 
E^{\min}_{l,l+1} = E_\Lambda^F, \quad
\lambda^{\min} =\lambda_\Lambda^F, \quad
\iota^{\min}|_{V_l^{\min}} = \id  
\end{equation*}
for all $l \ge L$.
\end{proposition}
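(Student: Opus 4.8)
The plan is to observe that the substance of the argument is already contained in the construction preceding the statement, and that the proposition merely records the level at which the constructed $\lambda$-graph system ${\frak L}_\Lambda^F=(V^F,E^F,\lambda^F,\iota^F)$ ceases to perform any vertex identifications and becomes a verbatim stationary copy of the Fischer cover $G_\Lambda^F$. First I would recall from the discussion immediately above that ${\frak L}_\Lambda^F$ is left-resolving, predecessor-separated, $\lambda$-synchronizing, and $\iota$-irreducible (the last because $\Lambda$ is irreducible, via Lemma \ref{lem:2.10}(i)); by the uniqueness of the canonical $\lambda$-synchronizing $\lambda$-graph system (the uniqueness proposition from \cite{MaJAMS2013} quoted earlier) this forces ${\frak L}_\Lambda^F={\frak L}^{\min}$. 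Consequently it suffices to exhibit $L$ for ${\frak L}_\Lambda^F$ itself.

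Next I would pin down the integer $L$ from the construction. The only modifications made in passing from the raw Bratteli unfolding $(V_l,E_{l,l+1})$ to $(V_l^F,E_{l,l+1}^F)$ are the collapses of vertices at level $l$ that share the same $l$-step predecessor set $\Gamma_l^-$. The decisive input is that $G_\Lambda^F$, being the left Fischer cover, is predecessor-separated, so distinct vertices $v_i\ne v_j$ have distinct predecessor sets. Separation is monotone in the level: if $\Gamma_{l+1}^-(v_i)=\Gamma_{l+1}^-(v_j)$, then truncating the first symbol of each length-$(l+1)$ predecessor yields $\Gamma_l^-(v_i)=\Gamma_l^-(v_j)$, so by contraposition once two vertices are separated at some level they remain separated at all higher levels. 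Since $V_\Lambda^F=\{v_1,\dots,v_N\}$ is finite there are only finitely many pairs, and taking $K$ to be the maximum over all pairs of the first level at which they separate yields a single $K\in\N$ such that $\Gamma_k^-(v_i)\ne\Gamma_k^-(v_j)$ for all $k\ge K$ and all $i\ne j$.

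I would then conclude as follows. For every $l\ge K$ no identification is performed, so $V_l^F=\{v_1,\dots,v_N\}=V_\Lambda^F$, the edge set $E_{l,l+1}^F$ is the unaltered copy $E_\Lambda^F$ carrying the labeling $\lambda^F=\lambda_\Lambda^F$, and since the level-$l$ and level-$(l+1)$ vertex sets are literally the same stationary copy of $V_\Lambda^F$ once collapsing has ceased, the structure map satisfies $\iota^F|_{V_l^F}=\id$. Setting $L=K$ and substituting the identification ${\frak L}^{\min}={\frak L}_\Lambda^F$ from the first step gives the four displayed equalities for all $l\ge L$.

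The one point I would treat with genuine care is the existence of the uniform separation level $K$: predecessor-separation of a finite labeled graph is a finite-level distinguishability statement (a Myhill--Nerode-type minimization), and it is the combination of monotonicity of separation with the finiteness of $V_\Lambda^F$ that upgrades the pair-by-pair finite separation into a single uniform threshold valid for all larger levels. Everything else is bookkeeping within the construction together with the uniqueness theorem already at our disposal.
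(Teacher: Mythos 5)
Your proposal is correct and follows essentially the same route as the paper: the paper's proof of this proposition is precisely the construction of ${\frak L}_\Lambda^F$ preceding it (unfold the Fischer cover into a labeled Bratteli diagram, collapse predecessor-equivalent vertices at low levels, note stabilization at a finite level $K$, verify the system is left-resolving, predecessor-separated, $\lambda$-synchronizing and $\iota$-irreducible, and invoke the uniqueness theorem to conclude ${\frak L}_\Lambda^F = {\frak L}^{\min}$), which is exactly your argument with $L = K$. Your only addition is the explicit monotonicity-plus-finiteness justification of the uniform separation level $K$, a detail the paper merely asserts, and that justification is sound since irreducibility of $G_\Lambda^F$ guarantees every predecessor path extends backwards.
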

Namely, 
the minimal $\lambda$-graph system 
${\frak L}^{\min}$
for an irreducible sofic shift $\Lambda$ 
is identified with its left Fischer cover.

Let $\Lambda$ be an irreducible sofic shift such that $\Lambda$ is not finite,
so that $\Lambda$ is a normal subshift.
Let $G_\Lambda^F =(V_\Lambda^F, E_\Lambda^F, \lambda_\Lambda^F)$
be its left Fischer cover graph with vertex set
$V_\Lambda^F =\{ v_1,\dots,v_N\}$.
Consider the following matrix :
\begin{equation} \label{eq:Aialphaj}
A(i,\alpha,j) 
= 
\begin{cases}
1 & \text{ if there exist } e \in E ; \lambda(e) = \alpha, s(e) = v_i, t(e) = v_j, \\
0 & \text{ otherwise. }
\end{cases}
\end{equation}
Let $S_\alpha, \alpha \in \Sigma$ and
$E_i, i=1,2,\dots,N$ be partial isometries and projections
respectively 
satisfying the following operator relations: 
\begin{equation}
1 =\sum_{\alpha \in \Sigma}S_\alpha S_\alpha^* = \sum_{i=1}^{N} E_i, 
\qquad S_\alpha S_\alpha^* E_i = E_i S_\alpha S_\alpha^*, 
\qquad
S_\alpha^* E_i S_\alpha = \sum_{j=1}^{N} A(i,\alpha, j) E_j \label{eq:SalphaEi}
\end{equation}
for $\alpha \in \Sigma,\ i=1, 2,\dots,N$.
Let us denote by ${\mathcal{O}}_{G^F_\Lambda}$
the universal $C^*$-algebra generated by 
$S_\alpha, \alpha \in \Sigma$ and
$E_i, i=1, 2, \dots,N$ satisfying the above relations.
We put 
\begin{equation*}
\widehat{\Sigma} 
=\{(\alpha,i) \in \Sigma \times \{1,2,\dots,N\} \mid
\text{ there exists } e \in E ; \lambda(e)= \alpha, t(e) = v_i \}.
\end{equation*}
For $(\alpha,i), (\beta,j) \in \widehat{\Sigma}$,
by using the matrix $A$ given by \eqref{eq:Aialphaj}, 
we define a matrix
\begin{equation*}
\widehat{A}((\alpha,i), (\beta,j)) =
\sum_{k=1}^N  A(k,\alpha,i) A(i,\beta,j).
\end{equation*}
Since the labeled graph $G_\Lambda^F$ is left-resolving,
the $(\alpha,i), (\beta,j)$-entry 
$\widehat{A}((\alpha,i), (\beta,j))
$
of the matrix 
$\widehat{A}$ is one or zero.
Let us denote by 
${\mathcal{O}}_{\widehat{A}}$
the Cuntz-Krieger algebra for the matrix $\widehat{A}$. 
We then have the following proposition.
\begin{proposition}\label{prop:4.2}
Let $\Lambda$ be an irreducible sofic shift such that $\Lambda$ is infinite.
Let $G_\Lambda^F =(V_\Lambda^F, E_\Lambda^F, \lambda_\Lambda^F)$
be its left Fischer cover graph.
Let $\LLmin$ be its minimal presentation of $\lambda$-graph system for the normal subshift
$\Lambda$.
Then the $C^*$-algebra
$\OLmin$ is a simple purely infinite $C^*$-algebra that 
is isomorphic to the Cuntz-Krieger algebra 
${\mathcal{O}}_{\widehat{A}}$
for the matrix  $\widehat{A}$. 
\end{proposition}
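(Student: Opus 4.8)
The plan is to reduce $\OLmin$ to a Cuntz--Krieger algebra in two stages: first identify $\OLmin$ with the universal $C^*$-algebra $\mathcal{O}_{G_\Lambda^F}$ generated by $S_\alpha,E_i$ subject to \eqref{eq:SalphaEi}, and then identify the latter with ${\mathcal{O}}_{\widehat{A}}$ by exhibiting an explicit Cuntz--Krieger $\widehat{A}$-family. By the previous proposition the minimal $\lambda$-graph system $\LLmin$ stabilizes above some level $L$ to the finite left Fischer cover, so that $V_l^{\min}=V_\Lambda^F$, $E_{l,l+1}^{\min}=E_\Lambda^F$ and $\iota^{\min}=\id$ for all $l\ge L$, whence $A^{\min}_{l,l+1}=A$ and $I^{\min}_{l,l+1}=\id$ there. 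In the relations $(\LLmin)$ of Proposition \ref{prop:universalunique}, the identity $E_i^l=\sum_j I^{\min}_{l,l+1}(i,j)E_j^{l+1}$ forces $E_i^l=E_i^{l+1}$ for $l\ge L$; setting $E_i:=E_i^L$, the level-$L$ relations are exactly \eqref{eq:SalphaEi}, while the finitely many lower levels contribute only projections determined by the $E_i$ through $\iota$ and relations that are consequences of the local property. Hence $\OLmin\cong\mathcal{O}_{G_\Lambda^F}$.

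For the second stage I would set $T_{(\alpha,i)}:=S_\alpha E_i$ for $(\alpha,i)\in\widehat{\Sigma}$ and show this is a Cuntz--Krieger family for $\widehat{A}$. The key computations, all from \eqref{eq:SalphaEi}, are the following. Summing $S_\alpha^*E_k S_\alpha=\sum_j A(k,\alpha,j)E_j$ over $k$ gives $S_\alpha^*S_\alpha=\sum_{j:(\alpha,j)\in\widehat{\Sigma}}E_j$, so that $S_\alpha E_i=0$ exactly when $(\alpha,i)\notin\widehat{\Sigma}$; consequently $T_{(\alpha,i)}^*T_{(\alpha,i)}=E_i S_\alpha^*S_\alpha E_i=E_i$ independently of $\alpha$, and the projections $T_{(\alpha,i)}T_{(\alpha,i)}^*=S_\alpha E_i S_\alpha^*$ are mutually orthogonal with $\sum_{(\alpha,i)\in\widehat{\Sigma}}S_\alpha E_i S_\alpha^*=\sum_\alpha S_\alpha(\sum_i E_i)S_\alpha^*=\sum_\alpha S_\alpha S_\alpha^*=1$. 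Finally, conjugating $S_\beta^*E_iS_\beta=\sum_j A(i,\beta,j)E_j$ by $S_\beta$ and summing over $\beta$ yields $\sum_{\beta,j}A(i,\beta,j)S_\beta E_j S_\beta^*=E_i$; since $\widehat{A}((\alpha,i),(\beta,j))=A(i,\beta,j)$ whenever $(\alpha,i)\in\widehat{\Sigma}$ (the factor $\sum_k A(k,\alpha,i)$ equals $1$ by left-resolvingness), this is precisely $T_{(\alpha,i)}^*T_{(\alpha,i)}=\sum_{(\beta,j)}\widehat{A}((\alpha,i),(\beta,j))T_{(\beta,j)}T_{(\beta,j)}^*$.

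By the universal property of ${\mathcal{O}}_{\widehat{A}}$ the family $\{T_{(\alpha,i)}\}$ induces a $*$-homomorphism ${\mathcal{O}}_{\widehat{A}}\to\mathcal{O}_{G_\Lambda^F}\cong\OLmin$, and it is surjective because $S_\alpha=\sum_{i:(\alpha,i)\in\widehat{\Sigma}}T_{(\alpha,i)}$ and $E_i=T_{(\alpha,i)}^*T_{(\alpha,i)}$ recover all the generators (every vertex of the irreducible cover has an incoming edge, so a suitable $\alpha$ exists). To see that it is an isomorphism, and to obtain simplicity and pure infiniteness at the same time, I would observe that $\widehat{A}$ is the edge matrix of the strongly connected graph $G_\Lambda^F$, hence irreducible, and is not a permutation matrix since $\Lambda$ is infinite; thus $\widehat{A}$ satisfies Cuntz--Krieger's condition (I), so ${\mathcal{O}}_{\widehat{A}}$ is simple and purely infinite, and the nonzero homomorphism above is automatically injective.

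The main obstacle will be the bookkeeping of the first stage — making rigorous that passing from the full level data of $(\LLmin)$ to the single finite relation set \eqref{eq:SalphaEi} introduces no collapse, i.e. that the lower-level generators and relations are genuinely redundant — together with getting the support index set right in the second stage, namely that the vanishing $S_\alpha E_i=0$ off $\widehat{\Sigma}$ matches exactly the nonzero entries of $\widehat{A}$. Once these are settled, the verification of the two Cuntz--Krieger relations is the short computation indicated above.
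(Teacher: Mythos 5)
Your proposal is correct and follows essentially the same route as the paper: both identify $\OLmin$ with the universal $C^*$-algebra ${\mathcal{O}}_{G^F_\Lambda}$ on the relations \eqref{eq:SalphaEi}, and then pass to ${\mathcal{O}}_{\widehat{A}}$ via the very same family $S_\alpha E_i$, with the same computations verifying the Cuntz--Krieger relations and the same recovery of $S_\alpha, E_i$ from that family. The only differences are cosmetic: you spell out the level-stabilization argument that the paper compresses into an appeal to the universality and uniqueness of $\OLmin$, and you get injectivity from simplicity of ${\mathcal{O}}_{\widehat{A}}$ (irreducibility of $\widehat{A}$ plus Cuntz--Krieger condition (I), which holds since $\Lambda$ is infinite) where the paper implicitly invokes the uniqueness theorem for Cuntz--Krieger algebras --- both rest on the same hypothesis.
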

\begin{proof}
By  the universality and the uniqueness of the $C^*$-algebra
$\OLmin$ for the canonical generating partial isometries 
$S_\alpha, \alpha \in \Sigma$
and projections $E_i^l, i=1,2, \dots, m(l), l\in \Zp$
subject to the relations $(\LLmin)$ as in Proposition \ref{prop:universalunique},
the $C^*$-algebra $\OLmin$ is canonically
isomorphic to the above $C^*$-algebra  
$ {\mathcal{O}}_{G^F_\Lambda}$.

We will henceforth show that 
$ {\mathcal{O}}_{G^F_\Lambda}$ is isomorphic to the Cuntz--Krieger algebra 
${\mathcal{O}}_{\widehat{A}}$.
Let  $S_\alpha, \alpha \in \Sigma$ and
$E_i, i=1,2, \dots,N$ be partial isometries and projections
respectively satisfying the operator relations
\eqref{eq:SalphaEi}.
For $(\alpha,i) \in \widehat{\Sigma}$,
put
$S_{(\alpha,i)} = S_\alpha E_i$.
We then have
\begin{equation*}
\sum_{(\alpha,i) \in \widehat{\Sigma}}S_{(\alpha,i)}S_{(\alpha,i)}^*
= \sum_{\alpha\in \Sigma}S_\alpha E_iS_\alpha^* 
=  \sum_{\alpha\in \Sigma}S_\alpha S_\alpha^* = 1.
\end{equation*}
As 
$
S_\alpha^*S_\alpha 
=\sum_{k=1}^N S_\alpha^* E_k S_\alpha
=\sum_{k=1}^N \sum_{j=1}^N A(k,\alpha,j) E_j,
$
we have
\begin{equation} \label{eq:Sai1}
S_{(\alpha,i)}^* S_{(\alpha,i)}
= E_i ( \sum_{k=1}^N \sum_{j=1}^N A(k,\alpha,j) E_j)  E_i 
=  \sum_{k=1}^N A(k,\alpha,i) E_i.
\end{equation}
Since
$S_\beta^* E_i S_\beta = \sum_{j=1}^N A(i,\beta,j) E_j$,
we have
\begin{equation}\label{eq:Sai2}
E_i = \sum_{\beta \in \Sigma}\sum_{j=1}^N A(i,\beta,j) S_\beta E_j S_\beta^*
    = \sum_{(\beta,j)\in \widehat{\Sigma}} A(i,\beta,j) S_{(\beta,j)} S_{(\beta,j)}^*.
\end{equation}
By \eqref{eq:Sai1} and \eqref{eq:Sai2},
we thus obtain
\begin{align*} 
S_{(\alpha,i)}^* S_{(\alpha,i)}
= &  \sum_{k=1}^N A(k,\alpha,i) (\sum_{(\beta,j)\in \widehat{\Sigma}} A(i,\beta,j) S_{(\beta,j)} S_{(\beta,j)}^*) \\
= &  \sum_{(\beta,j)\in \widehat{\Sigma}} \sum_{k=1}^N A(k,\alpha,i) A(i,\beta,j)) S_{(\beta,j)} S_{(\beta,j)}^* \\
= &  \sum_{(\beta,j)\in \widehat{\Sigma}} \widehat{A}((\alpha,i), (\beta,j)) S_{(\beta,j)} S_{(\beta,j)}^*. 
\end{align*}
Hence the $C^*$-algebra $C^*(S_{(\alpha,i)}; (\alpha,i) \in \widehat{\Sigma})$
generated by $S_{(\alpha,i)}, (\alpha,i) \in \widehat{\Sigma}$
is isomorphic to the Cuntz-Krieger algebra
${\mathcal{O}}_{\widehat{A}}$
for the matrix $\widehat{A}$.
By \eqref{eq:Sai2}, we have
\begin{equation*}
E_i= \sum_{(\beta,j)\in \widehat{\Sigma}} A(i,\beta,j) S_{(\beta,j)} S_{(\beta,j)}^*,
\qquad
S_\alpha = \sum_{i=1}^N S_\alpha E_i = \sum_{i=1}^N S_{(\alpha,i)}
\end{equation*}
so that $S_\alpha, E_i$ are generated by 
$S_{(\alpha,i)}, (\alpha,i) \in \widehat{\Sigma}$.
We thus have 
$C^*(S_\alpha, E_i; \alpha \in \Sigma, i=1,2,\dots,N) = C^*(S_{(\alpha,i)}; (\alpha,i) \in \widehat{\Sigma})$
and  hence 
${\mathcal{O}}_{G^F_\Lambda} ={\mathcal{O}}_{\widehat{A}}$.
\end{proof}


\section{Other examples of normal subshifts}
In this section, other examples of normal subshifts and their $C^*$-algebras 
will be presented.
 
\medskip

\noindent
{\bf 1. Dyck shifts.}

For a positive integer $N>1$, the Dyck shift $D_N$ of order $N$
was introduced by W. Krieger \cite{KriegerMST1974},
related to Dyck language in formal language theory in computer science (cf. \cite{HU}).
Consider an alphabet 
$\Sigma = \Sigma^+ \sqcup \Sigma^-$
where 
$\Sigma^- =\{ \alpha_1,\dots, \alpha_N\},
 \Sigma^+ =\{ \beta_1,\dots, \beta_N\}.
$
Following \cite{KriegerMST1974}, the Dyck inverse monoid
for $\Sigma$ is the inverse monoid defined by the product relations:
$ \alpha_i \beta_j = {\bf 1}$ if $i=j$, otherwise
$ \alpha_i \beta_j = 0$, for $i,j=1,\dots,N$.
The symbol ${\bf 1}$ plays a r\^{o}le of empty word
such that $\alpha_i {\bf 1} = {\bf 1} \alpha_i = \alpha_i,
\beta_j {\bf 1} = {\bf 1} \beta_j = \beta_j.$
By the product structure,
a word $\omega_1\cdots\omega_n$ of $\Sigma$ is defined to be admissible
if the reduced word of the product $\omega_1\cdots\omega_n$  
in the monoid is not $0$. 
The Dyck shift written $D_N$ is defined to be the subshift over
alphabet $\Sigma$ whose admissible words are the admissible words in this sense. 
It is well-known that the subshift $D_N$ is not sofic.
As in \cite{KMAAA2013},  the Dyck shift $D_N$ is
$\lambda$-synchronizing and hence normal.
Its minimal $\lambda$-graph system 
${\frak L}_{D_N}^{\min} =(V^\min, E^\min,\lambda^\min,\iota^\min)$
was already studied in \cite{KMDocMath2003}, in which 
the minimal $\lambda$-graph system ${\frak L}_{D_N}^{\min}$ 
was called the Cantor horizon $\lambda$-graph system 
written ${\frak L}^{Ch(D_N)}$.  
Let us briefly review its construction.

Let $\Lambda_N$ be the two-sided full $N$-shift over 
$\{1,2, \dots N\}$.
Let
\begin{equation}
V_l^{\min} := \{\beta_{\mu_1}\cdots \beta_{\mu_l} \in {(\Sigma^+)}^l
 \mid \mu_1\cdots\mu_l \in \{1,2, \dots,N\}^l \} \label{eq:Vlmin}
\end{equation}
and the mapping
$\iota^{\min}: V_{l+1}^{\min} \longrightarrow V_l^{\min}$
is defined by 
\begin{equation*}
\iota(\beta_{\mu_1}\cdots \beta_{\mu_l}\beta_{\mu_{l+1}})
= \beta_{\mu_1}\cdots \beta_{\mu_l}
\quad 
\text{ for }
\beta_{\mu_1}\cdots \beta_{\mu_l}\beta_{\mu_{l+1}}
 \in V_{l+1}^{\min}.
\end{equation*}
Define a labeled edge labeled $\alpha_j$ from 
the vertex
$\beta_{\mu_1}\cdots \beta_{\mu_l}
 \in  V_{l}^{\min}
$ 
to the vertex
$\beta_{\mu_0}\beta_{\mu_1}\cdots \beta_{\mu_l}
 \in  V_{l+1}^{\min}
$
precisely if $\mu_0 =j$.
Define  a labeled edge labeled $\beta_j$ from 
the vertex
$\beta_j \beta_{\mu_1}\cdots \beta_{\mu_{l-1}}
 \in  V_{l}^{\min}
$ 
to the vertex
$\beta_{\mu_1}\cdots \beta_{\mu_l}\beta_{\mu_{l+1}}
 \in  V_{l+1}^{\min}.
$
Such edges are denoted by $E_{l,l+1}^{\min}$.
We then have a $\lambda$-graph system presenting the Dyck shift $D_N$.
It is the minimal left-resolving presentation and hence it is the minimal 
$\lambda$-graph system ${\frak L}_{D_N}^{\min}$ (cf. \cite{MaJAMS2013}).
Since the subshift $D_N$ is $\lambda$-irreducible satisfying $\lambda$-condition (I),
 we know the following proposition.
\begin{proposition}[\cite{KMDocMath2003}, {\cite{MaJOT2007}, \cite{MaJAMS2013}}]
The $C^*$-algebra $\mathcal{O}_{D_N^{\min}}$ associated with
the  minimal $\lambda$-graph system ${\frak L}_{D_N}^{\min}$
for the Dyck shift $D_N$ is simple and purely infinite.
\end{proposition}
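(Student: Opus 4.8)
The plan is to apply Theorem \ref{thm:main1} to the normal subshift $\Lambda = D_N$. Since $D_N$ is $\lambda$-synchronizing and infinite, it is normal, so the two remaining hypotheses of Theorem \ref{thm:main1} to be verified are that $D_N$ is $\lambda$-irreducible (i.e.\ $\lambda$-transitive) and that it satisfies $\lambda$-condition (I). Both are most naturally checked on the explicit Cantor horizon $\lambda$-graph system ${\frak L}_{D_N}^{\min}$ of \eqref{eq:Vlmin}, transferring back to the subshift via Lemma \ref{lem:2.10}(ii) (which identifies $\lambda$-transitivity of $D_N$ with $\lambda$-irreducibility of ${\frak L}_{D_N}^{\min}$) and Lemma \ref{lem:lambdacond(I)} (which identifies the two forms of $\lambda$-condition (I)).

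First I would establish $\lambda$-irreducibility of ${\frak L}_{D_N}^{\min}$ directly from Definition \ref{def:2.5}(ii). A vertex of $V_l^{\min}$ is a length-$l$ block $\beta_{\mu_1}\cdots\beta_{\mu_l}$ of the full $N$-shift, and $\iota^{\min}$ deletes the final letter, so the vertices over a fixed $u \in V_l^{\min}$ at level $l+L$ are exactly the blocks extending $u$ by $L$ free symbols. Given an ordered pair $u, v \in V_l^{\min}$ and such a $w$, I would produce a labeled path from $v$ to $w$ by combining the two edge types: the $\alpha_j$-edges, which prepend $\beta_j$, and the $\beta_j$-edges, which slide the window one step forward while appending fresh symbols at the back. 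Reading a suitable block first evacuates the content of $v$ (via $\beta_{v_1}, \beta_{v_2}, \dots$) and then installs, symbol by symbol, the block coding $w$; choosing $L = L(u,v)$ large enough (depending only on $l$) makes this succeed uniformly in $w$, which is exactly the reachability demanded by Definition \ref{def:2.5}(ii).

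Next I would check $\lambda$-condition (I), where the hypothesis $N > 1$ is decisive. The balanced blocks $\alpha_1\beta_1$ and $\alpha_2\beta_2$ both reduce to $\mathbf{1}$ in the Dyck inverse monoid, so from any vertex $v_i^l$ the two paths labeled $\alpha_1\beta_1$ and $\alpha_2\beta_2$ (appending the same pair of symbols in their $\beta$-steps) terminate at a single common vertex $v_j^{l+2}$ while carrying the distinct labels required by the $\lambda$-graph-system form of $\lambda$-condition (I). Equivalently, at the subshift level one prepends these distinct balanced words to an $l$-synchronizing word $\mu$ without disturbing its $l$-past equivalence class, giving the subshift form of the condition. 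With $\lambda$-irreducibility and $\lambda$-condition (I) in hand, Theorem \ref{thm:main1} yields that $\mathcal{O}_{D_N^{\min}}$ is simple and purely infinite.

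The main obstacle is the first step: managing the two distinct edge types and the $\iota$-fibers of the Cantor horizon system to confirm the reachability of Definition \ref{def:2.5}(ii) uniformly over all $w$ above $u$. This combinatorial bookkeeping is precisely what is carried out in \cite{KMDocMath2003} and \cite{MaJOT2007}, so in the write-up I would cite those references to shorten it, leaving the verification of $\lambda$-condition (I) and the appeal to Theorem \ref{thm:main1} as the routine remaining steps.
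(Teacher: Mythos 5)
Your proposal is correct and takes essentially the same route as the paper: the paper also reduces the statement to the general simplicity/pure-infiniteness theorem (Theorem \ref{thm:main1}) by asserting that $D_N$ is $\lambda$-irreducible (i.e.\ $\lambda$-transitive) and satisfies $\lambda$-condition (I), with the verification of these two hypotheses deferred to \cite{KMDocMath2003}, \cite{MaJOT2007}, \cite{MaJAMS2013}. The only difference is that you sketch those verifications directly on the Cantor horizon system of \eqref{eq:Vlmin} --- your evacuate-then-install path construction and the $\alpha_1\beta_1$ versus $\alpha_2\beta_2$ argument both check out against the stated edge structure --- whereas the paper simply cites them.
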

The K-groups of the algebra $\mathcal{O}_{D_N^{\min}}$
was computed in the following way:
\begin{equation*}
K_0(\mathcal{O}_{D_N^{\min}}) \cong \Z/ N\Z \oplus C({\frak C},\Z),
\qquad
K_1(\mathcal{O}_{D_N^{\min}}) \cong 0
\end{equation*}
where ${\frak C}$ denotes the Cantor set 
(\cite{KMDocMath2003}, \cite{MaJAMS2013}).

\medskip
  
\noindent
{\bf 2. Markov--Dyck shifts.}

The class of Markov--Dyck shifts contains the class of Dyck shifts.
It is a natural generalization of Dyck shifts as 
the class of topological Markov shifts contains the class of full shifts. 
Let $A =[A(i,j)]_{i,j=1}^N$
be an $N \times N$ square matrix with entries in $\{0,1\}$.
We assume that the matrix is irreducible satisfying condition (I)
 in the sense of Cuntz--Krieger \cite{CK}. 
The Markov--Dyck shift $D_A$ for the matrix $A$
 is defined by using the canonical generating partial isometries of the Cuntz--Krieger algebra $\OA$ in the following way.
Let $s_1,\dots, s_N$ be the canonical generating partial isometries of the Cuntz--Krieger algebra $\OA$ that satisfies the relations:
\begin{equation*}
1 =\sum_{j=1}^N s_j s_j^*,\qquad 
s_i^* s_i =\sum_{j=1}^N A(i,j) s_j s_j^*, \quad i=1,2, \dots, N.
\end{equation*}
Similarly to the Dyck shift, we consider 
the alphabet  
$\Sigma = \Sigma^+ \sqcup \Sigma^-$
where 
$\Sigma^- =\{ \alpha_1,\dots, \alpha_N\},$
$ \Sigma^+ =\{ \beta_1,\dots, \beta_N\}.
$
Let
$\hat{\alpha}_i = s_i^*, \hat{\beta}_i = s_i,
i=1,2, \dots,N$.
We say that a word 
$\gamma_1\cdots\gamma_n $ of 
$\Sigma$ for 
$\gamma_1, \dots, \gamma_n \in \Sigma$
is forbidden if 
 $\hat{\gamma}_1\cdots\hat{\gamma}_n =0$
in the algebra $\OA$.
The Markov-Dyck shift $D_A$ for the matrix $A$
is defined by the subshift over alphabet $\Sigma$ by the 
 forbidden words.
These kinds of subshifts first
appeared in \cite{Kr2006BLM} by using certain semigroups.
More general setting was studied in \cite{HIK}.
The above definition by using generators of $C^*$-algebras 
was seen in \cite{MaMathScand2011} (cf. \cite{MaDCDS2015}).
If all entries of $A$ is one, then 
the product structure 
of $\hat{\alpha}_i, \hat{\beta}_i, i=1, 2, \dots,N$
go to that of the Dyck inverse monoid,
so that 
the Markov-Dyck shift $D_A$ coincides with the Dyck shift $D_N$.

The Markov--Dyck shift $D_A$ is not sofic for every irreducible matrix 
$A$ with entries in $\{0,1\}$ satisfying condition (I).
It is  always $\lambda$-synchronizing and hence normal.
Hence we have its minimal $\lambda$-graph system
${\frak L}_{D_A}^{\min}$ for $D_A$.
The $\lambda$-graph system was studied in \cite{KMDocMath2003}
in which it was called the Cantor horizon $\lambda$-graph system 
and written ${\frak L}^{Ch(D_A)}$.
Let $\Lambda_A$ denotes the shift space 
\begin{equation*}
\Lambda_A = \{ (x_n)_{n\in \Z} \in \{1,\dots,N\}^\Z
\mid A(x_n, x_{n+1}) =1 \text{ for all } n \in \Z\}
\end{equation*}
of the two-sided topological Markov shift defined by the matrix $A$.
We denote by $B_l(\Lambda_A)$ 
the set of admissible words of $\Lambda_A$ with its length $l$.
The vertex set $V^{\min}_l$ at level $l$ of the minimal $\lambda$-graph system  
${\frak L}_{D_A}^{\min}$ is defined by
\begin{equation*}
V_l^{\min} := \{\beta_{\mu_1}\cdots \beta_{\mu_l} \in {(\Sigma^+)}^l
 \mid \mu_1\cdots\mu_l \in B_l(\Lambda_A) \}.
\end{equation*}
The mapping
$\iota^{\min}: V_{l+1}^{\min} \longrightarrow V_l^{\min}$
is similarly defined to the minimal $\lambda$-graph system 
${\frak L}_{D_N}^{\min}$
of the Dyck shift by deleting its rightmost symbol of words 
in $V_{l+1}^{\min}.$
A labeled edge labeled $\alpha_j$ from 
$\beta_{\mu_1}\cdots \beta_{\mu_l}\in V_l^{\min}$
to 
$\beta_{\mu_0}\beta_{\mu_1}\cdots \beta_{\mu_l}\in V_{l+1}^{\min}$
is defined 
precisely if $\mu_0 =j$.
A labeled edge labeled $\beta_j$ from 
the vertex
$\beta_j \beta_{\mu_1}\cdots \beta_{\mu_{l-1}}
 \in V_l^{\min}
$ 
to the vertex
$\beta_{\mu_1}\cdots \beta_{\mu_l}\beta_{\mu_{l+1}}
 \in V_{l+1}^{\min}
$
is defined.
Such edges are denoted by $E_{l,l+1}^{\min}$.
We then have a $\lambda$-graph system presenting the Markov--Dyck shift $D_A$.
It is the minimal left-resolving presentation and hence it is the minimal 
$\lambda$-graph system ${\frak L}_{D_A}^{\min}$ (cf. \cite{MaJAMS2013}).
Since the matrix $A$ is irreducible and satisfies condition (I),
the subshift $D_A$ is $\lambda$-irreducible satisfying $\lambda$-condition (I),
 so that we know the following proposition.
\begin{proposition}[{\cite{KMDocMath2003}, \cite{MaJAMS2013}}]
The $C^*$-algebra $\mathcal{O}_{D_A^\min}$ associated with
the  minimal $\lambda$-graph system ${\frak L}_{D_A^\min}$
for the Makov--Dyck shift $D_A$ is simple and purely infinite.
\end{proposition}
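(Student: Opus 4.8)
The plan is to obtain both conclusions at one stroke from the $\lambda$-graph-system criterion of Proposition~\ref{prop:lambdaC}: a left-resolving $\lambda$-graph system ${\frak L}$ that satisfies condition~(I) and is $\lambda$-irreducible has $\OL$ simple and purely infinite. Since ${\frak L}_{D_A}^{\min}$ is by construction the minimal left-resolving presentation of $D_A$, it remains only to verify these two hypotheses for it. Condition~(I) comes essentially for free: the Cuntz--Krieger condition~(I) on the irreducible matrix $A$ forces the topological Markov shift $\Lambda_A$, and hence $D_A$, to be infinite, so that $D_A$ is a normal subshift; Proposition~\ref{prop:normal(I)} then yields that its minimal presentation ${\frak L}_{D_A}^{\min}$ satisfies condition~(I). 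Thus the whole proposition reduces to checking that ${\frak L}_{D_A}^{\min}$ is $\lambda$-irreducible in the sense of Definition~\ref{def:2.5}(ii).

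For the $\lambda$-irreducibility I would argue directly on the explicit Cantor horizon model recalled above, under the identification $V_l^{\min}\cong B_l(\Lambda_A)$ that sends $\beta_{\mu_1}\cdots\beta_{\mu_l}$ to the admissible word $\mu_1\cdots\mu_l$, with $\iota^{\min}$ deleting the rightmost symbol. Fix an ordered pair $u=u_1\cdots u_l$, $v=v_1\cdots v_l$ in $V_l^{\min}$. A vertex $w\in V_{l+L}^{\min}$ with $(\iota^{\min})^{L}(w)=u$ is then exactly an admissible right-extension $u_1\cdots u_l c_1\cdots c_L\in B_{l+L}(\Lambda_A)$ of $u$, and the goal is to produce, for a single length $L=L(u,v)$ independent of $w$, a labeled path from $v$ to every such $w$. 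The two edge types act on the word-window as the push/pop moves of the underlying Markov stack: reading an $\alpha_j$ grows the window at the left (prepending $\beta_j$, subject to an $A$-admissibility constraint), while reading a $\beta_j$ slides the window, its moves again being governed by $A$. Using irreducibility of $A$ one finds a word of $\Lambda_A$ of bounded length joining $v_l$ to $u_1$; reading the corresponding block of opening symbols steers the window from over $v$ onto a word beginning with $u_1\cdots u_l$, after which a block of closing symbols installs the prescribed extension $c_1\cdots c_L$, terminating at $w$. Because the connecting block can be chosen of length bounded uniformly over the finite state set $\{1,\dots,N\}$, one value $L(u,v)$ serves for every $w$ in the $\iota$-fibre over $u$, which is precisely $\lambda$-irreducibility.

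The main obstacle is exactly this path construction: one must match the bracket (push/pop) structure of the edge set so that the concatenation of the connecting block and the extension-building block is a genuine admissible labeled path of ${\frak L}_{D_A}^{\min}$, and one must secure a common length $L(u,v)$ valid simultaneously for all right-extensions $w$ of $u$; the finiteness of the vertex set together with the irreducibility of $A$ is what makes this uniformity possible. Equivalently, one may phrase the same verification as $\lambda$-transitivity of the subshift $D_A$ and pass to the graph system via Lemma~\ref{lem:2.10}(ii). Once ${\frak L}_{D_A}^{\min}$ is known to be $\lambda$-irreducible, Proposition~\ref{prop:lambdaC} gives at once that $\mathcal{O}_{D_A^{\min}}$ is simple and purely infinite.
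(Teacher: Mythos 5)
Your overall strategy---identify ${\frak L}_{D_A}^{\min}$ as the minimal presentation of the normal subshift $D_A$, check an irreducibility hypothesis and a condition-(I)-type hypothesis, and quote the general criterion---is the same as the paper's, which settles the proposition by asserting that $D_A$ is $\lambda$-irreducible \emph{and satisfies $\lambda$-condition (I)} and then invoking the theorem of Section 3 (with \cite{KMDocMath2003}, \cite{MaJAMS2013} carrying the details). But there is a genuine gap on the pure infiniteness side of your argument. You verify only condition (I) of Definition \ref{def:cindition(I)}, which by Proposition \ref{prop:normal(I)} holds automatically for the minimal presentation of \emph{every} normal subshift. In this paper's framework pure infiniteness is never derived from that condition: Theorem \ref{thm:main1} and the theorem of Section 3 both require $\lambda$-condition (I), which by Lemma \ref{lem:lambdacond(I)} is the $\lambda$-condition (I) of the $\lambda$-graph system, and the paper states explicitly that this condition---introduced in \cite{MathScand2005}, the very reference behind the last sentence of Proposition \ref{prop:lambdaC}---is \emph{strictly stronger} than condition (I). If plain condition (I) together with $\lambda$-irreducibility sufficed for pure infiniteness, the hypothesis ``$\lambda$-condition (I)'' in Theorem \ref{thm:main1} would be vacuous for normal subshifts; so the loosely worded last sentence of Proposition \ref{prop:lambdaC}, on which your whole claim of pure infiniteness rests, has to be read with $\lambda$-condition (I), exactly as the paper's own proof of the present proposition does. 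What is missing from your proof is precisely this verification, and it does not come for free from normality.

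The repair is short and uses the push--pop structure you already describe. Given a vertex $\beta_{\mu_1}\cdots\beta_{\mu_l}\in V_l^{\min}$ with $l\ge 1$, choose $j$ with $A(j,\mu_1)=1$ (possible since $A$ is irreducible) and an admissible continuation $\nu_1\nu_2$ of $\mu_l$ in $\Lambda_A$. Then the two labeled paths with label sequences $(\alpha_j,\beta_j)$ and $(\beta_{\mu_1},\alpha_{\mu_1})$ both start at $\beta_{\mu_1}\cdots\beta_{\mu_l}$ and both terminate at the same vertex $\beta_{\mu_1}\cdots\beta_{\mu_l}\beta_{\nu_1}\beta_{\nu_2}\in V_{l+2}^{\min}$, while their labels differ already in the first symbol (one lies in $\Sigma^-$, the other in $\Sigma^+$); this is exactly $\lambda$-condition (I) for ${\frak L}_{D_A}^{\min}$, and with it Theorem \ref{thm:main1} yields both simplicity and pure infiniteness. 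Two smaller repairs to your $\lambda$-irreducibility sketch: first, the roles of the two symbol types are reversed---an $\alpha_j$-edge prepends a symbol on the left and cannot steer the vertex word toward one beginning with $u_1\cdots u_l$; it is the $\beta_j$-edges that pop the left end and extend the right end, so that after enough of them the vertex word begins wherever you arrange it to. Second, ``connecting block of bounded length'' must be ``connecting block of one exact length'': when $A$ is irreducible but periodic, paths in the graph of $A$ from $v_l$ to $u_1$ exist only in lengths of a single residue class modulo the period, so $L(u,v)$ must be chosen in the correct residue class; this is permitted, since Definition \ref{def:2.5}(ii) allows $L(u,v)$ to depend on the ordered pair of vertices.
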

K-group formulas for the $C^*$-algebras 
$\mathcal{O}_{D_A^\min}$ were studied in \cite{MaMathScand2011}.

\medskip

%

\noindent
{\bf 3. Motzkin shifts.}

Motzkin language  
appears in automata theory  as well as Dyck language (\cite{HU}).
The Motzkin shifts are non sofic subshifts associated with the Motzkin language
(cf. \cite{MaMZ2004}).
For a positive integer $N>1$,
similarly to the Dyck shift, we consider 
the alphabet  
$\Sigma =\Sigma^+ \sqcup \Sigma^-$
where 
$\Sigma^- =\{ \alpha_1,\dots, \alpha_N\},
 \Sigma^+ =\{ \beta_1,\dots, \beta_N\}
$
and the Dyck inverse monoid
for $\Sigma^+ \sqcup \Sigma^-$ as in previous paragraphs.
The Dyck inverse monoid is defined by the product relations:
$ \alpha_i \beta_j = {\bf 1}$ if $i=j$, otherwise
$ \alpha_i \beta_j = 0$, for $i,j=1,\dots,N$.
Let us consider a new alphabet set
$\Sigma_{\bf 1}$ 
defined by
$\Sigma_{\bf 1} =\Sigma^+ \cup \Sigma^-\cup \{{\bf 1}\}$. 
The Motzkin shift $M_N$ of order $N$ is defined to be a subshift
over $\Sigma_{\bf 1}$ such that a word 
$\gamma_1\cdots\gamma_n$ of $\Sigma_{\bf 1}$
is forbidden  precisely if
$\gamma_1\cdots\gamma_n =0$. 
As seen in \cite{MaMZ2004},
the subshift $M_N$ is $\lambda$-synchronizing and hence normal.
Its minimal $\lambda$-graph system ${\frak L}_{M_N}^{\min}$
was described as the Cantor horizon $\lambda$-graph system written
${\frak L}^{Ch(M_N)}$ in \cite{MaMZ2004}. 
Let $V_l^{\min}$ be the vertex set defined by \eqref{eq:Vlmin}.
The mapping $\iota: V_{l+1}^{\min}\longrightarrow V_{l}^{\min}$
is similarly defined as in the case of Dyck shifts.
Labeled edges labeled symbols in $\Sigma$ from $V_l^{\min}$ to $V_{l+1}^{\min}$
are defined in a similar way to Dyck shifts.
In addition to the labeled edges above, 
an additional labeled edge labeled ${\bf 1}$ from
$\beta_{\mu_1}\cdots \beta_{\mu_l}\in V_l^\min$
to
$\beta_{\mu_1}\cdots \beta_{\mu_l}\beta_{\mu_{l+1}}\in V_{l+1}^\min$
is defined for every pair 
$\beta_{\mu_1}\cdots \beta_{\mu_l}\in V_l^\min$
and
$\beta_{\mu_1}\cdots \beta_{\mu_l}\beta_{\mu_{l+1}}\in V_{l+1}^\min$.
We then have a $\lambda$-graph system that is 
the minimal $\lambda$-graph system
${\frak L}_{M_N}^{\min}$
for the Motzkin shift $M_N$.
Since the $\lambda$-graph system
${\frak L}_{M_N}^{\min}$ contains the minimal $\lambda$-graph system
${\frak L}_{D_N}^{\min}$ of the Dyck shift $D_N$ as a subsystem,
${\frak L}_{M_N}^{\min}$ is $\lambda$-irreducible and satisfies $\lambda$-condition (I).
Therefore we have
\begin{proposition}[{\cite{MaMZ2004}}]
The $C^*$-algebra $\mathcal{O}_{M_N^\min}$ associated with
the  minimal $\lambda$-graph system ${\frak L}_{M_N}^{\min}$
for the Motzkin shift $M_N$ is simple and purely infinite.
\end{proposition}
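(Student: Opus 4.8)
The plan is to reduce the statement to the simplicity and pure-infiniteness criterion of Proposition~\ref{prop:lambdaC}: it suffices to check that the minimal $\lambda$-graph system ${\frak L}_{M_N}^{\min}$ satisfies condition~(I) and is $\lambda$-irreducible, and then to quote Proposition~\ref{prop:lambdaC} together with \cite{MathScand2005}. Equivalently, one may transport both hypotheses to the level of the subshift via Lemma~\ref{lem:2.10}(ii) ($\lambda$-transitivity is equivalent to $\lambda$-irreducibility of the presentation) and Lemma~\ref{lem:lambdacond(I)}, and then invoke the simplicity theorem for normal subshifts; I will argue directly with the $\lambda$-graph system, following the structural hint already given.

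First I would make precise the sense in which ${\frak L}_{D_N}^{\min}$ is a subsystem of ${\frak L}_{M_N}^{\min}$. By \eqref{eq:Vlmin} the two presentations share the same vertex set $V_l^{\min}$ at every level, the map $\iota^{\min}$ is in both cases the deletion of the rightmost symbol of a word in $V_{l+1}^{\min}$, and every edge of ${\frak L}_{D_N}^{\min}$ (those carrying a label in $\Sigma=\Sigma^+\sqcup\Sigma^-$) is again an edge of ${\frak L}_{M_N}^{\min}$; the only additional edges of the Motzkin presentation are those labeled by the symbol ${\bf 1}$. Thus the Dyck presentation sits inside the Motzkin one on the same vertices and with the same $\iota$.

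Next I would exploit the fact that both $\lambda$-irreducibility (Definition~\ref{def:2.5}(ii)) and condition~(I) are existential assertions about the presence of suitable labeled paths between vertices, whose admissible range is governed solely by $V^{\min}$ and $\iota^{\min}$. Since these data coincide for the two presentations and enlarging the edge set can only create more labeled paths, any family of witnessing paths for ${\frak L}_{D_N}^{\min}$ remains valid verbatim for ${\frak L}_{M_N}^{\min}$. As recorded in the discussion of the Dyck shift, ${\frak L}_{D_N}^{\min}$ is $\lambda$-irreducible and satisfies $\lambda$-condition~(I); hence so does ${\frak L}_{M_N}^{\min}$. Since $M_N$ is normal, condition~(I) is in any case automatic by Proposition~\ref{prop:normal(I)}, and $\lambda$-condition~(I) is a fortiori condition~(I).

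With condition~(I) and $\lambda$-irreducibility in hand, the conclusion that $\mathcal{O}_{M_N^{\min}}$ is simple and purely infinite follows immediately from Proposition~\ref{prop:lambdaC} and \cite{MathScand2005}. The only genuinely substantive step is the structural comparison of the second paragraph: one must confirm that the vertex sets and the maps $\iota^{\min}$ of the two presentations really do agree, so that the quantifier ranges in the two conditions are identical and the inheritance of witnessing paths is unobstructed. Once that identification is granted, no estimate or computation is required, the two structural properties transfer with no extra work, and the analytic conclusion is purely a citation.
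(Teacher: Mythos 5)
Your proposal is correct and follows essentially the same route as the paper, which likewise deduces the result from the fact that ${\frak L}_{M_N}^{\min}$ contains ${\frak L}_{D_N}^{\min}$ as a subsystem on the same vertex sets and the same $\iota$-maps, so that $\lambda$-irreducibility and $\lambda$-condition (I) transfer from the Dyck presentation, after which simplicity and pure infiniteness follow from the general criterion of Proposition \ref{prop:lambdaC} and \cite{MathScand2005}. Your write-up merely makes explicit the monotonicity argument (enlarging the edge set while fixing vertices and $\iota$ preserves both existential conditions) that the paper leaves implicit in the word ``subsystem.''
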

The K-groups of the algebra $\mathcal{O}_{M_N^\min}$
was computed in \cite{MaMZ2004}
for the case of $N=2$.
As in the paper \cite{MaMZ2004},
the strategy to compute $K_i(\mathcal{O}_{M_N^\min}), i=1,2$
works well for general  $\mathcal{O}_{M_N^\min}, N=2,3,\dots$,
so that we have:
\begin{equation*}
K_0(\mathcal{O}_{M_N^\min}) \cong C({\frak C},\Z),
\qquad
K_1(\mathcal{O}_{M_N^\min}) \cong 0
\end{equation*}
where ${\frak C}$ denotes the Cantor set 
(\cite{MaMZ2004}).

 \medskip
 

\noindent
{\bf 4. $\beta$-shifts.}

The  $\beta$-shift for real number $\beta>1$
was first introduced in \cite{Parry}, \cite{Renyi}.
It is an interpolation between full shifts, 
simultaneously   one of natural generalization of full shifts.
For a real number $\beta >1$,
take a natural number $N$ such that 
$N -1 < \beta \le N$.
Let $f_\beta:[0,1]\longrightarrow [0,1]$
be the mapping $f_\beta(x) = \beta x - [\beta x]$
for $x \in [0,1]$, where $[t]$ is the integer part of $t \in \R$.
Let $\Sigma =\{0,1,\dots, N-1 \}$.
The $\beta$-expansion of $x \in [0,1]$ is a sequence
$d_i(x,\beta), i\in \N$ of $\Sigma$ defined by
$$
d_i(x,\beta) = [\beta f_\beta^{i-1}(x)], \qquad i \in \N,
$$
so that we know that 
$x = \sum_{i=1}^\infty \frac{d_i(x,\beta)}{\beta^i}.$
We endow $\Sigma^\N$ with the lexicographical order.
Put
$\zeta_\beta =\sup_{x \in [0,1)}(d_i(x,\beta))_{i\in \N}$.
Define the one-sided subshift $X_{\Lambda_\beta}$
by
setting
\begin{equation*}
X_{\Lambda_\beta} = \{\omega \in \Sigma^\N \mid \sigma^i(\omega) \le \zeta_\beta, i\in \Zp\},
\end{equation*}
where $\sigma^i(\omega) =(\omega_{n+i})_{n\in \N}$ 
for $\omega =(\omega_n)_{n\in \N}$.
Its two-sided extension $\Lambda_\beta$ is defined by
\begin{equation*}
\Lambda_\beta = \{(\omega_n)_{n\in \Z} \in \Sigma^\Z 
\mid (\omega_{n+k})_{n\in \N}\in X_{\Lambda_\beta}, k\in \Z\}.
\end{equation*}
Put the maximum sequence 
$\zeta_\beta =(\xi_1,\xi_2,\dots )$
and the real number 
\begin{equation*}
b_{\xi_1\dots\xi_k} = \beta^k -\xi_1 \beta^{k-1}-\xi_2 \beta^{k-2} -\cdots -\xi_{k-1} \beta -\xi_k
\end{equation*}
As seen in \cite[Proposition 3.8]{KMW}, 
\begin{enumerate}
\renewcommand{\theenumi}{\roman{enumi}}
\renewcommand{\labelenumi}{\textup{(\theenumi)}}
\item
$\Lambda_\beta$ is a full shift if and only if $b_{\xi_1} =1$.
\item
$\Lambda_\beta$ is a shift of finite type  if and only if $b_{\xi_1\cdots\xi_k} =1$ for some $k \ge 1$.
\item
$\Lambda_\beta$ is a sofic subshift if and only if $b_{\xi_1\cdots\xi_l} =b_{\xi_1\cdots\xi_m}$ for some $l\ne m$.
\end{enumerate}
Hence $\Lambda_\beta$ is not sofic
unless $\beta$ is an algebraic integer. 
As in \cite{KMAAA2013}, the $\beta$-shift $\Lambda_\beta$ is
$\lambda$-synchronizing for every $\beta$, so that it is normal.
In \cite{KMW}, the $C^*$-algebra $\mathcal{O}_\beta$ of the $\beta$-shift $\Lambda_\beta$ 
was studied (cf. \cite{IJM1997}).
The $C^*$-algebra $\mathcal{O}_\beta$ is indeed the $C^*$-algebra 
$\mathcal{O}_{{\frak L}_{\Lambda_\beta}^{\min}}$
associated with the minimal $\lambda$-graph system
${\frak L}_{\Lambda_\beta}^{\min}$ of the subshift $\Lambda_\beta$.

We will briefly review  the construction of ${\frak L}_{\Lambda_\beta}^{\min}$
seen in \cite{KMW}.
For $l \in \N$, order the real numbers 
$\{b_{\xi_1},b_{\xi_1\xi_2},\dots ,b_{\xi_1\xi_2\cdots\xi_l} \}$
by its usual order in $\R$.
They give rise to
disjoint intervals partitioned by 
$\{b_{\xi_1},b_{\xi_1\xi_2},\dots ,b_{\xi_1\xi_2\cdots\xi_l} \}$
in $[0,1]$.
Let $m(l)$ be the number of the partitions in  $(0,1]$.
If $\Lambda_\beta$ is  sofic,
there exists $l_0$ such that $m(l) = L$ for all $l >l_0$.
If $\Lambda_\beta$ is  not sofic, $m(l) = l+1$.
Let 
$v_1^l, \dots, v_{m(l)}^l$ 
be the ordered set of 
the disjoint partitions of $(0,1]$.
The order is defined along the usual order in $\R$.
We denote by $V_l^{\min}$ 
the set $\{ v_1^l, \dots, v_{m(l)}^l \}$.
Suppose that 
$v_i^l $ corresponds to the interval 
$(b_{\xi_1\cdots\xi_q},  
 b_{\xi_1\cdots\xi_p}]$
with
$b_{\xi_1\cdots\xi_q} < b_{\xi_1\cdots\xi_p}$. 
For $\xi_{p+1} \in \Sigma$,
we define the labeled edge labeled $\xi_{p+1}$ from $v_i^l$ to
the vertices $v_j^{l+1} \in V_{l+1}^{\min}$ 
 corresponding to the partitions
 contained in the interval
$( b_{\xi_1\cdots\xi_q \xi_{p+1}}, b_{\xi_1\cdots\xi_p\xi_{p+1}}]$.
For $0\le \alpha <\xi_{p+1}$,
we define 
the labeled edge labeled $\alpha$ from $v_i^l$ to
the vertices $v_j^{l+1} \in V_{l+1}^{\min}$ 
 corresponding to the partitions
 contained in the interval
$( b_{\xi_1\cdots\xi_q \alpha}, 1]$.
Such edges are written $E_{l,l+1}^{\min}$.
We define the map $\iota^{\min}:V_{l+1}^{\min}\longrightarrow V_l^{\min}$
by setting $\iota(v_j^{l+1}) = v_i^l$ 
if the part in $(0,1]$ corresponding to $v_j^{l+1}$
is contained in the part in $(0,1]$ corresponding to $v_i^{l+1}$.
The resulting labeled Bratteli diagram becomes a $\lambda$-graph system.
It is not difficult to see that the $\lambda$-graph system is 
$\lambda$-synchronizing and hence minimal (cf. \cite{KMAAA2013}).  
The $C^*$-algebra $\mathcal{O}_{\beta}$ studied in \cite{KMW}
is generated by a finite family $S_0, S_1,\dots, S_{N-1}$ of partial isometries
corresponding to the letters of $\Sigma$. 
For an admissible word $\mu\in B_*(\Lambda_\beta)$,
put $a_\mu = S_\mu^* S_\mu$.
It was proved in \cite{KMW} that there exists a unique KMS-state
written $\varphi$ for gauge action on $\mathcal{O}_{\beta}$ 
(cf. \cite{KMW}).
As in \cite{KMW}, we know  
\begin{equation*}
 \varphi(a_{\xi_1\xi_2 \cdots\xi_k})= b_{\xi_1\xi_2 \cdots\xi_k},
 \qquad k \in \N.
\end{equation*}
By \cite[Corollary 3.2]{KMW}, we see 
\begin{equation} \label{eq:relationobeta}
  S_\alpha^{*}a_{\xi_1 \cdots \xi_n}S_\alpha = 
\begin{cases}
0        &     \quad   \alpha > \xi_{n+1} \\
a_{\xi_1 \cdots \xi_{n+1}}  &  \quad  \alpha= \xi_{n+1}\\
1        &     \quad  \alpha< \xi_{n+1}.
\end{cases}
\end{equation}
Since the projections
in the commutative $C^*$-algebra $\A_\beta$ 
generated by the projections of the form
$a_\mu ,\mu \in B_*(\Lambda_\beta)$
is generated by the projection of the form
$E_i^l: = b_{\xi_1\cdots\xi_p} -b_{\xi_1\cdots\xi_q}$, 
the relation \eqref{eq:relationobeta}
tells us that the $C^*$-algebra 
$\mathcal{O}_{\Lambda_\beta^\min}$
associated with the minimal $\lambda$-graph system 
${\frak L}_{\Lambda_\beta}^{\min}$ is canonically isomorphic to 
the $C^*$-algebra $\mathcal{O}_\beta$ studied in \cite{KMW}.
We therefore have 
\begin{proposition}[{\cite[Theorem 3.6 and Theorem 4.12]{KMW}}] 
The $C^*$-algebra $\mathcal{O}_{\Lambda_\beta^\min}$ 
for the $\beta$-shift $\Lambda_\beta$
is simple and purely infinite for each $1< \beta \in \R$
such that 
\begin{align*}
K_0(\mathcal{O}_{\Lambda_\beta^\min}) & =
{\begin{cases}
 \quad \Z / (\eta_1 + \cdots + \eta_m - 1) \Z & \text{ if }
      d(1,\beta) = \eta_1 \eta_2 \cdots \eta_m 000 \cdots   \\
 \quad \Z / (\xi_{1} + \cdots + \xi_{k}) \Z & \text{ if }
      d(1,\beta) = \nu_1 \cdots \nu_l \xi_{1} \cdots \xi_{k}
                                      \xi_{1} \cdots \xi_{k}
                                      \xi_{1} \cdots \xi_{k}
                                        \cdots  \\
 \quad \Z & \text{ otherwise. }
\end{cases}} \\
K_1(\mathcal{O}_{\Lambda_\beta^\min}) & = \{ 0 \}  \qquad \text{ for any } \beta >1.
\end{align*}
\end{proposition}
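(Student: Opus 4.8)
The statement splits into a dynamical assertion (simplicity and pure infiniteness) and a computational one (the $K$-groups), and throughout I would exploit the identification $\mathcal{O}_{\Lambda_\beta^\min}\cong\mathcal{O}_\beta$ recorded above to pass freely between the two presentations. For simplicity and pure infiniteness I would invoke Theorem \ref{thm:main1}: $\Lambda_\beta$ is already known to be normal, so it remains to check that $\Lambda_\beta$ is $\lambda$-transitive and satisfies $\lambda$-condition (I)—equivalently, by Lemma \ref{lem:2.10}(ii) and Lemma \ref{lem:lambdacond(I)}, that ${\frak L}_{\Lambda_\beta}^{\min}$ is $\lambda$-irreducible and satisfies $\lambda$-condition (I). Both are read off from the interval description of $V_l^{\min}$: the rightmost partition class (the one adjacent to $1$) is reachable from every vertex and itself launches blocks carrying all admissible prefixes, giving $\lambda$-irreducibility; $\lambda$-condition (I) is then verified directly from the transition rule \eqref{eq:relationobeta}, with the Cantor property of $X_{\Lambda_\beta}$ from Lemma \ref{lem:Cantor} guaranteeing that each $\Gamma^+_\infty(v_i^l)$ branches into two paths with a common terminal vertex but distinct labels. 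Granting these, Theorem \ref{thm:main1} yields simplicity and pure infiniteness for every $\beta>1$.

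For the $K$-groups I would separate according to the orbit type of $d(1,\beta)$. When $d(1,\beta)=\eta_1\cdots\eta_m000\cdots$ is finite, $\Lambda_\beta$ is a shift of finite type and $\mathcal{O}_\beta$ is the Cuntz--Krieger algebra $\OA$ of the associated companion-type $0/1$ matrix $A$; then $K_0=\operatorname{coker}(\id-A^t)$ and $K_1=\ker(\id-A^t)$, and Smith normal form (the relevant invariant factor being $\eta_1+\cdots+\eta_m-1$) evaluates these to $\Z/(\eta_1+\cdots+\eta_m-1)\Z$ and $\{0\}$; the full-shift case $d(1,N)=N\,000\cdots$ recovers $K_0(\mathcal{O}_N)=\Z/(N-1)\Z$ as a check. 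When $d(1,\beta)$ is eventually periodic with period $\xi_1\cdots\xi_k$, $\Lambda_\beta$ is an infinite irreducible sofic shift, so Proposition \ref{prop:4.2} identifies $\mathcal{O}_\beta$ with the Cuntz--Krieger algebra $\mathcal{O}_{\widehat A}$ of the left Fischer cover; computing $\operatorname{coker}(\id-\widehat A^t)$ and $\ker(\id-\widehat A^t)$ for that cover collapses the relations to the single one of weight $\xi_1+\cdots+\xi_k$, producing $\Z/(\xi_1+\cdots+\xi_k)\Z$ and $\{0\}$.

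The remaining, genuinely non-sofic case—$d(1,\beta)$ aperiodic—is where the essential work lies, and I expect it to be the main obstacle. Here $m(l)=l+1\to\infty$, so ${\frak L}_{\Lambda_\beta}^{\min}$ is an honestly infinite $\lambda$-graph system and no reduction to a finite matrix is available; instead I would apply the $K$-theory formula for $\lambda$-graph system $C^*$-algebras, which gives
\begin{equation*}
K_0(\mathcal{O}_\beta)=\varinjlim_l \operatorname{coker}\bigl((I^{\min}_{l,l+1})^t-(A^{\min}_{l,l+1})^t\bigr),
\qquad
K_1(\mathcal{O}_\beta)=\varinjlim_l \ker\bigl((I^{\min}_{l,l+1})^t-(A^{\min}_{l,l+1})^t\bigr).
\end{equation*}
The point is then to show, using the explicit staircase form of $(A^{\min}_{l,l+1},I^{\min}_{l,l+1})$ dictated by the digits $\xi_i$ through the partition of $(0,1]$ by the numbers $b_{\xi_1\cdots\xi_j}$ and the rule \eqref{eq:relationobeta}, that each connecting map is injective—forcing $K_1=\{0\}$—and that the inductive limit of its cokernels is a single copy of $\Z$. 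Controlling this limit is delicate precisely because the rank grows without bound; the uniform relation $m(l)=l+1$ and the monotonicity of the interval partition are the structural features I would lean on to keep the maps transparent and to isolate the one surviving free generator.
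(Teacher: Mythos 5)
Your proposal is correct in substance but takes a genuinely different route from the paper. The paper proves almost nothing here beyond an identification: it constructs ${\frak L}_{\Lambda_\beta}^{\min}$ from the interval partition, uses the relation \eqref{eq:relationobeta} of \cite[Corollary 3.2]{KMW} to see that $\mathcal{O}_{\Lambda_\beta^\min}$ is canonically isomorphic to the algebra $\mathcal{O}_\beta$ already studied in \cite{KMW}, and then imports simplicity, pure infiniteness and the $K$-groups wholesale from \cite[Theorem 3.6 and Theorem 4.12]{KMW}. You keep the identification but re-derive both halves inside the present framework: simplicity and pure infiniteness from Theorem \ref{thm:main1} after checking $\lambda$-transitivity and $\lambda$-condition (I) on the interval model, and the $K$-groups by the trichotomy shift of finite type / sofic / non-sofic, via Cuntz--Krieger matrices and Smith normal form in the first two cases (with Proposition \ref{prop:4.2} handling the sofic one) and the inductive-limit formula $K_0 = \varinjlim \operatorname{coker}$, $K_1 = \varinjlim \ker$ of $(I^{\min}_{l,l+1})^t - (A^{\min}_{l,l+1})^t$ in the third. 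The paper's route buys economy, since the only new content is the isomorphism $\mathcal{O}_{\Lambda_\beta^\min} \cong \mathcal{O}_\beta$; yours buys self-containedness and a uniform simplicity argument valid for all $\beta$ at once.

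The price is that your two deferred steps are exactly where the content lies. First, $\lambda$-transitivity of $\Lambda_\beta$ is true but nowhere claimed in the paper (unlike the Dyck and Motzkin cases, where it is asserted explicitly), and it needs a genuine argument with the partition structure. Moreover your appeal to Lemma \ref{lem:Cantor} only yields condition (I), as in Proposition \ref{prop:normal(I)}; $\lambda$-condition (I) is strictly stronger, requiring two distinctly labeled paths with common source \emph{and} common terminal vertex, which for $\Lambda_\beta$ one gets not from the Cantor property but from the transition rule itself, e.g.\ because every edge labeled $\alpha < \xi_{p+1}$ from a given vertex ends in the long final range $(b_{\xi_1\cdots\xi_q\alpha},1]$, so two different small digits admit a common target vertex. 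Second, the aperiodic $K$-theory computation you outline is precisely the main case of \cite[Theorem 4.12]{KMW}: your plan (injectivity of the maps $(I^{\min})^t - (A^{\min})^t$ forcing $K_1 = \{0\}$, and the cokernels limiting to a single copy of $\Z$) is the correct shape of that computation, but carrying it out against the growing rank $m(l) = l+1$ is the bulk of the work that the paper deliberately avoids redoing by citing \cite{KMW}.
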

\begin{remark}
It was shown that the KMS-state for the gauge action on 
$\mathcal{O}_\beta$
is unique at the inverse temperature $\log\beta$,
 which is the topological entropy for the $\beta$-shift $\Lambda_\beta$
(\cite{KMW}).
 Hence  two subshifts $\Lambda_\beta, \Lambda_{\beta'}$ are topologically conjugate if and only if 
 $\beta = \beta'$.
\end{remark}

\section{Continuous orbit equivalence}
In this section, we will discuss continuous orbit equivalence in normal subshifts.
Let ${\frak L}_1,{\frak L}_2$ be left-resolving $\lambda$-graph systems
and
$(\Lambda_1, \sigma_{\Lambda_1}),(\Lambda_2, \sigma_{\Lambda_2})$
their presenting two-sided subshifts, respectively.
In \cite{MaDynam2020}, 
the notion of $({\frak L}_1,{\frak L}_2)$-continuous orbit equivalence between
their one-sided subshifts
$(X_{\Lambda_1}, \sigma_{\Lambda_1}),(X_{\Lambda_2}, \sigma_{\Lambda_2})$
was introduced in the following way.

\begin{definition}[{\cite[Definition 4.1]{MaDynam2020}, \cite[Section 6]{MaYMJ2010}}] 
\label{def:coelambda}
 One-sided subshifts 
$(X_{\Lambda_1},\sigma_{\Lambda_1})$
and $(X_{\Lambda_2},\sigma_{\Lambda_2})$
are said to be
$({\frak L}_1,{\frak L}_2)$-{\it continuously orbit equivalent}\/ 
if there exist two homeomorphisms
$h_{\frak L}: X_{{\frak L}_1}\longrightarrow X_{{\frak L}_2}$
and
$h_{\Lambda}: X_{\Lambda_1}\longrightarrow X_{\Lambda_2}$
and continuous functions
$k_i, l_i : X_{{\frak L}_i} \longrightarrow \Zp, i=1,2$ such that  
$\pi_{{\frak L}_2} \circ h_{\frak L} =h_{\Lambda}\circ\pi_{{\frak L}_1}$
and
\begin{align}
\sigma_{{\frak L}_2}^{k_1(x)}(h_{\frak L}(\sigma_{{\frak L}_1}(x)))
& = 
\sigma_{{\frak L}_2}^{l_1(x)}(h_{\frak L}(x)), \qquad x \in X_{{\frak L}_1}, \label{eq:coex} \\ 
\sigma_{{\frak L}_1}^{k_2(y)}(h_{\frak L}^{-1}(\sigma_{{\frak L}_2}(y)))
& = 
\sigma_{{\frak L}_1}^{l_2(y)}(h_{\frak L}^{-1}(y)), \qquad y \in X_{{\frak L}_2}. \label{eq:coey}
\end{align}
\end{definition}
We first show the following lemma.
\begin{lemma}\label{lem:coe6.2}
Let ${\frak L}_1,{\frak L}_2$ be left-resolving $\lambda$-graph systems
satisfying condition (I) and
$(\Lambda_1, \sigma_{\Lambda_1}),(\Lambda_2, \sigma_{\Lambda_2})$
their presenting two-sided subshifts, respectively.
Suppose that 
one-sided subshifts 
$(X_{\Lambda_1},\sigma_{\Lambda_1})$
and $(X_{\Lambda_2},\sigma_{\Lambda_2})$
are 
$({\frak L}_1,{\frak L}_2)$-continuously orbit equivalent. 
If $\Lambda_1$ is a normal subshift and ${\frak L}_1$ is its minimal
presentation of $\Lambda_1$,
then $\Lambda_2$ is also normal and 
${\frak L}_2$ is its minimal
presentation.
\end{lemma}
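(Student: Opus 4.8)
The plan is to verify the two defining properties of normality for $\Lambda_2$ and then to recognize ${\frak L}_2$ as the canonical minimal presentation $\LLamtwomin$ through the uniqueness theorem \cite[Theorem 3.9]{MaJAMS2013}. The cardinality and irreducibility are the soft part. Since $\Lambda_1$ is normal it is irreducible and infinite, so $X_{\Lambda_1}$ is a Cantor set by Lemma \ref{lem:Cantor}, and the homeomorphism $h_\Lambda\colon X_{\Lambda_1}\to X_{\Lambda_2}$ makes $X_{\Lambda_2}$ infinite; hence $\Lambda_2$ is infinite. For irreducibility I would use that \eqref{eq:coex}--\eqref{eq:coey}, the intertwiner $\pi_{{\frak L}_2}\circ h_{\frak L}=h_\Lambda\circ\pi_{{\frak L}_1}$, and $\pi_{{\frak L}_i}\circ\sigma_{{\frak L}_i}=\sigma_{\Lambda_i}\circ\pi_{{\frak L}_i}$ together force $h_\Lambda$ to be orbit preserving, i.e. for each $\xi\in X_{\Lambda_1}$ the points $h_\Lambda(\sigma_{\Lambda_1}(\xi))$ and $h_\Lambda(\xi)$ lie on one $\sigma_{\Lambda_2}$-orbit. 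Applying this to a point of $X_{\Lambda_1}$ with dense forward orbit (which exists by irreducibility, as in the proof of Lemma \ref{lem:Cantor}), its $h_\Lambda$-image has dense $\sigma_{\Lambda_2}$-orbit, so $\Lambda_2$ is irreducible.

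\textbf{Reduction.} For the two remaining assertions I would pass to the $C^*$-algebras. By the theorem recalled in the introduction \cite{MaDynam2020}, the $({\frak L}_1,{\frak L}_2)$-continuous orbit equivalence yields an isomorphism $\Phi\colon\mathcal{O}_{{\frak L}_1}\to\mathcal{O}_{{\frak L}_2}$ with $\Phi(\mathcal{D}_{\Lambda_1})=\mathcal{D}_{\Lambda_2}$. As both ${\frak L}_i$ are left-resolving and satisfy condition (I), Proposition \ref{prop:relativecom} presents $\mathcal{D}_{{\frak L}_i}=\mathcal{D}_{\Lambda_i}^\prime\cap\mathcal{O}_{{\frak L}_i}$ intrinsically, so $\Phi$ automatically carries $\mathcal{D}_{{\frak L}_1}$ onto $\mathcal{D}_{{\frak L}_2}$; dually this recovers $h_{\frak L}$ commuting with the factor maps. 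The goal then is the algebraic characterization of minimal presentations stated after \cite[Proposition 3.3]{MaJAMS2013}: every canonical vertex projection of ${\frak L}_2$ should dominate some $S_\nu S_\nu^*$ with $\nu\in B_*(\Lambda_2)$ launched by that vertex. Granting this, ${\frak L}_2$ is a left-resolving $\lambda$-synchronizing $\lambda$-graph system; since $\Lambda_2$ is irreducible it is $\iota$-irreducible by \cite[Proposition 3.7]{MaJAMS2013}, and together with predecessor-separation it must equal $\LLamtwomin$ by uniqueness, while $\Lambda_2$ becomes $\lambda$-synchronizing and therefore normal.

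\textbf{The hard part.} Producing these launched words for ${\frak L}_2$ is where minimality of ${\frak L}_1$ is essential and where the genuine difficulty lies. Because ${\frak L}_1=\LLamonemin$, each vertex $v_i^l$ launches a synchronizing word $\mu$, so the cylinder $U_\mu\subset X_{\Lambda_1}$ lifts through $\pi_{{\frak L}_1}$ into a single vertex fibre at level $l$; I would try to phrase this ``unique lift'' condition purely through the inclusion $\mathcal{D}_{\Lambda_1}\subset\mathcal{D}_{{\frak L}_1}$ and transport it by $\Phi$ to manufacture launched cylinders for ${\frak L}_2$. The obstruction is that a continuous orbit equivalence does not intertwine the gauge actions and hence does not preserve the level/vertex grading: $\Phi$ need not send $\mathcal{A}_{{\frak L}_1}=C^*(E_i^l)$ onto $\mathcal{A}_{{\frak L}_2}$, and $h_{\frak L}$ distorts the fibration of $X_{{\frak L}_i}$ over $X_{\Lambda_i}$ by the orbit cocycles $k_i,l_i$ of \eqref{eq:coex}--\eqref{eq:coey}. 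Thus vertex projections cannot simply be pushed forward, and the crux is to recast ``a vertex launches a word'' as a property of the pair $(\mathcal{O}_{{\frak L}_1},\mathcal{D}_{\Lambda_1})$ and of the cocycle-twisted dynamics that $\Phi$ does respect, and then to verify that it descends to ${\frak L}_2$. I expect this gauge-invariance of the synchronization/launching structure to be the main obstacle; the infiniteness and irreducibility steps are routine.
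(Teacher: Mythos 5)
Your reduction is exactly the one the paper uses: invoke \cite[Theorem 1.2]{MaDynam2020} to obtain an isomorphism $\Phi:\mathcal{O}_{{\frak L}_1}\to\mathcal{O}_{{\frak L}_2}$ with $\Phi(\mathcal{D}_{\Lambda_1})=\mathcal{D}_{\Lambda_2}$, upgrade this to $\Phi(\mathcal{D}_{{\frak L}_1})=\mathcal{D}_{{\frak L}_2}$ via the relative commutant identity of Proposition \ref{prop:relativecom}, and then try to show that every vertex projection $E_i^{2l}$ of ${\frak L}_2$ dominates some cylinder projection $S_\mu^2S_\mu^{2*}$, which by \cite[Proposition 3.3]{MaJAMS2013} is precisely the statement that $v_i^{2l}$ launches $\mu$, i.e.\ that ${\frak L}_2$ is $\lambda$-synchronizing. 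However, at exactly this point you stop: you declare the transport of the launching property to be ``the main obstacle'' and leave it unresolved. That is a genuine gap — the whole content of the lemma is this step, and the soft parts (infiniteness and irreducibility of $\Lambda_2$) do not by themselves give $\lambda$-synchronization of ${\frak L}_2$ or identify it with $\LLamtwomin$.

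Moreover, the obstruction you point to (non-equivariance of $\Phi$ with the gauge actions, hence failure to carry $\mathcal{A}_{{\frak L}_1}$ onto $\mathcal{A}_{{\frak L}_2}$ or to respect the level/vertex grading) is illusory, because the correct argument never pushes a vertex projection of ${\frak L}_1$ forward; it pulls the vertex projections of ${\frak L}_2$ \emph{back}. Since $\Phi^{-1}(E_i^{2l})$ is a nonzero projection in $\mathcal{D}_{{\frak L}_1}\cong C(X_{{\frak L}_1})$, it dominates a basic clopen projection $S_\nu^1E_j^{1l}S_\nu^{1*}$ with $S_\nu^{1*}S_\nu^1\ge E_j^{1l}$. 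Minimality of ${\frak L}_1$ enters only here: the vertex $v_j^{1l}$ launches some word $\eta$, so $E_j^{1l}\ge S_\eta^1S_\eta^{1*}$ and hence
\begin{equation*}
\Phi^{-1}(E_i^{2l})\ \ge\ S_\nu^1E_j^{1l}S_\nu^{1*}\ \ge\ S_{\nu\eta}^1S_{\nu\eta}^{1*}\ \ne\ 0,
\end{equation*}
where the right-hand side now lies in $\mathcal{D}_{\Lambda_1}$ — the one subalgebra $\Phi$ is known to preserve. Thus $\Phi(S_{\nu\eta}^1S_{\nu\eta}^{1*})$ is a nonzero projection of $\mathcal{D}_{\Lambda_2}$ sitting under $E_i^{2l}$, and it in turn dominates some cylinder projection $S_\mu^2S_\mu^{2*}$; therefore $E_i^{2l}\ge S_\mu^2S_\mu^{2*}$ and $v_i^{2l}$ launches $\mu$. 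Every step is an order relation inside the nested pair $\mathcal{D}_{\Lambda_i}\subset\mathcal{D}_{{\frak L}_i}$, both of which $\Phi$ preserves; the grading, the gauge actions, and the cocycles $k_i,l_i$ of \eqref{eq:coex}--\eqref{eq:coey} never appear. (Your soft steps are fine and are in fact more explicit than the paper's proof; just note that the orbit equivalence sends forward orbits only into \emph{groupoid} orbits, so to deduce irreducibility of $\Lambda_2$ from a dense orbit you need the no-isolated-point property of $X_{\Lambda_2}$, which you do have since $X_{\Lambda_2}\cong X_{\Lambda_1}$ is a Cantor set.)
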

\begin{proof}
Assume that the one-sided subshifts 
$(X_{\Lambda_1},\sigma_{\Lambda_1})$
and $(X_{\Lambda_2},\sigma_{\Lambda_2})$
are 
$({\frak L}_1,{\frak L}_2)$-continuously orbit equivalent
and 
${\frak L}_1$ is the minimal presentation of the normal subshift
$\Lambda_1$.
By \cite[Theorem 1.2]{MaDynam2020},
there exists an isomorphism
$\Phi: \mathcal{O}_{{\frak L}_1}
\longrightarrow 
\mathcal{O}_{{\frak L}_2}
$
of $C^*$-algebras
such that 
$\Phi(\D_{\Lambda_1}) = \D_{\Lambda_2}$.
Now ${\frak L}_1 = \LLamonemin$, so that 
we may write 
$\mathcal{O}_{{\frak L}_1}
=
\OLamonemin$.
Let
$S_\alpha^1, E_i^{1l}$
and
$S_\alpha^2, E_i^{2l}$
be the canonical generators of the $C^*$-algebras
$
\OLamonemin$
and
$\mathcal{O}_{{\frak L}_2},
$
respectively.
By Proposition \ref{prop:relativecom},
the condition
$\Phi(\D_{\Lambda_1}) = \D_{\Lambda_2}$
implies 
$\Phi(\D_{{\frak L}_1}) = \D_{{\frak L}_2}$.
Hence 
for a vertex $v_i^{2l}$ in ${\frak L}_2$ 
and the corresponding projection 
$E_i^{2l}  \in \mathcal{O}_{{\frak L}_2}$,
we have 
$\Phi^{-1}(E_i^{2l}) \in \D_{{\frak L}_1}.$
We may find
a word $\nu \in B_*(\Lambda_1)$ and a vertex $v_j^{1l}$ in ${\frak L}_1$
such that 
\begin{equation*}
\Phi^{-1}(E_i^{2l}) \ge S_\nu^1 E_j^{1l}S_\nu^{1*},
\qquad   S_\nu^{1*} S_\nu^1 \ge E_j^{1l}.
\end{equation*}
Now $\Lambda_1$ is normal, there exists 
a word $\eta \in B_*(\Lambda_1)$ such that 
$ E_j^{1l} \ge S_\eta^1 S_\eta^{1*}
$
by \cite[Proposition 3.3]{MaJAMS2013},
so that 
\begin{equation}\label{eq:PhiEilno3}
 S_\nu^1 E_j^{1l}S_\nu^{1*} \ge
 S_\nu^{1} S_\eta^1 S_\eta^{1*} S_\nu^{1*} \ne 0.
\end{equation}
Hence we have
\begin{equation}\label{eq:PhiEilno4}
  E_i^{2l} \ge 
\Phi( S_{\nu\eta}^1 S_{\nu\eta}^{1*}).
\end{equation}
Since
$\Phi(\D_{\Lambda_1}) = \D_{\Lambda_2}$,
one may find 
$\mu \in B_*(\Lambda_2)$ such that 
\begin{equation}\label{eq:PhiEilno5}
 \Phi( S_{\nu\eta}^1 S_{\nu\eta}^{1*})
\ge S_\mu^2 S_\mu^{2*}.
\end{equation}
By \eqref{eq:PhiEilno4}, \eqref{eq:PhiEilno5},
we have
\begin{equation*}
 E_i^{2l}
\ge S_\mu^2 S_\mu^{2*}.
\end{equation*}
This implies that the vertex 
$v_i^{2l}$ in ${\frak L}_2$
launches $\mu$
by \cite[Proposition 3.3]{MaJAMS2013}
so that
the $\lambda$-graph system
${\frak L}_2$ is $\lambda$-synchronizing.
Therefore we conclude that 
the subshift $\Lambda_2$ is normal and 
${\frak L}_2$ is its minimal presentation. 
 \end{proof}
Now the following definition seems to be reasonable.  
\begin{definition}\label{def:coefornormal}
Let $(\Lambda_1,\sigma_1)$
and $(\Lambda_2,\sigma_2)$
be normal subshifts.
Their one-sided subshifts
$(X_{\Lambda_1},\sigma_1)$
and $(X_{\Lambda_2},\sigma_2)$
are said to be 
{\it continuously orbit equivalent}\/ 
if they are $(\LLamonemin,\LLamtwomin)$-continuously orbit equivalent.
\end{definition} 
Therefore we know the following proposition.
 \begin{proposition}[{\cite[Theorem 1.2]{MaDynam2020}}]
\label{prop:onesidednormalcoe}
Let $(\Lambda_1, \sigma_{\Lambda_1})$ 
and 
$(\Lambda_2, \sigma_{\Lambda_2})$
be normal subshifts.
Then the following two assertions are equivalent:
\begin{enumerate}
\renewcommand{\theenumi}{\roman{enumi}}
\renewcommand{\labelenumi}{\textup{(\theenumi)}}
\item  
Their one-sided subshifts 
$(X_{\Lambda_1},\sigma_{\Lambda_1})$
and
$(X_{\Lambda_2},\sigma_{\Lambda_2})$
are continuously orbit equivalent.
\item
There exists an isomorphism
$\Phi: \OLamonemin
\longrightarrow \OLamtwomin
$ of $C^*$-algebras
such that 
$\Phi({\mathcal{D}}_{\Lambda_1})={\mathcal{D}}_{\Lambda_2}.
$ 
\end{enumerate}
\end{proposition}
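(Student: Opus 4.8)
The plan is to deduce the equivalence directly from the general characterization of $({\frak L}_1,{\frak L}_2)$-continuous orbit equivalence recorded in \cite[Theorem 1.2]{MaDynam2020}, specialized to the canonical minimal presentations ${\frak L}_1 = \LLamonemin$ and ${\frak L}_2 = \LLamtwomin$. First I would verify that the hypotheses of that theorem hold for these choices: the canonical $\lambda$-synchronizing $\lambda$-graph system of a normal subshift is left-resolving by construction, and by Proposition \ref{prop:normal(I)} it satisfies condition (I). Hence both $\LLamonemin$ and $\LLamtwomin$ are left-resolving $\lambda$-graph systems satisfying condition (I), so the cited theorem applies. Under the identifications $\mathcal{O}_{\LLamonemin} = \OLamonemin$ and $\mathcal{O}_{\LLamtwomin} = \OLamtwomin$, together with the canonical commutative subalgebras ${\mathcal{D}}_{\Lambda_1} \subset \OLamonemin$ and ${\mathcal{D}}_{\Lambda_2} \subset \OLamtwomin$, the statement of \cite[Theorem 1.2]{MaDynam2020} reads verbatim as the asserted equivalence, provided one rewrites ``continuously orbit equivalent'' as ``$(\LLamonemin,\LLamtwomin)$-continuously orbit equivalent''.

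For the implication (i) $\Longrightarrow$ (ii) I would unwind Definition \ref{def:coefornormal}: by that definition, the one-sided normal subshifts $(X_{\Lambda_1},\sigma_{\Lambda_1})$ and $(X_{\Lambda_2},\sigma_{\Lambda_2})$ are continuously orbit equivalent precisely when they are $(\LLamonemin,\LLamtwomin)$-continuously orbit equivalent. Feeding this into the only-if direction of \cite[Theorem 1.2]{MaDynam2020} produces the required $C^*$-isomorphism $\Phi:\OLamonemin\longrightarrow\OLamtwomin$ with $\Phi({\mathcal{D}}_{\Lambda_1}) = {\mathcal{D}}_{\Lambda_2}$. For the reverse implication (ii) $\Longrightarrow$ (i), given such a $\Phi$, the if-direction of the same theorem yields $(\LLamonemin,\LLamtwomin)$-continuous orbit equivalence of the two one-sided subshifts, and Definition \ref{def:coefornormal} reinterprets this as continuous orbit equivalence in the sense of normal subshifts.

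Since both directions are immediate once the framework is matched up, there is no genuine obstacle: the content of the proposition lies entirely in the cited theorem and in the fact, secured by Proposition \ref{prop:normal(I)}, that minimal presentations of normal subshifts fall within its scope. The only point requiring a word of care is the verification that the hypotheses (left-resolving and condition (I)) hold, which also ensures that Definition \ref{def:coefornormal} is not vacuous; Lemma \ref{lem:coe6.2} moreover confirms that this notion is internally consistent, in that a continuous orbit equivalence automatically pairs the minimal presentations of the two normal subshifts.
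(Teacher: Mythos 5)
Your proposal is correct and follows essentially the same route as the paper: the paper states Proposition \ref{prop:onesidednormalcoe} as an immediate consequence of Definition \ref{def:coefornormal} together with the cited \cite[Theorem 1.2]{MaDynam2020}, which is exactly your argument. Your explicit verification that $\LLamonemin$ and $\LLamtwomin$ are left-resolving and satisfy condition (I) (via Proposition \ref{prop:normal(I)}) is the same hypothesis-checking the paper relies on implicitly, so nothing is missing.
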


We note the following proposition.
 \begin{proposition}
\label{prop:coesftsofic}
Let $(\Lambda_1, \sigma_{\Lambda_1})$ 
and 
$(\Lambda_2, \sigma_{\Lambda_2})$
be normal subshifts
such that 
their one-sided subshifts 
$(X_{\Lambda_1},\sigma_{\Lambda_1})$
and
$(X_{\Lambda_2},\sigma_{\Lambda_2})$
are continuously orbit equivalent.
\begin{enumerate}
\renewcommand{\theenumi}{\roman{enumi}}
\renewcommand{\labelenumi}{\textup{(\theenumi)}}
\item  
$(X_{\Lambda_1},\sigma_{\Lambda_1})$
is a shift of finite type if and only if 
$(X_{\Lambda_2},\sigma_{\Lambda_2})$
is a shift of finite type.
\item  
$(X_{\Lambda_1},\sigma_{\Lambda_1})$
is a sofic shift if and only if 
$(X_{\Lambda_2},\sigma_{\Lambda_2})$
is a sofic shift.
\end{enumerate}
\end{proposition}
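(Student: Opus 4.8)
The plan is to pass to the $C^*$-algebraic side, where continuous orbit equivalence becomes a diagonal-preserving isomorphism, and then to characterize ``shift of finite type'' and ``sofic'' intrinsically by properties of the nested pair of commutative subalgebras $\mathcal{D}_\Lambda \subset \mathcal{D}_{{\frak L}_\Lambda^{\min}}$ that such an isomorphism is forced to preserve. First, by Proposition \ref{prop:onesidednormalcoe} the hypothesis furnishes an isomorphism $\Phi : \OLamonemin \longrightarrow \OLamtwomin$ with $\Phi(\mathcal{D}_{\Lambda_1}) = \mathcal{D}_{\Lambda_2}$. Since $\Phi$ is a $*$-isomorphism it carries relative commutants to relative commutants, so by Proposition \ref{prop:relative} (applied to both minimal presentations, each satisfying condition (I) by Proposition \ref{prop:normal(I)}) we obtain
\[
\Phi\bigl(\mathcal{D}_{{\frak L}_{\Lambda_1}^{\min}}\bigr)
= \Phi\bigl(\mathcal{D}_{\Lambda_1}' \cap \OLamonemin\bigr)
= \mathcal{D}_{\Lambda_2}' \cap \OLamtwomin
= \mathcal{D}_{{\frak L}_{\Lambda_2}^{\min}}.
\]
Thus $\Phi$ restricts to an isomorphism of the inclusion $\mathcal{D}_{\Lambda_1} \subset \mathcal{D}_{{\frak L}_{\Lambda_1}^{\min}}$ onto $\mathcal{D}_{\Lambda_2} \subset \mathcal{D}_{{\frak L}_{\Lambda_2}^{\min}}$; recalling $\mathcal{D}_{\Lambda_i} \cong C(X_{\Lambda_i})$ and $\mathcal{D}_{{\frak L}_{\Lambda_i}^{\min}} \cong C(X_{{\frak L}_{\Lambda_i}^{\min}})$, this induces a homeomorphism $X_{{\frak L}_{\Lambda_1}^{\min}} \to X_{{\frak L}_{\Lambda_2}^{\min}}$ intertwining the factor maps $\pi_{{\frak L}_{\Lambda_i}^{\min}} : X_{{\frak L}_{\Lambda_i}^{\min}} \to X_{\Lambda_i}$. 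As continuous orbit equivalence is symmetric, it suffices to verify that each property passes from $\Lambda_1$ to $\Lambda_2$.

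For part (i), the plan is to show that a normal subshift $\Lambda$ is a shift of finite type if and only if $\mathcal{D}_\Lambda = \mathcal{D}_{{\frak L}_\Lambda^{\min}}$, i.e.\ $\mathcal{D}_\Lambda$ is maximal abelian in $\OLmin$, equivalently the factor map $\pi_{{\frak L}_\Lambda^{\min}}$ is a homeomorphism. This equality is manifestly preserved by $\Phi$ in view of the display above, so the conclusion is immediate once the characterization is established. For the ``only if'' direction one uses that when $\Lambda$ is an SFT its minimal presentation is eventually the Fischer cover, so $\OLmin$ is the corresponding Cuntz--Krieger algebra (as in Proposition \ref{prop:4.2}), in which $\mathcal{D}_\Lambda$ is the canonical Cartan MASA and $\pi$ is injective. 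The substantive direction is the converse: injectivity of $\pi_{{\frak L}_\Lambda^{\min}}$ must be shown to force the current vertex of $\LLmin$ to be a continuous, hence finite-window, function of the label future, which together with left-resolvingness yields a finite-memory presentation and hence the SFT property.

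For part (ii), the plan is to characterize soficity by a finiteness of the inclusion $\mathcal{D}_\Lambda \subset \mathcal{D}_{{\frak L}_\Lambda^{\min}}$ that $\Phi$ preserves. Concretely, a normal subshift $\Lambda$ is sofic if and only if its minimal presentation $\LLmin$ is finite, i.e.\ $\sup_l m(l) < \infty$, and I would translate this into the statement that $\pi_{{\frak L}_\Lambda^{\min}}$ has uniformly bounded fibres, equivalently that $\mathcal{D}_{{\frak L}_\Lambda^{\min}}$ is finitely generated as a module over $\mathcal{D}_\Lambda$ (a $C(X_\Lambda)$-algebra whose fibre dimensions are bounded). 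Since $\Phi$ is an isomorphism of the pair $\mathcal{D}_\Lambda \subset \mathcal{D}_{{\frak L}_\Lambda^{\min}}$, it transports this module-finiteness from $\Lambda_1$ to $\Lambda_2$, yielding the equivalence.

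I expect the main obstacle to be precisely these two intrinsic characterizations, above all establishing that uniformly bounded fibres of $\pi_{{\frak L}_\Lambda^{\min}}$ are equivalent to soficity, by relating the fibre cardinality to the number $m(l)$ of $l$-past-equivalence classes of $l$-synchronizing words in a way that is visible from the inclusion $\mathcal{D}_\Lambda \subset \mathcal{D}_{{\frak L}_\Lambda^{\min}}$ alone. The SFT side is analogous but easier, since there one needs only injectivity of $\pi$ rather than a quantitative fibre bound.
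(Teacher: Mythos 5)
Your part (i) is essentially the paper's own argument: the paper likewise records that a normal subshift $\Lambda$ is a shift of finite type if and only if $\mathcal{D}_\Lambda = \mathcal{D}_{{\frak L}_\Lambda^{\min}}$, obtains $\Phi(\mathcal{D}_{{\frak L}_{\Lambda_1}^{\min}}) = \mathcal{D}_{{\frak L}_{\Lambda_2}^{\min}}$ from Proposition \ref{prop:relative} via relative commutants, and concludes; your additional sketch of the converse direction of the characterization (which the paper dismisses as easy) is fine and is not a deviation.

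Part (ii), however, has a genuine gap. Your whole argument rests on the claim that a normal subshift is sofic if and only if $\pi_{{\frak L}_\Lambda^{\min}}$ has uniformly bounded fibres, ``equivalently'' $\mathcal{D}_{{\frak L}_\Lambda^{\min}}$ is finitely generated as a $\mathcal{D}_\Lambda$-module, and you explicitly defer the hard implication. Two distinct problems arise. First, the parenthetical equivalence is false: finite generation of $C(Y)$ over $C(X)$ does force fibres of cardinality at most the number of generators, but bounded fibres do not imply finite generation, even for zero-dimensional compact metric spaces --- take $X = \{0\}\cup\{1/n : n\in\N\}$ and $Y$ obtained by doubling each point $1/n$ into two points at distance $\epsilon_n \to 0$; the fibres have at most two points, yet a rate-of-merging argument shows $C(Y)$ is not finitely generated over $C(X)$. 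Second, the implication you actually need (bounded fibres, or module finiteness, implies soficity) is never proved, and it is the entire content of the statement: soficity of a normal subshift amounts to boundedness of the vertex counts $m(l)$ of ${\frak L}_\Lambda^{\min}$, and nothing in your proposal bounds $m(l)$ in terms of fibre cardinalities of $\pi$. The paper avoids any such intrinsic characterization. Since continuous orbit equivalence of normal subshifts is by definition an $({\frak L}_1,{\frak L}_2)$-continuous orbit equivalence (Definition \ref{def:coelambda}), it already comes equipped with a homeomorphism $h_{\frak L}: X_{{\frak L}_1} \to X_{{\frak L}_2}$ of the covers satisfying the orbit relations. When $\Lambda_1$ is sofic, $X_{{\frak L}_1}$ is a shift of finite type (Section 4, the cover being the Fischer cover); by the invariance established in part (i) together with the theory of continuous orbit equivalence of shifts of finite type (\cite{MaPacific2010}), $X_{{\frak L}_2}$ is then also a shift of finite type; finally $X_{\Lambda_2}$ is a factor of $X_{{\frak L}_2}$ via $\pi_2$, hence sofic by Weiss's theorem \cite{Weiss}. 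This route uses only the cover homeomorphism and the classical fact that sofic shifts are exactly the factors of shifts of finite type; you should either adopt it or supply a complete proof of your characterization, since as written your part (ii) does not go through.
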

\begin{proof}
The minimal presentations 
$\LLamonemin, \LLamtwomin$
of
$\Lambda_1, \Lambda_2$
are written 
${\frak L}_1, {\frak L}_2$,
respectively.

(i)
It is easy to see that 
a normal subshift $\Lambda$ is a shift of finite type
if and only if
$\D_\Lambda = \D_{\LLmin}$.  
Now 
there exists an isomorphism
$\Phi: \OLamonemin
\longrightarrow \OLamtwomin
$ of $C^*$-algebras
such that 
$\Phi({\mathcal{D}}_{\Lambda_1})={\mathcal{D}}_{\Lambda_2}.
$ 
Since 
$\D_{\LLmin} =\D_\Lambda^\prime\cap\OLmin$
for a normal subshift $\Lambda$,
we know that 
$\D_{{\frak L}_1} = {\mathcal{D}}_{\Lambda_1}$
if and only if
$\D_{{\frak L}_2} = {\mathcal{D}}_{\Lambda_2}.$
Hence $(X_{\Lambda_1},\sigma_{\Lambda_1})$
is a shift of finite type 
if and only if 
$(X_{\Lambda_2},\sigma_{\Lambda_2})$
is a shift of finite type.

(ii)
Suppose that 
$(X_{\Lambda_1},\sigma_{\Lambda_1})$
is sofic.
As in Section 4, the dynamical system 
$(X_{{\frak L}_1}, \sigma_{{\frak L}_1})$
is a shift of finite type.
We know that the class of shifts of finite type 
is preserved under continuous orbit equivalence 
by the above discussion (i).
By definition,  the shift of finite type
$(X_{{\frak L}_1}, \sigma_{{\frak L}_1})$
is continuously orbit equivalent 
to 
$(X_{{\frak L}_2}, \sigma_{{\frak L}_2})$
as shifts of finite type (cf. \cite{MaPacific2010}).
Hence $(X_{{\frak L}_2}, \sigma_{{\frak L}_2})$ is a shift of finite type.
As there exists a factor map
$\pi_2:X_{{\frak L}_2} \longrightarrow X_{\Lambda_2}$
such that 
$\pi_2\circ \sigma_{{\frak L}_2} = \sigma_{\Lambda_2}\circ\pi_2$,
we see that 
$(X_{\Lambda_2},\sigma_{\Lambda_2})$
is a sofic shift by \cite{Weiss}.
\end{proof} 
 \begin{proposition}
\label{prop:coesofic}
Let $(\Lambda_i, \sigma_{\Lambda_i}), i=1,2$
be sofic shifts and
$G_{\Lambda_i}^F, i=1,2$
be its left Fischer cover graphs.
Let us denote by
$\widehat{A}_i, i=1,2$ the transition matrices
of the graphs $G_{\Lambda_i}^F, i=1,2$.
Let $\pi_i:X_{\widehat{A}_i}\longrightarrow   
X_{\Lambda_i}, i=1,2
$
be the natural factor maps from the shifts of finite type
$X_{\widehat{A}_i}$ to the sofic shifts 
$   
X_{\Lambda_i}, i=1,2.$
Then the following three assertions are equivalent.
\begin{enumerate}
\renewcommand{\theenumi}{\roman{enumi}}
\renewcommand{\labelenumi}{\textup{(\theenumi)}}
\item  
Their one-sided sofic shifts  
$(X_{\Lambda_1},\sigma_{\Lambda_1})$
and
$(X_{\Lambda_2},\sigma_{\Lambda_2})$
are continuously orbit equivalent.
\item
The shifts of finite type
$(X_{\widehat{A}_1}, \sigma_{\widehat{A}_1})$ 
and
$(X_{\widehat{A}_2}, \sigma_{\widehat{A}_1})$ 
are continuously orbit equivalent via 
a homeomorphism 
$h_{\widehat{A}}: X_{\widehat{A}_1} \longrightarrow 
X_{\widehat{A}_2}
$
such that there exists a homeomorphism
$h_\Lambda: X_{\Lambda_1}\longrightarrow 
X_{\Lambda_2}
$
satisfying
$\pi_2\circ h_{\widehat{A}} =h_\Lambda\circ\pi_1$.
\item
There exists an isomorphism
$\Phi: \mathcal{O}_{\widehat{A}_1}
\longrightarrow \mathcal{O}_{\widehat{A}_2}
$ of Cuntz--Krieger algebras
such that 
$\Phi(C(X_{\Lambda_1}))=C(X_{\Lambda_2}),
$ 
where $C(X_{\Lambda_i})$
is embedded into $C(X_{\Lambda_i})\subset\mathcal{O}_{\widehat{A}_i}$
through the factor maps
$\pi_i:X_{\widehat{A}_i}\longrightarrow   
X_{\Lambda_i}, i=1,2.
$
\end{enumerate}
\end{proposition}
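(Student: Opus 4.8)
The plan is to reduce both equivalences to results already proved in Sections~4 and~6, after transporting everything through the identification of the minimal presentation with the left Fischer cover. Throughout, the $\Lambda_i$ are taken infinite irreducible sofic shifts, hence normal, so that Proposition~\ref{prop:4.2} and the results of Section~6 apply. For $i=1,2$ put ${\frak L}_i := {\frak L}_{\Lambda_i}^{\min}$; by the discussion of Section~4 this coincides with the left Fischer cover $G_{\Lambda_i}^F$ regarded as a $\lambda$-graph system. Since the maps $\iota^{\min}$ stabilize to the identity, the vertex space $\Omega_{{\frak L}_i}$ is (up to the natural bijection) the finite set $V_{\Lambda_i}^F$, and the one-sided path space $X_{{\frak L}_i}$ is exactly the edge shift of the left-resolving graph $G_{\Lambda_i}^F$, i.e. the one-sided topological Markov shift $X_{\widehat{A}_i}$, with $\sigma_{{\frak L}_i}=\sigma_{\widehat{A}_i}$ and with the factor map $\pi_{{\frak L}_i}$ equal to the label-reading map $\pi_i: X_{\widehat{A}_i}\to X_{\Lambda_i}$. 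Under these identifications Proposition~\ref{prop:4.2} yields an isomorphism $\mathcal{O}_{{\Lambda_i}^{\min}}\cong\mathcal{O}_{\widehat{A}_i}$ carrying the full diagonal $\D_{{\frak L}_i}=C(X_{{\frak L}_i})$ onto $\mathcal{D}_{\widehat{A}_i}=C(X_{\widehat{A}_i})$, and hence carrying ${\mathcal{D}}_{\Lambda_i}=C(X_{\Lambda_i})$ onto the copy $\{f\circ\pi_i : f\in C(X_{\Lambda_i})\}\subset\mathcal{D}_{\widehat{A}_i}$ appearing in assertion~(iii).

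For (i)~$\Longleftrightarrow$~(iii) I would combine Proposition~\ref{prop:onesidednormalcoe} with this transport. By Definition~\ref{def:coefornormal}, assertion~(i) means that $(X_{\Lambda_1},\sigma_{\Lambda_1})$ and $(X_{\Lambda_2},\sigma_{\Lambda_2})$ are $(\LLamonemin,\LLamtwomin)$-continuously orbit equivalent, and Proposition~\ref{prop:onesidednormalcoe} states that this holds if and only if there is an isomorphism $\Phi:\OLamonemin\to\OLamtwomin$ with $\Phi({\mathcal{D}}_{\Lambda_1})={\mathcal{D}}_{\Lambda_2}$. Composing $\Phi$ on each side with the isomorphisms $\mathcal{O}_{{\Lambda_i}^{\min}}\cong\mathcal{O}_{\widehat{A}_i}$ of the first paragraph, and using that they send ${\mathcal{D}}_{\Lambda_i}$ to the embedded $C(X_{\Lambda_i})$, converts $\Phi$ into an isomorphism $\mathcal{O}_{\widehat{A}_1}\to\mathcal{O}_{\widehat{A}_2}$ with $\Phi(C(X_{\Lambda_1}))=C(X_{\Lambda_2})$, i.e. exactly~(iii); reversing the arrows gives the converse.

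For (i)~$\Longleftrightarrow$~(ii) I would unfold Definition~\ref{def:coelambda} under the same identifications. A $(\LLamonemin,\LLamtwomin)$-continuous orbit equivalence is, by definition, a pair of homeomorphisms $h_{\frak L}:X_{{\frak L}_1}\to X_{{\frak L}_2}$ and $h_\Lambda:X_{\Lambda_1}\to X_{\Lambda_2}$ together with continuous cocycle functions $k_i,l_i$ satisfying \eqref{eq:coex}, \eqref{eq:coey} and $\pi_{{\frak L}_2}\circ h_{\frak L}=h_\Lambda\circ\pi_{{\frak L}_1}$. Writing $h_{\widehat{A}}:=h_{\frak L}$ and using $X_{{\frak L}_i}=X_{\widehat{A}_i}$, $\sigma_{{\frak L}_i}=\sigma_{\widehat{A}_i}$, $\pi_{{\frak L}_i}=\pi_i$, equations \eqref{eq:coex} and \eqref{eq:coey} become precisely the defining relations of a continuous orbit equivalence between the one-sided topological Markov shifts $X_{\widehat{A}_1}$ and $X_{\widehat{A}_2}$, while the remaining relation becomes the factor-map compatibility $\pi_2\circ h_{\widehat{A}}=h_\Lambda\circ\pi_1$ of~(ii). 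Thus (i) and (ii) are the same statement after the identifications.

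The only genuine work lies in the identifications of the first paragraph, and the step I expect to require the most care is checking that the isomorphism furnished by Proposition~\ref{prop:4.2} intertwines the distinguished subalgebras correctly: namely that it maps ${\mathcal{D}}_{\Lambda_i}$ onto the pullback copy $\{f\circ\pi_i : f\in C(X_{\Lambda_i})\}\subset\mathcal{D}_{\widehat{A}_i}$ rather than onto some other masa. This amounts to tracking the generators $S_\mu S_\mu^{*}$ of ${\mathcal{D}}_{\Lambda_i}$ through the explicit isomorphism $\mathcal{O}_{{\Lambda_i}^{\min}}\cong\mathcal{O}_{\widehat{A}_i}$ constructed in the proof of Proposition~\ref{prop:4.2}. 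Once this compatibility is verified, both equivalences follow formally, with no further estimates needed.
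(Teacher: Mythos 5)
Your proof is correct and follows essentially the same route as the paper: the paper's own proof consists of the single observation that $(X_{{\frak L}_{\Lambda_i}},\sigma_{{\frak L}_{\Lambda_i}})$ coincides with $(X_{\widehat{A}_i},\sigma_{\widehat{A}_i})$, after which it appeals to the "previous discussions" — exactly the identifications via Proposition \ref{prop:4.2}, the unfolding of Definition \ref{def:coelambda}, and Proposition \ref{prop:onesidednormalcoe} that you carry out explicitly. Your added verification that the isomorphism of Proposition \ref{prop:4.2} carries ${\mathcal{D}}_{\Lambda_i}$ onto the pullback copy $\pi_i^*C(X_{\Lambda_i})$ is a detail the paper leaves implicit, and it is the right point to check.
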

\begin{proof}
Since the topological dynamical systems
$(X_{{\frak L}_{\Lambda_i}}, 
\sigma_{{\frak L}_{\Lambda_i}})$
are the shifts of finite type
$(X_{\widehat{A}_i}, \sigma_{\widehat{A}_i}), i=1,2,$ 
the assertions are direct from the previous discussions.
\end{proof}
 
We will give an example (cf. \cite[Example 6.15]{BC2019}).  
\begin{example}
\end{example}
Let $\Lambda_0$ and $\Lambda_1$ be 
the even shift over the alphabet $\{0,1\}$ and
the odd shift over the alphabet $\{0,1\}$,
respectively.
Their forbidden words 
${\frak F}_*(\Lambda_0)$
are 
${\frak F}_*(\Lambda_1)$
are defined by
$$
{\frak F}_*(\Lambda_0) = \{ 10^{2n+1}1 \mid n \in \Zp\}, \qquad
{\frak F}_*(\Lambda_1) = \{ 10^{2n}1 \mid n \in \Zp\}
$$
where
$1 0^k 1 = 1\overbrace{0\cdots 0}^{k} 1$
for
$k=2n+1, 2n.$
It is well-known that the subshifts 
$\Lambda_0, \Lambda_1$ are both sofic shifts.
Their left Fischer covers
$G_{\Lambda_0}^F,G_{\Lambda_1}^F$
are shown in Figure \ref{fig:Fischerevenodd}, respectively.
 
\begin{figure}[htbp]
\begin{center}
\input{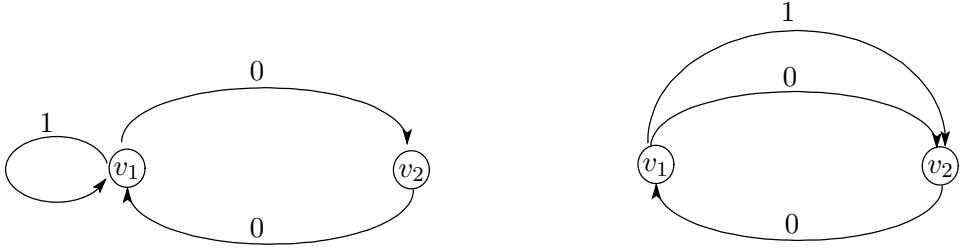}
\end{center}
\caption{Left Fischer covers of $\Lambda_0$ and $\Lambda_1$}
\label{fig:Fischerevenodd}
\end{figure}

\noindent
We write $\alpha =0, \beta =1$ for the alphabet $\{0,1\}$.
To describe the transition matrices
for the Fischer cover graphs 
$G_{\Lambda_0}^F,G_{\Lambda_1}^F$,
consider the new alphabet sets 
$\widehat{\Sigma}_0, \widehat{\Sigma}_1$
by setting
\begin{equation*}
\widehat{\Sigma}_0 := \{ (\alpha, v_1), \, (\alpha, v_2), \, (\beta, v_1)\},
\qquad 
\widehat{\Sigma}_1 := \{ (\alpha, v_1), \, (\alpha, v_2), \, (\beta, v_2)\},
\end{equation*}
and put
\begin{gather*}
u_1:=(\alpha, v_1), \quad u_2:=(\alpha, v_2), \quad u_3:=(\beta, v_1) \quad \text{ in } \widehat{\Sigma}_0, \\
w_1:=(\alpha, v_1), \quad w_2:=(\alpha, v_2), \quad w_3:=(\beta, v_2) \quad \text{ in } \widehat{\Sigma}_1.
\end{gather*}
We then have the associated transition graphs for 
$G_{\Lambda_0}^F$ and $G_{\Lambda_1}^F$, respectively.
They are shwon in Figure \ref{fig:transitionFischer}.
\begin{figure}[htbp]
\begin{center}
\unitlength 0.1in
\begin{picture}( 41.9800, 13.7500)( 14.8800,-23.2500)
%
\put(30.3600,-19.4200){\makebox(0,0)[lb]{}}%
%
\put(24.2000,-9.6000){\makebox(0,0)[lb]{}}%
\put(27.0000,-22.3000){\makebox(0,0)[lb]{$u_3$}}%
%
\special{pn 8}%
\special{ar 3090 2170 194 156  3.3379255 6.2831853}%
\special{ar 3090 2170 194 156  0.0000000 2.8160494}%
%
\special{pn 8}%
\special{pa 2910 2224}%
\special{pa 2908 2220}%
\special{fp}%
\special{sh 1}%
\special{pa 2908 2220}%
\special{pa 2924 2286}%
\special{pa 2934 2266}%
\special{pa 2958 2266}%
\special{pa 2908 2220}%
\special{fp}%
%
\special{pn 8}%
\special{pa 1740 2010}%
\special{pa 2210 1380}%
\special{fp}%
\special{sh 1}%
\special{pa 2210 1380}%
\special{pa 2154 1422}%
\special{pa 2178 1424}%
\special{pa 2186 1446}%
\special{pa 2210 1380}%
\special{fp}%
%
\special{pn 8}%
\special{pa 2568 2176}%
\special{pa 1784 2170}%
\special{fp}%
\special{sh 1}%
\special{pa 1784 2170}%
\special{pa 1850 2190}%
\special{pa 1836 2170}%
\special{pa 1850 2150}%
\special{pa 1784 2170}%
\special{fp}%
%
\special{pn 8}%
\special{pa 2310 1370}%
\special{pa 2718 2040}%
\special{fp}%
\special{sh 1}%
\special{pa 2718 2040}%
\special{pa 2700 1974}%
\special{pa 2690 1994}%
\special{pa 2666 1994}%
\special{pa 2718 2040}%
\special{fp}%
%
\special{pn 8}%
\special{pa 2120 1370}%
\special{pa 1656 2002}%
\special{fp}%
\special{sh 1}%
\special{pa 1656 2002}%
\special{pa 1712 1960}%
\special{pa 1688 1960}%
\special{pa 1678 1936}%
\special{pa 1656 2002}%
\special{fp}%
\put(15.5000,-22.4000){\makebox(0,0)[lb]{$u_2$}}%
\put(21.6400,-13.0000){\makebox(0,0)[lb]{$u_1$}}%
%
\put(56.8600,-19.3200){\makebox(0,0)[lb]{}}%
%
\special{pn 8}%
\special{ar 5438 2180 132 110  0.0000000 6.2831853}%
%
\put(50.7000,-9.5000){\makebox(0,0)[lb]{}}%
\put(53.5000,-22.3000){\makebox(0,0)[lb]{$w_3$}}%
%
\special{pn 8}%
\special{pa 4390 2000}%
\special{pa 4860 1370}%
\special{fp}%
\special{sh 1}%
\special{pa 4860 1370}%
\special{pa 4804 1412}%
\special{pa 4828 1414}%
\special{pa 4836 1436}%
\special{pa 4860 1370}%
\special{fp}%
%
\special{pn 8}%
\special{pa 4770 1360}%
\special{pa 4306 1992}%
\special{fp}%
\special{sh 1}%
\special{pa 4306 1992}%
\special{pa 4362 1950}%
\special{pa 4338 1950}%
\special{pa 4328 1926}%
\special{pa 4306 1992}%
\special{fp}%
\put(42.1000,-22.2000){\makebox(0,0)[lb]{$w_2$}}%
\put(48.2000,-13.0000){\makebox(0,0)[lb]{$w_1$}}%
%
\special{pn 8}%
\special{pa 5410 2010}%
\special{pa 4982 1352}%
\special{fp}%
\special{sh 1}%
\special{pa 4982 1352}%
\special{pa 5002 1418}%
\special{pa 5012 1396}%
\special{pa 5036 1396}%
\special{pa 4982 1352}%
\special{fp}%
%
\special{pn 8}%
\special{pa 4920 1402}%
\special{pa 5352 2056}%
\special{fp}%
\special{sh 1}%
\special{pa 5352 2056}%
\special{pa 5332 1990}%
\special{pa 5324 2012}%
\special{pa 5300 2012}%
\special{pa 5352 2056}%
\special{fp}%
%
\special{pn 8}%
\special{ar 4310 2150 132 110  0.0000000 6.2831853}%
%
\special{pn 8}%
\special{ar 4910 1240 132 110  0.0000000 6.2831853}%
%
\special{pn 8}%
\special{ar 2240 1230 132 110  0.0000000 6.2831853}%
%
\special{pn 8}%
\special{ar 1620 2170 132 110  0.0000000 6.2831853}%
%
\special{pn 8}%
\special{ar 2760 2170 132 110  0.0000000 6.2831853}%
\end{picture}%
\end{center}
\caption{Transition graphs for the left Fischer covers of $\Lambda_0$ and $\Lambda_1$}
\label{fig:transitionFischer}
\end{figure}
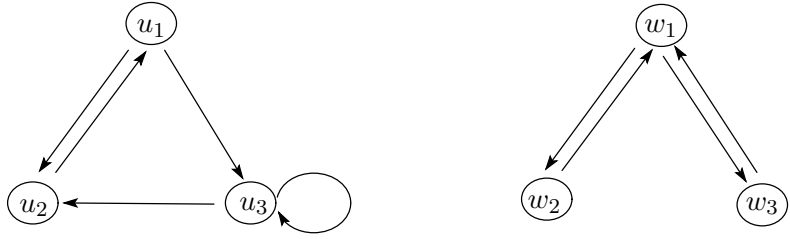
Their transition matrices are denoted by 
$\widehat{A}_0$ and $\widehat{A}_1$,
respectively. 
They are written 
\begin{equation*}
\widehat{A}_0
=
\begin{bmatrix}
0 & 1& 1 \\
1 & 0& 0 \\
0 & 1& 1 
\end{bmatrix},
\qquad
\widehat{A}_1
=
\begin{bmatrix}
0 & 1& 1 \\
1 & 0& 0 \\
1 & 0& 0 
\end{bmatrix}.
\end{equation*}
Let
$s_1, s_2, s_3$
and $t_1, t_2, t_3$ 
be the generating partial isometries 
of the Cuntz--Krieger algebras
$\mathcal{O}_{\widehat{A}_0}$
and
$\mathcal{O}_{\widehat{A}_1},$
respectively.
They satisfy the following operator relations:
\begin{gather*}
s_1 s_1^* + s_2 s_2^* + s_3 s_3^* =1, \qquad
s_1^* s_1= s_2 s_2^* + s_3 s_3^*, \qquad
s_2^* s_2 = s_1 s_1^*, \qquad
s_3^* s_3= s_2 s_2^* + s_3 s_3^*, \\
t_1 t_1^* + t_2 t_2^* + t_3 t_3^* =1, \qquad
t_1^* t_1= t_2 t_2^* + t_3 t_3^*, \qquad
t_2^* t_2 = t_1 t_1^*, \qquad
t_3^* t_3= t_1 t_1^*.
\end{gather*}
\begin{proposition}
There exists an isomorphism
$\Phi: \mathcal{O}_{\widehat{A}_0}
\longrightarrow \mathcal{O}_{\widehat{A}_1}
$ of Cuntz--Krieger algebras
such that 
$$
\Phi(\D_{\widehat{A}_0})=\D_{\widehat{A}_1},
\qquad
\Phi(C(X_{\Lambda_0}))=C(X_{\Lambda_1}),
$$ 
where $\D_{\widehat{A}_i} = C(X_{\widehat{A}_i}), i=0,1.$
\end{proposition}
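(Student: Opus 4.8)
The guiding idea is the recoding of the even shift into the odd shift obtained by \emph{inserting a $0$ immediately after every $1$}: a word $1 0^{2a} 1$ of $\Lambda_0$ becomes $1 0^{2a+1} 1$, which is admissible for $\Lambda_1$. On the level of Fischer covers this corresponds to the edge substitution $u_1 \mapsto w_1$, $u_2 \mapsto w_2$, $u_3 \mapsto w_3 w_1$, in which the label-$1$ self-loop $u_3$ of $G_{\Lambda_0}^F$ is replaced by the length-two path $w_3 w_1$ of $G_{\Lambda_1}^F$. Reading this substitution off at the level of generators suggests defining $\Phi$ on the canonical generators of $\mathcal{O}_{\widehat{A}_0}$ by
\[
\Phi(s_1) = t_1, \qquad \Phi(s_2) = t_2, \qquad \Phi(s_3) = t_3 t_1 .
\]

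First I would verify that $S_1 := t_1$, $S_2 := t_2$, $S_3 := t_3 t_1$ are nonzero partial isometries in $\mathcal{O}_{\widehat{A}_1}$ satisfying the relations \eqref{eq:SalphaEi} for $\widehat{A}_0$. Using $t_3^* t_3 = t_1 t_1^*$ one gets $S_3 S_3^* = t_3 t_1 t_1^* t_3^* = t_3 t_3^*$ and $S_3^* S_3 = t_1^* t_1 = t_2 t_2^* + t_3 t_3^*$; together with $S_1^* S_1 = t_1^* t_1$ and $S_2^* S_2 = t_1 t_1^*$ this gives each of the four defining relations in a single line. Since $\widehat{A}_0$ is irreducible and satisfies condition (I), the universality of $\mathcal{O}_{\widehat{A}_0}$ produces a unital $*$-homomorphism $\Phi$, and simplicity of $\mathcal{O}_{\widehat{A}_0}$ (Proposition \ref{prop:4.2}) forces $\Phi$ to be injective. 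For surjectivity I recover the missing generator via $t_3 = t_3 t_1 t_1^* = S_3 S_1^*$, so that $t_1, t_2, t_3$ all lie in the range of $\Phi$; hence $\Phi$ is an isomorphism.

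Next comes $\Phi(\D_{\widehat{A}_0}) = \D_{\widehat{A}_1}$. Because each $S_i$ is a \emph{monomial} in the $t_j$ (namely $t_1$, $t_2$, $t_3 t_1$), every $\Phi(s_\mu s_\mu^*)$ has the form $t_\nu t_\nu^*$, so $\Phi(\D_{\widehat{A}_0}) \subseteq \D_{\widehat{A}_1}$; as $\D_{\widehat{A}_0}$ is a maximal abelian subalgebra and $\Phi$ is an isomorphism, its image is a maximal abelian subalgebra contained in the abelian algebra $\D_{\widehat{A}_1}$, whence equality by maximality. The finer condition $\Phi(C(X_{\Lambda_0})) = C(X_{\Lambda_1})$ is more delicate. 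Writing $S_0^\Lambda = s_1 + s_2$, $S_1^\Lambda = s_3$ and $T_0^\Lambda = t_1 + t_2$, $T_1^\Lambda = t_3$ for the label-generators, $\Phi$ sends $S_0^\Lambda \mapsto t_1 + t_2 = T_0^\Lambda$ and $S_1^\Lambda \mapsto t_3 t_1 = t_3(t_1+t_2) = T_1^\Lambda T_0^\Lambda$ (using $t_3 t_2 = 0$), exactly the recoding $0 \mapsto 0$, $1 \mapsto 10$, giving $\Phi(C(X_{\Lambda_0})) \subseteq C(X_{\Lambda_1})$. Here the maximality trick fails, since $C(X_{\Lambda_i})$ is a \emph{proper} abelian subalgebra of $\D_{\widehat{A}_i}$, so I would instead establish the reverse inclusion $\Phi^{-1}(C(X_{\Lambda_1})) \subseteq C(X_{\Lambda_0})$ directly, checking that $\Phi^{-1}$ carries each cylinder projection $T_\nu^\Lambda T_\nu^{\Lambda *}$ to a cylinder projection of $C(X_{\Lambda_0})$; the base cases $\Phi^{-1}(T_0^\Lambda T_0^{\Lambda *}) = s_1 s_1^* + s_2 s_2^*$ and $\Phi^{-1}(t_3 t_3^*) = s_3 s_3^*$ (realizing the inverse recoding $10 \mapsto 1$) set up an induction on word length, and equality then follows from bijectivity of $\Phi$.

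I expect this last step to be the main obstacle: unlike the full diagonals, the sofic subalgebras are not masas, so the equality $\Phi(C(X_{\Lambda_0})) = C(X_{\Lambda_1})$ must be secured by an explicit two-sided bookkeeping of how $\Phi$ and $\Phi^{-1}$ transport cylinder projections under the length-changing recoding $1 \leftrightarrow 10$, with care at the degenerate points (the fixed point $0^\infty$ and sequences containing only finitely many $1$'s). Conceptually this is guaranteed in advance: the substitution above is a $({\frak L}_{\Lambda_0}^{\min},{\frak L}_{\Lambda_1}^{\min})$-continuous orbit equivalence intertwining the factor maps $\pi_0,\pi_1$, so Proposition \ref{prop:coesofic} and \cite[Theorem 1.2]{MaDynam2020} already yield an isomorphism preserving both $\D_{\widehat{A}_i}$ and $C(X_{\Lambda_i})$; the explicit generator formula merely makes this isomorphism and both diagonal identities transparent.
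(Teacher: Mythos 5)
Your proposal is correct and follows essentially the same route as the paper: the identical generator assignment $\Phi(s_1)=t_1$, $\Phi(s_2)=t_2$, $\Phi(s_3)=t_3t_1$, the same recovery $t_3=\Phi(s_3s_1^*)$ for surjectivity, and the same recoding computations $\Phi(S_\alpha)=T_\alpha$, $\Phi(S_\beta)=T_{\beta\alpha}$ (i.e.\ $0\mapsto 0$, $1\mapsto 10$) for the two subalgebra identities. Your two finishing devices---maximal abelianness of $\D_{\widehat{A}_1}$ for the diagonal equality, and the explicit induction transporting cylinder projections under $\Phi^{-1}$ for $C(X_{\Lambda_1})$---are sound ways of supplying the reverse inclusions that the paper treats as immediate from the substitutions $\mu\mapsto\tilde{\mu}$ and $\xi\mapsto\bar{\xi}$.
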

\begin{proof}
Put
$s'_1 = t_1, s'_2 = t_2, s'_3 = t_3 t_1$.
They are partial isometries in 
$\mathcal{O}_{\widehat{A}_1}$ satisfying
\begin{equation*}
s'_1 s^{\prime *}_1 + s'_2 s^{\prime *}_2 + s'_3 s^{\prime *}_3 =1, \qquad
s^{\prime *}_1 s'_1= s'_2 s^{\prime *}_2 + s'_3 s^{\prime *}_3, \qquad
s^{\prime *}_2 s'_2 = s'_1 s^{\prime *}_1, \qquad
s^{\prime *}_3 s'_3= s'_2 s^{\prime *}_2 + s'_3 s^{\prime *}_3. 
\end{equation*}
Since 
$t_3 = s'_3 s^{\prime *}_1$,
by putting 
$\Phi(s_i)= s'_i, i=1,2,3$,
$\Phi$ extends an isomorphism from 
$\mathcal{O}_{\widehat{A}_0}$
to $\mathcal{O}_{\widehat{A}_1}$.
For an admissible word
$\mu$ of the shift of finite type 
$(\Lambda_{\widehat{A}_0}, \sigma_{\widehat{A}_0})$ 
defined by the matrix $\widehat{A}_0$,
denote by $\tilde{\mu}$ an admisible word
of $(\Lambda_{\widehat{A}_1}, \sigma_{\widehat{A}_1})$ 
defined by substitutiing
$$
1 \longrightarrow 1, \qquad 
2 \longrightarrow 2, \qquad 
3 \longrightarrow 31. 
$$
It is direct to see that the equality  
$\Phi(s_\mu s_\mu^* ) = t_{\tilde{\mu}}t_{\tilde{\mu}}^*$ 
holds.
Hence we have 
$
\Phi(\D_{\widehat{A}_0})=\D_{\widehat{A}_1}.
$

We will next show that 
$\Phi(C(X_{\Lambda_0}))=C(X_{\Lambda_1}).$
Define the partial isometries
by setting
\begin{equation*}
S_\alpha := s_1 + s_2, \quad
S_\beta := s_3\quad \text{ in } \mathcal{O}_{\widehat{A}_0}
\quad
\text{and }
\quad
T_\alpha := t_1 + t_2, \quad
T_\beta := t_3
\quad \text{ in } \mathcal{O}_{\widehat{A}_1}.
\end{equation*}
It is easy to see that the equalities 
\begin{equation*}
\Phi(S_\alpha) = T_\alpha, 
\quad
\Phi(S_\beta)= T_{\beta \alpha}
\quad
\text{and }
\quad
\Phi^{-1}(T_\alpha)= S_\alpha, \quad
\Phi^{-1}(T_\beta)= S_\beta S_\alpha^*
\end{equation*}
hold.
For $\xi \in B_*(\Lambda_0),$
let
$\bar{\xi}$ be the admissible word of $\Lambda_1$
by substituting 
$$\alpha\longrightarrow \alpha, \qquad \beta \longrightarrow \beta\alpha$$
in $\xi$. 
Then we have
$\Phi(S_\xi S_\xi^*) = T_{\bar{\xi}}T_{\bar{\xi}}^*.$
As
$C(X_{\Lambda_0})$ and $C(X_{\Lambda_1})$ 
are generated by projections
$S_\xi S_\xi^*, \xi \in B_*(\Lambda_0)$
and
$S_\eta S_\eta^*, \eta \in B_*(\Lambda_1),$
respectively,
we know that 
$\Phi(C(X_{\Lambda_0}))=C(X_{\Lambda_1}).$
\end{proof}
\begin{corollary}
The even shift $(X_{\Lambda_0},\sigma_{\Lambda_0})$
and the odd shift $(X_{\Lambda_1},\sigma_{\Lambda_1})$
are continuously orbit equivalent to each other.
\end{corollary}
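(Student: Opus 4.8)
The plan is to deduce this corollary directly from Proposition \ref{prop:coesofic} together with the isomorphism constructed in the Proposition immediately preceding. First I would record that the hypotheses of Proposition \ref{prop:coesofic} are met: the even shift $\Lambda_0$ and the odd shift $\Lambda_1$ are both sofic (as noted in the example) and both infinite as sets, hence both are normal subshifts, and their left Fischer cover graphs $G_{\Lambda_0}^F, G_{\Lambda_1}^F$ have the transition matrices $\widehat{A}_0, \widehat{A}_1$ exhibited above. Thus Proposition \ref{prop:coesofic} applies to the pair $(\Lambda_0,\Lambda_1)$, with the usual embeddings of $C(X_{\Lambda_i})$ into $\mathcal{O}_{\widehat{A}_i}$ through the factor maps $\pi_i : X_{\widehat{A}_i}\longrightarrow X_{\Lambda_i}$.

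Next I would invoke the preceding Proposition, which furnishes an isomorphism $\Phi : \mathcal{O}_{\widehat{A}_0}\longrightarrow \mathcal{O}_{\widehat{A}_1}$ of Cuntz--Krieger algebras satisfying $\Phi(C(X_{\Lambda_0})) = C(X_{\Lambda_1})$. This is exactly assertion (iii) of Proposition \ref{prop:coesofic} for the pair $(\Lambda_0,\Lambda_1)$. Applying the implication (iii) $\Longrightarrow$ (i) of that proposition then yields immediately that the one-sided sofic shifts $(X_{\Lambda_0},\sigma_{\Lambda_0})$ and $(X_{\Lambda_1},\sigma_{\Lambda_1})$ are continuously orbit equivalent, which is the assertion of the corollary.

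There is no genuine analytic obstacle here, since the substantive work—the explicit construction of the conjugating isomorphism $\Phi$ via $s_1'=t_1,\,s_2'=t_2,\,s_3'=t_3t_1$ and the verification that it intertwines the diagonals and the subalgebras $C(X_{\Lambda_i})$—was already completed in the preceding Proposition. The only point requiring care is the bookkeeping of the index convention: one must set $\Lambda_1 := \Lambda_0$ and $\Lambda_2 := \Lambda_1$ when reading the statement of Proposition \ref{prop:coesofic}, and check that the images $\Phi(S_\xi S_\xi^*) = T_{\bar{\xi}}T_{\bar{\xi}}^*$ computed there are precisely the generators of $C(X_{\Lambda_1})$ embedded through $\pi_1$, so that the hypothesis $\Phi(C(X_{\Lambda_0}))=C(X_{\Lambda_1})$ coincides with condition (iii) as phrased in Proposition \ref{prop:coesofic}.
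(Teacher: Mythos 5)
Your proposal is correct and matches the paper's intended argument: the corollary is stated in the paper without a separate proof precisely because it follows by feeding the isomorphism $\Phi$ with $\Phi(C(X_{\Lambda_0}))=C(X_{\Lambda_1})$ from the preceding Proposition into the implication (iii) $\Longrightarrow$ (i) of Proposition \ref{prop:coesofic}, exactly as you do. Your added remarks on verifying normality of $\Lambda_0,\Lambda_1$ and on the index bookkeeping are sound and consistent with the paper.
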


\begin{remark}
Keep the above notation 
for $S_\alpha, S_\beta$ and $T_\alpha, T_\beta$.
Let us denote by 
$C^*(S_\alpha, S_\beta)$ the $C^*$-subalgebra of 
$\mathcal{O}_{\widehat{A}_0}$
generated by the partial isometries
$S_\alpha, S_\beta$.
It is easy to see that the identities
\begin{equation*}
s_1 = S_\alpha^* S_\beta^* S_\beta S_\alpha S_\alpha, 
\qquad 
s_2 = S_\alpha - S_\alpha^* S_\beta^* S_\beta S_\alpha S_\alpha, \qquad 
s_3 = S_\beta
\end{equation*}
hold, so that the $C^*$-subalgebra
$C^*(S_\alpha, S_\beta)$ coincides with $\mathcal{O}_{\widehat{A}_0}$.
Similarly we know the identities
\begin{equation*}
t_1 = T_\beta^*  T_\beta T_\alpha, 
\qquad 
t_2 = T_\alpha - T_\beta^* T_\beta T_\alpha, \qquad 
t_3 = T_\beta
\end{equation*}
so that 
the $C^*$-subalgebra
$C^*(T_\alpha, T_\beta)$ coincides with $\mathcal{O}_{\widehat{A}_1}$.
\end{remark}

 \section{One-sided topological conjugacy}
In what follows, a sliding bock code means a shift commuting continuous map between 
subshifts, that is always defined by a block map (see \cite{LM}).

In this section, we will prove that 
the triplet 
$(\OLmin, \DLam, \rho^\Lambda)$
for a normal subshift $\Lambda$
is invariant under topological conjugacy of one-sided subshifts,
where $\rho^\Lambda$ denotes the gauge action $\rho^{\LLmin}$ on $\OLmin$
defined in \eqref{eq:gauge}.
  For a left-resolving $\lambda$-graph system ${\frak L}$,
let us denote by
$\Lambda$ the presented subshift.
There exists a natural factor map 
$\pi_{\frak L}: X_{\frak L} \longrightarrow X_\Lambda$ such that 
$\pi_{\frak L}\circ \sigma_{\frak L} = \sigma_\Lambda\circ \pi_{\frak L}$
that is defined in Section 3.
\begin{definition}[{\cite{MaYMJ2010},\cite{MaDynam2020}}]
Let ${\frak L}_1$ and ${\frak L}_2$
be left-resolving $\lambda$-graph systems that present subshifts
$\Lambda_1$ and $\Lambda_2$, respectively.
One-sided subshifts 
$(X_{\Lambda_1},\sigma_{\Lambda_1})$
and
$(X_{\Lambda_2},\sigma_{\Lambda_2})$
are said to be $({\frak L}_1,{\frak L}_2)$-{\it conjugate}\/
if there exist topological conjugacies
$h_\L: (X_{\L_1}, \sigma_{\L_1}) \longrightarrow (X_{\L_2}, \sigma_{\L_2}) $
and
$h_{\Lambda}: (X_{\Lambda_1}, \sigma_{\Lambda_1}) 
\longrightarrow (X_{\Lambda_2}, \sigma_{\Lambda_2}) $
such that 
$\pi_{\L_2} \circ h_\L = h_{\Lambda}\circ \pi_{\L_1}$.
\end{definition}
Equivalently,
there exist homeomorphisms
$h_\L: X_{\L_1} \longrightarrow X_{\L_2}$
and
$h_{\Lambda}: X_{\Lambda_1}
\longrightarrow X_{\Lambda_2}
$
such that 
\begin{equation} \label{eq:hfrak}
\begin{cases}
&h_\L(\sigma_{\L_1}(x))  =\sigma_{\L_2}(h_\L(x)) , \qquad x \in X_{\L_1}, \\ 
&h_\L^{-1}(\sigma_{\L_2}(y)) =\sigma_{\L_1}(h_\L^{-1}(y)), \qquad y \in X_{\L_2},
\end{cases}
\end{equation}
and 
\begin{equation} \label{eq:hpi}
\pi_{\L_2} \circ h_\L = h_{\Lambda}\circ \pi_{\L_1}.
\end{equation}
We remark that the equalities \eqref{eq:hfrak} and \eqref{eq:hpi}
automatically imply the equalities
\begin{equation} \label{eq:hlambda}
\begin{cases}
&h_\Lambda(\sigma_{\Lambda_1}(a)) 
 =\sigma_{\Lambda_2}(h_\Lambda(a)) , \qquad a \in X_{\Lambda_1}, \\ 
&h_\Lambda^{-1}(\sigma_{\Lambda_2}(b))
 =\sigma_{\Lambda_1}(h_\Lambda^{-1}(b)), \qquad b \in X_{\Lambda_2}.
\end{cases}
\end{equation}
We note that if
one-sided subshifts 
$(X_{\Lambda_1},\sigma_{\Lambda_1})$
and
$(X_{\Lambda_2},\sigma_{\Lambda_2})$
are $({\frak L}_1,{\frak L}_2)$-conjugate,
they are $({\frak L}_1,{\frak L}_2)$-eventually conjugate
in the sense of \cite{MaDynam2020}
or in the sense of the following section.
\begin{lemma}
Let $\LLamonemin$ and $\LLamtwomin$
be the minimal $\lambda$-graph systems for normal subshifts 
$\Lambda_1$ and $\Lambda_2$, respectively.
Assume that there exists a topological conjugacy
$h:X_{\Lambda_1}\longrightarrow X_{\Lambda_2}.$
Then there exists $L \in \N$ such that 
for any $l \in \N$ and a word $\mu \in S_l(\Lambda_1)$,
there exists a word $\tilde{\mu} \in S_l(\Lambda_2)$ with 
$|\tilde{\mu}| =|\mu | + L$
such that 
\begin{enumerate}
\renewcommand{\theenumi}{\roman{enumi}}
\renewcommand{\labelenumi}{\textup{(\theenumi)}}
\item 
for $\eta \in \Gamma^-_l(\tilde{\mu})$ and $y \in \Gamma^+_\infty(\tilde{\mu})$,
the equality $h^{-1}(\eta \tilde{\mu} y)_{[l+1, l+|\mu|]} = \mu$ holds,
\item there exists $\gamma \in \Gamma^+_{2L}(\mu)$ such that 
for $\xi \in \Gamma^-_l(\mu), x\in \Gamma^+_\infty(\xi\mu\gamma),$
the equality

\noindent
 $h(\xi \mu \gamma x)_{[l+1, l+|\mu|+L]} = \tilde{\mu}$ holds.
\end{enumerate}
\end{lemma}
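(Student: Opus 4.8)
The plan is to realize the conjugacy $h$ and its inverse as sliding block codes and to take $\tilde\mu$ to be the $h$-image of a suitable right-extension of $\mu$. First I would invoke the Curtis--Hedlund--Lyndon theorem for one-sided subshifts: since $h\colon X_{\Lambda_1}\to X_{\Lambda_2}$ is a topological conjugacy, both $h$ and $h^{-1}$ are given by block maps with zero memory, so there is a single $L\in\N$ such that $h(x)_i$ is determined by $x_{[i,i+L]}$ and $h^{-1}(y)_i$ is determined by $y_{[i,i+L]}$, for all $i\in\N$. The tool I will use repeatedly is the resulting locality principle: an image coordinate at position $i$ depends only on the $L+1$ source coordinates on $[i,i+L]$, so if two points agree on an interval $[a,b]$ with $b\ge a+L$, their images agree on $[a,b-L]$.

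Next, fix $l\in\N$ and $\mu\in S_l(\Lambda_1)$. Since $\Lambda_1$ is normal it is irreducible and infinite, so every admissible word extends to the right; choose $\gamma\in\Gamma^+_{2L}(\mu)$ and, for any $\xi\in\Gamma^-_l(\mu)$ and $x\in\Gamma^+_\infty(\xi\mu\gamma)$, put $z=\xi\mu\gamma x\in X_{\Lambda_1}$ and set
$$
\tilde\mu := h(z)_{[l+1,\,l+|\mu|+L]}.
$$
By locality this window is determined by $z_{[l+1,\,l+|\mu|+2L]}=\mu\gamma$ alone, so $\tilde\mu$ is independent of $\xi,x$, is admissible in $\Lambda_2$, and has length $|\mu|+L$. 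Assertion (ii) is then immediate, since $h(\xi\mu\gamma x)_{[l+1,\,l+|\mu|+L]}$ is computed from the same window $\mu\gamma$ for every choice of $\xi,x$. For (i), apply $h^{-1}\circ h=\mathrm{id}$ to $z$: on one hand $h^{-1}(h(z))_{[l+1,\,l+|\mu|]}=z_{[l+1,\,l+|\mu|]}=\mu$, while on the other hand the left side depends only on $h(z)_{[l+1,\,l+|\mu|+L]}=\tilde\mu$; hence for any $\eta\in\Gamma^-_l(\tilde\mu)$ and $y\in\Gamma^+_\infty(\tilde\mu)$, the point $\eta\tilde\mu y$ carries $\tilde\mu$ on $[l+1,\,l+|\mu|+L]$, giving $h^{-1}(\eta\tilde\mu y)_{[l+1,\,l+|\mu|]}=\mu$.

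The substantive point is $\tilde\mu\in S_l(\Lambda_2)$; admissibility being clear, what remains is $\Gamma^-_l(\tilde\mu)\subseteq\Gamma^-_l(\tilde\mu\tilde\omega)$ for every $\tilde\omega\in\Gamma^+_*(\tilde\mu)$, the reverse inclusion being automatic. Here I would transport the synchronization of $\mu$ across $h$. Given $\tilde\omega$, realize $\tilde\mu\tilde\omega$ inside a point $p^\circ$ with $p^\circ_{[l+1,\infty)}=\tilde\mu\tilde\omega\cdots$ and set $W:=h^{-1}(p^\circ)_{[l+1,\infty)}$; by (i) this tail begins with $\mu$, so $W=\mu\omega$ for some $\omega\in\Gamma^+_\infty(\mu)$. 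Given $\eta\in\Gamma^-_l(\tilde\mu)$, a preimage of $\eta\tilde\mu(\cdots)$ produces $\xi\in\Gamma^-_l(\mu)$ with $h(\xi\mu\cdots)_{[1,l]}=\eta$. Because $\mu$ is $l$-synchronizing, $\Gamma^-_l(\mu)=\Gamma^-_l(\mu\omega')$ for all $\omega'\in\Gamma^+_*(\mu)$, so $\xi$ may be prepended to $W$, giving an admissible $z':=\xi\,W\in X_{\Lambda_1}$. Pushing forward, locality yields $h(z')_i=p^\circ_i$ for all $i\ge l+1$, so $h(z')_{[l+1,\infty)}=\tilde\mu\tilde\omega\cdots$; together with $h(z')_{[1,l]}=\eta$ this exhibits $\eta\tilde\mu\tilde\omega$ as admissible, as required.

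The main obstacle is precisely the matching $h(z')_{[1,l]}=\eta$ in this last step. The block $h(z')_{[1,l]}$ is determined by $z'_{[1,\,l+L]}$, that is by $\xi$ together with the $L$ symbols following it, and these agree with $\mu$ only on the first $|\mu|$ positions; thus when $|\mu|<L$ the value of $h(z')_{[1,l]}$ a priori depends on a few symbols of the future of $\mu$ carried by $W$, which need not be the symbols that produced $\eta$ from $\xi$. Overcoming this is exactly where the synchronization hypothesis must be used in full: the freedom, guaranteed by $l$-synchronization, to attach $\xi$ to an \emph{arbitrary} future of $\mu$, combined with choosing $\gamma$ deep inside a synchronizing region so that the relevant $l$-past classes have already stabilized, forces the junction windows to agree and makes the two determinations of the left $l$-block coincide. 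For $|\mu|\ge L$ no junction symbols beyond $\mu$ intervene and the matching is automatic, while the short-word case is reduced to this by first passing to a synchronizing right-extension of $\mu$ having the same $l$-past. This window-alignment at the gluing point is the only delicate ingredient; everything else follows formally from the invertibility of $h$ and the locality of block codes.
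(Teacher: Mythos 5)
Your route is the same as the paper's own proof: realize $h$ and $h^{-1}$ as zero-memory sliding block codes with a common anticipation $L$, define $\tilde\mu$ as the image window $h(\mu\gamma x)_{[l+1,l+|\mu|+L]}$ of a buffered extension of $\mu$ (equivalently, as a word whose $\phi$-decoding returns $\mu$), deduce (i) from $h^{-1}\circ h=\id$ together with locality, get (ii) from the choice of the $2L$-buffer $\gamma$, and prove $\tilde\mu\in S_l(\Lambda_2)$ by pulling a past $\eta$ of $\tilde\mu$ back to a past $\xi\in\Gamma_l^-(\mu)$, regluing $\xi$ onto the pullback $\mu z'$ of an arbitrary future of $\tilde\mu$ (this is where $l$-synchronization of $\mu$ is used), and pushing forward with $h$. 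Your treatment of (i) and (ii) is correct, and the synchronization argument is complete exactly in the regime $|\mu|\ge L$, where the head $h(\xi\mu z')_{[1,l]}$ is computed from the window $(\xi\mu)_{[1,l+L]}$ alone and therefore coincides with $\eta$.

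The gap is the case $|\mu|<L$, which you flag but do not close, and neither of your two proposed remedies is a proof. When $|\mu|<L$ the window $(\xi\mu z')_{[1,l+L]}$ overhangs $\mu$ by $L-|\mu|$ symbols of $z'$, while $\eta$ was produced from the corresponding window of $\xi\mu z$; and these overhang symbols genuinely depend on the chosen future, since the $j$-th symbol of a pulled-back tail is $\phi$ applied to a window that overhangs $\tilde\mu$ and reads the first $j$ symbols of the continuation $y$. No choice of $\gamma$ "deep inside a synchronizing region" removes this dependence, and you give no argument that it does. The suggested reduction via a right-extension also fails as stated: for $\delta\in\Gamma_*^+(\mu)$ with $|\mu\delta|\ge L$ one does have $\mu\delta\in S_l(\Lambda_1)$ with the same $l$-past, so the long-word case yields $\widetilde{\mu\delta}\in S_l(\Lambda_2)$; but the lemma demands $|\tilde\mu|=|\mu|+L$, and the truncation $(\widetilde{\mu\delta})_{[1,|\mu|+L]}$ need not lie in $S_l(\Lambda_2)$ --- prefixes of $l$-synchronizing words are not in general $l$-synchronizing, and a continuation of the truncated word need not be a continuation of $\widetilde{\mu\delta}$, so the equality $\Gamma_l^-(\tilde\mu)=\Gamma_l^-(\tilde\mu\tilde\omega)$ is not inherited. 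For fairness you should know that the paper's own proof is subject to the same restriction: it writes $h(\xi\mu)_{[1,l]}=\eta$, an expression that is well defined only when $|\xi\mu|\ge l+L$, i.e. $|\mu|\ge L$, and every later application of the lemma (Lemma \ref{lem:4.4}, Proposition \ref{prop:conjeventconj}) invokes it only for words of length at least $L$. So your proposal covers exactly the ground the paper's proof covers, but as a proof of the statement as written both arguments leave the short-word case open.
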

\begin{proof}
Since $h:X_{\Lambda_1}\longrightarrow X_{\Lambda_2}$
is a topological conjugacy, there exist $L\in \N$ and
 block maps
$$
\varphi:B_{L+1}(\Lambda_1) \longrightarrow \Sigma_2, \qquad
\phi:B_{L+1}(\Lambda_2) \longrightarrow \Sigma_1,
$$
such that 
$h = \varphi^{[0,L]}_\infty : X_{\Lambda_1}\longrightarrow X_{\Lambda_2}$
and
$h^{-1} = \phi^{[0,L]}_\infty : X_{\Lambda_2}\longrightarrow X_{\Lambda_1}$
where 
$\varphi^{[0,L]}_\infty((x_n)_{n\in \N}) =(\varphi(x_n,\dots,x_{L+n}))_{n\in \N}$
and
$\phi^{[0,L]}_\infty$ is similarly defined (see \cite{LM}).
Let $\mu \in S_l(\Lambda_1)$
with $\mu =(\mu_1,\dots,\mu_m)$.
Since 
$h^{-1} : X_{\Lambda_2}\longrightarrow X_{\Lambda_1}$
is a sliding block code,
there exists 
$\tilde{\mu} =(\tilde{\mu}_1, \dots,\tilde{\mu}_{|\mu|+L}) \in B_{|\mu|+L}(\Lambda_2)$
such that 
$$
\mu_n = \phi(\tilde{\mu}_n, \dots,\tilde{\mu}_{n+L}), \qquad n=1,2,\dots, m.
$$
Suppose that  $y, y' \in \Gamma^+_\infty(\tilde{\mu})$ and
$\eta \in \Gamma^-_l(\tilde{\mu} y)$.
Hence we have
$h^{-1}(\eta\tilde{\mu}y) \in X_{\Lambda_1}$
such that 
$h^{-1}(\eta\tilde{\mu}y)_{[l+1,l+|\mu|]} =\mu.$
Take $\xi \in B_l(\Lambda_1)$ such that 
$h^{-1}(\eta \tilde{\mu} y) = \xi h^{-1}(\tilde{\mu} y)$.
Since 
$h^{-1}(\tilde{\mu} y) = \mu z$ for some $z \in \Gamma^+_\infty(\mu)$,
we have
$h^{-1}(\eta \tilde{\mu} y) = \xi  \mu z$,
so that we have $\xi \in \Gamma^-_l(\mu z)$.

Let $h^{-1}(\tilde{\mu} y') = \mu z'$ for some $z' \in \Gamma^+_\infty(\mu).$
As $\mu \in S_l(\Lambda_1)$, 
the condition  $\xi \in \Gamma^-_l(\mu z)$ implies
 $\xi \in \Gamma^-_l(\mu z')$, so that
$\xi \mu z' \in X_{\Lambda_1}$.
As $h(\xi \mu z) = \eta\tilde{\mu}y$,
we see 
$h(\xi \mu )_{[1,l]} = \eta$.
Now $\tilde{\mu}y' = h(\mu z')$ so that we have
$$
h(\xi \mu z')=h(\xi \mu )_{[1,l]}h(\mu z') = \eta \tilde{\mu}y'.
$$ 
Hence we have
$\eta \in \Gamma^-_l(\tilde{\mu}y')$.
This implies that 
$\Gamma^-_l(\tilde{\mu}y)= \Gamma^-_l(\tilde{\mu}y')
$
so that we conclude that 
$\tilde{\mu} \in S_l(\Lambda_2)$.
One may find $\gamma = (\gamma_1,\dots,\gamma_{2L}) \in \Gamma^+_{2L}(\mu)$
such that 
$h(\mu\gamma x)_{[1,|\mu|+L]} =\tilde{\mu}$
for any $x \in \Gamma_\infty^+(\mu \gamma)$.
Hence 
 $h(\xi \mu \gamma x)_{[l+1, l+|\mu|+L]} = \tilde{\mu}$ holds
for $\xi \in \Gamma^-_l(\mu), x\in \Gamma^+_\infty(\xi\mu\gamma).$
\end{proof}
\begin{lemma}
 For $\mu\in S_l(\Lambda_1),$ 
let $\tilde{\mu}\in S_l(\Lambda_2)$ be as above. 
For $\gamma^\prime \in \Gamma^+_{2L}(\mu)$, 
put 
$\tilde{\mu}^\prime:= h(\xi \mu \gamma' x')_{[l+1, |\mu|+L]} \in S_l(\Lambda_2)$
for some $\xi \in \Gamma^-_l(\mu), x'\in \Gamma^+_\infty(\xi\mu\gamma').$
Then
$\tilde{\mu}\underset{l}{\sim}\tilde{\mu}^\prime$ in $S_l(\Lambda_2)$.
 Hence the equivalence class of $\tilde{\mu}$ 
does not depend on the choice 
of $\gamma$ and $x$ as long as $\xi \mu \gamma x \in B_*(\Lambda_1)$. 
\end{lemma}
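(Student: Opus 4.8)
The plan is to prove the identity of predecessor sets $\Gamma_l^-(\tilde{\mu}) = \Gamma_l^-(\tilde{\mu}')$, which is literally the assertion $\tilde{\mu}\underset{l}{\sim}\tilde{\mu}'$. Since $\gamma$ and $\gamma'$ play symmetric roles, it suffices to establish one inclusion, say $\Gamma_l^-(\tilde{\mu}) \subseteq \Gamma_l^-(\tilde{\mu}')$, and then repeat the argument with $\gamma$ and $\gamma'$ interchanged. The three ingredients I would combine are: property (i) of the preceding lemma (that $h^{-1}$ recovers $\mu$ in the coordinates $[l+1,l+|\mu|]$ of any admissible window $\eta\tilde{\mu}y$), property (ii) of that lemma (that re-encoding $\xi\mu\gamma'$ by $h$ reproduces $\tilde{\mu}'$ in the corresponding window, independently of $\xi$ and the tail), and the defining feature of $\mu\in S_l(\Lambda_1)$, namely $\Gamma_l^-(\mu) = \Gamma_l^-(\mu\omega)$ for all $\omega\in\Gamma_*^+(\mu)$.

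Concretely, I would first take $\eta\in\Gamma_l^-(\tilde{\mu})$ and a follower $y\in\Gamma_\infty^+(\tilde{\mu})$, so $\eta\tilde{\mu}y\in X_{\Lambda_2}$. Applying $h^{-1}$ and invoking property (i), I obtain $h^{-1}(\eta\tilde{\mu}y) = \zeta\mu z$, where $\zeta := h^{-1}(\eta\tilde{\mu}y)_{[1,l]}$ and $z\in\Gamma_\infty^+(\mu)$; since $\zeta\mu z\in X_{\Lambda_1}$ and $\mu$ is $l$-synchronizing, $\zeta\in\Gamma_l^-(\mu z) = \Gamma_l^-(\mu)$. Next, as $\gamma'\in\Gamma_{2L}^+(\mu)$ gives $\mu\gamma'\in B_*(\Lambda_1)$, the $l$-synchronizing property yields $\Gamma_l^-(\mu) = \Gamma_l^-(\mu\gamma')$, so $\zeta\in\Gamma_l^-(\mu\gamma')$ and hence $\zeta\mu\gamma'\in B_*(\Lambda_1)$. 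Extending to a point $\zeta\mu\gamma'x'\in X_{\Lambda_1}$ and applying $h$, property (ii) gives $h(\zeta\mu\gamma'x')_{[l+1,\,l+|\mu|+L]} = \tilde{\mu}'$. Writing $h(\zeta\mu\gamma'x') = \eta''\tilde{\mu}'z''$ with $\eta'' := h(\zeta\mu\gamma'x')_{[1,l]}$ and $z''\in\Gamma_\infty^+(\tilde{\mu}')$, I would then verify $\eta'' = \eta$, which forces $\eta\tilde{\mu}'z''\in X_{\Lambda_2}$ and therefore $\eta\in\Gamma_l^-(\tilde{\mu}')$, as desired.

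The crux is the identity $\eta'' = \eta$. Both $\eta$ and $\eta''$ are values of the memory-$0$, anticipation-$L$ block code $h=\varphi^{[0,L]}_\infty$ on the coordinates $[1,l]$, computed on the two one-sided inputs $\zeta\mu z$ and $\zeta\mu\gamma'x'$. Since $h$ reads only the input window $[1,l+L]$ to produce its first $l$ output symbols, it is enough that these two inputs agree on $[1,l+L]$. They share the prefix $\zeta\mu$, so when $|\mu|\ge L$ both windows reduce to $\zeta$ together with $\mu_{[1,L]}$ and the equality $\eta''=\eta$ is immediate; this is the principal case. The genuine obstacle is the short-word range $|\mu|<L$, where the window $[1,l+L]$ overshoots $\mu$ into the two different followers $z$ and $\gamma'x'$, so that $\eta''$ and $\eta$ a priori depend on $z_{[1,L-|\mu|]}$ versus $\gamma'_{[1,L-|\mu|]}$. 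I expect to resolve this by first passing, within the same $l$-past class, to a follower-extension $\mu\omega\in S_l(\Lambda_1)$ with $|\mu\omega|\ge L$: such an extension is again $l$-synchronizing and satisfies $\mu\omega\underset{l}{\sim}\mu$ (since $\Gamma_l^-(\mu\omega\tau)=\Gamma_l^-(\mu)$ for every $\tau\in\Gamma_*^+(\mu\omega)$), which reduces the boundary case to the principal one. Carrying out the symmetric inclusion then yields $\Gamma_l^-(\tilde{\mu}) = \Gamma_l^-(\tilde{\mu}')$, i.e. $\tilde{\mu}\underset{l}{\sim}\tilde{\mu}'$; in particular the class $[\tilde{\mu}]_l$ does not depend on the auxiliary data $\gamma$ and $x$, which is the final assertion.
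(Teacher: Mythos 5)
Your proposal follows essentially the same route as the paper's own proof: pull $\eta\tilde{\mu}y$ back through $h^{-1}$ via property (i), use the $l$-synchronizing property of $\mu$ to exchange the follower for $\gamma'x'$, push forward through $h$ to read off $\eta\tilde{\mu}'$, and conclude $\Gamma_l^-(\tilde{\mu})\subseteq\Gamma_l^-(\tilde{\mu}')$ with the reverse inclusion by symmetry. You are in fact more careful than the paper at the one delicate point, namely the identity $h(\xi\mu\gamma'x')_{[1,l]}=\eta$, which the paper asserts without comment and which does require $|\mu|\ge L$ as you observe; this is the case in all of the paper's applications (the subsequent lemmas and Proposition \ref{prop:conjeventconj} only invoke words of length at least $L$), so your sketched reduction for the short-word case addresses a gap the paper simply glosses over rather than a flaw in your own argument.
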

\begin{proof}
For $\eta \in \Gamma^-_l(\tilde{\mu})$,
take $y \in \Gamma^+_\infty(\tilde{\mu})$ such that 
$\eta \tilde{\mu}y \in X_{\Lambda_2}$.
Hence
$h^{-1}(\eta \tilde{\mu} y) = \xi  \mu z \in X_{\Lambda_1}$.
As
 $\gamma^\prime \in \Gamma^+_{2L}(\mu)$
and hence $\xi \mu \gamma' \in B_*(\Lambda_1)$,
we have
$\xi \mu \gamma' x' \in X_{\Lambda_1}$
for any $x' \in \Gamma_\infty^+(\xi \mu \gamma')$.
We then have
$$
h(\xi\mu\gamma'x')_{[1,|\mu|+L]}
= \eta h(\xi\mu\gamma'x')_{[l+1,|\mu|+L]}
=\eta \tilde{\mu}^\prime
$$
so that 
$\eta \in \Gamma^-_l(\tilde{\mu}^\prime)$
and
hence
$\Gamma^-_l(\tilde{\mu}) \subset \Gamma^-_l(\tilde{\mu}^\prime)$.
Similarly we have
$\Gamma^-_l(\tilde{\mu}^\prime) \subset \Gamma^-_l(\tilde{\mu})$
so that 
$\Gamma^-_l(\tilde{\mu}) =\Gamma^-_l(\tilde{\mu}^\prime).$
\end{proof}
\begin{lemma} \label{lem:4.4}
Suppose $\nu \in B_*(\Lambda_1)$ with $|\nu |\ge L$ and
$\nu \gamma \in S_l(\Lambda_1)$ and 
$\nu \delta \in S_{l+1}(\Lambda_1)$
for some $\gamma, \delta \in \Gamma_*^+(\nu)$
such that 
$\nu \gamma \underset{l}{\sim} \nu \delta.$
Then we have
$\widetilde{\nu\gamma} \in S_l(\Lambda_2)$ and 
$\widetilde{\nu\delta} \in S_{l+1}(\Lambda_2)$
such that 
$\widetilde{\nu\gamma} \underset{l}{\sim} \widetilde{\nu\delta}.$
\end{lemma}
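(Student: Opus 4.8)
The plan is to prove the asserted $l$-past equivalence by establishing the two inclusions $\Gamma^-_l(\widetilde{\nu\gamma}) \subseteq \Gamma^-_l(\widetilde{\nu\delta})$ and $\Gamma^-_l(\widetilde{\nu\delta}) \subseteq \Gamma^-_l(\widetilde{\nu\gamma})$. Before starting I would record two preliminary observations. First, since $S_{l+1}(\Lambda_1)\subseteq S_l(\Lambda_1)$ and $S_{l+1}(\Lambda_2)\subseteq S_l(\Lambda_2)$, the word $\nu\delta$ lies in $S_l(\Lambda_1)$ and $\widetilde{\nu\delta}$ lies in $S_l(\Lambda_2)$, so the two preceding lemmas apply to $\nu\delta$ at level $l$; moreover the representative $\widetilde{\nu\delta}$ produced at level $l$ and at level $l+1$ is the same word, because the symbols of $h(\xi\,\nu\delta\,\gamma^+ x)$ read off after the prefix $\xi$ depend only on $\nu\delta\gamma^+ x$ and not on $\xi$ (the block map $\varphi$ for $h$ has window $L$). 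Second, the crucial structural input is that $\nu\gamma$ and $\nu\delta$ share the common prefix $\nu$ of length $|\nu|\ge L$: since $\varphi$ has window $L$, any symbol of $h(\zeta\nu\cdots)$ in a position $\le l$ is determined by the input coordinates $[1,l+L]$, i.e. by $\zeta$ together with the first $L$ symbols of $\nu$. This is what lets a predecessor computed for $\nu\gamma$ be transported to $\nu\delta$ without change.

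For the inclusion $\Gamma^-_l(\widetilde{\nu\gamma}) \subseteq \Gamma^-_l(\widetilde{\nu\delta})$ I would take $\eta\in\Gamma^-_l(\widetilde{\nu\gamma})$ and choose $y\in\Gamma^+_\infty(\widetilde{\nu\gamma})$, so that $\eta\widetilde{\nu\gamma}y\in X_{\Lambda_2}$. Applying property (i) of the first preceding lemma to $\mu=\nu\gamma$ gives $h^{-1}(\eta\widetilde{\nu\gamma}y)=\zeta\,\nu\gamma\,w$ with $\zeta\in\Gamma^-_l(\nu\gamma)$ and $w\in\Gamma^+_\infty(\nu\gamma)$; applying $h$ back yields $h(\zeta\nu\gamma w)_{[1,l]}=\eta$. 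The hypothesis $\nu\gamma\underset{l}{\sim}\nu\delta$ gives $\Gamma^-_l(\nu\gamma)=\Gamma^-_l(\nu\delta)$, hence $\zeta\in\Gamma^-_l(\nu\delta)$. Now I would invoke property (ii) of the first preceding lemma for $\mu=\nu\delta$ at level $l$: there is $\gamma^+\in\Gamma^+_{2L}(\nu\delta)$ such that $h(\zeta\,\nu\delta\,\gamma^+ x)_{[l+1,\,l+|\nu\delta|+L]}=\widetilde{\nu\delta}$ for any $x\in\Gamma^+_\infty(\zeta\nu\delta\gamma^+)$. Since $\zeta\nu\gamma w$ and $\zeta\nu\delta\gamma^+ x$ agree on the first $l+L$ coordinates (both begin with $\zeta\nu$ and $|\nu|\ge L$), the window-$L$ argument of the first observation gives $h(\zeta\nu\delta\gamma^+ x)_{[1,l]}=h(\zeta\nu\gamma w)_{[1,l]}=\eta$. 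Therefore $h(\zeta\nu\delta\gamma^+ x)=\eta\,\widetilde{\nu\delta}\cdots\in X_{\Lambda_2}$, which shows that $\eta\widetilde{\nu\delta}$ is admissible, i.e. $\eta\in\Gamma^-_l(\widetilde{\nu\delta})$.

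The reverse inclusion $\Gamma^-_l(\widetilde{\nu\delta})\subseteq\Gamma^-_l(\widetilde{\nu\gamma})$ is entirely symmetric: starting from $\eta\in\Gamma^-_l(\widetilde{\nu\delta})$, I would use property (i) for $\nu\delta$ at level $l$ (legitimate by the first observation) to produce $\zeta\in\Gamma^-_l(\nu\delta)=\Gamma^-_l(\nu\gamma)$, then property (ii) for $\nu\gamma$ together with the common prefix $\nu$ to conclude $\eta\in\Gamma^-_l(\widetilde{\nu\gamma})$. Combining the two inclusions gives $\Gamma^-_l(\widetilde{\nu\gamma})=\Gamma^-_l(\widetilde{\nu\delta})$, that is $\widetilde{\nu\gamma}\underset{l}{\sim}\widetilde{\nu\delta}$. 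I expect the only real subtlety, and hence where I would spend the most care, to be the bookkeeping across the shift between level $l$ and level $l+1$: one must check that the $(l+1)$-synchronizing word $\widetilde{\nu\delta}$ of the statement is legitimately the same representative to which properties (i) and (ii) apply at level $l$, and that the truncation of $h(\zeta\nu\delta\gamma^+ x)$ to coordinates $[l+1,\,l+|\nu\delta|+L]$ returns exactly $\widetilde{\nu\delta}$ rather than merely an $l$-past equivalent word. Both points are settled by the independence of the read-off symbols from the chosen predecessor $\zeta$, noted in the first observation.
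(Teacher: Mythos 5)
Your proof is correct and follows essentially the same route as the paper's: pull the predecessor $\eta$ back through $h^{-1}$ via property (i) of the first preceding lemma, transport it across the hypothesis $\nu\gamma \underset{l}{\sim} \nu\delta$ to get a valid predecessor of $\nu\delta$, then push forward through $h$ and read off $\eta$ on the first $l$ coordinates and $\widetilde{\nu\delta}$ thereafter, with the reverse inclusion by symmetry. The only difference is that you make explicit what the paper leaves implicit, namely the window-$L$ overlap argument justified by $|\nu|\ge L$ and the choice of the specific continuation $\gamma^+$ from property (ii) guaranteeing that the read-off word is exactly $\widetilde{\nu\delta}$.
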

\begin{proof}
By the previous lemma,
we know that 
$\widetilde{\nu\gamma} \in S_l(\Lambda_2)$ and 
$\widetilde{\nu\delta} \in S_{l+1}(\Lambda_2)$.
It suffices to show that 
$\widetilde{\nu\gamma} \underset{l}{\sim} \widetilde{\nu\delta}.$
For $\eta \in \Gamma^-_l(\widetilde{\nu\gamma})$,
we have $\eta\widetilde{\nu\gamma} y \in X_{\Lambda_2}$ for some
$y \in X_{\Lambda_2}$.
Hence we have
$h^{-1}(\eta\widetilde{\nu\gamma} y) = \xi \nu \gamma z $
for some $\xi \in \Gamma^-_l(\nu \gamma), z \in \Gamma^+_l(\nu \gamma)$.
As 
$\nu \gamma \underset{l}{\sim} \nu \delta$
and hence 
$\xi \nu \delta \in B_*(\Lambda_1)$,
we see
$\xi \nu \delta z'  \in X_{\Lambda_1}$ for some $z' \in X_{\Lambda_1}$.
Since
$\eta\widetilde{\nu\gamma} y =h( \xi \nu \gamma z), $
we have
 $$
h(\xi \nu \delta z') 
= \eta h(\nu \delta z')_{[l+1,\infty)}
= \eta\widetilde{\nu\delta}y'
\quad\text{ for some } y' \in \Gamma^+_\infty(\widetilde{\nu\delta}).
$$
Hence we have 
$\eta\in\Gamma^-_l(\widetilde{\nu\delta})$
so that  
$\Gamma^-_l(\widetilde{\nu\gamma}) \subset \Gamma^-_l(\widetilde{\nu\delta}).
$
Similarly we have
$\Gamma^-_l(\widetilde{\nu\delta}) \subset \Gamma^-_l(\widetilde{\nu\gamma})
$
so that 
$\Gamma^-_l(\widetilde{\nu\gamma}) = \Gamma^-_l(\widetilde{\nu\delta}).
$
\end{proof}
\begin{proposition}\label{prop:conjeventconj}
Let $\LLamonemin$ and $\LLamtwomin$
be the minimal $\lambda$-graph systems for normal subshifts 
$\Lambda_1$ and $\Lambda_2$, respectively.
Assume that the one-sided subshifts
$(X_{\Lambda_1},\sigma_{\Lambda_1})$
and
$(X_{\Lambda_2},\sigma_{\Lambda_2})$
are topologically conjugate.
Then they are
 $(\LLamonemin, \LLamtwomin)$-conjugate.
\end{proposition}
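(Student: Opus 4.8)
The plan is to lift the given topological conjugacy $h := h_\Lambda : X_{\Lambda_1} \to X_{\Lambda_2}$ to a shift-commuting homeomorphism $h_{\frak L} : X_{\LLamonemin} \to X_{\LLamtwomin}$ of the associated covers that sits over $h$ through the factor maps; this is exactly what $(\LLamonemin,\LLamtwomin)$-conjugacy demands, since we may take $h_\Lambda = h$ and are left to produce $h_{\frak L}$ satisfying $\pi_{\LLamtwomin}\circ h_{\frak L} = h\circ\pi_{\LLamonemin}$ together with shift-commutation.

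First I would package the three preceding lemmas into a map on vertices. For each level $l$ and each $\mu \in S_l(\Lambda_1)$ the preceding lemmas produce $\tilde\mu \in S_l(\Lambda_2)$ whose $l$-past equivalence class $[\tilde\mu]_l$ is independent of the auxiliary choices; since a vertex of $\LLamonemin$ at level $l$ is precisely an $l$-past class of an $l$-synchronizing word, this gives a well-defined map $\varphi_l : V_l^{\Lambda_1^{\min}} \to V_l^{\Lambda_2^{\min}}$, $[\mu]_l \mapsto [\tilde\mu]_l$. Lemma \ref{lem:4.4} is exactly the assertion that $\iota^{\min}\circ\varphi_{l+1} = \varphi_l \circ \iota^{\min}$, so the family $\{\varphi_l\}_l$ descends to the projective limits and yields a continuous map $\varphi : \Omega_{\LLamonemin} \to \Omega_{\LLamtwomin}$ of $\iota$-orbit spaces. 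Running the identical construction for $h^{-1}$ produces $\psi : \Omega_{\LLamtwomin} \to \Omega_{\LLamonemin}$, and I would check $\psi\circ\varphi = \id$ and $\varphi\circ\psi = \id$ from $h^{-1}\circ h = \id$ and the uniqueness of the past-class attached to $\tilde\mu$; hence $\varphi$ is a homeomorphism on vertex spaces.

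Next I would assemble $h_{\frak L}$. A point $x = (\alpha_n, u_n)_{n\in\N} \in X_{\LLamonemin}$ carries its label sequence $a = \pi_{\LLamonemin}(x) = (\alpha_n)_n \in X_{\Lambda_1}$ and its vertex sequence $u_n = v(x)_n \in \Omega_{\LLamonemin}$. I would set $h_{\frak L}(x) := (\beta_n, w_n)_n$, where $(\beta_n)_n = h(a)$ is the image under the conjugacy (a sliding block code of window $L$) and each $w_n$ is the image under $\varphi$ of the vertex of $x$ at the appropriate position, the index being displaced by $L$ to absorb the memory of the block map --- a displacement compensated exactly by the length relation $|\tilde\mu| = |\mu| + L$ in the vertex construction. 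The formula makes $\pi_{\LLamtwomin}\circ h_{\frak L} = h\circ\pi_{\LLamonemin}$ immediate, and shift-commutation $h_{\frak L}\circ\sigma_{\LLamonemin} = \sigma_{\LLamtwomin}\circ h_{\frak L}$ follows because $h$ commutes with the shift and $\varphi$ was built level by level. The inverse of $h_{\frak L}$ is the analogous map assembled from $h^{-1}$ and $\psi$, so $h_{\frak L}$ is a homeomorphism.

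The step I expect to be the genuine obstacle is verifying that $h_{\frak L}(x)$ is an \emph{admissible} path in $\LLamtwomin$, i.e.\ that $(w_{n-1}, \beta_n, w_n) \in E_{\LLamtwomin}$ for every $n$, rather than merely a sequence of vertices lying over $h(a)$. This is where parts (i) and (ii) of the first preceding lemma are essential: they control, through the synchronizing words, how $h$ and $h^{-1}$ transport cylinder sets around $l$-synchronizing words, and thereby force the $l$-past classes in $\Lambda_2$ to evolve along $h(a)$ precisely as prescribed by the labeled edges of $\LLamtwomin$. Making the index bookkeeping in this edge-compatibility check precise --- reconciling the $+L$ length shift, the window of the block map, and the level of the $\iota$-tower --- is the technical heart of the argument.
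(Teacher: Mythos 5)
Your construction breaks down at its very first step, and the failure is not repairable within the framework you set up. You claim that the preceding lemmas give a well-defined map $\varphi_l\colon V_l^{\Lambda_1^{\min}}\to V_l^{\Lambda_2^{\min}}$, $[\mu]_l\mapsto[\tilde{\mu}]_l$. They do not. The word $\tilde{\mu}$ is obtained by applying the sliding block code of window $L+1$ to $\mu\gamma x$, so both $\tilde{\mu}$ itself and the induced correspondence of pasts $\xi\in\Gamma_l^-(\mu)\mapsto h(\xi\mu\gamma x)_{[1,l]}\in\Gamma_l^-(\tilde{\mu})$ depend on the actual initial symbols of $\mu$, not only on the set $\Gamma_l^-(\mu)$. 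What the lemmas actually provide is (a) independence of the auxiliary choices $\gamma$ and $x$ for a \emph{fixed} word $\mu$, and (b) (Lemma \ref{lem:4.4}) $l$-past equivalence of $\widetilde{\nu\gamma}$ and $\widetilde{\nu\delta}$ for two words sharing a common prefix $\nu$ with $|\nu|\ge L$. Neither statement compares $\tilde{\mu}$ with $\tilde{\mu}'$ when $\mu\underset{l}{\sim}\mu'$ but the initial $L$-blocks of $\mu$ and $\mu'$ differ, and for such a pair there is no reason to have $\tilde{\mu}\underset{l}{\sim}\tilde{\mu}'$: changing the initial block changes the block-code output on the overlap window, hence changes which words $\eta$ can precede the image. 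Consequently $\varphi_l$ is not known to be well defined, the asserted commutation $\iota^{\min}\circ\varphi_{l+1}=\varphi_l\circ\iota^{\min}$ is not what Lemma \ref{lem:4.4} says (that lemma concerns only representatives with a long common prefix), the map on $\Omega$-spaces does not exist, and everything you assemble from it — including the edge-admissibility check that you correctly flag as the technical heart but leave unproved — is unsupported.

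The paper's proof is structured precisely to avoid this issue: it never defines a map on vertices. Given a path $(\alpha_i,u_i)_{i\in\N}\in X_{\LLamonemin}$, it fixes $i$ and $l$, sets $\nu=(\alpha_{i+1},\dots,\alpha_{i+L})$, and chooses $\gamma$ launched by the vertex $u_{i+L}^{l+L}$ and $\delta$ launched by $u_{i+L}^{l+L+1}$. Then $\nu\gamma$ and $\nu\delta$ are representatives of the classes $u_i^l$ and $u_i^{l+1}$ which share the prefix $\nu$ of length $L$ forced by the labels of the path — exactly the hypothesis of Lemma \ref{lem:4.4} — so $\tilde{u}_i^l:=[\widetilde{\nu\gamma}]_l$ and $\tilde{u}_i^{l+1}:=[\widetilde{\nu\delta}]_{l+1}$ are well defined and satisfy $\iota(\tilde{u}_i^{l+1})=\tilde{u}_i^l$. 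The image vertex at position $i$ thus depends on a \emph{window} of the path (the label block $\alpha_{[i+1,i+L]}$ together with the deeper-level vertex $u_{i+L}^{l+L}$), not on the single vertex $u_i^l$; this is the $\lambda$-graph-system analogue of the fact that a conjugacy is a sliding block code rather than a one-block map. Your displacement-by-$L$ remark gestures at this, but it cannot be routed through a level-preserving vertex map; the fix is to define $\tilde{u}_i^l$ directly from the path as above, after which $(\tilde{\alpha}_i,\tilde{u}_i)_{i\in\N}\in X_{\LLamtwomin}$, the intertwining $\pi_{{\frak L}_2}\circ h_{\frak L}=h\circ\pi_{{\frak L}_1}$, shift-commutation, and invertibility (by the symmetric construction from $h^{-1}$) all follow as in the paper.
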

\begin{proof}
Let
$h:X_{\Lambda_1}\longrightarrow X_{\Lambda_2}$
be a topological conjugacy.
Keep the notation as in the previous lemmas.
For $(\alpha_i, u_i)_{i\in \N} \in X_{\LLamonemin}$
where
$u_i =(u_i^l)_{l\in \Zp} \in \Omega_{\LLamonemin}, i \in \N$.
Put
$(\tilde{\alpha}_i)_{i\in \N}:= h((\alpha_i)_{i\in \N}) \in X_{\Lambda_2}$.
Fix $i\in \N$ and $l\in \N$.
Take $\gamma \in B_*(\Lambda_1)$ such that $u_{i+L}^{l+L}$ launches $\gamma$,
and
$\delta\in B_*(\Lambda_1)$ such that $u_{i+L}^{l+L+1}$ launches $\delta$.
We see the following picture:
\begin{equation*}
\begin{CD}
u_i^l @>{\alpha_{i+1}}>>u_{i+1}^{l+1} @>{\alpha_{i+2}}>>
u_{i+2}^{l+2} @>>> \cdots @>{\alpha_{i+L}}>> u_{i+L}^{l+L} @>{\gamma}>> \\
@A{\iota}AA  @A{\iota}AA  @A{\iota}AA @. @A{\iota}AA \\
u_i^{l+1} @>{\alpha_{i+1}}>>u_{i+1}^{l+2} @>{\alpha_{i+2}}>>
u_{i+2}^{l+3} @>>> \cdots @>{\alpha_{i+L}}>> u_{i+L}^{l+L+1} @>{\delta}>> 
 \end{CD}
\end{equation*}
Put $\nu = (\alpha_{i+1},\alpha_{i+2}, \dots, \alpha_{i+L}) \in B_L(\Lambda_1)$
with $|\nu | = L$.
Hence we see that 
$u_i^l = [\nu \gamma]_l$ the $l$-past equivalence class of 
$\nu \gamma \in S_l(\Lambda_1)$, and 
$u_i^{l+1} = [\nu \delta]_{l+1}$ the $l+1$-past equivalence class of 
$\nu \delta \in S_{l+1}(\Lambda_1)$.
By the preceding lemma, we know that   
$\widetilde{\nu\gamma} \in S_l(\Lambda_2)$, 
$\widetilde{\nu\delta} \in S_{l+1}(\Lambda_2)$
and 
$\widetilde{\nu\gamma} \underset{l}{\sim} \widetilde{\nu\delta}.$
Define 
$\tilde{u}_i^l := [\widetilde{\nu\gamma}]_l $
the $l$-past equivalence class of 
$\widetilde{\nu\gamma} \in S_l(\Lambda_2)$, and
$\tilde{u}_i^{l+1} := [\widetilde{\nu\delta}]_{l+1} $
the $l+1$-past equivalence class of 
$\widetilde{\nu\delta} \in S_{l+1}(\Lambda_2)$.
It does not depend on the choice of 
$\gamma$ and $\delta$ by Lemma \ref{lem:4.4}.
 Hence we have vertices
$\tilde{u}_i^l \in V_l^{\Lambda_2^\min}$
and
$\tilde{u}_i^{l+1} \in V_{l+1}^{\Lambda_2^\min}$.
Since
$\widetilde{\nu\gamma} \underset{l}{\sim} \widetilde{\nu\delta},$
we have
$\iota(\tilde{u}_i^{l+1}) =\tilde{u}_i^l$
so that we have an $\iota$-orbit
$$
\tilde{u}_i = (\tilde{u}_i^l)_{l\in \Zp} \in \Omega_\LLamtwomin
\quad \text{ for each } i \in \N.
$$
By its construction, we have for some $x \in X_{\Lambda_1}$
\begin{align*}
h((\alpha_1,\dots,\alpha_i)\nu\gamma x)
&= h((\alpha_1,\dots,\alpha_i)(\alpha_{i+1},\dots,\alpha_{i+L})\gamma x)_{[1,i]}
     h(\nu\gamma x) \\
&= (\tilde{\alpha}_1,\dots,\tilde{\alpha}_i) h(\nu\gamma x)_{[1, |\nu \gamma|]}
     h(x)_{[1,\infty)} \\
&= (\tilde{\alpha}_1,\dots,\tilde{\alpha}_i) \widetilde{\nu\gamma}h(x).
 \end{align*}
Hence we have
$ (\tilde{\alpha}_1,\dots,\tilde{\alpha}_i) \in \Gamma^-_i(\tilde{u}_i^l)$.
It is easy to see that $(\tilde{u}_{i-1}, \alpha_i, \tilde{u}_i) \in E_\LLamtwomin$
so that we have  a sequence
$(\tilde{\alpha}_i, \tilde{u}_i)_{i\in\N} \in X_\LLamtwomin$.
Consequently we get a map
$$
\varphi: (\alpha_i, u_i)_{i\in \N} \in X_{\LLamonemin} 
\longrightarrow
(\tilde{\alpha}_i, \tilde{u}_i)_{i\in\N} \in X_\LLamtwomin
$$
that is continuous by its construction.
Since $h((\alpha_i)_{i\in \N}) = (\tilde{\alpha}_i)_{i\in \N},$
it satisfies $h \circ \pi_{{\frak L}_1} =\pi_{{\frak L}_2}\circ \varphi$.
Similarly we get a map
$$
\phi: (\beta_i, w_i)_{i\in \N} \in X_{\LLamtwomin}
 \longrightarrow
(\tilde{\beta}_i, \tilde{w}_i)_{i\in\N} \in X_\LLamonemin
$$
satisfying
$\varphi \circ \phi = \id_{X_\LLamtwomin}
$ 
and
$\phi \circ \varphi = \id_{X_\LLamonemin}.
$ 
By putting $h_{\frak L} =\varphi$, we have a desired topological conjugacy
$
h_{\frak L}:  X_{\LLamonemin} \longrightarrow X_\LLamtwomin.
$
\end{proof}
\begin{theorem}\label{thm:one-sidedconj4}
Let $\Lambda_1$ and $\Lambda_2$ be normal subshifts.
If their one-sided subshifts 
$(X_{\Lambda_1},\sigma_{\Lambda_1})$ 
and 
$(X_{\Lambda_2},\sigma_{\Lambda_2})$ 
are topologically conjugate, then there exists an isomorphism
$\Phi:\OLamonemin\longrightarrow\OLamtwomin$ of $C^*$-algebras such that 
$\Phi({\mathcal{D}}_{\Lambda_1}) ={\mathcal{D}}_{\Lambda_2}$
and
$\Phi\circ\rho^{\Lambda_1}_t = \rho^{\Lambda_2}_t\circ\Phi, t \in \T$.
\end{theorem}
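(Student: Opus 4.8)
The plan is to obtain the theorem as a short composition of two facts: the lifting construction of Proposition \ref{prop:conjeventconj}, which has just been proved through the preceding sequence of synchronization lemmas, and the $C^*$-algebraic characterization of eventual conjugacy established in \cite{MaDynam2020} and recalled in the introduction. The geometric substance lies entirely in the former; once a topological conjugacy of one-sided subshifts is lifted to a conjugacy between the associated $\lambda$-graph shift spaces, the passage to the operator algebras is formal.

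First I would record that, since $\Lambda_1$ and $\Lambda_2$ are normal, each admits its minimal presentation $\LLamonemin$, $\LLamtwomin$, which are left-resolving, predecessor-separated, $\iota$-irreducible, and by Proposition \ref{prop:normal(I)} satisfy condition (I). Consequently the associated $C^*$-algebras $\OLamonemin = \mathcal{O}_{\LLamonemin}$ and $\OLamtwomin = \mathcal{O}_{\LLamtwomin}$ are the universal, unique $C^*$-algebras of Proposition \ref{prop:universalunique}, each equipped with its gauge action $\rho^{\Lambda_i}$ and its canonical diagonal ${\mathcal{D}}_{\Lambda_i}$. Given a topological conjugacy $h : X_{\Lambda_1} \to X_{\Lambda_2}$, Proposition \ref{prop:conjeventconj} produces a topological conjugacy $h_{\frak L} : X_{\LLamonemin} \to X_{\LLamtwomin}$ lifting $h$ through the factor maps $\pi_{{\frak L}_1}, \pi_{{\frak L}_2}$, so that $(X_{\Lambda_1},\sigma_{\Lambda_1})$ and $(X_{\Lambda_2},\sigma_{\Lambda_2})$ are $(\LLamonemin, \LLamtwomin)$-conjugate.

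Next I would invoke the observation recorded after the definition of $({\frak L}_1,{\frak L}_2)$-conjugacy that such a conjugacy is in particular an $({\frak L}_1,{\frak L}_2)$-eventual conjugacy in the sense of \cite{MaDynam2020}: the lifted map $h_{\frak L}$ genuinely intertwines $\sigma_{{\frak L}_1}$ and $\sigma_{{\frak L}_2}$, so the continuous cocycle data of an eventual conjugacy may be taken constant. Applying the result of \cite{MaDynam2020} stated in the introduction then yields an isomorphism $\Phi : \OLamonemin \to \OLamtwomin$ of $C^*$-algebras satisfying $\Phi({\mathcal{D}}_{\Lambda_1}) = {\mathcal{D}}_{\Lambda_2}$ and $\Phi \circ \rho^{\Lambda_1}_t = \rho^{\Lambda_2}_t \circ \Phi$ for all $t \in \T$, which is precisely the conclusion.

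The real obstacle of the whole argument is not in this assembly but inside Proposition \ref{prop:conjeventconj}, where one must verify that the vertex-level assignment $u_i^l \mapsto \tilde{u}_i^l = [\widetilde{\nu\gamma}]_l$ induced by the block code of $h$ is well-defined independently of the auxiliary launching words $\gamma, \delta$ and is compatible with the maps $\iota^{\min}$; this rests on Lemma \ref{lem:4.4} and its predecessors, which guarantee that $l$-past equivalence classes of synchronizing words are transported correctly under $h$ and $h^{-1}$ and that $\widetilde{\nu\gamma} \underset{l}{\sim} \widetilde{\nu\delta}$ whenever $\nu\gamma \underset{l}{\sim} \nu\delta$. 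Once that lifting is in hand, condition (I) provided by normality secures the uniqueness of the generating relations, so the groupoid isomorphism arising from the $(\LLamonemin, \LLamtwomin)$-conjugacy of the shift spaces $X_{\LLamonemin}, X_{\LLamtwomin}$ descends to the stated gauge-equivariant, diagonal-preserving isomorphism with no further effort.
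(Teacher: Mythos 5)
Your proposal is correct and follows essentially the same route as the paper's own proof: Proposition \ref{prop:conjeventconj} upgrades the one-sided topological conjugacy to an $(\LLamonemin,\LLamtwomin)$-conjugacy, which is in particular an $(\LLamonemin,\LLamtwomin)$-eventual conjugacy, and then \cite[Theorem 1.3]{MaDynam2020} (recalled as Proposition \ref{prop:eventconj}) yields the gauge-equivariant, diagonal-preserving isomorphism. Your explicit check that the minimal presentations satisfy condition (I) via Proposition \ref{prop:normal(I)} is a hypothesis of the cited theorem that the paper leaves implicit, but this is the same argument, not a different one.
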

\begin{proof}
By Proposition \ref{prop:conjeventconj},
$(X_{\Lambda_1},\sigma_{\Lambda_1})$ 
and 
$(X_{\Lambda_2},\sigma_{\Lambda_2})$ 
are
 $(\LLamonemin, \LLamtwomin)$-conjugate,
so that they are 
 $(\LLamonemin, \LLamtwomin)$-eventually conjugate
in the sense of \cite{MaDynam2020} or in the sense of the following section.
By \cite[Theorem 1.3]{MaDynam2020},
we have a desired isomorphism
$\Phi:\OLamonemin\longrightarrow\OLamtwomin$ of $C^*$-algebras.
\end{proof}

\begin{remark}
Brix--Carlsen in \cite{BC2017} gave an example of a pair
$(X_A, \sigma_A)$ and $(X_B, \sigma_B)$
 of irreducible shifts of finite type
such that the converse of Theorem \ref{thm:one-sidedconj4} does not hold.
They found two irreducible matrices $A, B$ with entries in $\{0,1\}$
such that there exists an isomorphism
$\Phi: \OA \longrightarrow \mathcal{O}_B$
of the Cuntz--Krieger algebras such that 
$\Phi(\mathcal{D}_A) = \mathcal{D}_B$
and
$\Phi\circ \rho^A_t = \rho^B_t\circ \Phi$,
but the one-sided topological Markov shifts
$(X_A,\sigma_A)$ and $(X_B,\sigma_B)$ are not topologically conjugate.  
\end{remark}

  
\section{ One-sided  eventual conjugacy}
In this section, we will prove that 
a slightly weaker equivalence relation than one-sided topological conjugavy
in one-sided normal subshifts $X_{\Lambda_1}, X_{\Lambda_2}$, called eventual conjugacy,
is equivalent to the condition that 
there exists an isomorphism
$\Phi:\OLamonemin\longrightarrow\OLamtwomin$ of $C^*$-algebras satisfying 
$\Phi({\mathcal{D}}_{\Lambda_1}) ={\mathcal{D}}_{\Lambda_2}$
and
$\Phi\circ\rho^{\Lambda_1}_t = \rho^{\Lambda_2}_t\circ\Phi, t \in \T$.

Let $\Lambda_1$ and $\Lambda_2$ be  subshifts.
Suppose that their one-sided subshifts 
$(X_{\Lambda_1},\sigma_{\Lambda_1})$ 
and 
$(X_{\Lambda_2},\sigma_{\Lambda_2})$ 
eventually conjugate.
This means that there exist
 a homeomorphism
$h: X_{\Lambda_1}\longrightarrow X_{\Lambda_2}$
and an integer  $K \in \Zp$ such that 
\begin{equation} \label{eq:eventconj}
\begin{cases}
&\sigma_{\Lambda_2}^K(h(\sigma_{\Lambda_1}(x))) 
 =\sigma_{\Lambda_2}^{K+1}(h(x)) , \qquad x \in X_{\Lambda_1}, \\ 
&\sigma_{\Lambda_1}^K(h^{-1}(\sigma_{\Lambda_2}(y))
 =\sigma_{\Lambda_1}^{K+1}(h^{-1}(y)), \qquad y \in X_{\Lambda_2}.
\end{cases}
\end{equation}
Let $h_{[1,K]}:X_{\Lambda_1}\longrightarrow B_K({\Lambda_2})$
and
  $h_1:X_{\Lambda_1}\longrightarrow X_{\Lambda_2}$
be continuous maps defined by setting
$$
h_{[1,K]}(x) := h(x)_{[1,K]}, \qquad
h_1(x) := \sigma_{\Lambda_2}^K(h(x)), \qquad
x \in X_{\Lambda_1}.
$$
We then have
\begin{equation*}
h(x) = h_{[1,K]}(x)h_1(x), \qquad
x \in X_{\Lambda_1}.
\end{equation*}
Since 
$h_{[1,K]}:X_{\Lambda_1}\longrightarrow B_K({\Lambda_2})$
is continuous, 
for $\xi_i \in \{\xi_1,\dots,\xi_m\} = B_K(\Lambda_2)$,
$h_{[1,K]}^{-1}(\xi_i)$
is a finite union of cylinder sets, so that
there exists $M_1 \in \Zp$ 
and a block map
$\varphi_1: B_{M_1}({\Lambda_1}) \longrightarrow B_K({\Lambda_2})$
such that
$$
h_{[1,K]}(x) = \varphi_1(x_1,\dots,x_{M_1}) \qquad
\text{ for } x = (x_i)_{i\in \N} \in X_{\Lambda_1}.
$$
Hence we have
\begin{equation*}
h(x) =\varphi_1(x_{[1, M_1]})h_1(x), \qquad
x \in X_{\Lambda_1}.
\end{equation*}
By \eqref{eq:eventconj},
we have the equality
\begin{equation}
h_1(\sigma_{\Lambda_1}(x))) 
=\sigma_{\Lambda_2}(h_1(x)) , \qquad x \in X_{\Lambda_1},
\end{equation}
so that 
  $h_1:X_{\Lambda_1}\longrightarrow X_{\Lambda_2}$
is a sliding block code (cf. \cite{LM}).

Similarly there exists $M_2 \in \Zp$ and a block map 
$\varphi_2: B_{M_2}({\Lambda_2}) \longrightarrow B_K({\Lambda_1})$
and a continuous map
$h_2: X_{\Lambda_2} \longrightarrow X_{\Lambda_1}$
such that 
$$
h^{-1}(y) = \varphi_2(y_{[1,M_2]}) h_2(y), \qquad
h_2(\sigma_{\Lambda_2}(y))) 
=\sigma_{\Lambda_1}(h_2(y))\quad \text{ for } y = (y_i)_{i\in \N} \in X_{\Lambda_2}.
$$
We may assume that $M_1 = M_2$ written $M$ such that 
$M \ge K$.
It then follows that 
\begin{align*}
x = & h^{-1}(h(x)) \\
  = & \varphi_2(h(x)_{[1,M]}) h_2(h(x))\\
  = & \varphi_2(\varphi_1(x_{[1,M]}) h_1(x)_{[1,M-K]}) h_2(\varphi_1(x_{[1,M]})h_1(x)).
\end{align*}
This implies
\begin{align}
x_{[1,K]} & = \varphi_2(\varphi_1(x_{[1,M]}) h_1(x)_{[1,M-K]}),\label{eq:x1K} \\
x_{[K+1,\infty)}& = h_2(\varphi_1(x_{[1,M]})h_1(x)), \label{eq:xK1}
\end{align}
and hence
\begin{equation}
x_{[2K+1,\infty)} 
=\sigma_{\Lambda_1}^K(x_{[K+1,\infty)})
=h_2(\sigma_{\Lambda_2}^K(\varphi_1(x_{[1,M]})h_1(x)))
= h_2(h_1(x)), \qquad x \in X_{\Lambda_1}.
\end{equation}
Similarly we have 
\begin{align*}
y_{[1,K]} & = \varphi_1(\varphi_2(y_{[1,M]}) h_2(y)_{[1,M-K]}), \\
y_{[K+1,\infty)}& = h_1(\varphi_2(y_{[1,M]})h_2(y)), \\
\end{align*}
and 
\begin{equation}
y_{[2K+1,\infty)} = h_1(h_2(y)), \qquad y \in X_{\Lambda_2}.
\end{equation}



For $\xi=(\xi(1),\dots,\xi(K)) \in B_K(\Lambda_2)$
and $y =(y_n)_{n\in \N} \in X_{\Lambda_2}$ with
$y \in \Gamma_\infty^+(\xi)$, we write
$(\xi, y) := (\xi(1),\dots,\xi(K), y_1,y_2,\dots ) \in X_{
\Lambda_2}$. 
Now suppose that 
 $(\xi,y) \in X_{\Lambda_2}$
such that  
$\xi = \varphi_1(x_{[1,M]}), y = h_1(x)$ for some $x=(x_n)_{n\in \N} \in X_{\Lambda_1}$.
Define
$$
\tau(\xi, y) := (\varphi_1(x_{[2,M+1]}), h(\sigma_{\Lambda_1}(x))).
$$
Under the identification
$(\xi, y) = (\varphi_1(x_{[1,M]}), h_1(x)) = h(x), 
$ 
we have
\begin{equation*}
\tau(h(x)) = h(\sigma_{\Lambda_1}(x)) \quad \text{ for } x \in X_{\Lambda_1}.
\end{equation*}
Hence we have
a continuous surjection
$\tau: X_{\Lambda_2}\longrightarrow X_{\Lambda_2}$
such that 
$$
\tau = h \circ \sigma_{\Lambda_1}\circ h^{-1}.
$$
By the relations
\eqref{eq:x1K}, \eqref{eq:xK1},
we know that 
$$
x_{[2,M+1]} 
= x_{[2,K]} x_{[K+1,M+1]}
= \varphi_2(\xi y_{[1, M-K]})_{[2,K]} h_2(\xi y)_{[1, M-K+1]}
$$
so that 
\begin{equation*}
\varphi_1(x_{[2,M+1]} ) = \varphi_1(\varphi_2(\xi y_{[1, M-K]})_{[2,K]} h_2(\xi y)_{[1, M-K+1]})
\end{equation*}
and
\begin{equation} \label{eq:tauxiy}
\tau(\xi,y)
 = (\varphi_1(\varphi_2(\xi y_{[1, M-K]})_{[2,K]} h_2(\xi y)_{[1, M-K+1]}),
 \sigma_{\Lambda_2}(y)) \in B_K({\Lambda_2}) \times X_{\Lambda_2},  
\end{equation}
for
$(\xi,y) \in X_{\Lambda_2}.$
As 
$h_1: X_{\Lambda_1}\longrightarrow X_{\Lambda_2}$
and
$h_2: X_{\Lambda_2}\longrightarrow X_{\Lambda_1}$
are both sliding block codes,
one may take integers $N_1, N_2 \in \N$
and block maps 
$\phi_1: B_{N_1}(\Lambda_1) \longrightarrow \Sigma_2$,
$\phi_2: B_{N_2}(\Lambda_2) \longrightarrow \Sigma_1$
such that 
$$
h_1(x) = \phi_1(x_{[i,N_1+i]})_{i\in \N}
\quad \text{for } x \in X_{\Lambda_1}
\quad
\text{ and }
\quad
h_2(y) = \phi_2(y_{[i,N_2+i]})_{i\in \N}
\quad \text{ for } y \in X_{\Lambda_2}.
$$
We may assume that $N_1, N_2 \ge K$.
Hence we have
$
h_2(\xi y)_{[1,M-K+1]} = \phi_2(\xi y_{[1,M-2K+1+N_2]}).
$ 
We put
$L = M - 2K + 1 + N_2$ and
\begin{equation} \label{eq:tauvarphi1}
\tau^{\varphi_1}(\xi,y)
 = \varphi_1(\varphi_2(\xi y_{[1, M-K]})_{[2,K]} \phi_2(\xi y_{[1,L]}))
\end{equation}
so that 
\begin{equation} \label{eq:tauvpxiy}
\tau(\xi,y)
=(\tau^{\varphi_1}(\xi,y),
 \sigma_{\Lambda_2}(y)) \in B_K({\Lambda_2}) \times X_{\Lambda_2}, \qquad (\xi,y) \in X_{\Lambda_2}. 
\end{equation}
As $N_2 \ge K$, we note that $L \ge M-K$.
Hence the word
$\tau^{\varphi_1}(\xi,y) \in B_K(\Lambda_2)$
is determined by  only $\xi \in B_K(\Lambda_2)$
and $y_{[1,L]}$, so that we may write
$$
\tau^{\varphi_1}(\xi,y) =\tau^{\varphi_1}(\xi,y_{[1,L]}).
$$
Let $X_{\Lambda_2^{[L]}}$ be the right one-sided subshift of 
 the $L$th higher block shift $\Lambda_2^{[L]}$ of $\Lambda_2$ ( see \cite{LM}).
Define sliding block codes: 
\begin{align*}
g_1: & x \in X_{\Lambda_1} \longrightarrow (\varphi_1(x_{[n,M +n-1]}))_{n \in \N} \in B_K(\Lambda_2)^\N, \\ 
h_1^L: & x \in X_{\Lambda_1} \longrightarrow (h_1(x)_{[n,L + n-1]})_{n \in \N} \in X_{\Lambda_2^{[L]}}
\end{align*}
and put 
\begin{align*}
g_1(x)_n =& \varphi_1(x_{[n,M+n-1]}) \in B_K(\Lambda_2), \\
h_1^L(x)_n =& h_1(x)_{[n,L+n-1]} \in B_L(\Lambda_2)
\end{align*}
so that 
$g_1(x) = (g_1(x)_n)_{n\in \N}, h_1^L(x) = (h_1^L(x)_n)_{n\in \N}.
$
Since $h_1\circ \sigma_{\Lambda_1} = \sigma_{\Lambda_2} \circ h_1,$
we have 
${h}_1^L\circ \sigma_{\Lambda_1} 
= \sigma_{{\Lambda_2}^{[L]}} \circ {h}_1^L.$
Define
$\hat{h}^L: X_{\Lambda_1} \longrightarrow (B_K(\Lambda_2)\times B_L(\Lambda_2))^\N$
by seting
\begin{equation*}
\hat{h}^L(x) =(g_1(x), h_1^L(x)) =(\varphi_1(x_{[n,M +n-1]}), h_1(x)_{[n,L + n-1]})_{n \in \N}. 
\end{equation*}
\begin{lemma}
Define 
\begin{equation*}
X_{\Lambda_2^{\varphi_1}\times\Lambda_2^{[L]}}
=\{ (g_1(x), h_1^L(x)) \in (B_K(\Lambda_2)\times B_L(\Lambda_2))^\N
\mid x \in X_{\Lambda_1} \},
\end{equation*}
and the map
$\sigma_{\Lambda_2^{\varphi_1}\times\Lambda_2^{[L]}}:
X_{\Lambda_2^{\varphi_1}\times\Lambda_2^{[L]}}
\longrightarrow
X_{\Lambda_2^{\varphi_1}\times\Lambda_2^{[L]}}
$ by setting
\begin{equation*}
\sigma_{\Lambda_2^{\varphi_1}\times\Lambda_2^{[L]}}
((g_1(x)_n, h_1^L(x)_n)_{n\in \N}) = (g_1(x)_{n+1}, h_1^L(x)_{n+1})_{n\in \N},
\end{equation*}
Then 
$(X_{\Lambda_2^{\varphi_1}\times\Lambda_2^{[L]}},
\sigma_{\Lambda_2^{\varphi_1}\times\Lambda_2^{[L]}}
)$
is a subshift over $B_K(\Lambda_2)\times B_L(\Lambda_2)$
that is conjugate to $(X_{\Lambda_1},\sigma_{\Lambda_1})$ via
\begin{equation*}
\hat{h}^L: x \in X_{\Lambda_1} \longrightarrow 
(g_1(x)_n, h_1^L(x)_n)_{n\in \N} \in X_{\Lambda_2^{\varphi_1}\times\Lambda_2^{[L]}}.
\end{equation*}
\end{lemma}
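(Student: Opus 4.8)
The plan is to verify that $\hat{h}^L$ is a topological conjugacy onto the set $X_{\Lambda_2^{\varphi_1}\times\Lambda_2^{[L]}}$ by establishing four facts: that $\hat{h}^L$ intertwines the shifts, that it is continuous, that it is injective, and that its image is a genuine subshift over the finite alphabet $B_K(\Lambda_2)\times B_L(\Lambda_2)$. The homeomorphism property will then drop out of a compactness argument, and the only point requiring real care is the recovery of $x$ from $\hat{h}^L(x)$.

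First I would record that each coordinate map of $\hat{h}^L$ commutes with the shift. Since $g_1(x)_n = \varphi_1(x_{[n,M+n-1]})$ depends only on the window $x_{[n,M+n-1]}$, one has $g_1(\sigma_{\Lambda_1}(x))_n = g_1(x)_{n+1}$; and since $h_1\circ\sigma_{\Lambda_1} = \sigma_{\Lambda_2}\circ h_1$ was established above, the same bookkeeping gives $h_1^L(\sigma_{\Lambda_1}(x))_n = h_1^L(x)_{n+1}$. Combining these yields $\hat{h}^L\circ\sigma_{\Lambda_1} = \sigma_{\Lambda_2^{\varphi_1}\times\Lambda_2^{[L]}}\circ\hat{h}^L$, which simultaneously shows that the coordinatewise shift $\sigma_{\Lambda_2^{\varphi_1}\times\Lambda_2^{[L]}}$ carries $X_{\Lambda_2^{\varphi_1}\times\Lambda_2^{[L]}}$ into itself (so the dynamics on the image is well defined) and that $\hat{h}^L$ intertwines the two systems. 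Continuity of $\hat{h}^L$ is immediate, as each coordinate is a block map reading a window of fixed width in $x$.

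Next I would prove injectivity, the crux of the argument. From the second coordinate $h_1^L(x) = (h_1(x)_{[n,L+n-1]})_{n\in\N}$ one recovers $y := h_1(x)$ by reading the first letter of each block $h_1^L(x)_n$; meanwhile the first entry $g_1(x)_1 = \varphi_1(x_{[1,M]})$ equals $h(x)_{[1,K]} = \xi$ by the definition of $h_{[1,K]}$. Using the factorization $h(x) = \varphi_1(x_{[1,M]})\,h_1(x) = (\xi,y)$ obtained earlier, the pair $\hat{h}^L(x)$ determines $h(x)$ completely, and since $h$ is a homeomorphism we recover $x = h^{-1}(h(x))$. Hence $\hat{h}^L$ is injective. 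This recovery step is where all the structure of the eventual conjugacy is used, and it is the step I expect to be the main obstacle; everything else is formal.

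Finally, because $X_{\Lambda_1}$ is compact and $\hat{h}^L$ is continuous, the image $X_{\Lambda_2^{\varphi_1}\times\Lambda_2^{[L]}}$ is compact, hence closed inside the full shift over the finite alphabet $B_K(\Lambda_2)\times B_L(\Lambda_2)$; together with the shift-invariance from the first step this exhibits it as a subshift with $\sigma_{\Lambda_2^{\varphi_1}\times\Lambda_2^{[L]}}$ the restriction of the full shift. The map $\hat{h}^L\colon X_{\Lambda_1}\to X_{\Lambda_2^{\varphi_1}\times\Lambda_2^{[L]}}$ is then a continuous bijection from a compact space onto a Hausdorff space, hence a homeomorphism, and it intertwines the shifts, so it is the claimed topological conjugacy.
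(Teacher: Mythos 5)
Your proposal is correct and follows essentially the same route as the paper: both reduce the statement to injectivity of $\hat{h}^L$ (the shift-intertwining and the subshift property of the image being formal consequences of $\hat{h}^L$ being a sliding block code), and both prove injectivity by recovering $h(x)$ from the pair $(g_1(x), h_1^L(x))$ via the factorization $h(x) = \varphi_1(x_{[1,M]})\,h_1(x)$ and then applying $h^{-1}$. Your compactness argument for the image being a closed shift-invariant set, and for the continuous bijection being a homeomorphism, is just a spelled-out version of what the paper absorbs into the phrase ``sliding block code.''
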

\begin{proof}
Since
$\hat{h}^L: X_{\Lambda_1} \longrightarrow X_{\Lambda_2^{\varphi_1}\times\Lambda_2^{[L]}}$
is a sliding block code, 
the pair
$(X_{\Lambda_2^{\varphi_1}\times\Lambda_2^{[L]}},
\sigma_{\Lambda_2^{\varphi_1}\times\Lambda_2^{[L]}}
)$
gives rise to a subshift over $B_K(\Lambda_2)\times B_L(\Lambda_2)$.
As
$\hat{h}_1^L\circ \sigma_{\Lambda_1} 
= \sigma_{\Lambda_2^\varphi \times{\Lambda_2}^{[L]}} \circ \hat{h}_1^L,$
it remains to show that $\hat{h}^L$ is injective.
Suppose tha $\hat{h}^L(x) = \hat{h}^L(z)$ for some 
$x = (x_n)_{n\in \N}, z = (z_n)_{n\in \N}$.
Hence we have
$$
\varphi_1(x_{[1,M_1]}) = \varphi_1(z_{[1,M_1]}),
\qquad
h_1^L(x) = h_1^L(z)
\quad\text{ and hence }
\quad
h_1(x) = h_1(z)
$$
so that $h(x) = h(z)$ proving $x=z$.
\end{proof}

Define 
$\Sigma'_2 = \{ (\xi, y_{[1,L]}) \in  B_K(\Lambda_2)\times B_L(\Lambda_2)
\mid \xi \in \Gamma^-_K(y_{[1,L]}) \}$
and a subshift $(\Lambda'_2,\sigma_{\Lambda'_2})$
over $\Sigma_2'$
by its right one-sided subshift
\begin{align*}
X_{\Lambda'_2} = &\{
(\xi_n, y_{[n,L+n-1]})_{n\in \N} \in (B_K(\Lambda_2)\times B_L(\Lambda_2))^{\N}
\mid \\
&\xi_{n+1} = \tau^{\varphi_1}(\xi_n, y_{[n, L + n-1]}), n \in \N, 
(\xi_1, (y_n)_{n\in \N}) \in X_{\Lambda_2} \} 
\end{align*}
and $\sigma_{\Lambda'_2} : X_{\Lambda'_2} \longrightarrow X_{\Lambda'_2}$
by
\begin{equation*}
\sigma_{\Lambda'_2}(\xi_n, y_{[n,L+n-1]})_{n\in \N}) 
=(\xi_{n+1}, y_{[n+1,L+n]})_{n\in \N}.
\end{equation*}
We then have
\begin{lemma}\label{lem:LamptwoLamone}
$(X_{\Lambda_2^{\varphi_1}\times\Lambda_2^{[L]}},
\sigma_{\Lambda_2^{\varphi_1}\times\Lambda_2^{[L]}}
)
= (X_{\Lambda'_2}, \sigma_{\Lambda'_2})
$
so that 
$ (X_{\Lambda'_2}, \sigma_{\Lambda'_2})$ is topologically conjugate to
$(X_{\Lambda_1}, \sigma_{\Lambda_1})$.
Hence the subshift 
$ (\Lambda'_2, \sigma_{\Lambda'_2})$ is normal if 
$(\Lambda_1,\sigma_1)$ is normal.
\end{lemma}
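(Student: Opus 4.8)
The plan is to reduce everything to the single set-theoretic identity $X_{\Lambda_2^{\varphi_1}\times\Lambda_2^{[L]}} = X_{\Lambda'_2}$; once this is in hand, the topological conjugacy with $(X_{\Lambda_1},\sigma_{\Lambda_1})$ is immediate from the previous lemma, which already realizes $\hat{h}^L$ as a conjugacy of $(X_{\Lambda_1},\sigma_{\Lambda_1})$ onto $(X_{\Lambda_2^{\varphi_1}\times\Lambda_2^{[L]}},\sigma_{\Lambda_2^{\varphi_1}\times\Lambda_2^{[L]}})$. The two shift maps agree tautologically, since both advance the index $n\mapsto n+1$, so only the equality of the underlying sets of sequences requires argument, and I would prove it by the two inclusions, the engine in both directions being the recursion $\xi_{n+1}=\tau^{\varphi_1}(\xi_n,y_{[n,L+n-1]})$ coming from $\tau = h\circ\sigma_{\Lambda_1}\circ h^{-1}$ and \eqref{eq:tauvpxiy}.

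For the inclusion $X_{\Lambda_2^{\varphi_1}\times\Lambda_2^{[L]}}\subseteq X_{\Lambda'_2}$, fix $x\in X_{\Lambda_1}$ and write $\xi_n = g_1(x)_n = \varphi_1(x_{[n,M+n-1]})$ and $y = h_1(x)$. Because $h_1$ is a sliding block code commuting with the shift, $(\xi_n, y_{[n,\infty)}) = h(\sigma_{\Lambda_1}^{n-1}(x))\in X_{\Lambda_2}$ for every $n$; in particular $\xi_n\in\Gamma^-_K(y_{[n,L+n-1]})$, so each coordinate lies in $\Sigma'_2$, and for $n=1$ this supplies the required initial condition $(\xi_1,(y_n)_{n\in\N})\in X_{\Lambda_2}$. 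The recursion is where the map $\tau$ enters: applying $\tau$ to $h(\sigma_{\Lambda_1}^{n-1}(x))$ and reading off the $B_K(\Lambda_2)$-component via \eqref{eq:tauvpxiy} gives $\xi_{n+1} = \tau^{\varphi_1}(\xi_n,y_{[n,L+n-1]})$, using that $\tau^{\varphi_1}$ depends on its second argument only through $y_{[n,L+n-1]}$. Hence $\hat{h}^L(x)\in X_{\Lambda'_2}$.

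For the reverse inclusion, start from a point $(\xi_n, y_{[n,L+n-1]})_{n\in\N}\in X_{\Lambda'_2}$; the defining initial condition $(\xi_1,(y_n)_{n\in\N})\in X_{\Lambda_2}$ permits me to set $x := h^{-1}(\xi_1, y)\in X_{\Lambda_1}$. Then $h(x) = (\xi_1,y)$ forces $g_1(x)_1 = \varphi_1(x_{[1,M]}) = \xi_1$ and $h_1(x) = y$, whence $h_1^L(x)_n = y_{[n,L+n-1]}$ for all $n$. Finally $(g_1(x)_n)_n$ satisfies exactly the recursion $g_1(x)_{n+1} = \tau^{\varphi_1}(g_1(x)_n, y_{[n,L+n-1]})$ established in the forward direction, while $(\xi_n)_n$ satisfies the same recursion with the same first term $\xi_1 = g_1(x)_1$; an induction on $n$ then yields $\xi_n = g_1(x)_n$ for all $n$, so the point equals $\hat{h}^L(x)$. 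This closes the equality.

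With the identity proved, $\hat{h}^L$ is a topological conjugacy of $(X_{\Lambda_1},\sigma_{\Lambda_1})$ onto $(X_{\Lambda'_2},\sigma_{\Lambda'_2})$. For the last assertion I would observe that a one-sided topological conjugacy between $X_{\Lambda_1}$ and $X_{\Lambda'_2}$ extends to a topological conjugacy of the two-sided subshifts $\Lambda_1$ and $\Lambda'_2$ (one-sided conjugacy being the stronger notion, cf. \cite{LM}), so that normality of $\Lambda_1$ passes to $\Lambda'_2$: $\lambda$-synchronization is invariant under two-sided topological conjugacy (\cite{KMAAA2013}), and infiniteness of the shift space is obviously preserved. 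The only genuinely delicate point is the bookkeeping in the reverse inclusion, where one must confirm that the initial datum built into the definition of $X_{\Lambda'_2}$ is precisely the information $(\xi_1,y)$ needed to invert $h$, and that the recursion characterizing $X_{\Lambda'_2}$ matches verbatim the recursion satisfied by the coordinates of $\hat{h}^L(x)$; both hinge on the fact, recorded just before \eqref{eq:tauvpxiy}, that $\tau^{\varphi_1}(\xi,y)$ depends on $y$ only through $y_{[1,L]}$.
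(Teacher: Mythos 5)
Your proof is correct and follows essentially the same route as the paper: the paper likewise takes a point $(\xi_n, y_{[n,L+n-1]})_{n\in\N} \in X_{\Lambda'_2}$, sets $x = h^{-1}(\xi_1,y)$, and identifies the point with $\hat{h}^L(x)$, the forward inclusion being implicit in the earlier derivation of the recursion $\xi_{n+1}=\tau^{\varphi_1}(\xi_n,y_{[n,L+n-1]})$ from $\tau = h\circ\sigma_{\Lambda_1}\circ h^{-1}$. You merely spell out both inclusions and the passage from one-sided to two-sided conjugacy (hence normality via \cite{KMAAA2013}) more explicitly than the paper does, which is fine.
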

\begin{proof}
For $(\xi_n, y_{[n,L+n-1]})_{n\in \N} \in X_{\Lambda'_2}$,
we see $(\xi_1, y_1, y_2,\dots ) \in X_{\Lambda_2}$.
Put
$x = h^{-1}(\xi_1, y_1, y_2, \dots ) \in X_{\Lambda_1}$
so that  we know  
$\xi_n = g_1(x)_n$ and $y_{[n,L+n-1]} = h_1^L(x)_n$
for all $n \in \N$.
Hence we may identify
$(g_1(x), h_1^L(x))$ with
$(\xi_n, y_{[n,L+n-1]})_{n\in \N}$.
The identification
between 
$(g_1(x), h_1^L(x))$ and
$(\xi_n, y_{[n,L+n-1]})_{n\in \N}$
yields the identification between 
the subshifts
$(X_{\Lambda_2^{\varphi_1}\times\Lambda_2^{[L]}},
\sigma_{\Lambda_2^{\varphi_1}\times\Lambda_2^{[L]}})
$
and
$(X_{\Lambda'_2}, \sigma_{\Lambda'_2})$.
This implies that
$ (X_{\Lambda'_2}, \sigma_{\Lambda'_2})$ is topologically conjugate to
$(X_{\Lambda_1}, \sigma_{\Lambda_1})$.
\end{proof}

\medskip

In what follows, we assume that  the subshifts
$(\Lambda_1, \sigma_{\Lambda_1})$ and 
$(\Lambda_2, \sigma_{\Lambda_2})$
are both normal.
 Since the subshift
$ (X_{\Lambda'_2}, \sigma_{\Lambda'_2})$ is topologically conjugate to
$(X_{\Lambda_1}, \sigma_{\Lambda_1})$
as one-sided subshifts,
Theorem \ref{thm:one-sidedconj4}
ensures us that 
there exists an isomorphism
$\Phi_1:\OLamonemin\longrightarrow
{\mathcal{O}}_{{\Lambda'_2}^{\operatorname{min}}}
$ of $C^*$-algebras such that 
$\Phi_1({\mathcal{D}}_{\Lambda_1}) ={\mathcal{D}}_{\Lambda'_2}$
and
$\Phi_1\circ\rho^{\Lambda_1}_t = \rho^{\Lambda'_2}_t\circ\Phi_1, t \in \T$.
We will henceforth prove that 
there exists an isomorphism
$\Phi_2:
{\mathcal{O}}_{{\Lambda'_2}^{\operatorname{min}}}
\longrightarrow
\OLamtwomin
$ 
of $C^*$-algebras such that 
$\Phi_2({\mathcal{D}}_{\Lambda'_2}) ={\mathcal{D}}_{\Lambda_2}$
and
$\Phi_2\circ\rho^{\Lambda'_2}_t = \rho^{\Lambda_2}_t\circ\Phi_2, t \in \T$.

Let $(V', E', \lambda',\iota')$ be the minimal $\lambda$-graph system
$\LLamptwomin$ of $\Lambda'_2$. 
The vertex set $V'_l$ is denote by
$\{ v_1^{\prime l}, \dots, v_{m'(l)}^{\prime l} \}.$ 
Since $\LLamptwomin$ is predecessor separated,
the projections of the form
$E_i^{\prime l}$ in the $C^*$-algebra
${\mathcal{O}}_{{\Lambda'_2}^{\operatorname{min}}}$
corresponding to the vertex 
$v_i^{\prime l}  \in V_l^\prime$
of
$\LLamptwomin$
is written in terms of the generating partial isometries
$S'_{(\xi, y_{[1,L]})}, (\xi, y_{[1,L]})\in \Sigma'_2$
by the formula \eqref{eq:eil}.

Let $S_\alpha,\alpha \in \Sigma_2$ be the generating partial isometries 
of the $C^*$-algebra $\OLamtwomin$.
For $(\xi, y) \in B_K(\Lambda_2) \times X_{\Lambda_2}$
with $\xi \in \Gamma_K^-(y)$, 
let us define a sequence $(\xi_n)_{n\in \N}$ of words
of $B_K(\Lambda_2)$ by 
\begin{equation} \label{eq:xiy}
\xi_1 := \xi, \qquad 
\xi_{n+1} = \tau^{\varphi_1}(\xi_n, y_{[n, L+n-1]}), \quad n \in \N.
\end{equation}
For a word $w= (w_1,\dots, w_k) \in B_k(\Lambda_2)$, 
we write 
the partial isometry $S_{w_1}\cdots S_{w_k} \in \OLamtwomin$
as
$S_w$ in $\OLamtwomin$. 
For $(\xi, y_{[1,L]}) \in \Sigma'_2$,
we define a partial isometry
$\widehat{S}_{(\xi, y_{[1,L]})}$ in 
$\OLamtwomin$ by setting
\begin{equation*}
\widehat{S}_{(\xi, y_{[1,L]})} := S_{\xi_1 y_{[1,L]}} S_{\xi_2 y_{[2,L]}}^* \in \OLamtwomin 
\quad \text{ where }
\xi_1 = \xi, \, \xi_2 = \tau^{\varphi_1}(\xi, y_{[1,L]}).
\end{equation*}
We also write 
for $\mu =(\mu_1,\dots, \mu_m) \in B_m(\Lambda_2')$
$$
\widehat{S}_\mu := \widehat{S}_{\mu_1}\cdots \widehat{S}_{\mu_m} \in \OLamtwomin.
$$
We write
$\LLamtwomin =
(V^{\Lambda_2^\min}, E^{\Lambda_2^\min}, \lambda^{\Lambda_2^\min}, \iota^{\Lambda_2^\min}). $
The transition matrix system of $\LLamtwomin $
is denoted by 
$(A_{l,l+1}^\min, I_{l,l+1}^\min)_{l\in \Zp}.$ 
For $w =(w_1,\dots, w_l) \in B_l(\Lambda_2)$,
w define for $v_j^l \in V_l^{\Lambda_2^\min}$ by 
\begin{equation*}
 A_{0,l}^\min(0, w, j) 
=
{\begin{cases}
1 & \text{ there exists } \gamma \in E_{0,l}^{\Lambda_2^\min} ;
      \lambda(\gamma) =w, t(\gamma) = v_j^l, \\
 0 & \text{ otherwise,}
\end{cases}
}
\end{equation*}
where the top vertex $V_0^{\Lambda_2^\min} =\{ v_0\}$ a singleton.
We note the following lemma. 
\begin{lemma}\label{lem:key1}
For $(\xi, y) \in B_K(\Lambda_2) \times X_{\Lambda_2}$
with $\xi \in \Gamma_K^-(y)$, 
let $(\xi_n)_{n\in \N}$  be the sequence 
of $B_K(\Lambda_2)$ defined by \eqref{eq:xiy}.
We then have 
$S_{\xi_1 y_{[1,L+1]}}^*S_{\xi_1 y_{[1,L+1]}}
\le
 S_{\xi_2 y_{[2,L+1]}}^*
S_{\xi_2 y_{[2,L+1]}}$
and hence
$S_{\xi_1 y_{[1,L+1]}} S_{\xi_2 y_{[2,L+1]}}^*
S_{\xi_2 y_{[2,L+1]}} = S_{\xi_1 y_{[1,L+1]}}.
$
More generally we have  
\begin{gather}
S_{\xi_{n} y_{[n,L+n]}}^*S_{\xi_{n} y_{[n,L+n]}}\le
 S_{\xi_{n+1} y_{[n+1,L+n]}}^*
S_{\xi_{n+1} y_{[n+1,L+n]}} \quad \text{ and } \label{eq:InSxinyn} \\
S_{\xi_{n} y_{[n,L+n]}} S_{\xi_{n+1} y_{[n+1,L+n]}}^*
S_{\xi_{n+1} y_{[n+1,L+n]}} 
=S_{\xi_{n} y_{[n,L+n]}}, \qquad n \in \N. \label{eq:Sxinyn}
\end{gather}
\end{lemma}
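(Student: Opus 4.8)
The plan is to deduce the two displayed equalities from the two inclusions of support projections, and then to prove those inclusions by peeling off a common suffix and reducing to a single-symbol comparison. Throughout I would use that every projection of the form $S_w^* S_w$ ($w \in B_*(\Lambda_2)$) lies in the commutative diagonal $\DLLamtwomin$: indeed, iterating $S_\alpha^* E_i^l S_\alpha = \sum_j A_{l,l+1}^{\min}(i,\alpha,j)E_j^{l+1}$ and summing against $\sum_i E_i^l = 1$ shows that $S_w^* S_w$ is a finite sum of vertex projections $E_j^l$, hence a member of the commutative algebra $\DLLamtwomin$.

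Granting \eqref{eq:InSxinyn}, the identity \eqref{eq:Sxinyn} is immediate projection bookkeeping. Writing $S_\mu = S_{\xi_n y_{[n,L+n]}}$, $P = S_\mu^* S_\mu$ and $Q = S_{\xi_{n+1} y_{[n+1,L+n]}}^* S_{\xi_{n+1} y_{[n+1,L+n]}}$, the inequality $P \le Q$ between commuting projections gives $PQ = P$, so $S_\mu Q = S_\mu P Q = S_\mu P = S_\mu S_\mu^* S_\mu = S_\mu$. The first two displayed formulas of the lemma are the case $n=1$ with $L$ replaced by $L+1$, and are handled in exactly the same way.

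The next step is to reduce \eqref{eq:InSxinyn} to the single-symbol statement $S_{\xi_n y_n}^* S_{\xi_n y_n} \le S_{\xi_{n+1}}^* S_{\xi_{n+1}}$. Setting $u = y_{[n+1,L+n]}$, $a = \xi_n y_n$ and $b = \xi_{n+1}$, one has $\xi_n y_{[n,L+n]} = a u$ and $\xi_{n+1} y_{[n+1,L+n]} = b u$, whence $S_{\xi_n y_{[n,L+n]}}^* S_{\xi_n y_{[n,L+n]}} = S_u^*(S_a^* S_a)S_u$ and likewise with $b$. Since $0 \le S_b^* S_b - S_a^* S_a$ forces $0 \le S_u^*(S_b^* S_b - S_a^* S_a)S_u$, the inequality for $au,bu$ follows from the one for $a,b$. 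It therefore remains to establish the core inequality $S_{\xi_n y_n}^* S_{\xi_n y_n} \le S_{\xi_{n+1}}^* S_{\xi_{n+1}}$. Here I would pass to the groupoid picture of Section~3: for $w \in B_*(\Lambda_2)$ the projection $S_w^* S_w$ is the characteristic function of the clopen set $R(w) = \{\, z \in X_{\LLamtwomin} \mid w \text{ labels a path of } \LLamtwomin \text{ terminating at } v(z)_0 \,\}$, so that $S_a^* S_a \le S_b^* S_b$ is equivalent to $R(a) \subseteq R(b)$. Because $\LLamtwomin$ is the minimal, predecessor-separated, left-resolving $\lambda$-graph system, the vertex $v(z)_0$ is pinned down by its past-equivalence data $\Gamma_l^-(v(z)_0^l)$, and membership in $R(w)$ is a synchronizing-word condition relating $w$ to the future $\pi_{\LLamtwomin}(z)$. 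I would then feed in the defining property of $\tau^{\varphi_1}$: the recursion \eqref{eq:xiy} is built so that $h^{-1}(\xi_{n+1}\, y_{[n+1,\infty)}) = \sigma_{\Lambda_1}\!\big(h^{-1}(\xi_n\, y_{[n,\infty)})\big)$, i.e. the points $\xi_n y_{[n,\infty)}$ and $\xi_{n+1} y_{[n+1,\infty)}$ are successive images under $\tau = h\circ\sigma_{\Lambda_1}\circ h^{-1}$. Translating this through the block maps $\varphi_1,\varphi_2,\phi_2$ and the synchronizing-word transfer of this section (the $\widetilde{(\cdot)}$ construction and Lemma \ref{lem:4.4}) should show that the updated context $\xi_{n+1}$ is at least as permissive a predecessor as $\xi_n y_n$ for every future occurring in $R(\xi_n y_n)$, giving $R(\xi_n y_n) \subseteq R(\xi_{n+1})$.

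The main obstacle is precisely this last inclusion. One must verify that replacing the context $\xi_n y_n$ by its $\tau^{\varphi_1}$-update $\xi_{n+1}$ never destroys admissibility of a future at the level of the minimal $\lambda$-graph system, i.e. that the predecessor information carried by $\xi_{n+1}$ dominates that of $\xi_n y_n$ on the relevant synchronizing futures. The delicate feature is that the two words have different lengths, $K+1$ versus $K$, and only come to share the common tail $y_{n+1}y_{n+2}\cdots$ after the shift implemented by $\tau$; the comparison cannot be read off symbol by symbol but must be extracted from the conjugacy relations \eqref{eq:x1K}, \eqref{eq:xK1} and the explicit formula \eqref{eq:tauvarphi1} for $\tau^{\varphi_1}$, together with the fact that $\tau$ is a homeomorphism of $X_{\Lambda_2}$. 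Once $R(\xi_n y_n)\subseteq R(\xi_{n+1})$ is secured, the remainder of the lemma is the routine projection arithmetic described above.
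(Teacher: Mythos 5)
There is a genuine gap, and it occurs exactly at the step you flag as ``the main obstacle.'' Your reduction of \eqref{eq:InSxinyn} to the single-symbol inequality $S_{\xi_n y_n}^* S_{\xi_n y_n} \le S_{\xi_{n+1}}^* S_{\xi_{n+1}}$ replaces the lemma by a strictly stronger statement that is false in general. The implication you use (from the inequality for $a=\xi_n y_n$, $b=\xi_{n+1}$ to the inequality for $au$, $bu$ with $u=y_{[n+1,L+n]}$) is valid, but it only goes in that direction, and the premise cannot be established: the word $\xi_{n+1}=\tau^{\varphi_1}(\xi_n,y_{[n,L+n-1]})$ is computed from the \emph{specific} length-$L$ window $y_{[n,L+n-1]}$. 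For an arbitrary continuation $w$ with $\xi_n y_n w$ admissible, the conjugacy argument (pull back by $h^{-1}$, apply $\sigma_{\Lambda_1}$, push forward by $h$, and use \eqref{eq:tauvpxiy}) shows only that $w$ follows the context $\tau^{\varphi_1}(\xi_n, y_n w_{[1,L-1]})$; when $w_{[1,L-1]}\ne y_{[n+1,L+n-1]}$ this word need not equal $\xi_{n+1}$, so there is no reason for $\xi_{n+1}w$ to be admissible, i.e.\ no reason for $\Gamma^+_\infty(\xi_n y_n)\subseteq\Gamma^+_\infty(\xi_{n+1})$. (If that inclusion held for every continuation, the context coordinate of $\Lambda'_2$ would carry no admissibility information at all, defeating the purpose of the construction.) The inequality of the lemma is true precisely because both words \emph{retain} the suffix $y_{[n+1,L+n]}$: every continuation of $\xi_n y_{[n,L+n]}$ is then forced to pass through the window that defines $\xi_{n+1}$, so the updated context is the correct one for that continuation. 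Peeling off the common suffix discards exactly the information on which the statement rests.

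The remaining content of your proposal is a sketch, not a proof: the core inclusion is only asserted to ``follow'' from the conjugacy relations. For comparison, the actual argument is short and concrete, and it is worth seeing why it needs the suffix. For $z\in\Gamma^+_\infty(\xi_1 y_{[1,L+1]})$ put $x=h^{-1}(\xi_1 y_{[1,L+1]}z)$ and $y'=y_{[1,L+1]}z$; since $\tau^{\varphi_1}$ only reads the window $y'_{[1,L]}=y_{[1,L]}$, the shifted point $\hat{h}^L(\sigma_{\Lambda_1}(x))$ has first context coordinate $\tau^{\varphi_1}(\xi_1,y_{[1,L]})=\xi_2$, whence $\xi_2 y_{[2,L+1]}z\in X_{\Lambda_2}$ and $\Gamma^+_\infty(\xi_1y_{[1,L+1]})\subseteq\Gamma^+_\infty(\xi_2y_{[2,L+1]})$. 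This follower-set inclusion is then converted into the projection inequality using that $\LLamtwomin$ is $\lambda$-synchronizing: every vertex $v_j^{K+L+1}$ with $A^{\min}_{0,K+L+1}(0,\xi_1y_{[1,L+1]},j)=1$ is a past-equivalence class $[w]_{K+L+1}$ of a synchronizing word $w$, the inclusion produces a path labeled $\xi_2y_{[2,L+1]}$ ending at $[w]_{K+L}=\iota^{\min}([w]_{K+L+1})$, hence $E_j^{K+L+1}\le E_{j'}^{K+L}$ for the corresponding vertex $v_{j'}^{K+L}$, and summing over $j$ in $S_{\xi_1y_{[1,L+1]}}^*S_{\xi_1y_{[1,L+1]}}=\sum_j A^{\min}_{0,K+L+1}(0,\xi_1y_{[1,L+1]},j)E_j^{K+L+1}$ gives \eqref{eq:InSxinyn}. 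Your groupoid picture of $S_w^*S_w$ as characteristic functions could replace this last translation, but only for the words as stated in the lemma, with the suffix kept; both the dynamical inclusion and its transfer into the $\lambda$-graph system must actually be carried out, and neither appears in your proposal.
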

\begin{proof}
For $z =(z_n)_{n\in \N} \in X_{\Lambda_2}$ with 
$z \in \Gamma^+_\infty(\xi_1 y_{[1,L+1]})$,
we put
$x = h^{-1}(\xi_1 y_{[1,L+1]}z) \in X_{\Lambda_1}.$
Let
$y'_n = y_n$ for $ n=1,2,\dots, L+1$
and $y'_{L + n+1} = z_n$ for $n \in \N$,
and hence 
$(y'_n)_{n\in \N} =y_{[1,L+1]}z\in X_{\Lambda_2}$.
Put
$\xi'_1 =\xi_1$ and 
$\xi'_{n+1} =\tau^\varphi(\xi'_n, y'_{[n, L+n-1]}), n \in \N.$
Hence 
\begin{equation*}
\sigma_{\Lambda_2^{\varphi_1}\times\Lambda_2^{[L]}}(\hat{h}^L(x))
 =
((\xi'_2, y'_{[2,L+1]}),(\xi'_3, y'_{[3,L+2]}), \dots ).
\end{equation*}
Since $\xi'_2 y'_{[2,\infty)} \in X_{\Lambda_2}$
and $y'_{[L+2,\infty)} = z$,
we have 
$\xi'_2 y'_{[2,L+1]} z \in X_{\Lambda_2}.$
As
$\xi'_2 = \tau^\varphi(\xi'_1, y'_{[1, L]}) =\tau^\varphi(\xi_1, y_{[1, L]}) =\xi_2$
and
$y'_{[1,L+1]} = y_{[1,L+1]}$,
we know 
$\xi_2 y_{[2,L+1]} z \in X_{\Lambda_2}.$
Hence  we see
\begin{equation*}
 \Gamma^+_\infty(\xi_1 y_{[1,L+1]}) 
\subset  \Gamma^+_\infty(\xi_2 y_{[2,L+1]}) \qquad \text{ in } X_{\Lambda_2}
\end{equation*}
and hence 
\begin{equation}
 \Gamma^+_*(\xi_1 y_{[1,L+1]}) \subset  \Gamma^+_*(\xi_2 y_{[2,L+1]})  \qquad \text{ in } B_*({\Lambda_2}).  \label{eq:Gammaxiy2}
\end{equation}
Consider the $\lambda$-graph system
$\LLamtwomin$.
Suppose that
$A_{0, K+L+1}^{\min}(0, \xi_1 y_{[1,L+1]}, j) =1$
for some $j=1,2,\dots,m(K+L+1)$.
Since $\LLamtwomin$ is the $\lambda$-synchronizing $\lambda$-graph system,
the vertex $v_j^{K+L+1}$ in $V_{K+L+1}^{\Lambda_2^\min}$
 is written as
$v_j^{K+L+1} =[w]_{K+L+1}$ for some   
$w \in S_{K+L+1}(\Lambda_2)$.
Hence $w \in \Gamma_*^+(\xi_1 y_{[1, L+1]})$.
By \eqref{eq:Gammaxiy2},
$ w\in \Gamma^+_*(\xi_2 y_{[2,L+1]})$
so that 
$ \xi_2 y_{[2,L+1]} w \in S_1(\Lambda_2)$
and there exists a labeled edge labeled  
$\xi_2 y_{[2,L+1]}$ from 
the top vertex $v_0$ to 
$[w]_{K+L} \in V_{K+L}^{\Lambda_2^\min}$
in $\LLamtwomin$.
Since
$[w]_{K+L}= \iota^{\min}([w]_{K+L+1})$,
by putting
 $v_{j'}^{K+L} =[w]_{K+L},$
we have
$$
A_{0, K+L}^{\min}(0, \xi_2 y_{[2,L+1]}, j') =1, \qquad
I_{K+L, K+L+1}(j', j) =1
$$
so that 
$$
E_j^{K+L+1} \le E_{j'}^{K+L} \qquad \text{ in } \OLamtwomin.
$$
As
\begin{align*}
S_{\xi_1 y_{[1,L+1]}}^* S_{\xi_1 y_{[1,L+1]}}
= & \sum_{j=1}^{m(K+L+1)} A_{0, K+L+1}^{\min}(0, \xi_1 y_{[1,L+1]}, j) E_j^{K+L+1}, \\
S_{\xi_2 y_{[2,L+1]}}^* S_{\xi_2 y_{[2,L+1]}}
= & \sum_{j'=1}^{m(K+L)} A_{0, K+L}^{\min}(0, \xi_2 y_{[2,L+1]}, j') E_{j'}^{K+L},
\end{align*}
we have
\begin{equation*}
S_{\xi_1 y_{[1,L+1]}}^* S_{\xi_1 y_{[1,L+1]}}
\le 
S_{\xi_2 y_{[2,L+1]}}^* S_{\xi_2 y_{[2,L+1]}}
\end{equation*}
so that
\begin{equation*}
S_{\xi_1 y_{[1,L+1]}} S_{\xi_2 y_{[2,L+1]}}^*
S_{\xi_2 y_{[2,L+1]}} = S_{\xi_1 y_{[1,L+1]}}.
\end{equation*}
Similarly we have
the inequality \eqref{eq:InSxinyn}
and  the equality \eqref{eq:Sxinyn}.
\end{proof}
By using the above lemma, we see that the following lemma holds.
\begin{lemma}\label{lem:5.6}
For $(\xi, y) \in B_K(\Lambda_2) \times X_{\Lambda_2}$
with $\xi \in \Gamma_K^-(y)$, 
let $(\xi_n)_{n\in \N}$  be the sequence 
of $B_K(\Lambda_2)$ defined by \eqref{eq:xiy}.
We then have
\begin{equation*}
\widehat{S}_{(\xi_1, y_{[1,L]})} \widehat{S}_{(\xi_2, y_{[2,L+1]})} 
\cdots \widehat{S}_{(\xi_n, y_{[n,L+n-1]})} 
=S_{\xi_1 y_{[1,L+n-1]}}
S_{\xi_{n+1} y_{[n+1,L+n-1]}}^*, \qquad n\in \N.
\end{equation*}
\end{lemma}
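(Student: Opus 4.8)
The plan is to prove the identity by induction on $n$, exploiting the telescoping structure built into the generators $\widehat{S}_{(\xi_m, y_{[m,L+m-1]})} = S_{\xi_m y_{[m,L+m-1]}} S_{\xi_{m+1} y_{[m+1,L+m-1]}}^*$, where $(\xi_m)_{m\in\N}$ is the sequence defined by \eqref{eq:xiy}. For $n=1$ the assertion is just the definition of $\widehat{S}_{(\xi_1, y_{[1,L]})}$. Assuming the identity for $n$, I would multiply the telescoped expression $S_{\xi_1 y_{[1,L+n-1]}} S_{\xi_{n+1} y_{[n+1,L+n-1]}}^*$ on the right by $\widehat{S}_{(\xi_{n+1}, y_{[n+1,L+n]})} = S_{\xi_{n+1} y_{[n+1,L+n]}} S_{\xi_{n+2} y_{[n+2,L+n]}}^*$. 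Writing $\xi_{n+1} y_{[n+1,L+n]} = \xi_{n+1} y_{[n+1,L+n-1]} y_{L+n}$, and hence $S_{\xi_{n+1} y_{[n+1,L+n]}} = S_{\xi_{n+1} y_{[n+1,L+n-1]}} S_{y_{L+n}}$, the middle factor collapses to $Q_n S_{y_{L+n}}$, where $Q_n := S_{\xi_{n+1} y_{[n+1,L+n-1]}}^* S_{\xi_{n+1} y_{[n+1,L+n-1]}}$. Since also $S_{\xi_1 y_{[1,L+n]}} = S_{\xi_1 y_{[1,L+n-1]}} S_{y_{L+n}}$, the whole induction step reduces to the single absorption identity $S_{\xi_1 y_{[1,L+n-1]}} Q_n = S_{\xi_1 y_{[1,L+n-1]}}$.

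For a projection $Q$ one has $S_w Q = S_w$ if and only if $S_w^* S_w \le Q$, so the required absorption is equivalent to the order relation $S_{\xi_1 y_{[1,L+n-1]}}^* S_{\xi_1 y_{[1,L+n-1]}} \le Q_n$. In view of the formula $S_w^* S_w = \sum_j A^{\min}_{0,|w|}(0,w,j) E_j^{|w|}$ (used already in the proof of Lemma \ref{lem:key1}) and the identification of $\DLam$ with $C(X_{\Lambda_2})$, this order relation is in turn equivalent to the follower-set inclusion $\Gamma^+_\infty(\xi_1 y_{[1,L+n-1]}) \subseteq \Gamma^+_\infty(\xi_{n+1} y_{[n+1,L+n-1]})$ in $X_{\Lambda_2}$. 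I would establish this inclusion by chaining the $n$ one-step inclusions $\Gamma^+_\infty(\xi_m y_{[m,L+m-1]}) \subseteq \Gamma^+_\infty(\xi_{m+1} y_{[m+1,L+m-1]})$, $m=1,\dots,n$, along a common tail: given $z$ with $\xi_1 y_{[1,L+n-1]} z \in X_{\Lambda_2}$, the $m$-th one-step, applied to the follower $y_{[L+m,L+n-1]} z$ of $\xi_m y_{[m,L+m-1]}$, strips the leading symbol $y_m$ and advances the index while leaving the tail $z$ intact, producing $\xi_{m+1} y_{[m+1,L+n-1]} z \in X_{\Lambda_2}$; after $n$ applications one reaches $\xi_{n+1} y_{[n+1,L+n-1]} z \in X_{\Lambda_2}$, as desired.

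The one-step inclusion is essentially the content of Lemma \ref{lem:key1}: running its argument for an arbitrary continuation after $y_{[m,L+m-1]}$, rather than for the prescribed next symbol, shows precisely $\Gamma^+_\infty(\xi_m y_{[m,L+m-1]}) \subseteq \Gamma^+_\infty(\xi_{m+1} y_{[m+1,L+m-1]})$ via the conjugacy $\hat{h}^L$ and the relation $\xi_{m+1} = \tau^{\varphi_1}(\xi_m, y_{[m,L+m-1]})$, using that $\tau^{\varphi_1}(\xi_m, y_{[m,L+m-1]})$ depends only on $\xi_m$ and on $y_{[m,L+m-1]}$. The hard part will be the index bookkeeping: Lemma \ref{lem:key1} is literally stated for the words $\xi_n y_{[n,L+n]}$ and $\xi_{n+1} y_{[n+1,L+n]}$, one symbol longer than the words occurring here, and a naive chaining of that literal statement lands one step short (at $\Gamma^+_\infty(\xi_n y_{[n,L+n-1]})$), since the extra trailing symbol would constrain the common tail. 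The delicate point is therefore to verify that the follower-set inclusion in the proof of Lemma \ref{lem:key1} holds for an unconstrained continuation, so that the single tail $z$ survives unchanged through all $n$ applications; granting this, the chaining, the translation back to the projection comparison via \eqref{eq:InSxinyn}-style reasoning, and the telescoping induction all go through routinely.
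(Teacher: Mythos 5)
Your proof is correct and takes essentially the same route as the paper's: a telescoping induction in which each step reduces to an absorption identity $S_w Q = S_w$, justified by the follower-set inclusions that the map $\tau$ (the mechanism behind Lemma \ref{lem:key1}) produces. You are in fact more explicit than the paper about the delicate point, since the paper compresses the gap-$n$ chaining of one-step inclusions that you spell out into the words ``Similarly'' (in Lemma \ref{lem:key1}) and ``inductively we have the desired equality'' (in Lemma \ref{lem:5.6}).
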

\begin{proof}
The following equalities hold:
\begin{align*}
& \widehat{S}_{(\xi_1, y_{[1,L]})} \widehat{S}_{(\xi_2, y_{[2,L+1]})} 
   \cdots \widehat{S}_{(\xi_n, y_{[n,L+n-1]})} \\
=&  S_{\xi_1 y_{[1,L]}} S_{\xi_2 y_{[2,L]}}^*
    S_{\xi_2 y_{[2,L+1]}} S_{\xi_3 y_{[3,L+1]}}^* 
\cdots \widehat{S}_{(\xi_n, y_{[n,L+n-1]})}      \\
=& S_{\xi_1 y_{[1,L]}} S_{\xi_2 y_{[2,L]}}^*
    S_{\xi_2 y_{[2,L]}}S_{y_{L+1}}S_{y_{L+1}}^*
    S_{y_{L+1}}
    S_{\xi_3 y_{[3,L+1]}}^* 
\cdots \widehat{S}_{(\xi_n, y_{[n,L+n-1]})}      \\
=&  S_{\xi_1 y_{[1,L]}} S_{y_{L+1}}S_{y_{L+1}}^*
S_{\xi_2 y_{[2,L]}}^* 
    S_{\xi_2 y_{[2,L]}}    S_{y_{L+1}}
S_{\xi_3 y_{[3,L+1]}}^* 
\cdots \widehat{S}_{(\xi_n, y_{[n,L+n-1]})}      \\
=&  S_{\xi_1 y_{[1,L+1]}} 
S_{\xi_2 y_{[2,L+1]}}^* 
    S_{\xi_2 y_{[2,L+1]}}
    S_{\xi_3 y_{[3,L+1]}}^* 
\cdots \widehat{S}_{(\xi_n, y_{[n,L+n-1]})}.
\end{align*}
By Lemma \ref{lem:key1}, 
we see that 
$$
S_{\xi_1 y_{[1,L+1]}} 
S_{\xi_2 y_{[2,L+1]}}^* 
    S_{\xi_2 y_{[2,L+1]}}
= S_{\xi_1 y_{[1,L+1]}}.
$$
We thus have
\begin{equation*}
\widehat{S}_{(\xi_1, y_{[1,L]})} \widehat{S}_{(\xi_2, y_{[2,L+1]})} 
   \cdots \widehat{S}_{(\xi_n, y_{[n,L+n-1]})} 
=  S_{\xi_1 y_{[1,L+1]}} 
    S_{\xi_3 y_{[3,L+1]}}^* 
\cdots \widehat{S}_{(\xi_n, y_{[n,L+n-1]})},      
\end{equation*}
so that  inductively we have the desired equality.
\end{proof}
The following lemma is direct by using Lemma \ref{lem:5.6}.
\begin{lemma} \label{lem:5.7}
For $(\xi, y) \in B_K(\Lambda_2) \times X_{\Lambda_2}$
with $\xi \in \Gamma_K^-(y)$, 
let $(\xi_n)_{n\in \N}$  be the sequence 
of $B_K(\Lambda_2)$ defined by \eqref{eq:xiy}.
\begin{enumerate}
\renewcommand{\theenumi}{\roman{enumi}}
\renewcommand{\labelenumi}{\textup{(\theenumi)}}
\item
\begin{align*}
& (\widehat{S}_{(\xi_1, y_{[1,L]})} \widehat{S}_{(\xi_2, y_{[2,L+1]})} 
\cdots \widehat{S}_{(\xi_n, y_{[n,L+n-1]})})^*
(\widehat{S}_{(\xi_1, y_{[1,L]})} \widehat{S}_{(\xi_2, y_{[2,L+1]})} 
\cdots \widehat{S}_{(\xi_n, y_{[n,L+n-1]})}) \\
 =&
S_{\xi_{n+1} y_{[n+1,L+n-1]}}
S_{\xi_1 y_{[1,L+n-1]}}^*
S_{\xi_1 y_{[1,L+n-1]}}
S_{\xi_{n+1} y_{[n+1,L+n-1]}}^*, \qquad n\in \N.
\end{align*}
\item
\begin{align*}
& (\widehat{S}_{(\xi_1, y_{[1,L]})} \widehat{S}_{(\xi_2, y_{[2,L+1]})} 
\cdots \widehat{S}_{(\xi_n, y_{[n,L+n-1]})})
(\widehat{S}_{(\xi_1, y_{[1,L]})} \widehat{S}_{(\xi_2, y_{[2,L+1]})} 
\cdots \widehat{S}_{(\xi_n, y_{[n,L+n-1]})})^* \\
 =&
S_{\xi_1 y_{[1,L+n-1]}}
S_{\xi_1 y_{[1,L+n-1]}}^*, \qquad n\in \N.
\end{align*}
\item
\begin{equation*}
 \sum_{(\xi_1, y_{[1,L]}) \in \Sigma'_2}
\widehat{S}_{(\xi_1, y_{[1,L]})}
\widehat{S}_{(\xi_1, y_{[1,L]})}^* \\
 =  1.
\end{equation*}
\end{enumerate}
\end{lemma}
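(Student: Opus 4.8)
The plan is to verify the three identities (i), (ii), (iii) by direct computation using Lemma \ref{lem:5.6} together with the defining relations $(\LLamtwomin)$ of the $C^*$-algebra $\OLamtwomin$. For (i) and (ii) the work is essentially algebraic manipulation of the closed-form expression for the product in Lemma \ref{lem:5.6}, while (iii) requires exhibiting a partition-of-unity argument over the alphabet $\Sigma'_2$. I expect (iii) to be the main obstacle, since it is the only one of the three that is a genuine sum relation rather than a formal consequence of a single product formula.

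First I would treat (i) and (ii). By Lemma \ref{lem:5.6} we have
\begin{equation*}
\widehat{S}_{(\xi_1, y_{[1,L]})} \cdots \widehat{S}_{(\xi_n, y_{[n,L+n-1]})}
= S_{\xi_1 y_{[1,L+n-1]}} S_{\xi_{n+1} y_{[n+1,L+n-1]}}^*,
\end{equation*}
so both (i) and (ii) reduce to computing $T^*T$ and $TT^*$ where $T = S_{\eta}S_{\zeta}^*$ with $\eta = \xi_1 y_{[1,L+n-1]}$ and $\zeta = \xi_{n+1} y_{[n+1,L+n-1]}$. For (i), $T^*T = S_\zeta S_\eta^* S_\eta S_\zeta^*$, which is exactly the claimed right-hand side once one writes it out; this is immediate. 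For (ii), $TT^* = S_\eta S_\zeta^* S_\zeta S_\eta^*$, and here I would invoke the key inequality \eqref{eq:InSxinyn} from Lemma \ref{lem:key1} in the iterated form $S_\zeta^* S_\zeta \ge S_\eta^* S_\eta$ (applied with the roles matching $\xi_{n+1}y_{[n+1,L+n-1]}$ dominating $\xi_1 y_{[1,L+n-1]}$), which forces $S_\eta S_\zeta^* S_\zeta = S_\eta$ and hence $TT^* = S_\eta S_\eta^*$. This gives the stated formula with $S_{\xi_1 y_{[1,L+n-1]}} S_{\xi_1 y_{[1,L+n-1]}}^*$.

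The harder part is (iii). Taking $n=1$ in (ii) gives $\widehat{S}_{(\xi_1,y_{[1,L]})}\widehat{S}_{(\xi_1,y_{[1,L]})}^* = S_{\xi_1 y_{[1,L]}} S_{\xi_1 y_{[1,L]}}^*$, so the sum in (iii) becomes
\begin{equation*}
\sum_{(\xi, y_{[1,L]}) \in \Sigma'_2} S_{\xi y_{[1,L]}} S_{\xi y_{[1,L]}}^*.
\end{equation*}
The plan is to show this equals $1$ by repeatedly applying the relation $\sum_{\beta \in \Sigma} S_\beta S_\beta^* = 1$ from $(\LLamtwomin)$ and summing over the admissible extensions. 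Concretely, I would argue that as $(\xi, y_{[1,L]})$ ranges over $\Sigma'_2 = \{(\xi, y_{[1,L]}) : \xi \in \Gamma_K^-(y_{[1,L]})\}$, the words $\xi y_{[1,L]}$ range over precisely the admissible words of $\Lambda_2$ of length $K+L$, each appearing exactly once (the constraint $\xi \in \Gamma_K^-(y_{[1,L]})$ being exactly the admissibility of the concatenation). Hence $\sum S_{\xi y_{[1,L]}} S_{\xi y_{[1,L]}}^* = \sum_{w \in B_{K+L}(\Lambda_2)} S_w S_w^*$, and this telescopes to $1$ by induction on word length using $\sum_{w' \in B_{K+L+1}(\Lambda_2)} S_{w'}S_{w'}^* = \sum_{w} S_w (\sum_\beta S_\beta S_\beta^*) S_w^* = \sum_w S_w S_w^*$ together with the base case. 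The main subtlety to check carefully is that the indexing of $\Sigma'_2$ matches the admissible words of length $K+L$ bijectively, which I would confirm from the definition of $\Sigma'_2$ and the fact that $S_{\xi y_{[1,L]}} = 0$ whenever $\xi y_{[1,L]}$ is not admissible.
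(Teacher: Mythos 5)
Your proposal is correct and matches the paper's intent exactly: the paper gives no written proof, declaring the lemma ``direct by using Lemma \ref{lem:5.6}'', and your computations are precisely that direct verification --- (i) immediately from the product formula, (ii) from the product formula together with the $\Gamma^+$-inclusion inequality of Lemma \ref{lem:key1} (iterated to the relevant word lengths), and (iii) from the bijection $(\xi,y_{[1,L]})\mapsto \xi y_{[1,L]}$ between $\Sigma'_2$ and $B_{K+L}(\Lambda_2)$ plus the standard identity $\sum_{w\in B_m(\Lambda_2)}S_wS_w^*=1$. No gaps; this is the same route the paper takes.
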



The following lemma is direct from the above lemma.
\begin{lemma}
For $\nu, \mu \in B_*(\Lambda'_2)$,
we have
\begin{equation*}
 \widehat{S}_\nu^* \widehat{S}_\nu \widehat{S}_\mu \widehat{S}_\mu^*
= \widehat{S}_\mu \widehat{S}_\mu^* \widehat{S}_\nu^* \widehat{S}_\nu.
\end{equation*}
\end{lemma}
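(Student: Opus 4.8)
The plan is to recognize that both factors on each side of the asserted identity are in fact projections lying in the \emph{commutative} diagonal subalgebra $\DLLamtwomin$ of $\OLamtwomin$; once this is in place, the commutation is automatic. The starting observation I would make is that an admissible word $\nu=(\nu_1,\dots,\nu_m)\in B_m(\Lambda'_2)$ is, by the very definition of $\Lambda'_2$, a segment $\nu_i=(\xi_i,y_{[i,L+i-1]})$ whose first coordinates obey the recursion $\xi_{i+1}=\tau^{\varphi_1}(\xi_i,y_{[i,L+i-1]})$ of \eqref{eq:xiy}, coming from an underlying point $y\in X_{\Lambda_2}$ with $\xi_1=\Gamma_K^-$-prefix. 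Thus Lemma \ref{lem:5.6} applies verbatim to $\widehat{S}_\nu=\widehat{S}_{\nu_1}\cdots\widehat{S}_{\nu_m}$, and likewise to $\widehat{S}_\mu$, so that $\widehat{S}_\nu^*\widehat{S}_\nu$ and $\widehat{S}_\mu\widehat{S}_\mu^*$ are exactly the two expressions evaluated in Lemma \ref{lem:5.7}(i) and (ii).

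First I would dispose of $\widehat{S}_\mu\widehat{S}_\mu^*$. By Lemma \ref{lem:5.7}(ii) it equals $S_w S_w^*$ with $w=\xi_1 y_{[1,L+m-1]}\in B_*(\Lambda_2)$, which is precisely a generating projection for $\DLamtwo$, and hence
\[
\widehat{S}_\mu\widehat{S}_\mu^* \in \DLamtwo \subset \DLLamtwomin .
\]

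Next I would handle $\widehat{S}_\nu^*\widehat{S}_\nu$. By Lemma \ref{lem:5.7}(i) it has the form $S_a\,(S_b^*S_b)\,S_a^*$ with $a=\xi_{n+1}y_{[n+1,L+n-1]}$ and $b=\xi_1 y_{[1,L+n-1]}$, where $n=|\nu|$. Invoking the defining relation of Proposition \ref{prop:universalunique} (as already used in the proof of Lemma \ref{lem:key1}) I would write $S_b^*S_b=\sum_j A^{\min}_{0,|b|}(0,b,j)\,E_j^{|b|}$, a projection in $\A_{|b|}$, so that
\[
\widehat{S}_\nu^*\widehat{S}_\nu = \sum_j A^{\min}_{0,|b|}(0,b,j)\, S_a E_j^{|b|} S_a^* .
\]
Each summand $S_a E_j^{|b|} S_a^*$ is a generator of $\DLLamtwomin$ of the form $S_\mu E_i^l S_\mu^*$, whence $\widehat{S}_\nu^*\widehat{S}_\nu \in \DLLamtwomin$ as well. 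Since $\DLLamtwomin\cong C(X_{\LLamtwomin})$ is commutative and contains both projections, they commute, which is exactly the claimed equality.

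I do not expect a genuine obstacle here, as the whole content is carried by the bookkeeping of Lemma \ref{lem:5.7}. The one point that merits a moment's care is the second step: both $\widehat{S}_\nu^*\widehat{S}_\nu$ and $\widehat{S}_\mu\widehat{S}_\mu^*$ are manifestly gauge-invariant and hence lie in the fixed-point algebra, but that alone would not suffice, since the fixed-point algebra is noncommutative; the argument genuinely needs the sharper fact that the initial--final word data extracted from a \emph{general} admissible $\nu$ places $S_a(S_b^*S_b)S_a^*$ among the diagonal generators $S_\mu E_i^l S_\mu^*$, i.e.\ inside $\DLLamtwomin$ rather than merely in the fixed-point algebra.
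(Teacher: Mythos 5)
Your proof is correct and takes essentially the same route as the paper: the paper also invokes the preceding lemma (Lemma \ref{lem:5.7}) to write $\widehat{S}_\nu^*\widehat{S}_\nu$ and $\widehat{S}_\mu\widehat{S}_\mu^*$ explicitly as $S_{\xi_{n+1}y_{[n+1,L+n-1]}}S_{\xi_1y_{[1,L+n-1]}}^*S_{\xi_1y_{[1,L+n-1]}}S_{\xi_{n+1}y_{[n+1,L+n-1]}}^*$ and $S_{\eta_1w_{[1,L+m-1]}}S_{\eta_1w_{[1,L+m-1]}}^*$, and then concludes that both lie in the commutative algebra $\DLLamtwomin$, hence commute. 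Your expansion of $S_b^*S_b$ into vertex projections $E_j^{|b|}$ merely makes explicit the membership in $\DLLamtwomin$ that the paper asserts without comment.
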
\label{lem:commute}
\begin{proof}
Let $\nu =((\xi_1,y_{[1,L]}), \dots, (\xi_n,y_{[n, L+n-1]})),
\mu =((\eta_1,w_{[1,L]}), \dots, (\eta_m,w_{[m, L+m-1]}))$.
Since
\begin{equation*}
 \widehat{S}_\nu^* \widehat{S}_\nu 
= 
S_{\xi_{n+1} y_{[n+1,L+n-1]}}
S_{\xi_1 y_{[1,L+n-1]}}^*
S_{\xi_1 y_{[1,L+n-1]}}
S_{\xi_{n+1} y_{[n+1,L+n-1]}}^* 
\end{equation*}
and
\begin{equation*}
\widehat{S}_\mu \widehat{S}_\mu^*
=
S_{\eta_1 w_{[1,L+m-1]}}
S_{\eta_1 w_{[1,L+m-1]}}^*,
\end{equation*} 
both $\widehat{S}_\nu^* \widehat{S}_\nu$
and
$\widehat{S}_\mu \widehat{S}_\mu^*$
are contained in
${\mathcal{D}}_{{\frak L}_{\Lambda_2}^{\operatorname{min}}}$ 
that is a  commutative $C^*$-algebra.
\end{proof}

Let
$\OhatLamtwomin$
be the $C^*$-subalgebra of $\OLamtwomin$ generated by 
the partial isometries
$$
\widehat{S}_{(\xi_1, y_{[1,L]})},
\qquad   (\xi_1, y_{[1,L]})\in \Sigma'_2.
$$
The $C^*$-subalgebra
$\DhatLLamtwomin$ of $\OhatLamtwomin$
is defined by the $C^*$-algebra  generated by elements of the form:
$$
\widehat{S}_\mu \widehat{S}_\nu^* \widehat{S}_\nu \widehat{S}_\mu^*,
\qquad  \mu, \nu \in B_*(\Lambda'_2)
$$
and
the $C^*$-subalgebra
$\DhatLamtwo$ of $\OhatLamtwomin$
is defined by the $C^*$-algebra  generated by elements of the form:
$$
\widehat{S}_\mu \widehat{S}_\mu^*,
\qquad \mu \in B_*(\Lambda'_2).
$$
\begin{lemma}
Let $\nu, \mu \in B_*(\Lambda'_2)$ be 
$\nu =((\xi_1,y_{[1,L]}), \dots, (\xi_n,y_{[n, L+n-1]}))$
and

\noindent
$\mu =((\eta_1,w_{[1,L]}), \dots, (\eta_m,w_{[m, L+m-1]}))$.
We  then  have
\begin{equation}\label{eq:hatsmusnu}
 \widehat{S}_\mu \widehat{S}_\nu^* \widehat{S}_\nu \widehat{S}_\mu^* 
=
 {\begin{cases}
& S_{\xi_1 y_{[1,L+n-1]}} S_{\eta_1 w_{[1,L+m-1]}}^* 
S_{\eta_1 w_{[1,L+m-1]}} S_{\xi_1 y_{[1,L+n-1]}}^* \\
& \text{if } \xi_{n+1} = \eta_{m+1},  y_{[n+1,L+n-1]} =w_{[m+1,L+m-1]}, \\
& 0 \quad \text{ otherwise, }
\end{cases}}
 \end{equation}
where 
$\xi_{n+1} = \tau^{\varphi_1}(\xi_n, y_{[n,L+n-1]}),
 \eta_{m+1} = \tau^{\varphi_1}(\eta_m, w_{[m,L+m-1]}).
$
\end{lemma}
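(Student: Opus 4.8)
The plan is to push everything down to the generating partial isometries $S_\alpha$ of $\OLamtwomin$ by means of Lemma \ref{lem:5.6}, and then to read off the two cases from the orthogonality of equal-length words. First I would apply Lemma \ref{lem:5.6} to both paths, writing
\[
\widehat{S}_\nu = S_{\xi_1 y_{[1,L+n-1]}} S_{\xi_{n+1} y_{[n+1,L+n-1]}}^*,
\qquad
\widehat{S}_\mu = S_{\eta_1 w_{[1,L+m-1]}} S_{\eta_{m+1} w_{[m+1,L+m-1]}}^*,
\]
where $\xi_{n+1} = \tau^{\varphi_1}(\xi_n, y_{[n,L+n-1]})$ and $\eta_{m+1} = \tau^{\varphi_1}(\eta_m, w_{[m,L+m-1]})$. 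Substituting these into $\widehat{S}_\mu \widehat{S}_\nu^* \widehat{S}_\nu \widehat{S}_\mu^*$ and grouping the adjoints produces a product of eight $S$-words whose two innermost factors are $S_{\eta_{m+1} w_{[m+1,L+m-1]}}^* S_{\xi_{n+1} y_{[n+1,L+n-1]}}$, together with its mirror image $S_{\xi_{n+1} y_{[n+1,L+n-1]}}^* S_{\eta_{m+1} w_{[m+1,L+m-1]}}$ further out.

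The elementary fact driving the case split is that for words $\zeta,\zeta'$ of the same length one has $S_\zeta^* S_{\zeta'} = 0$ unless $\zeta = \zeta'$, in which case $S_\zeta^* S_{\zeta'} = S_\zeta^* S_\zeta$; this follows from the mutual orthogonality of the range projections $S_\alpha S_\alpha^*$ (forced by $\sum_{\alpha}S_\alpha S_\alpha^*=1$) and the left-resolving property, propagated along the word. Since $\xi_{n+1} y_{[n+1,L+n-1]}$ and $\eta_{m+1} w_{[m+1,L+m-1]}$ both have length $K+L-1$, this immediately yields the dichotomy: the whole expression vanishes unless $\xi_{n+1} = \eta_{m+1}$ and $y_{[n+1,L+n-1]} = w_{[m+1,L+m-1]}$, which is exactly the \emph{otherwise} clause of \eqref{eq:hatsmusnu}.

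In the surviving case both innermost factors collapse to the source projection $S_{\xi_{n+1} y_{[n+1,L+n-1]}}^* S_{\xi_{n+1} y_{[n+1,L+n-1]}}$. To finish I would absorb the intervening factors using Lemma \ref{lem:key1}: iterating the inequality \eqref{eq:InSxinyn} gives $S_{\xi_1 y_{[1,L+n-1]}}^* S_{\xi_1 y_{[1,L+n-1]}} \le S_{\xi_{n+1} y_{[n+1,L+n-1]}}^* S_{\xi_{n+1} y_{[n+1,L+n-1]}}$ (and the analogous comparison for $\mu$), which is precisely the statement that $\widehat{S}_\nu$ and $\widehat{S}_\mu$ are partial isometries as already recorded in Lemma \ref{lem:5.7}. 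Because all these source projections lie in the commutative algebra $\DLLamtwomin$ and therefore commute, the middle projection is absorbed and what remains is the displayed right-hand side of \eqref{eq:hatsmusnu}. The main obstacle is purely the word-length bookkeeping in the two steps: one must check that the pertinent words really have the common length $K+L-1$ so that the equal-length orthogonality applies, and that the iterated form of \eqref{eq:InSxinyn} supplies exactly the comparison of source projections needed to cancel the middle factors without altering the outer ones.
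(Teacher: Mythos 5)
Your proposal is correct and follows essentially the same route as the paper: expand $\widehat{S}_\mu$ and $\widehat{S}_\nu$ via Lemma \ref{lem:5.6}, kill the cross terms by equal-length orthogonality of the words $\xi_{n+1}y_{[n+1,L+n-1]}$ and $\eta_{m+1}w_{[m+1,L+m-1]}$ (both of length $K+L-1$), and in the surviving case absorb the central projection by source-projection domination of the kind established in Lemma \ref{lem:key1}. The only caveat is that the inequality $S_{\xi_1 y_{[1,L+n-1]}}^* S_{\xi_1 y_{[1,L+n-1]}} \le S_{\xi_{n+1} y_{[n+1,L+n-1]}}^* S_{\xi_{n+1} y_{[n+1,L+n-1]}}$ is not a literal iteration of \eqref{eq:InSxinyn} (the word lengths there do not chain directly) but rather the generalized form of the inclusion \eqref{eq:Gammaxiy2}, which is precisely what the paper itself invokes at this step, so the bookkeeping you flag is the same as in the paper's proof.
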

\begin{proof}
We have
\begin{align*}
 \widehat{S}_\mu \widehat{S}_\nu^* \widehat{S}_\nu \widehat{S}_\mu^* 
=&
S_{\xi_1 y_{[1,L+n-1]}} S_{\xi_{n+1} y_{[n+1,L+n-1]}}^*\cdot 
S_{\eta_{m+1} w_{[m+1,L+m-1]}} S_{\eta_{1} w_{[1,L+m-1]}}^*  \\
&\cdot 
S_{\eta_{1} w_{[1,L+m-1]}} S_{\eta_{m+1} w_{[m+1,L+m-1]}} ^*\cdot
S_{\xi_{n+1} y_{[n+1,L+n-1]}} S_{\xi_1 y_{[1,L+n-1]}} ^*. 
\end{align*}
Similarly to \eqref{eq:Gammaxiy2}, we know  
$\Gamma^+_*(\xi_1 y_{[1,L+n-1]}) \subset
\Gamma^+_*(\xi_{n+1} y_{[n+1,L+n-1]}),
$ 
so that 
the equality
$$
S_{\xi_1 y_{[1,L+n-1]}} S_{\xi_{n+1} y_{[n+1,L+n-1]}}^*\cdot 
S_{\eta_{m+1} w_{[m+1,L+m-1]}}
=S_{\xi_1 y_{[1,L+n-1]}}
$$ holds
 if and only if 
$\xi_{n+1} y_{[n+1,L+n-1]} =\eta_{m+1} w_{[m+1,L+m-1]}$,
otherwise
$$
S_{\xi_1 y_{[1,L+n-1]}} S_{\xi_{n+1} y_{[n+1,L+n-1]}}^*\cdot 
S_{\eta_{m+1} w_{[m+1,L+m-1]}}
=0.
$$
Hence we have the equality \eqref{eq:hatsmusnu}.
\end{proof}
\begin{lemma}\label{lem:iff}
For 
$\nu, \mu \in B_*(\Lambda'_2)$,
we have
\begin{enumerate}
\renewcommand{\theenumi}{\roman{enumi}}
\renewcommand{\labelenumi}{\textup{(\theenumi)}}
\item
$S_\mu^{\prime *} S_\mu^{\prime} \ge S'_\nu S_\nu^{\prime*}$ in $\OLamptwomin$
if and only if 
$\widehat{S}_\mu^* \widehat{S}_\mu \ge \widehat{S}_\nu \widehat{S}_\nu^*$ 
in $\OhatLamtwomin.$
\item
$1- S_\mu^{\prime*} S'_\mu \ge S'_\nu S_\nu^{\prime*}$ in $\OLamptwomin$
if and only if 
$1- \widehat{S}_\mu^* \widehat{S}_\mu \ge \widehat{S}_\nu \widehat{S}_\nu^*$ in $\OhatLamtwomin.$
\end{enumerate}
\end{lemma}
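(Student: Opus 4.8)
The plan is to reduce each of the two stated equivalences to a single product identity and then to match the resulting combinatorial conditions on the two presentations. For projections $P,Q$ one has $P\ge Q\iff PQ=Q$ and $1-P\ge Q\iff PQ=0$. Taking $P=S_\mu^{\prime*}S'_\mu$ (the source projection of $S'_\mu$) and $Q=S'_\nu S_\nu^{\prime*}$ (the range projection of $S'_\nu$), assertion (i) is equivalent to $S_\mu^{\prime*}S'_\mu\,S'_\nu=S'_\nu$ and assertion (ii) to $S_\mu^{\prime*}S'_\mu\,S'_\nu=0$. The same reduction applies on the hatted side: the commutation relation $\widehat S_\nu^*\widehat S_\nu\widehat S_\mu\widehat S_\mu^*=\widehat S_\mu\widehat S_\mu^*\widehat S_\nu^*\widehat S_\nu$ proved above shows that $\widehat S_\mu^*\widehat S_\mu$ and $\widehat S_\nu\widehat S_\nu^*$ are commuting projections in $\DhatLLamtwomin$, so (i) becomes $\widehat S_\mu^*\widehat S_\mu\,\widehat S_\nu=\widehat S_\nu$ and (ii) becomes $\widehat S_\mu^*\widehat S_\mu\,\widehat S_\nu=0$. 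It therefore suffices to match these two pairs of product identities.

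Next I would give both conditions a combinatorial reading. Write $\nu=((\xi_1,y_{[1,L]}),\dots,(\xi_n,y_{[n,L+n-1]}))$ and $\mu=((\eta_1,w_{[1,L]}),\dots,(\eta_m,w_{[m,L+m-1]}))$. In $\OLamptwomin$ the projection $S'_\nu S_\nu^{\prime*}$ is the support of the cylinder determined by $\nu$, while $S_\mu^{\prime*}S'_\mu$ is the support of the follower set of $\mu$ in $X_{\LLamptwomin}$; hence (i) holds iff the $\nu$-cylinder lies inside the follower set of $\mu$, and (ii) iff they are disjoint. On the hatted side Lemma \ref{lem:5.7} gives $\widehat S_\nu\widehat S_\nu^*=S_cS_c^*$ with $c=\xi_1y_{[1,L+n-1]}$ and $\widehat S_\mu^*\widehat S_\mu=S_aS_b^*S_bS_a^*$ with $a=\eta_{m+1}w_{[m+1,L+m-1]}$, $b=\eta_1w_{[1,L+m-1]}$ and $\eta_{m+1}=\tau^{\varphi_1}(\eta_m,w_{[m,L+m-1]})$. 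The generalized nesting $\Gamma_*^+(b)\subset\Gamma_*^+(a)$ coming from \eqref{eq:Gammaxiy2} makes $S_aS_b^*S_bS_a^*$ a subprojection of $S_aS_a^*$, equal to the characteristic function of $\{z\in U_a:\sigma_{\LLamtwomin}^{|a|}(z)\in\Gamma_\infty^+(b)\}$; thus the hatted identities are again cylinder-inclusion and disjointness statements in $X_{\LLamtwomin}$.

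The core of the proof is to identify these two pictures. By Lemma \ref{lem:LamptwoLamone}, $X_{\Lambda'_2}$ is topologically conjugate to $X_{\Lambda_1}$, and the transition rule $\xi_{k+1}=\tau^{\varphi_1}(\xi_k,y_{[k,L+k-1]})$ records exactly how consecutive symbols of $\Lambda'_2$ are read off as words of $\Lambda_2$. Consequently $\mu\nu\in B_*(\Lambda'_2)$ is equivalent to the overlap equalities $\xi_1=\eta_{m+1}$ and $y_{[1,L-1]}=w_{[m+1,L+m-1]}$, which is precisely the statement that $a$ is an initial subword of $c$, i.e. $U_c\subset U_a$. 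Granting this, the inclusion of the $\nu$-cylinder in the follower set of $\mu$ in $\LLamptwomin$ translates, under the conjugacy and because $\LLamtwomin$ is $\lambda$-synchronizing and predecessor-separated, into the inclusion $U_c\subset\{z\in U_a:\sigma^{|a|}(z)\in\Gamma_\infty^+(b)\}$, while the disjoint alternative corresponds to $U_c\cap U_a=\emptyset$. Reading these equivalences in both directions gives (i), and the orthogonal versions give (ii).

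The step I expect to be the main obstacle is this last identification, because $\widehat S_\mu^*\widehat S_\mu=S_aS_b^*S_bS_a^*$ is not a plain cylinder projection but a support set pulled back through $\sigma^{|a|}$. I will have to check that conjugation by $S_a$ and $S_a^*$ neither creates nor destroys the required inclusions, and this is exactly where the nesting \eqref{eq:Gammaxiy2} and the detection of support projections by synchronizing words (Lemma \ref{lem:key1} together with the launching formula \eqref{eq:eil}) are needed: they guarantee that the $\LLamptwomin$-follower set of $\mu$ and the set $\{z\in U_a:\sigma^{|a|}(z)\in\Gamma_\infty^+(b)\}$ encode the same constraints on the continuations of $\nu$.
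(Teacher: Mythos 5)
Your opening reductions are correct (for commuting projections $P\geq Q\Leftrightarrow PQ=Q$ and $1-P\geq Q\Leftrightarrow PQ=0$, and both pairs here do commute inside the respective diagonals), and your overall strategy --- unwind both operator inequalities into word-level admissibility conditions via the explicit formulas for $\widehat{S}_\mu^*\widehat{S}_\mu$ and $\widehat{S}_\nu\widehat{S}_\nu^*$ and then match them --- is in spirit the same as the paper's. The problem is that the proposal stops exactly where the content of the lemma begins: the step you defer in your last paragraph is not a verification to be checked, it is the proof. The genuinely hard implication is passing from the hatted inequality back to the primed one in (i). The hatted inequality unwinds into the purely word-level statement that $\nu\gamma\in\Gamma_*^+(\mu)$ for every $\gamma\in\Gamma_*^+(\nu)$ in $B_*(\Lambda'_2)$. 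But $S_\mu^{\prime *}S'_\mu$ is a sum of \emph{vertex} projections $E_j^{\prime|\mu|}$ of $\LLamptwomin$, not a function of label sequences, so a word-level hypothesis does not a priori dominate it. The paper bridges this by decomposing $S'_\nu S_\nu^{\prime *}=\sum_k S'_\nu E_k^{\prime|\mu|+|\nu|}S_\nu^{\prime *}$, locating for each nonzero term the vertex $v_j^{\prime|\mu|}$ with $A'_{|\mu|,|\mu|+|\nu|}(j,\nu,k)=1$, and then --- this is the crux --- invoking $\lambda$-synchronization of $\LLamptwomin$ (available because $\Lambda'_2$ is normal) to choose a word $\gamma$ \emph{launched} by $v_k^{\prime|\mu|+|\nu|}$: uniqueness of the labeled path $\nu\gamma$ then forces $\mu\in\Gamma^-_{|\mu|}(v_j^{\prime|\mu|})$, hence $S_\mu^{\prime *}S'_\mu\geq E_j^{\prime|\mu|}\geq S'_\nu E_k^{\prime|\mu|+|\nu|}S_\nu^{\prime *}$. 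This launching argument appears nowhere in your proposal; moreover you attribute the needed synchronization to $\LLamtwomin$, whereas it is the synchronization of the \emph{primed} system $\LLamptwomin$ that carries this direction.

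Part (ii) is also not an ``orthogonal version'' of (i) obtainable by the same translation. Disjointness does not transfer through your cylinder picture by symmetry: to show non-orthogonality on one side forces non-orthogonality on the other, the paper argues by contradiction and \emph{constructs} a common nonzero subprojection --- in one direction by extending with a long synchronizing word $z$ of $\Lambda_2$ to produce $\gamma\in B_*(\Lambda'_2)$ with $0\neq S'_{\nu\gamma}S_{\nu\gamma}^{\prime *}\leq S_\mu^{\prime *}S'_\mu\cdot S'_\nu S_\nu^{\prime *}$, and in the other direction by a launched word at a vertex $v_k^{\prime|\mu|+|\nu|}$ that reduces the statement to part (i). Finally, your appeal to the conjugacy $X_{\Lambda'_2}\cong X_{\Lambda_1}$ of Lemma \ref{lem:LamptwoLamone} is a detour: what the matching actually uses is the correspondence between $\Lambda'_2$-letters and pairs of $\Lambda_2$-words through $\tau^{\varphi_1}$, together with the nesting $\Gamma_*^+(\xi_1y_{[1,L+n-1]})\subset\Gamma_*^+(\xi_{n+1}y_{[n+1,L+n-1]})$ of \eqref{eq:Gammaxiy2}, both of which live entirely inside $\Lambda_2$ and $\Lambda'_2$; the paper never routes the argument through $\Lambda_1$. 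Until the launching-vertex arguments and the explicit subprojection constructions are supplied, the proposal restates the lemma rather than proves it.
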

\begin{proof}
Let $\nu, \mu \in B_*(\Lambda'_2)$ be 
$\nu =((\xi_1,y_{[1,L]}), \dots, (\xi_n,y_{[n, L+n-1]}))$
and

\noindent
$\mu =((\eta_1,w_{[1,L]}), \dots, (\eta_m,w_{[m, L+m-1]}))$.

(i) Assume that 
$S_\mu^{\prime *} S_\mu^{\prime}\ge S'_\nu S_\nu^{\prime*}$.
Since 
$$
\mu \nu =
((\eta_1,w_{[1,L]}),\dots, (\eta_m,w_{[m,L+m-1]}),(\xi_1,y_{[1,L]}),\dots,
(\xi_n,y_{[n, L+n-1]}))
$$ 
is admissible in $\Lambda'_2$,
we see that 
$\xi_1 = \eta_{m+1}, w_{[m+1, L+m-1]} =y_{[1, L-1]}.$ 
Hence we have
\begin{align*}
& \widehat{S}_\mu^* \widehat{S}_\mu \cdot \widehat{S}_\nu \widehat{S}_\nu^*  \\
= & 
S_{\eta_{m+1} w_{[m+1,L+m-1]}} S_{\eta_1 w_{[1,L+m-1]}}^*S_{\eta_1 w_{[1,L+m-1]}} S_{\eta_{m+1} w_{[m+1,L+m-1]}}^*
\cdot 
S_{\xi_1 y_{[1,L+n-1]}} S_{\xi_1 y_{[1,L+n-1]}}^* \\
= & 
S_{\xi_1 y_{[1,L-1]}} S_{\eta_1 w_{[1,L+m-1]}}^*S_{\eta_1 w_{[1,L+m-1]}} S_{\xi_1 y_{[1,L-1]}}^*
\cdot 
S_{\xi_1 y_{[1,L+n-1]}} S_{\xi_1 y_{[1,L+n-1]}}^* \\
= & 
S_{\xi_1 y_{[1,L+n-1]}} S_{\xi_1 y_{[1,L+n-1]}}^* S_{\xi_1 y_{[1,L-1]}} S_{\eta_1 w_{[1,L+m-1]}}^* \\
& \cdot S_{\eta_1 w_{[1,L+m-1]}} S_{\xi_1 y_{[1,L-1]}}^*
S_{\xi_1 y_{[1,L+n-1]}} S_{\xi_1 y_{[1,L+n-1]}}^*. \\
= & 
S_{\xi_1 y_{[1,L+n-1]}} S_{y_{[L,L+n-1]}}^*S_{\xi_1 y_{[1,L-1]}}^* S_{\xi_1 y_{[1,L-1]}} S_{\eta_1 w_{[1,L+m-1]}}^* \\
& \cdot S_{\eta_1 w_{[1,L+m-1]}} S_{\xi_1 y_{[1,L-1]}}^*
S_{\xi_1 y_{[1,L-1]}}S_{y_{[L,L+n-1]}} S_{\xi_1 y_{[1,L+n-1]}}^*.
\end{align*}
Now the equality
\begin{align*}
S_{\eta_1 w_{[1,L+m-1]}} S_{\xi_1 y_{[1,L-1]}}^* S_{\xi_1 y_{[1,L-1]}} 
=
&S_{\eta_1 w_{[1,L+m-1]}} S_{\eta_{m+1} w_{[m+1,L+m-1]}}^* 
                                   S_{\eta_{m+1} w_{[m+1,L+m-1]}} \\
=
&S_{\eta_1 w_{[1,L+m-1]}}
\end{align*}
holds
because the last equality may be shown in a similar way to \eqref{eq:Sxinyn}.
Hence we have 
\begin{align*}
& \widehat{S}_\mu^* \widehat{S}_\mu \cdot \widehat{S}_\nu \widehat{S}_\nu^*  \\
= & 
S_{\xi_1 y_{[1,L+n-1]}} S_{y_{[L,L+n-1]}}^* S_{\eta_1 w_{[1,L+m-1]}}^*  
 \cdot 
S_{\eta_1 w_{[1,L+m-1]}} S_{y_{[L,L+n-1]}} S_{\xi_1 y_{[1,L+n-1]}}^* \\
= & 
S_{\xi_1 y_{[1,L+n-1]}} S_{\eta_1 w_{[1,L+m-1]} y_{[L,L+n-1]}}^*
S_{\eta_1 w_{[1,L+m-1]} y_{[L,L+n-1]}} S_{\xi_1 y_{[1,L+n-1]}}^*. \\
\end{align*}
Since for 
\begin{equation}
\gamma = ((\zeta_1,z_{[1,L]}),(\zeta_2,z_{[2,L+1]}), \dots, (\zeta_k,z_{[k,L+k-1]}))
\in \Gamma_*^+(\nu), \label{eq:gamma}
\end{equation}
with
\begin{equation*}
\nu \gamma =
((\xi_1,y_{[1,L]}),\dots,(\xi_n,y_{[n, L+n-1]}),(\zeta_1,z_{[1,L]}),(\zeta_2,z_{[2,L+1]}), \dots, (\zeta_k,z_{[k,L+k-1]}))
\in  B_*(\Lambda'_2),
\end{equation*}
the condition
$S_\mu^{\prime*} S'_\mu \ge S'_\nu S_\nu^{\prime*}$ 
implies
\begin{align}
\mu\nu \gamma 
=& ((\eta_1,w_{[1,L]}),\dots, (\eta_m,w_{[m,L+m-1]}), (\xi_1,y_{[1,L]}),\dots,(\xi_n,y_{[n, L+n-1]}),\\
& (\zeta_1,z_{[1,L]}),(\zeta_2,z_{[2,L+1]}), \dots, (\zeta_k,z_{[k,L+k-1]}))
\in  B_*(\Lambda'_2). \label{eq:munugamma}
\end{align}
Hence 
in addition to $\xi_1 = \eta_{m+1}, w_{[m+1, L+m-1]} =y_{[1, L-1]}$,
we have the inequality 
\begin{equation}
S_{\xi_{1} y_{[1,L+n-1]}} S_{\eta_1w_{[1, L+m-1]} y_{[L,L+n-1]}}^* 
S_{\eta_1 w_{[1,L+m-1]}y_{[L,L+n-1]}} 
S_{\xi_1 y_{[1,L+n-1]}}^* 
\ge 
S_{\xi_1 y_{[1,L+n-1]}} S_{\xi_1 y_{[1,L+n-1]}}^*,  \label{eq:5.16}
\end{equation}
proving  
$\widehat{S}_\mu^* \widehat{S}_\mu \cdot \widehat{S}_\nu \widehat{S}_\nu^*
\ge\widehat{S}_\nu \widehat{S}_\nu^*
$ and hence
\begin{equation*}
\widehat{S}_\mu^* \widehat{S}_\mu \cdot \widehat{S}_\nu \widehat{S}_\nu^*
=\widehat{S}_\nu \widehat{S}_\nu^*.
\end{equation*}

Conversely suppose that 
the inequality 
$\widehat{S}_\mu^* \widehat{S}_\mu \ge \widehat{S}_\nu \widehat{S}_\nu^*$ 
in $\OhatLamtwomin$
holds.
The inequality is equivalent to the equality
$\widehat{S}_\mu^* \widehat{S}_\mu \cdot \widehat{S}_\nu \widehat{S}_\nu^*
=\widehat{S}_\nu \widehat{S}_\nu^*
$
that is also equivalent to the inequality \eqref{eq:5.16}
because of the preceding equality
\begin{equation}
 \widehat{S}_\mu^* \widehat{S}_\mu \cdot \widehat{S}_\nu \widehat{S}_\nu^*  
= 
S_{\xi_1 y_{[1,L+n-1]}} S_{\eta_1 w_{[1,L+m-1]} y_{[L,L+n-1]}}^*
S_{\eta_1 w_{[1,L+m-1]} y_{[L,L+n-1]}} S_{\xi_1 y_{[1,L+n-1]}}^*.  \label{eq:SmuhatSnuhat}
\end{equation}
For $\gamma \in \Gamma_*^+(\nu)$ as in \eqref{eq:gamma},
the inequality \eqref{eq:5.16} together with \eqref{eq:munugamma} implies 
$\mu \nu \gamma \in B_*(\Lambda'_2)$ and hence
$\nu \gamma \in \Gamma_*^+(\mu)$.
In the identity 
\begin{equation}\label{eq:maruone}
S'_\nu S_\nu^{\prime*} =
\sum_{k=1}^{m'(|\mu| + |\nu|)}
S'_\nu E_k^{\prime |\mu| + |\nu|} S_\nu^{\prime*}
\quad \text{ in } \OLamptwomin,
\end{equation}
take $k =1,2,\dots,m'(|\mu| + |\nu|)$ such that 
$S'_\nu E_k^{\prime |\mu| + |\nu|} S_\nu^{\prime*}\ne 0$.
As $S_\nu^{\prime*} S'_\nu \ge E_k^{\prime |\mu| +|\nu|}$,
we see that 
$$
A'_{|\mu|, |\mu| +|\nu|} (j, \nu, k) =1
\quad
\text{ for some }
j=1,2,\dots,m'(|\mu|)
$$
where
$A'_{|\mu|, |\mu| +|\nu|} (j, \nu, k)$
is a matrix component defined by the transition matrix system
$(A'_{l,l+1}, I'_{l,l+1})_{l\in \Zp}$
of ${\frak L}_{\Lambda'_2}^{\operatorname{min}}.$ 
Since
\begin{equation*}
S'_\nu S_\nu^{\prime*} E_j^{\prime|\mu|}S'_\nu S_\nu^{\prime*} =
\sum_{k=1}^{m'(|\mu| + |\nu|)}
A'_{|\mu|, |\mu| +|\nu|} (j, \nu, k)
S'_\nu E_k^{\prime |\mu| + |\nu|} S_\nu^{\prime*},
\end{equation*}
we have
\begin{equation} \label{eq:marutwo}
E_j^{\prime|\mu|} \ge 
S'_\nu E_k^{\prime |\mu| + |\nu|} S_\nu^{\prime*},
\end{equation}
Take $\delta \in E'_{|\mu|, |\mu|+|\nu|}$
such that 
$\lambda(\delta) = \nu, s(\delta) = v_j^{\prime|\mu|} \in V'_{|\mu|},
t(\delta) = v_k^{\prime|\mu|+|\nu|} \in V'_{|\mu|+|\nu|}
$
in the $\lambda$-graph system
${\frak L}_{\Lambda'_2}^{\operatorname{min}}.$
There exists a word $\gamma \in B_*(\Lambda'_2)$
such that 
$v_k^{\prime|\mu|+|\nu|}$ launches $\gamma$.
Take $\delta' \in E'_{|\mu|+|\nu|,|\mu|+|\nu|+|\gamma|}$
such that 
$\lambda(\delta') = \gamma,
 s(\delta') = v_k^{\prime|\mu|+|\nu|} \in V'_{|\mu|+|\nu|}
$
in 
${\frak L}_{\Lambda'_2}^{\operatorname{min}}.$
The labeled path $\delta\delta'$ is the unique path 
labeled $\nu \gamma$
in $E'_{|\mu|+|\nu|,|\mu|+|\nu|+|\gamma|}$.
Since
$\gamma \in \Gamma_*^+(\nu)$
implies 
$\nu \gamma \in \Gamma_*^+(\mu)$,
we know  that 
$\mu \in \Gamma_{|\mu|}^-(v_j^{\prime|\mu|}).$
Hence there exists a labeled path
$\delta^{\prime\prime} \in E'_{0,|\mu|}$
labeled $\mu$ such that 
$t(\delta^{\prime\prime}) =v_j^{\prime|\mu|}.
$
This implies that
\begin{equation}\label{eq:maruthree}
S_\mu^{\prime *} S_\mu^{\prime} \ge E_j^{\prime|\mu|}
\end{equation}
so that by \eqref{eq:marutwo} we see
$S_\mu^{\prime *} S_\mu^{\prime} 
\ge 
S'_\nu E_k^{\prime |\mu| + |\nu|} S_\nu^{\prime*}.
$
By the identity \eqref{eq:maruone}, we conclude the inequality
$S_\mu^{\prime *} S_\mu^{\prime} \ge S'_\nu S_\nu^{\prime*}$ in $\OLamptwomin$.

(ii)
Assume that 
$1- S_\mu^{\prime*} S_\mu^{\prime} \ge S_\nu^\prime S_\nu^{\prime*}$ 
in  $\OLamptwomin$.
Now suppose that 
$\widehat{S}_\mu^* \widehat{S}_\mu \cdot \widehat{S}_\nu \widehat{S}_\nu^*\ne 0$ 
in $\OhatLamtwomin$.
As in the proof of (i),  we have
\begin{align*}
0\ne & \widehat{S}_\mu^* \widehat{S}_\mu \cdot \widehat{S}_\nu \widehat{S}_\nu^*  \\
= & 
S_{\eta_{m+1} w_{[m+1,L+m-1]}} S_{\eta_1 w_{[1,L+m-1]}}^*S_{\eta_1 w_{[1,L+m-1]}} S_{\eta_{m+1} w_{[m+1,L+m-1]}}^*
\cdot 
S_{\xi_1 y_{[1,L+n-1]}} S_{\xi_1 y_{[1,L+n-1]}}^* 
\end{align*}
so that  
$\xi_1 = \eta_{m+1}, w_{[m+1, L+m-1]} =y_{[1, L-1]}.$ 
One may take an $L+K+m+n-1$-synchronizing word
$z_{[1, L+ k-1]} \in B_*(\Lambda_2)$ such that 
$$
\eta_1w_{[1,L+m-1]}y_{[L,L+n-1]}z_{[L-1, L+k-1]} \in B_*(\Lambda_2),
$$ 
where $n =|\nu|, m=|\mu|$.
By putting
\begin{equation*}
\zeta_1  =\tau^{\varphi_1}(\xi_n, y_{[n,L+n-1]}), \quad
\zeta_2  =\tau^{\varphi_1}(\zeta_1, z_{[1,L]}), \quad
\cdots  \quad
\zeta_k  =\tau^{\varphi_1}(\zeta_{k-1}, z_{[k, L+k-2]}),
\end{equation*}
the word 
\begin{equation*}
 ((\eta_1,w_{[1,L]}),\dots, (\eta_m,w_{[m,L+m-1]}), (\xi_1,y_{[1,L]}),\dots,(\xi_n,y_{[n, L+n-1]}),
 (\zeta_1,z_{[1,L]}), \dots, (\zeta_k,z_{[k,L+k-1]}))
\end{equation*}
belongs to
$B_*(\Lambda'_2).$
Put
$\gamma = ((\zeta_1,z_{[1,L]}),(\zeta_2,z_{[2,L+1]}), \dots, (\zeta_k,z_{[k,L+k-1]}))$
so that we have
$ S_\mu^{\prime*} S_\mu^\prime \cdot S_\nu^\prime S_\nu^{\prime*}
\ge S_{\nu\gamma}^\prime S_{\nu\gamma}^{\prime*} \ne 0$ 
in $\OLamptwomin$, 
a contradiction.

Conversely, assume  
$1- \widehat{S}_\mu^* \widehat{S}_\mu \ge \widehat{S}_\nu \widehat{S}_\nu^*$ in $\OhatLamtwomin.$
Now suppose that
$ S_\mu^{\prime*} S_\mu^\prime \cdot S_\nu^\prime S_\nu^{\prime*} \ne 0$
in $\OLamptwomin$.
Since
\begin{equation*}
S_\mu^{\prime*}S'_\mu  =
\sum_{j=1}^{m'(|\mu|)}
A'_{0, |\mu|} (0, \mu, j)
 E_j^{\prime |\mu|}
\quad \text{ in } \OLamptwomin,
\end{equation*}
take 
$
j=1,2,\dots,m'(|\mu|)
$ such that 
$A'_{0, |\mu|} (0, \mu, j) =1$ 
and
$ E_j^{\prime |\mu|} \cdot S_\nu^\prime S_\nu^{\prime*} \ne 0$
so that 
$S_\nu^{\prime*}
 E_j^{\prime |\mu|} \cdot S_\nu^\prime 
 \ne 0.
 $
As
\begin{equation*}
 S_\nu^{\prime*} E_j^{\prime|\mu|}S'_\nu 
=
\sum_{k=1}^{m'(|\mu| + |\nu|)}
A'_{|\mu|, |\mu| +|\nu|} (j, \nu, k)
 E_k^{\prime |\mu| + |\nu|},
\end{equation*}
there exists $k =1,2,\dots,m'(|\mu| + |\nu|)$
such that 
$A'_{|\mu|, |\mu| +|\nu|} (j, \nu, k) =1$
and hence 
\begin{equation*} 
S_\nu^{\prime*} E_j^{\prime|\mu|} S'_\nu
\ge 
 E_k^{\prime |\mu| + |\nu|}.
\end{equation*}
One may take an admissible word
$\gamma =((\zeta_1,z_{[1,L]}), \dots, (\zeta_p, z_{[p,L+p-1]})) \in B_*(\Lambda'_2)$
such that 
$v_k^{\prime |\mu| + |\nu|}$ launches $\gamma$
so that 
$E_k^{\prime |\mu| + |\nu|} \ge S'_\gamma S_\gamma^{\prime*}$.
Hence we have
\begin{equation*} 
 E_j^{\prime|\mu|} 
\ge 
S'_{\nu\gamma} S_{\nu\gamma}^{\prime*}
\quad \text{ so that}
\quad
S_\mu^{\prime*}S_\mu^{\prime} \ge 
S'_{\nu\gamma} S_{\nu\gamma}^{\prime*}.
\end{equation*}
By (i) we have 
$\widehat{S}_\mu^* \widehat{S}_\mu \ge 
\widehat{S}_{\nu\gamma} \widehat{S}_{\nu\gamma}^{*}.
$
Since
\begin{equation*}
\widehat{S}_\mu^* \widehat{S}_\mu \cdot
\widehat{S}_{\nu} \widehat{S}_{\nu}^{*} \cdot
\widehat{S}_{\nu\gamma} \widehat{S}_{\nu\gamma}^{*}
=
\widehat{S}_{\nu\gamma} \widehat{S}_{\nu\gamma}^{*}
\ne 0,
\end{equation*}
we get
$\widehat{S}_\mu^* \widehat{S}_\mu \cdot
\widehat{S}_{\nu} \widehat{S}_{\nu}^{*} \ne 0,$
a contradiction to
$1- \widehat{S}_\mu^* \widehat{S}_\mu \ge \widehat{S}_\nu \widehat{S}_\nu^*$.
\end{proof}

\begin{lemma}\label{lem:Snualpha}
For $\alpha\in \Sigma_2^\prime$ and
$\nu \in B_n(\Lambda'_2)$
with
$\nu \alpha \in B_{n+1}(\Lambda'_2)$,
we have
$
\widehat{S}_\alpha^* \widehat{S}_\nu^* \widehat{S}_\nu\widehat{S}_\alpha
= \widehat{S}_{\nu\alpha}^* \widehat{S}_{\nu\alpha}.
$ 
\end{lemma}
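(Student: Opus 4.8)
The plan is to read the identity straight off the definition of $\widehat{S}_{\nu\alpha}$ as an ordered product, and then, for concreteness and as a consistency check, to confirm it against the telescoped expressions furnished by Lemma~\ref{lem:5.6} and Lemma~\ref{lem:5.7}. First I would observe that the hypothesis $\nu\alpha\in B_{n+1}(\Lambda'_2)$ is precisely what makes $\widehat{S}_{\nu\alpha}$ a legitimate word in the sense of the definition $\widehat{S}_\mu:=\widehat{S}_{\mu_1}\cdots\widehat{S}_{\mu_m}$, and that for $\nu=(\nu_1,\dots,\nu_n)$ this gives $\widehat{S}_{\nu\alpha}=\widehat{S}_{\nu_1}\cdots\widehat{S}_{\nu_n}\widehat{S}_\alpha=\widehat{S}_\nu\widehat{S}_\alpha$. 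Taking adjoints, $\widehat{S}_{\nu\alpha}^*=\widehat{S}_\alpha^*\widehat{S}_\nu^*$, so that
\[
\widehat{S}_{\nu\alpha}^*\widehat{S}_{\nu\alpha}
=(\widehat{S}_\nu\widehat{S}_\alpha)^*(\widehat{S}_\nu\widehat{S}_\alpha)
=\widehat{S}_\alpha^*\widehat{S}_\nu^*\widehat{S}_\nu\widehat{S}_\alpha,
\]
which is exactly the asserted equality. At this formal level the statement is an immediate consequence of associativity and the $*$-anti-homomorphism property; the only thing one must really invoke is that admissibility of $\nu\alpha$ legitimises writing $\widehat{S}_{\nu\alpha}$ as the length-$(n+1)$ product.

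To exhibit both sides explicitly in $\OLamtwomin$ (and thereby double-check the computation), I would write $\nu=((\xi_1,y_{[1,L]}),\dots,(\xi_n,y_{[n,L+n-1]}))$ and $\alpha=(\xi_{n+1},y_{[n+1,L+n]})$, where admissibility of $\nu\alpha$ forces $\xi_{k+1}=\tau^{\varphi_1}(\xi_k,y_{[k,L+k-1]})$ for $1\le k\le n$. Then Lemma~\ref{lem:5.6} telescopes the product into
\[
\widehat{S}_{\nu\alpha}=S_{\xi_1 y_{[1,L+n]}}S_{\xi_{n+2} y_{[n+2,L+n]}}^*,\qquad
\xi_{n+2}=\tau^{\varphi_1}(\xi_{n+1},y_{[n+1,L+n]}),
\]
and Lemma~\ref{lem:5.7}(i) presents $\widehat{S}_{\nu\alpha}^*\widehat{S}_{\nu\alpha}$ as the source projection $S_{\xi_{n+2}y_{[n+2,L+n]}}S_{\xi_1 y_{[1,L+n]}}^*S_{\xi_1 y_{[1,L+n]}}S_{\xi_{n+2}y_{[n+2,L+n]}}^*$ lying in $\D_{\Lambda_2}$. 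Computing the left-hand side the same way—first simplifying $\widehat{S}_\nu^*\widehat{S}_\nu$ by Lemma~\ref{lem:5.7}(i), then conjugating by $\widehat{S}_\alpha=S_{\xi_{n+1}y_{[n+1,L+n]}}S_{\xi_{n+2}y_{[n+2,L+n]}}^*$—produces the identical expression, the cancellations at each junction being exactly the follower-set inclusions $\Gamma^+_*(\xi_1 y_{[1,L+n]})\subset\Gamma^+_*(\xi_{n+2}y_{[n+2,L+n]})$ of the type established in Lemma~\ref{lem:key1}.

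I do not expect a substantial obstacle here: the equality is a formal consequence of $\widehat{S}_{\nu\alpha}=\widehat{S}_\nu\widehat{S}_\alpha$, and the explicit verification is optional. The one place where care is needed, and the only point where a slip could plausibly creep in, is the index bookkeeping across the concatenation: one must track the windows $y_{[k,L+k-1]}$ correctly and confirm that the admissibility of $\nu\alpha$ supplies the transition equalities $\xi_{k+1}=\tau^{\varphi_1}(\xi_k,y_{[k,L+k-1]})$ at every junction, so that the telescoping of Lemma~\ref{lem:5.6} is applicable and the source projection of $\nu\alpha$ agrees with the one obtained from $\nu$ after conjugation by $\widehat{S}_\alpha$.
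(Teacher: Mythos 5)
Your proof is correct, and its main argument is genuinely different from (and more elementary than) the paper's. You read the identity off the definition: since $\widehat{S}_\mu$ is defined for $\mu \in B_m(\Lambda'_2)$ as the ordered product $\widehat{S}_{\mu_1}\cdots\widehat{S}_{\mu_m}$ of letter operators, and the letters of $\nu\alpha$ are those of $\nu$ followed by $\alpha$, the hypothesis $\nu\alpha \in B_{n+1}(\Lambda'_2)$ gives $\widehat{S}_{\nu\alpha}=\widehat{S}_\nu\widehat{S}_\alpha$ literally by definition, so the claim is just the $C^*$-identity $(\widehat{S}_\nu\widehat{S}_\alpha)^*(\widehat{S}_\nu\widehat{S}_\alpha)=\widehat{S}_\alpha^*\widehat{S}_\nu^*\widehat{S}_\nu\widehat{S}_\alpha$. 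The paper instead proves the lemma by exactly the explicit computation that you relegate to an optional consistency check: it expands $\widehat{S}_\nu^*\widehat{S}_\nu$ by Lemma~\ref{lem:5.7}~(i), writes $\widehat{S}_\alpha = S_{\eta_1 w_{[1,L]}}S_{\eta_2 w_{[2,L]}}^*$, invokes the matching conditions $\eta_1=\xi_{n+1}$ and $w_{[1,L-1]}=y_{[n+1,L+n-1]}$ forced by admissibility of $\nu\alpha$, and uses the cancellation $S_{\xi_1 y_{[1,L+n-1]}}S_{\xi_{n+1}y_{[n+1,L+n-1]}}^*S_{\xi_{n+1}y_{[n+1,L+n-1]}}=S_{\xi_1 y_{[1,L+n-1]}}$ of the type in Lemma~\ref{lem:key1} to arrive at $S_{\eta_2 w_{[2,L]}}S_{\xi_1 y_{[1,L+n-1]}w_L}^*S_{\xi_1 y_{[1,L+n-1]}w_L}S_{\eta_2 w_{[2,L]}}^*$, which is then recognized as the Lemma~\ref{lem:5.7}~(i) expression for $\widehat{S}_{\nu\alpha}^*\widehat{S}_{\nu\alpha}$. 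What the paper's route buys is this concrete source-projection formula in the generators of $\OLamtwomin$ (though it is anyway available by applying Lemma~\ref{lem:5.7}~(i) directly to the word $\nu\alpha$); what your route buys is brevity and the clarification that the lemma requires no analytic input at all, only that $\nu$, $\alpha$ and $\nu\alpha$ are admissible so that all three symbols are defined. One small slip in your optional verification: the projection $\widehat{S}_{\nu\alpha}^*\widehat{S}_{\nu\alpha}$, being of the form $S_cS_d^*S_dS_c^*$ (hence a sum of projections $S_cE_j^lS_c^*$), lies in $\DLLamtwomin$ but not in general in $\DLamtwo$; this does not affect your main argument.
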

\begin{proof}
Let $\alpha\in \Sigma_2^\prime$ and
$\nu \in B_n(\Lambda'_2)$ be 
$\alpha =(\eta_1,w_{[1,L]}) \in \Sigma_2^\prime, 
\nu =((\xi_1,y_{[1,L]}), \dots, (\xi_n,y_{[n, L+n-1]})) \in B_n(\Lambda'_2).$
We put
$\eta_2 = \tau^{\varphi_1}(\eta_1,w_{[1,L]}) \in B_K(\Lambda_2)$.
We then have
\begin{align*}
 \widehat{S}_\alpha^* \widehat{S}_\nu^* \widehat{S}_\nu\widehat{S}_\alpha 
= & S_{\eta_2 w_{[2,L]}}S_{\eta_1w_{[1,L]}}^* S_{\xi_{n+1} y_{[n+1, L + n-1]}}
     S_{\xi_1 y_{[1, L + n-1]}}^*  \cdot \\
  &  S_{\xi_1 y_{[1, L + n-1]}} S_{\xi_{n+1} y_{[n+1, L + n-1]}}^* 
     S_{\eta_1w_{[1,L]}}S_{\eta_2 w_{[2,L]}}^* \\
\end{align*}
As $S_{\eta_1w_{[1,L]}}^* S_{\xi_{n+1} y_{[n+1, L + n-1]}} \ne $
if and only if  $\eta_1 = \xi_{n+1}, w_{[1,L-1]} =  y_{[n+1, L + n-1]}.$
Hence we have
\begin{align*}
 \widehat{S}_\alpha^* \widehat{S}_\nu^* \widehat{S}_\nu\widehat{S}_\alpha 
= & S_{\eta_2 w_{[2,L]}} S_{w_L}^* S_{\xi_{n+1} y_{[n+1, L + n-1]}}^* 
     S_{\xi_{n+1} y_{[n+1, L + n-1]}} S_{\xi_1 y_{[1, L + n-1]}}^*  \cdot \\
  &  S_{\xi_1 y_{[1, L + n-1]}} S_{\xi_{n+1} y_{[n+1, L + n-1]}}^* 
      S_{\xi_{n+1} y_{[n+1, L + n-1]}} S_{w_L}S_{\eta_2 w_{[2,L]}}^* \\
\end{align*}
As $S_{\xi_1 y_{[1, L + n-1]}} S_{\xi_{n+1} y_{[n+1, L + n-1]}}^* 
      S_{\xi_{n+1} y_{[n+1, L + n-1]}} = S_{\xi_1 y_{[1, L + n-1]}}$,
we have
\begin{align*}
 \widehat{S}_\alpha^* \widehat{S}_\nu^* \widehat{S}_\nu\widehat{S}_\alpha 
= & S_{\eta_2 w_{[2,L]}} S_{w_L}^*  S_{\xi_1 y_{[1, L + n-1]}}^*  \cdot 
    S_{\xi_1 y_{[1, L + n-1]}}  S_{w_L}S_{\eta_2 w_{[2,L]}}^* \\
= & S_{\eta_2 w_{[2,L]}}  S_{\xi_1 y_{[1, L + n-1]} w_L}^*  \cdot 
    S_{\xi_1 y_{[1, L + n-1]}w_L}  S_{\eta_2 w_{[2,L]}}^*.
\end{align*}
The last term above is nothing but
$
 \widehat{S}_{\nu\alpha}^*  \widehat{S}_{\nu\alpha}. 
$
\end{proof}

In the minimal $\lambda$-graph system ${\frak L}_{\Lambda'_2}^{\min}$,
recall that 
$\{ v_1^{\prime l}, \dots, v_{m'(l)}^{\prime l} \}$
denote the vertex set 
$V'_l $ of ${\frak L}_{\Lambda'_2}^{\min}$
of the normal subshift $\Lambda'_2$.
For a vertex $v_i^{\prime l} \in V'_l$, define a function
$f_i^l:B_l(\Lambda'_2) \longrightarrow \{0,1\}$ 
by setting for $\mu \in B_l(\Lambda'_2)$
\begin{equation*}
f_i^l(\mu) =
\begin{cases}
1 & \text{ if } \mu \in \Gamma^-_l(v_i^{\prime l}), \\
-1 & \text{ if } \mu \not\in \Gamma^-_l(v_i^{\prime l}).
\end{cases}
\end{equation*}
Recall that  $S'_\alpha, \alpha \in \Sigma_2'$ and
$E_i^{\prime l}, v_i^{\prime l} \in V'_l$
denote the canonical generating partial isometries and projections 
of the $C^*$-algebra $\OLamptwomin$.
We then have by \eqref{eq:eil}
\begin{equation}\label{eq:eiprimel}
E_i^{\prime l} 
= \prod_{\mu \in B_l(\Lambda'_2)} S_\mu^{\prime*} S_\mu^{\prime f_i^l(\mu)} 
\quad \text{ in }
\quad  \OLamptwomin
\end{equation}
where for $\mu \in B_l(\Lambda'_2)$
$$
S_\mu^{\prime*} S_\mu^{\prime f_i^l(\mu)} =
\begin{cases}
S_\mu^{\prime*} S_\mu^{\prime} & \text{ if } f_i^l(\mu) =1, \\
1 - S_\mu^{\prime*} S_\mu^\prime & \text{ if } f_i^l(\mu) =-1.
\end{cases}
$$
In the $C^*$-algebra $\OhatLamtwomin$,
we define a projection for each $v_i^{\prime l}\in V'_l$ 
by settting
\begin{equation}
\widehat{E}_i^{l} : = \prod_{\mu \in B_l(\Lambda'_2)} \widehat{S}_\mu^* \widehat{S}_\mu^{f_i^l(\mu)} 
\quad \text{ in }
\quad  \OhatLamtwomin \label{eq:eilhat}
\end{equation}
where for $\mu \in B_l(\Lambda'_2)$
$$
\widehat{S}_\mu^{*} \widehat{S}_\mu^{f_i^l(\mu)} =
\begin{cases}
\widehat{S}_\mu^{*} \widehat{S}_\mu & \text{ if } f_i^l(\mu) =1, \\
1 - \widehat{S}_\mu^{*} \widehat{S}_\mu & \text{ if } f_i^l(\mu) =-1.
\end{cases}
$$
Let $(A'_{l,l+1}, I'_{l,l+1})_{l\in \Zp}$ 
be the transition matrix system for  ${\frak L}_{\Lambda'_2}^{\min}$.
\begin{lemma}
For each $v_i^{\prime l} \in V_l^\prime$, we have the identities
\begin{equation*}
\sum_{i=1}^{m'(l)} \widehat{E}_i^{l} = 1, \qquad
\widehat{E}_i^{l} = \sum_{j=1}^{m'(l+1)} I'_{l,l+1}(i,j) \widehat{E}_j^{l+1}.
\end{equation*}
\end{lemma}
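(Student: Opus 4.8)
The plan is to transfer the two identities from the corresponding relations satisfied by the canonical projections $E_i^{\prime l}$ of $\OLamptwomin$. Recall from Proposition~\ref{prop:universalunique}, applied to $\LLamptwomin$, that in $\OLamptwomin$ one has $\sum_{i=1}^{m'(l)}E_i^{\prime l}=1$ and $E_i^{\prime l}=\sum_{j=1}^{m'(l+1)}I'_{l,l+1}(i,j)E_j^{\prime l+1}$, where by \eqref{eq:eiprimel} each $E_i^{\prime l}$ is the Boolean minterm in the commuting final projections $P'_\mu:=S_\mu^{\prime*}S_\mu^\prime$, $\mu\in B_l(\Lambda'_2)$, prescribed by the predecessor set $\Gamma^-_l(v_i^{\prime l})$. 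Since $\widehat{E}_i^l$ in \eqref{eq:eilhat} is the identical minterm in the projections $\widehat{P}_\mu:=\widehat{S}_\mu^*\widehat{S}_\mu$, the strategy is to show that the family $\{\widehat{S}_\alpha,\widehat{P}_\mu,\widehat{S}_\mu\widehat{S}_\mu^*\}$ obeys, inside $\OhatLamtwomin$, the same relations from which the two displayed identities are derived in $\OLamptwomin$; the identical derivations then produce the desired identities for $\widehat{E}_i^l$. For $F\subset B_l(\Lambda'_2)$ I write $Q_F:=\prod_{\mu\in F}P'_\mu\prod_{\mu\notin F}(1-P'_\mu)$ and $\widehat{Q}_F:=\prod_{\mu\in F}\widehat{P}_\mu\prod_{\mu\notin F}(1-\widehat{P}_\mu)$, so that $E_i^{\prime l}=Q_{F_i}$ and $\widehat{E}_i^l=\widehat{Q}_{F_i}$ with $F_i=\Gamma^-_l(v_i^{\prime l})$.

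First I would assemble the ingredients already in hand. By Lemma~\ref{lem:5.7}(i) each $\widehat{P}_\mu$ lies in the commutative algebra $\DLLamtwomin$, so the products defining $\widehat{Q}_F$ are genuine pairwise-orthogonal projections and all manipulations take place in a commutative setting. Lemma~\ref{lem:5.7}(iii) supplies the completeness relation $\sum_{\alpha\in\Sigma'_2}\widehat{S}_\alpha\widehat{S}_\alpha^*=1$, and Lemma~\ref{lem:Snualpha} supplies the level-shifting relation $\widehat{S}_\alpha^*\widehat{P}_\nu\widehat{S}_\alpha=\widehat{P}_{\nu\alpha}$, the exact analogues of $\sum_\alpha S'_\alpha S_\alpha^{\prime*}=1$ and $S_\alpha^{\prime*}P'_\nu S'_\alpha=P'_{\nu\alpha}$. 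These, together with commutativity, are precisely the relations used to pass from the level-$l$ minterm $E_i^{\prime l}$ to the level-$(l+1)$ minterms $E_j^{\prime l+1}$ in $\OLamptwomin$; carrying out that derivation verbatim with the hatted symbols yields the second identity $\widehat{E}_i^l=\sum_j I'_{l,l+1}(i,j)\widehat{E}_j^{l+1}$.

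For the completeness identity $\sum_i\widehat{E}_i^l=1$, I would use that the sum of all minterms in a finite family of commuting projections equals $1$, so it suffices to prove that $\widehat{Q}_F=0$ exactly when $Q_F=0$. Since for commuting projections $\prod_kQ_k\ge R$ holds iff $Q_k\ge R$ for every $k$, Lemma~\ref{lem:iff}(i),(ii) gives, for each word $\eta\in B_*(\Lambda'_2)$, the equivalence $Q_F\ge S'_\eta S_\eta^{\prime*}\Longleftrightarrow \widehat{Q}_F\ge\widehat{S}_\eta\widehat{S}_\eta^*$. A nonzero $Q_F$ equals some $E_i^{\prime l}$ and, by the launching property \cite[Proposition~3.3]{MaJAMS2013}, dominates a cylinder $S'_\eta S_\eta^{\prime*}$; transferring shows $\widehat{E}_i^l\ge\widehat{S}_\eta\widehat{S}_\eta^*\ne0$ (the latter being a nonzero cylinder by Lemma~\ref{lem:5.7}(ii)), so every $\widehat{E}_i^l$ is nonzero. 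It then remains to show the converse, that a non-realized pattern $F$ with $Q_F=0$ forces $\widehat{Q}_F=0$; summing the surviving minterms would give $\sum_i\widehat{E}_i^l=1$.

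The hard part will be exactly this converse vanishing. A nonzero $\widehat{Q}_F$ is a nonzero projection of $\DLLamtwomin\cong C(X_{\LLamtwomin})$, and to feed it into the backward direction of Lemma~\ref{lem:iff} I need it to dominate some cylinder $\widehat{S}_\eta\widehat{S}_\eta^*$. I expect to obtain this from the $\lambda$-synchronizing (launching) property of $\LLamtwomin$ together with the conjugacy $X_{\Lambda'_2}\cong X_{\Lambda_1}$ of Lemma~\ref{lem:LamptwoLamone}: iterating the partition of unity $\sum_{\alpha}\widehat{S}_\alpha\widehat{S}_\alpha^*=1$ of Lemma~\ref{lem:5.7}(iii) should show that the cylinders $\widehat{S}_\mu\widehat{S}_\mu^*=S_{\xi_1 y_{[1,L+n-1]}}S_{\xi_1 y_{[1,L+n-1]}}^*$, $\mu\in B_n(\Lambda'_2)$, refine to a neighborhood basis on the relevant part of $X_{\LLamtwomin}$. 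Granting this, any nonzero $\widehat{Q}_F$ dominates a cylinder $\widehat{S}_\eta\widehat{S}_\eta^*$, whence $Q_F\ge S'_\eta S_\eta^{\prime*}\ne0$ by the backward transfer, so $Q_F\ne0$; contraposing gives $\widehat{Q}_F=0$ whenever $Q_F=0$, and the completeness identity follows.
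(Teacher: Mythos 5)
Your plan for the completeness identity $\sum_{i=1}^{m'(l)}\widehat{E}_i^l=1$ is essentially the paper's own argument (reduce to showing that a sign pattern with nonvanishing minterm is realized, produce a long cylinder $S_b S_b^*$ below the minterm via $\lambda$-synchronization of $\LLamtwomin$, identify it with $\widehat{S}_\nu\widehat{S}_\nu^*$ for some $\nu\in B_*(\Lambda'_2)$ as in Lemma \ref{lem:5.7}(ii), and transfer through Lemma \ref{lem:iff} factor by factor), and your observation that $Q_F\ge S'_\eta S_\eta^{\prime*}\Longleftrightarrow\widehat{Q}_F\ge\widehat{S}_\eta\widehat{S}_\eta^*$ is a correct and clean way to package that transfer. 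But your derivation of the second identity contains a genuine gap. You assert that $\widehat{E}_i^l=\sum_{j}I'_{l,l+1}(i,j)\widehat{E}_j^{l+1}$ follows ``verbatim'' from commutativity, $\sum_\alpha\widehat{S}_\alpha\widehat{S}_\alpha^*=1$ and $\widehat{S}_\alpha^*\widehat{P}_\nu\widehat{S}_\alpha=\widehat{P}_{\nu\alpha}$, before and independently of the completeness identity. There is no such derivation to transport: in $\OLamptwomin$ the identity $E_i^{\prime l}=\sum_j I'_{l,l+1}(i,j)E_j^{\prime l+1}$ is a \emph{defining} relation of the universal algebra, not a consequence of those three relations. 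Indeed it cannot be a consequence of them: the canonical generators of the $C^*$-algebra of any other left-resolving $\lambda$-graph system presenting $\Lambda'_2$ (for instance the canonical, generally non-minimal, one) satisfy the same three relations and the same minterm formulas, yet the decomposition over the vertices of $\LLamptwomin$ fails there, because minterms corresponding to vertices of the larger system which are not vertices of $\LLamptwomin$ survive. Concretely, the relations you list only control products $\widehat{E}_i^l\widehat{E}_j^{l+1}$ for \emph{realized} patterns; the term-by-term expansion $\widehat{E}_i^l=\sum_{g}\widehat{E}_i^l\widehat{E}_g$ over all level-$(l+1)$ sign patterns $g$ requires knowing that the non-realized minterms vanish, which is exactly the completeness statement you prove afterwards.

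What your three relations do yield is only the inequality $\widehat{E}_i^l\ge\sum_j I'_{l,l+1}(i,j)\widehat{E}_j^{l+1}$ (the paper's \eqref{eq:wideinequal}), and even for that two further ingredients are needed which your list omits: the join identity $\bigvee_{\beta\in\Sigma'_2}\widehat{S}_\beta^*\widehat{S}_\beta=1$, used to convert $\prod_{\beta}(1-\widehat{P}_{\beta\mu})$ into $1-\widehat{P}_\mu$ when handling the factors with $f_i^l(\mu)=-1$, and predecessor-separation of $\LLamptwomin$, which gives orthogonality of the $\widehat{E}_j^{l+1}$. The equality must then be forced by completeness at both levels: since $\sum_i\widehat{E}_i^l=1=\sum_j\widehat{E}_j^{l+1}=\sum_i\sum_j I'_{l,l+1}(i,j)\widehat{E}_j^{l+1}$, the positive elements $\widehat{E}_i^l-\sum_j I'_{l,l+1}(i,j)\widehat{E}_j^{l+1}$ sum to zero and hence vanish. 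So the two identities must be proved in the opposite order from the one you propose --- inequality, then completeness, then combine --- which is exactly how the paper argues; your first-identity argument does not use the second, so the repair is a reordering plus the missing inequality/orthogonality/join steps. Finally, a smaller imprecision: the cylinders $\widehat{S}_\mu\widehat{S}_\mu^*$ do \emph{not} refine to a neighborhood basis of (any part of) $X_{\LLamtwomin}$, since they only see the label coordinates; the correct statement, which is what the launching property actually gives, is that every nonzero projection of $\DLLamtwomin$ dominates a nonzero basic projection $S_a E_j^{l'}S_a^*$, which in turn dominates a nonzero cylinder $S_{a\eta}S_{a\eta}^*$ with $|a\eta|\ge K+L$, and such cylinders are hatted cylinders.
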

\begin{proof}
We will first show 
$\widehat{E}_i^l \ge \widehat{E}_j^{l+1}$ for 
$i=1,2,\dots,m'(l), j=1,2,\dots,m'(l+1)$
with $I'_{l,l+1}(i,j) =1$.
Assume that $I'_{l,l+1}(i,j) =1$.
Hence we have
$\Gamma_l^-(v_j^{\prime l+1}) =\Gamma_l^-(v_i^{\prime l}) $
in $\LLamptwomin$.
By Lemma \ref{lem:Snualpha}, for $\nu \in \Gamma_l^-(v_i^{\prime l})$,
we have 
$
\widehat{S}_\nu^* \widehat{S}_\beta^* \widehat{S}_\beta\widehat{S}_\nu
= \widehat{S}_{\beta\nu}^* \widehat{S}_{\beta\nu}
$ 
for 
$\beta \in \Gamma_1^-(\nu).$
Hence 
\begin{equation}
\widehat{S}_\nu^* \widehat{S}_\nu
\ge \widehat{S}_{\beta\nu}^* \widehat{S}_{\beta\nu}
\quad
\text{ for }
\quad
\beta \in \Gamma_1^-(\nu).
\end{equation}
For 
 $\nu \not\in \Gamma_l^-(v_i^{\prime l})$
and
$\beta_1, \beta_2 \in \Gamma_1^-(\nu),$
we have 
\begin{equation*}
(1 -  \widehat{S}_{\beta_1\nu}^* \widehat{S}_{\beta_1\nu})
(1 -  \widehat{S}_{\beta_2\nu}^* \widehat{S}_{\beta_2\nu})
= 1 -  \widehat{S}_{\nu}^*(
\widehat{S}_{\beta_1}^* \widehat{S}_{\beta_1}
+
\widehat{S}_{\beta_2}^* \widehat{S}_{\beta_2}
-
\widehat{S}_{\beta_1}^* \widehat{S}_{\beta_1}
\widehat{S}_{\beta_2}^* \widehat{S}_{\beta_2}
)
 \widehat{S}_{\nu}.
\end{equation*}
Similarly we know 
\begin{equation}
\prod_{\beta\in \Gamma_1^-(\nu)}
(1 -  \widehat{S}_{\beta\nu}^* \widehat{S}_{\beta\nu})
= 1 -  \widehat{S}_{\nu}^*(
\bigvee_{\beta\in \Gamma_1^-(\nu)}
\widehat{S}_{\beta}^* \widehat{S}_{\beta}
)
 \widehat{S}_{\nu} \label{eq:bigvee}
\end{equation}
where 
$$
\bigvee_{\beta\in \Gamma_1^-(\nu)}
\widehat{S}_{\beta}^* \widehat{S}_{\beta}
= \sum_{\beta\in \Gamma_1^-(\nu)}
\widehat{S}_{\beta}^* \widehat{S}_{\beta}
- \sum_{\beta1\ne \beta_2\in \Gamma_1^-(\nu)}
 \widehat{S}_{\beta_1}^* \widehat{S}_{\beta_1}
\widehat{S}_{\beta_2}^* \widehat{S}_{\beta_2}
+ \cdots -
(-1)^{|\Gamma_1^-(\nu)|} \prod_{\beta\in \Gamma_1^-(\nu)}
\widehat{S}_{\beta}^* \widehat{S}_{\beta}
$$
the projection spanned by 
$
\widehat{S}_{\beta}^* \widehat{S}_{\beta},
{\beta\in \Gamma_1^-(\nu)}.
$
Now 
$
\bigvee_{\beta\in \Sigma'_2}
\widehat{S}_{\beta}^* \widehat{S}_{\beta} = 1
$
so that 
we have
\begin{equation}
\widehat{S}_{\nu}^*
\widehat{S}_{\nu}
=
\widehat{S}_{\nu}^*(
\bigvee_{\beta\in \Sigma'_2}
\widehat{S}_{\beta}^* \widehat{S}_{\beta}
) \widehat{S}_{\nu}
=
\widehat{S}_{\nu}^* ( \bigvee_{\beta\in \Gamma_1^-(\nu)}
\widehat{S}_{\beta}^* \widehat{S}_{\beta})
\widehat{S}_{\nu}. \label{eq:Snuvee}
\end{equation}
By \eqref{eq:bigvee} and \eqref{eq:Snuvee},
we have
\begin{equation}
\prod_{\beta\in \Gamma_1^-(\nu)}
(1 -  \widehat{S}_{\beta\nu}^* \widehat{S}_{\beta\nu})
= 1 -  \widehat{S}_{\nu}^* \widehat{S}_{\nu} \label{eq:betaprod}
\end{equation}
For
$\nu \in \Gamma_l^-(v_i^{\prime l})(=\Gamma_l^-(v_j^{\prime l+1}))$
and $\beta \in \Gamma_1^-(\nu)$
with
 $\mu =\beta \nu \in \Gamma_{l+1}^-(v_j^{\prime l+1})$,
we have 
\begin{equation*}
\widehat{S}_\nu^*\widehat{S}_\nu \ge 
\widehat{S}_\mu^*\widehat{S}_\mu. 
\end{equation*}
For
$\nu \not\in \Gamma_l^-(v_i^{\prime l})(=\Gamma_l^-(v_j^{\prime l+1}))$
and $\beta \in \Gamma_1^-(\nu)$
with
 $\mu =\beta \nu \not\in \Gamma_{l+1}^-(v_j^{\prime l+1})$,
we have by \eqref{eq:betaprod}
\begin{equation*}
\prod_{\beta\in \Gamma_1^-(\nu)}
(1 -  \widehat{S}_{\beta\nu}^* \widehat{S}_{\beta\nu})
= 1 -  \widehat{S}_{\nu}^* \widehat{S}_{\nu}.
\end{equation*}
Hence  we have
\begin{align*}
\widehat{E}_i^l
= & \prod_{\nu \in \Gamma_l^-(v_i^{\prime l})} \widehat{S}_\nu^*\widehat{S}_\nu \cdot
 \prod_{\nu \not\in \Gamma_l^-(v_i^{\prime l})}(1- \widehat{S}_\nu^*\widehat{S}_\nu) \\
\ge &  \prod_{\mu \in \Gamma_{l+1}^-(v_j^{\prime l+1})} \widehat{S}_\mu^*\widehat{S}_\mu \cdot
 \prod_{\mu \not\in \Gamma_{l+1}^-(v_j^{\prime l+1})}(1- \widehat{S}_\mu^*\widehat{S}_\mu) \\
\ge &
\widehat{E}_j^{l+1}.
\end{align*}

We will next see that 
$\widehat{E}_{j}^{l+1} \cdot \widehat{E}_{j'}^{l+1}  =0$
for $j \ne j'$.
As 
$v_j^{\prime l+1} \ne v_{j'}^{\prime l+1}$ in $V'_{l+1},$
we know 
$\Gamma_{l+1}^-(v_j^{\prime l+1}) \ne \Gamma_{l+1}^-(v_{j'}^{\prime l+1})$
because 
$\LLamptwomin$ is predecessor-separated.
Hence we have two cases:

Case (1): There exists $\mu \in \Gamma_{l+1}^-(v_j^{\prime l+1})$ 
such that $\mu \not\in  \Gamma_{l+1}^-(v_{j'}^{\prime l+1})$.

Case (2): There exists $\mu \in \Gamma_{l+1}^-(v_{j'}^{\prime l+1})$ 
such that $\mu \not\in  \Gamma_{l+1}^-(v_{j}^{\prime l+1})$.

In both cases, it is easy to see that 
$\widehat{E}_{j}^{l+1} \cdot \widehat{E}_{j'}^{l+1}  =0$
by its definition \eqref{eq:eilhat}.

Since 
$\widehat{E}_i^l \ge \widehat{E}_j^{l+1}$ for 
$i=1,2,\dots,m'(l), j=1,2,\dots,m'(l+1)$
with $I'_{l,l+1}(i,j) =1$,
and 
$\widehat{E}_{j}^{l+1} \cdot \widehat{E}_{j'}^{l+1}  =0$
for $j \ne j'$,
we have the inequality
\begin{equation}\label{eq:wideinequal}
\widehat{E}_i^l \ge \sum_{j=1}^{m'(l+1)} I'_{l,l+1}(i,j) \widehat{E}_j^{l+1}.
\end{equation}
We will next show that 
$\sum_{i=1}^{m'(l)} \widehat{E}_i^l =1.$
Denote by $\{1,-1\}^{B_l(\Lambda'_2)}$ the set of function
$f:B_l(\Lambda'_2) \longrightarrow \{1,-1\}$.
For $f \in \{1,-1\}^{B_l(\Lambda'_2)}$, we set  
$$
\widehat{S}_\mu^*\widehat{S}_\mu^{f(\mu)} =
\begin{cases}
\widehat{S}_\mu^*\widehat{S}_\mu & \text{ if } f(\mu) =1, \\
1 - \widehat{S}_\mu^*\widehat{S}_\mu  & \text{ if } f(\mu) =-1
\end{cases}
$$
and put
$$
\widehat{E}_f = \prod_{\mu \in B_l(\Lambda'_2)}\widehat{S}_\mu^*\widehat{S}_\mu^{f(\mu)}
\quad \text{ in } \OhatLamtwomin.
$$
For $\mu \in B_l(\Lambda'_2)$, 
the identity
$1= \widehat{S}_\mu^*\widehat{S}_\mu + (1- \widehat{S}_\mu^*\widehat{S}_\mu)$
implies
\begin{equation*}
1= \prod_{\mu \in B_l(\Lambda'_2)} \left( \widehat{S}_\mu^*\widehat{S}_\mu + (1- \widehat{S}_\mu^*\widehat{S}_\mu)\right)
 = \sum_{f \in \{1,-1\}^{B_l(\Lambda'_2)}} \widehat{E}_f.   
\end{equation*}
Fix $f \in \{1,-1\}^{B_l(\Lambda'_2)}$
such that $\widehat{E}_f\ne 0$.
Since
$
1=
\bigvee_{\mu\in B_l(\Lambda'_2)}
\widehat{S}_{\mu}^* \widehat{S}_{\mu},
$
we have
\begin{equation*}
\widehat{E}_f =
\widehat{E}_f \cdot\bigvee_{\mu\in B_l(\Lambda'_2)}
\widehat{S}_{\mu}^* \widehat{S}_{\mu}=
\bigvee_{\mu\in B_l(\Lambda'_2)}
 \widehat{E}_f \widehat{S}_{\mu}^* \widehat{S}_{\mu}.
\end{equation*}
If $f(\mu) = -1$ for all $\mu \in B_l(\Lambda'_2)$, then 
$\widehat{E}_f\cdot  \widehat{S}_{\mu}^* \widehat{S}_{\mu}=0$ for all $\mu \in B_l(\Lambda'_2)$.
Hence the condition $\widehat{E}_f\ne 0$
implies that there exists $\mu\in B_l(\Lambda'_2)$
such that $f(\mu) = 1$.
We fix such $\mu$ and write it as 
$\mu =((\eta_1,w_{[1,L]}), \dots, (\eta_m,w_{[m, L+m-1]})).$
Since 
\begin{equation*}
 \widehat{S}_\mu^* \widehat{S}_\mu 
= 
S_{\eta_{m+1} y_{[m+1,L+m-1]}}
S_{\eta_1 y_{[1,L+m-1]}}^*
S_{\eta_1 y_{[1,L+m-1]}}
S_{\eta_{m+1} y_{[m+1,L+m-1]}}^* \quad \text{ in } \OLamtwomin, 
\end{equation*}
one may find a vertex $v_j^{K+L+m-1} \in V_{K+L+m-1}^{\Lambda_2^\min}$
in the $\lambda$-graph system 
$\LLamtwomin$, 
such that 
there exists $\gamma \in E_{m,K+L+m-1}^{\Lambda_2^\min}$ satisfying
\begin{equation} \label{eq:vjklm}
\begin{cases}
& \eta_1 w_{[1,L+m-1]} \in \Gamma_{K+L+m-1}^-(v_j^{K+L+m-1}),\\
& \lambda(\gamma) = \eta_{m+1}w_{[m+1, L+m-1]}, \\ 
& \lambda(\gamma) \in \Gamma_{K+L-1}^-(v_j^{K+L+m-1}). 
\end{cases} 
\end{equation}
We may further find a vertex 
$v_j^{K+L+m-1}$ in $V_{K+L+m-1}^{\Lambda_2^\min}$
for the word $\mu$ satisfying  \eqref{eq:vjklm}
but not satisfying the conditions for the words $\nu\in B_l(\Lambda'_2)$
such that $f(\nu) =-1$.
We take such a vertex and write it as 
 $v_{j_0}^{K+L+m-1}$.
 Let 
$E_{j_0}^{K+L+m-1}$ be the corresponding projection 
in the $C^*$-algebra $\OLamtwomin$
so that we have
$$
\widehat{E}_f \ge E_{j_0}^{K+L+m-1}.
$$
Since 
$\LLamtwomin$ is $\lambda$-synchronizing,
there exists an admissible  word 
$(a_1,\dots,a_k) \in B_*^+(\Lambda_2)$
such that 
$v_{j_0}^{K+L+m-1}$ launches $(a_1,\dots,a_k)$.
We may assume that $k >K+L$
for the length $k$ of the word $(a_1,\dots,a_k).$
Hence we see that 
$$
 E_{j_0}^{K+L+m-1} \ge S_{(a_1,\dots,a_k)}S_{(a_1,\dots,a_k)}^* \quad \text{ in }
\OLamtwomin.
$$
Let $n = k-(K+L) +1$.
Put
$$
\xi_1 =(a_1,\dots,a_K), \quad
y_{[1, L + n-1]} = (a_{K+1},\dots,a_k)
$$
and
$\xi_{i+1} = \tau^{\varphi_1}(\xi_i,y_{[i, L+i-1]}), i=1,2,\dots, n$. 
Define the word
$$
\nu = ((\xi_1, y_{[1,L]}), \dots, (\xi_n, y_{[n,L+n-1]})) \in B_n(\Lambda'_2).
$$ 
By Lemma \ref{lem:5.7} (ii), we know
$$
\widehat{S}_\nu \widehat{S}_\nu^* 
= S_{\xi_1y_{[1,L+n-1]}}S_{\xi_1y_{[1,L+n-1]}}^*
= S_{(a_1,\dots,a_k)}S_{(a_1,\dots,a_k)}^*.
$$
Hence we have
\begin{equation}\label{eq:widehatEf}
\widehat{E}_f \ge \widehat{S}_\nu \widehat{S}_\nu^* \quad \text{ in }
 \OhatLamtwomin.
\end{equation}
We put
\begin{equation*}
E_f^{\prime} 
= \prod_{\mu \in B_l(\Lambda'_2)} S_\mu^{\prime*} S_\mu^{\prime f(\mu)} 
\quad \text{ in }
 \OLamptwomin.
\end{equation*}
By applying Lemma \ref{lem:iff} for \eqref{eq:widehatEf}, 
we have 
\begin{equation}\label{eq:Eprimef}
E^{\prime}_f \ge S_\nu^\prime S_\nu^{\prime*} \quad \text{ in }
\OLamptwomin
\end{equation}
and hence 
$E^{\prime}_f \ne 0$.
Since
$$
1 = \sum_{f \in \{1,-1\}^{B_l(\Lambda'_2)}} E^\prime_f
  = \sum_{i=1}^{m'(l)} E_i^{\prime l}
$$
and 
$E^{\prime}_{f_i^l} =E_i^{\prime l}$,
we know that 
$E^{\prime}_f \ne 0$ if and only if $f = f_i^l$ for some $v_i^{\prime l} \in V_l^\prime$.
Hence the condition
$E^{\prime}_f \ne 0$ implies that $f = f_i^l$ for some $v_i^{\prime l} \in V_l^\prime$.
Therefore we see that 
$\widehat{E}_f = \widehat{E}_{f_i^l}$
for some $v_i^{\prime l} \in V_l^\prime$.
Since 
$\widehat{E}_{f_i^l} = \widehat{E}_i^l$
and $1 =\sum_{f \in \{1,-1\}^{B_l(\Lambda'_2)}} \widehat{E}_f$,
we conclude that 
\begin{equation}\label{eq:oneeil}
1 =\sum_{i=1}^{m'(l)} \widehat{E}_i^l \quad \text{ in }
 \OhatLamtwomin.
\end{equation}
Since for each $j =1,2,\dots,m'(l+1)$,
there exists a unique $i=1,2,\dots,m'(l)$ such that 
$I'_{l,l+1}(i,j) =1$, 
we have
$ \widehat{E}_j^{l+1} = \sum_{i=1}^{m'(l)} I'_{l,l+1}(i,j)\widehat{E}_j^{l+1} $.
As the identity \eqref{eq:oneeil} holds for all $l \in \Zp$,
we have
\begin{equation*}
1 = \sum_{j=1}^{m'(l+1)}\widehat{E}_j^{l+1}
  = \sum_{i=1}^{m'(l)}\sum_{j=1}^{m'(l+1)} I'_{l,l+1}(i,j)\widehat{E}_j^{l+1}
\end{equation*}
so that 
\begin{equation}
1 =\sum_{i=1}^{m'(l)}\widehat{E}_i^l 
  = \sum_{i=1}^{m'(l)}\sum_{j=1}^{m'(l+1)} I'_{l,l+1}(i,j)\widehat{E}_j^{l+1}.
\end{equation}
By the inequality \eqref{eq:wideinequal},
we conclude that 
\begin{equation*}
\widehat{E}_i^l  = \sum_{j=1}^{m'(l+1)} I'_{l,l+1}(i,j) \widehat{E}_j^{l+1}.
\end{equation*}
\end{proof}
Define the commutative $C^*$-subalgebras: 
\begin{align*}
\ALLamptwomin
= & C^*( S_\mu^{\prime*}S_\mu^{\prime} : \mu \in B_*(\Lambda'_2)) 
\subset \OLamptwomin, \\
\AhatLLamtwomin
= & C^*( \widehat{S}_\mu^{*} \widehat{S}_\mu : \mu \in B_*(\Lambda'_2)) 
\subset \OhatLamtwomin.
\end{align*}
\begin{lemma}Keep the above notation.
\begin{enumerate}
\renewcommand{\theenumi}{\roman{enumi}}
\renewcommand{\labelenumi}{\textup{(\theenumi)}}
\item
\begin{align*}
\ALLamptwomin
= & C^*( E_i^{\prime l} : i=1,2,\dots,m'(l), \, l\in\Zp), \\
\AhatLLamtwomin
= & C^*( \widehat{E}_i^l : i=1,2,\dots,m'(l), \, l\in\Zp).
\end{align*}
\item
There exists an isomorphism
$\Phi_\A:\ALLamptwomin\longrightarrow \AhatLLamtwomin
$
of $C^*$-algebras such that 
\begin{equation} \label{eq:SalphaPhi}
\widehat{S}_\alpha^*\Phi_\A(X)\widehat{S}_\alpha 
= \Phi_\A(S_\alpha^{\prime*} X S_\alpha^\prime), 
\qquad X \in \ALLamptwomin, \, \alpha \in \Sigma'_2.
\end{equation}
\end{enumerate}
\end{lemma}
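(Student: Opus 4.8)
The plan is to deduce both parts from facts already in hand for the genuine generators $S_\mu^{\prime*}S_\mu^{\prime}$ and transport them to the hatted generators $\widehat S_\mu^*\widehat S_\mu$; the genuinely hard technical inputs (Lemma \ref{lem:iff}, Lemma \ref{lem:Snualpha}, and the preceding lemma giving $\sum_i\widehat E_i^l=1$ and $\widehat E_i^l=\sum_j I'_{l,l+1}(i,j)\widehat E_j^{l+1}$) are already established, so what remains is organizational together with one covariance computation.

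For (i), I would first note that every $\widehat S_\mu^*\widehat S_\mu$ lies in the commutative algebra $\DLLamtwomin$, by the explicit formula of Lemma \ref{lem:5.7}(i); hence these projections mutually commute, the products in \eqref{eq:eilhat} are meaningful, and $\widehat E_i^l\in\AhatLLamtwomin$, while $E_i^{\prime l}\in\ALLamptwomin$ by \eqref{eq:eiprimel}. For the reverse inclusions I would use the partition of unity $\sum_{i=1}^{m'(l)}\widehat E_i^l=1$ from the preceding lemma, together with orthogonality of the $\widehat E_i^l$ at each level (the Case (1)/Case (2) argument of that lemma, which rests on predecessor-separation and the definition \eqref{eq:eilhat}), and the elementary observation that $\widehat E_j^l\le\widehat S_\mu^*\widehat S_\mu$ precisely when $f_j^l(\mu)=1$ and $\widehat E_j^l\,\widehat S_\mu^*\widehat S_\mu=0$ when $f_j^l(\mu)=-1$. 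Multiplying $\widehat S_\mu^*\widehat S_\mu$ by $1=\sum_j\widehat E_j^l$ then gives $\widehat S_\mu^*\widehat S_\mu=\sum_{j:f_j^l(\mu)=1}\widehat E_j^l\in C^*(\widehat E_i^l)$, and the identical computation with $S'$ gives $S_\mu^{\prime*}S_\mu^{\prime}=\sum_{j:f_j^l(\mu)=1}E_j^{\prime l}$. This yields both equalities in (i).

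For (ii), I would realize both algebras as the AF inductive limits of their level algebras $\A'_l=C^*(E_i^{\prime l}:i)$ and $\widehat\A_l=C^*(\widehat E_i^l:i)$. Since $\LLamptwomin$ is a genuine minimal $\lambda$-graph system of the normal subshift $\Lambda'_2$, the $\{E_i^{\prime l}\}_i$ are nonzero, orthogonal, and sum to $1$, so $\A'_l\cong\mathbb C^{m'(l)}$. The crucial point is that each $\widehat E_i^l$ is nonzero: by the launching property \cite[Proposition 3.3]{MaJAMS2013} there is $\gamma\in S_l(\Lambda'_2)$ with $E_i^{\prime l}\ge S'_\gamma S_\gamma^{\prime*}$; writing $E_i^{\prime l}=\prod_\mu S_\mu^{\prime*}S_\mu^{\prime f_i^l(\mu)}$ as a product of commuting projections dominating $S'_\gamma S_\gamma^{\prime*}$, each factor dominates $S'_\gamma S_\gamma^{\prime*}$, and Lemma \ref{lem:iff}(i),(ii) transports every one of these domination relations to the hatted picture, giving $\widehat E_i^l\ge\widehat S_\gamma\widehat S_\gamma^*$; the latter equals $S_wS_w^*\ne 0$ for the admissible word $w$ of Lemma \ref{lem:5.7}(ii), so $\widehat E_i^l\ne 0$. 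Hence $E_i^{\prime l}\mapsto\widehat E_i^l$ is a $*$-isomorphism $\A'_l\to\widehat\A_l$, the connecting maps on both sides are governed by the common matrices $I'_{l,l+1}$ (preceding lemma), and the level isomorphisms assemble into an isomorphism $\Phi_\A:\ALLamptwomin\to\AhatLLamtwomin$ with $\Phi_\A(E_i^{\prime l})=\widehat E_i^l$, equivalently $\Phi_\A(S_\mu^{\prime*}S_\mu^{\prime})=\widehat S_\mu^*\widehat S_\mu$.

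It remains to verify the covariance \eqref{eq:SalphaPhi}, and this is where the main obstacle lies. By linearity and continuity it suffices to treat $X=E_i^{\prime l}$, i.e. to show $\widehat S_\alpha^*\widehat E_i^l\widehat S_\alpha=\Phi_\A(S_\alpha^{\prime*}E_i^{\prime l}S_\alpha^{\prime})$. The key observation I would make precise is that $a\mapsto\widehat S_\alpha^* a\widehat S_\alpha$ is a $*$-homomorphism on the commutative algebra $\DLLamtwomin$ (using $\widehat S_\alpha\widehat S_\alpha^*\in\DLLamtwomin$ and $\widehat S_\alpha\widehat S_\alpha^*\widehat S_\alpha=\widehat S_\alpha$), so conjugation distributes over the product \eqref{eq:eilhat}; combined with Lemma \ref{lem:Snualpha}, which gives $\widehat S_\alpha^*(\widehat S_\mu^*\widehat S_\mu)\widehat S_\alpha=\widehat S_{\mu\alpha}^*\widehat S_{\mu\alpha}$ (with the convention that this vanishes when $\mu\alpha$ is inadmissible), it expresses $\widehat S_\alpha^*\widehat E_i^l\widehat S_\alpha$ as $\prod_{f_i^l(\mu)=1}\widehat S_{\mu\alpha}^*\widehat S_{\mu\alpha}\cdot\prod_{f_i^l(\mu)=-1}(\widehat S_\alpha^*\widehat S_\alpha-\widehat S_{\mu\alpha}^*\widehat S_{\mu\alpha})$. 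The identical computation in $\OLamptwomin$ turns $S_\alpha^{\prime*}E_i^{\prime l}S_\alpha^{\prime}$ into the same product with $S'$ in place of $\widehat S$, which by the defining relation $(\LLamptwomin)$ equals $\sum_j A'_{l,l+1}(i,\alpha,j)E_j^{\prime l+1}$. Since $\Phi_\A$ is a homomorphism carrying each $S_\mu^{\prime*}S_\mu^{\prime}$ to $\widehat S_\mu^*\widehat S_\mu$, it sends the $S'$-product to the $\widehat S$-product, giving $\Phi_\A(S_\alpha^{\prime*}E_i^{\prime l}S_\alpha^{\prime})=\widehat S_\alpha^*\widehat E_i^l\widehat S_\alpha=\widehat S_\alpha^*\Phi_\A(E_i^{\prime l})\widehat S_\alpha$, which is \eqref{eq:SalphaPhi}. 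The delicate point to get right is precisely that conjugation by $\widehat S_\alpha$ behaves as a $*$-homomorphism on $\DLLamtwomin$ and interacts with Lemma \ref{lem:Snualpha} exactly as conjugation by $S_\alpha^{\prime}$ does in $\OLamptwomin$, so that $\Phi_\A$ intertwines the two conjugations.
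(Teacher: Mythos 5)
Your proof is correct and follows essentially the same route as the paper: part (i) via the partition of unity $1=\sum_{i}E_i^{\prime l}$ (resp.\ $1=\sum_{i}\widehat{E}_i^l$) to express each $S_\mu^{\prime *}S_\mu^{\prime}$ (resp.\ $\widehat{S}_\mu^{*}\widehat{S}_\mu$) as a sum of the level projections, and part (ii) by matching the two level-wise systems of mutually orthogonal projections compatible with the matrices $I'_{l,l+1}$ and then verifying \eqref{eq:SalphaPhi} by means of Lemma \ref{lem:Snualpha}. The two places where you supply more detail than the paper are both worthwhile and correct: the paper merely asserts that the projections $\widehat{E}_i^l$ are all nonzero, whereas you derive this from the launching property together with Lemma \ref{lem:iff} and Lemma \ref{lem:5.7}, and you make explicit that conjugation by $\widehat{S}_\alpha$ is multiplicative on the commutative diagonal, which is what legitimizes checking the covariance identity only on a spanning (or generating) family.
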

\begin{proof}
(i) 
By the identity \eqref{eq:eiprimel},
$E_i^{\prime l}$ is written in terms of  $S_\mu^{\prime*}S_\mu^{\prime f_i^l(\mu)}$.
Conversely for any word $\mu \in B_l(\Lambda'_2)$
we set
$$J'(\mu,i) =
\begin{cases}
1 & \text{ if } \mu \in \Gamma_l^-(v_i^{\prime l}), \\
0 & \text{ otherwise}.
\end{cases}
$$
By the formula $ 1 = \sum_{i=1}^{m'(l)}E_i^{\prime l}$, we have
\begin{equation*}
S_\mu^{\prime*}S_\mu^{\prime} 
=\sum_{i=1}^{m'(l)} S_\mu^{\prime*}S_\mu^{\prime}E_i^{\prime l}
=\sum_{i=1}^{m'(l)} J'(\mu,i) E_i^{\prime l},
\end{equation*}
so that  $S_\mu^{\prime*}S_\mu^{\prime f_i^l(\mu)}$
 is written in terms of $E_i^{\prime l}$.
Hence we have 
$$  
\ALLamptwomin
= C^*( E_i^{\prime l} : i=1,2, \dots,m'(l), \, l\in\Zp).
$$
The other equality
$$\AhatLLamtwomin
=  C^*( \widehat{E}_i^l : i=1,2, \dots,m'(l), \, l\in\Zp).
$$
is similarly proved.

 (ii)
The identities 
\begin{gather*}
E_i^{\prime l} = \sum_{j=1}^{m'(l+1)} I'_{l,l+1}(i,j) E_j^{\prime l+1}, \qquad
1 = \sum_{i=1}^{m'(l)}E_i^{\prime l} \quad \text{ in } \quad \ALLamptwomin, \\
\widehat{E}_i^{l} = \sum_{j=1}^{m'(l+1)} I'_{l,l+1}(i,j) \widehat{E}_j^{l+1}, \qquad
1 = \sum_{i=1}^{m'(l)} \widehat{E}_i^{ l} \quad \text{ in } \quad \AhatLLamtwomin
\end{gather*}
hold.
Since the projections
$E_i^{\prime l}, \widehat{E}_i^l$ are all nonzero,
the correspondence 
$E_i^{\prime l} \longrightarrow  \widehat{E}_i^l$
 extends to an isomorphism 
$\Phi_\A:\ALLamptwomin\longrightarrow \AhatLLamtwomin
$
of $C^*$-algebras such that
$\Phi_\A(E_i^{\prime l})=\widehat{E}_i^l$
and hence 
$\Phi_\A(S_\mu^{\prime*}S_\mu^{\prime} )
=\widehat{S}_\mu^{*} \widehat{S}_\mu 
$ for $\mu \in B_*(\Lambda'_2)$.
We then have 
\begin{equation*}
\widehat{S}_\alpha^*\Phi_\A(S_\mu^{\prime*}S_\mu^{\prime} )
\widehat{S}_\alpha
=\widehat{S}_\alpha^* \widehat{S}_\mu^{*} \widehat{S}_\mu \widehat{S}_\alpha
=\widehat{S}_{\mu\alpha}^{*} \widehat{S}_{\mu\alpha}
=\Phi_\A(S_{\mu\alpha}^{\prime*}S_{\mu\alpha}^{\prime} )
=\Phi_\A(S_{\alpha}^{\prime*}S_{\mu}^{\prime*}S_{\mu}^{\prime}S_{\alpha}^{\prime} )
\end{equation*}
proving the identity \eqref{eq:SalphaPhi}.
\end{proof}
Recall that  $A'_{l,l+1}(i,\alpha,j)$ is the the matrix component of the transition matrix system
$(A'_{l,l+1}, I'_{l,l+1})_{l\in \Zp}$ of ${\frak L}_{\Lambda'_2}^\min$.
\begin{lemma}
\begin{equation*}
\widehat{S}_\alpha^* \widehat{E}_i^{l} \widehat{S}_\alpha
= \sum_{j=1}^{m'(l+1)} A'_{l,l+1}(i,\alpha,j) \widehat{E}_j^{l+1} \quad 
\text{ for } \alpha=(\xi_1,y_{[1,L]}) \in \Sigma'_2. 
\end{equation*}
\end{lemma}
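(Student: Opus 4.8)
The plan is to obtain the identity as an immediate consequence of the intertwining relation \eqref{eq:SalphaPhi} for the isomorphism $\Phi_\A:\ALLamptwomin\longrightarrow \AhatLLamtwomin$ established in the preceding lemma, combined with the defining operator relations of the $C^*$-algebra $\OLamptwomin$. The key observation is that the quantity $\widehat{S}_\alpha^* \widehat{E}_i^l \widehat{S}_\alpha$ to be computed is precisely the left-hand side of \eqref{eq:SalphaPhi} evaluated at the argument $X = E_i^{\prime l}$, since the preceding lemma was proved with $\Phi_\A(E_i^{\prime l}) = \widehat{E}_i^l$.

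First I would record the relevant defining relation. Because $\Lambda'_2$ is normal, its minimal presentation ${\frak L}_{\Lambda'_2}^{\min}$ satisfies condition (I) by Proposition \ref{prop:normal(I)}, so by Proposition \ref{prop:universalunique} the algebra $\OLamptwomin$ is the universal $C^*$-algebra on the generators $S'_\alpha,\, E_i^{\prime l}$ subject to the relations $({\frak L}_{\Lambda'_2}^{\min})$. In particular the fourth relation reads
\begin{equation*}
S_\alpha^{\prime*} E_i^{\prime l} S_\alpha^\prime
= \sum_{j=1}^{m'(l+1)} A'_{l,l+1}(i,\alpha,j)\, E_j^{\prime l+1}
\qquad \text{in } \OLamptwomin,
\end{equation*}
and the right-hand side lies in $\ALLamptwomin$, so that $S_\alpha^{\prime*} E_i^{\prime l} S_\alpha^\prime$ is a legitimate argument for $\Phi_\A$.

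Then I would apply \eqref{eq:SalphaPhi} with $X = E_i^{\prime l}\in\ALLamptwomin$ and substitute the displayed relation, using linearity of $\Phi_\A$ together with $\Phi_\A(E_j^{\prime l+1}) = \widehat{E}_j^{l+1}$; the computation reads
\begin{align*}
\widehat{S}_\alpha^* \widehat{E}_i^l \widehat{S}_\alpha
&= \widehat{S}_\alpha^* \Phi_\A(E_i^{\prime l}) \widehat{S}_\alpha
= \Phi_\A\bigl(S_\alpha^{\prime*} E_i^{\prime l} S_\alpha^\prime\bigr) \\
&= \Phi_\A\Bigl( \sum_{j=1}^{m'(l+1)} A'_{l,l+1}(i,\alpha,j)\, E_j^{\prime l+1} \Bigr)
= \sum_{j=1}^{m'(l+1)} A'_{l,l+1}(i,\alpha,j)\, \widehat{E}_j^{l+1},
\end{align*}
which is exactly the asserted formula.

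No serious obstacle remains, since the combinatorial heavy lifting has already been carried out in Lemma \ref{lem:Snualpha}, in the identity $\widehat{E}_i^{l} = \sum_{j} I'_{l,l+1}(i,j)\widehat{E}_j^{l+1}$, and above all in establishing the intertwining property \eqref{eq:SalphaPhi}. The only point deserving a word of care is that \eqref{eq:SalphaPhi}, though verified on the generating projections $S_\mu^{\prime*}S_\mu^{\prime}$ of $\ALLamptwomin$, must be used for the particular element $X = E_i^{\prime l}$; this is legitimate because $E_i^{\prime l}\in\ALLamptwomin$ by part (i) of the preceding lemma, and \eqref{eq:SalphaPhi} is stated there for all $X\in\ALLamptwomin$.
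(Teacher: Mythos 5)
Your proposal is correct and follows essentially the same route as the paper's own proof: both invoke the defining relation $S_\alpha^{\prime*} E_i^{\prime l} S_\alpha^{\prime} = \sum_{j} A'_{l,l+1}(i,\alpha,j) E_j^{\prime l+1}$ in $\OLamptwomin$ and then push it through the intertwining identity \eqref{eq:SalphaPhi} of the preceding lemma, using $\Phi_\A(E_i^{\prime l})=\widehat{E}_i^l$. Your extra remark justifying that $E_i^{\prime l}$ is a legitimate argument for $\Phi_\A$ is a sound (if implicit in the paper) point of care.
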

\begin{proof}
We note that the identity 
\begin{equation*}
S_\alpha^{\prime*} E_i^{\prime l} S_\alpha^{\prime}
= \sum_{j=1}^{m'(l+1)} A'_{l,l+1}(i,\alpha,j) E_j^{\prime l+1} \quad 
\text{ for } \alpha=(\xi_1,y_{[1,L]}) \in \Sigma'_2 
\end{equation*}
holds.
By using the preceding lemma,  we have 
\begin{align*}
\widehat{S}_\alpha^* \widehat{E}_i^{l} \widehat{S}_\alpha
=& 
\widehat{S}_\alpha^* \Phi_\A({E}_i^{\prime l}) \widehat{S}_\alpha 
= \Phi({S}_\alpha^{\prime* }{E}_i^{\prime l}{S}_\alpha^\prime) \\
=& \Phi(\sum_{j=1}^{m'(l+1)} A'_{l,l+1}(i,\alpha,j) E_j^{\prime l+1} ) \\
=& \sum_{j=1}^{m'(l+1)} A'_{l,l+1}(i,\alpha,j) \widehat{E}_j^{l+1}. 
\end{align*}
\end{proof}


Recall that 
\begin{align*}
\OhatLamtwomin
=& C^*(\widehat{S}_{(\xi_1, y_{[1,L]})} : (\xi_1, y_{[1,L]})\in \Sigma'_2 ), \\\DhatLLamtwomin
= & C^*(
\widehat{S}_\mu \widehat{S}_\nu^* \widehat{S}_\nu \widehat{S}_\mu^*,
: \mu, \nu \in B_*(\Lambda'_2)), \\
\DhatLamtwo
= & C^*(
\widehat{S}_\mu \widehat{S}_\mu^* : \mu \in B_*(\Lambda'_2)).
\end{align*}
Then the inclusion relations 
\begin{equation*}
\OhatLamtwomin \subset \OLamtwomin, \qquad
\DhatLLamtwomin \subset \DLLamtwomin, \qquad
\DhatLamtwo \subset \DLamtwo
\end{equation*}
are obvious.
Let
$\rhohatLamtwo_t$ be the restriction of the gauge action 
$\rhoLamtwo_t$ on $\OLamtwomin$ to the subalgebra  $\OhatLamtwomin$.
The gauge action on $\mathcal{O}_{\Lambda_2^{\prime \min}}$
is denoted by $\rho_t^{\Lambda'_2}$.
\begin{lemma}\label{lem:OLampOlam}
Keep the above notation. 
There exists an isomorphism
$\Phi: \OLamptwomin \longrightarrow  \OhatLamtwomin$ of $C^*$-algebras such that 
\begin{equation*}
\Phi(\DLLamptwomin) = \DhatLLamtwomin, \qquad
\Phi(\DLamptwo) =\DhatLamtwo, \qquad
\Phi\circ \rhoLamptwo_t= \rhohatLamtwo_t \circ \Phi.
\end{equation*}
\end{lemma}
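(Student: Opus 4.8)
The plan is to build $\Phi$ from the universal property of $\OLamptwomin$ and then verify that it respects the two diagonals and the gauge action. By Lemma~\ref{lem:LamptwoLamone} the subshift $\Lambda'_2$ is normal, so its minimal presentation $\LLamptwomin$ satisfies condition (I) by Proposition~\ref{prop:normal(I)}, and by Proposition~\ref{prop:universalunique} the algebra $\OLamptwomin$ is the universal unital $C^*$-algebra generated by partial isometries $S_\alpha^\prime$, $\alpha\in\Sigma'_2$, and projections $E_i^{\prime l}$ subject to the relations $(\LLamptwomin)$.

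First I would check that the elements $\widehat{S}_\alpha$ and $\widehat{E}_i^{l}$ of $\OhatLamtwomin$ satisfy exactly these relations. The equality $\sum_{\alpha\in\Sigma'_2}\widehat{S}_\alpha\widehat{S}_\alpha^*=1$ is Lemma~\ref{lem:5.7}(iii); the identities $\sum_{i}\widehat{E}_i^{l}=1$ and $\widehat{E}_i^{l}=\sum_{j}I'_{l,l+1}(i,j)\widehat{E}_j^{l+1}$ are proved above; the relation $\widehat{S}_\alpha^*\widehat{E}_i^{l}\widehat{S}_\alpha=\sum_{j}A'_{l,l+1}(i,\alpha,j)\widehat{E}_j^{l+1}$ is the last lemma preceding the definition of $\ALLamptwomin$; and the commutation $\widehat{S}_\alpha\widehat{S}_\alpha^*\,\widehat{E}_i^{l}=\widehat{E}_i^{l}\,\widehat{S}_\alpha\widehat{S}_\alpha^*$ follows from the defining formula \eqref{eq:eilhat} together with the fact that $\widehat{S}_\alpha\widehat{S}_\alpha^*$ and every $\widehat{S}_\mu^*\widehat{S}_\mu$ lie in the commutative algebra $\DLLamtwomin$. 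By universality this produces a surjective $*$-homomorphism $\Phi\colon\OLamptwomin\to\OhatLamtwomin$ with $\Phi(S_\alpha^\prime)=\widehat{S}_\alpha$ and $\Phi(E_i^{\prime l})=\widehat{E}_i^{l}$.

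Next I would promote $\Phi$ to an isomorphism using the uniqueness clause of Proposition~\ref{prop:lambdaC}, which applies once all generators are nonzero. Here $\widehat{S}_\alpha\neq0$ because $\widehat{S}_\alpha\widehat{S}_\alpha^*=S_{\xi_1 y_{[1,L]}}S_{\xi_1 y_{[1,L]}}^*\neq0$ by Lemma~\ref{lem:5.7}(ii), and the $\widehat{E}_i^{l}$ are nonzero as recorded in the construction of the isomorphism $\Phi_\A\colon\ALLamptwomin\to\AhatLLamtwomin$; note that $\Phi$ restricts to $\Phi_\A$ on $\ALLamptwomin$. For the structural compatibilities, $\Phi(S_\mu^\prime)=\widehat{S}_\mu$ for all $\mu\in B_*(\Lambda'_2)$ gives $\Phi(S_\mu^\prime S_\mu^{\prime*})=\widehat{S}_\mu\widehat{S}_\mu^*$, hence $\Phi(\DLamptwo)=\DhatLamtwo$; combining this with $\widehat{S}_\nu^*\widehat{S}_\nu\in\AhatLLamtwomin$ yields $\Phi(\DLLamptwomin)=\DhatLLamtwomin$. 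Finally, since $|\xi_1 y_{[1,L]}|-|\xi_2 y_{[2,L]}|=(K+L)-(K+L-1)=1$, one computes $\rhoLamtwo_t(\widehat{S}_\alpha)=e^{2\pi\sqrt{-1}t}\widehat{S}_\alpha$, which matches $\rhoLamptwo_t(S_\alpha^\prime)=e^{2\pi\sqrt{-1}t}S_\alpha^\prime$, so $\Phi$ intertwines the gauge actions on generators and therefore everywhere, i.e. $\Phi\circ\rhoLamptwo_t=\rhohatLamtwo_t\circ\Phi$.

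The essential work, and the main obstacle, is not in the present lemma but in the preparatory results it invokes: verifying the relation $\widehat{S}_\alpha^*\widehat{E}_i^{l}\widehat{S}_\alpha=\sum_j A'_{l,l+1}(i,\alpha,j)\widehat{E}_j^{l+1}$ and, above all, the completeness $\sum_i\widehat{E}_i^{l}=1$ together with the nonvanishing of the projections $\widehat{E}_i^{l}$, which rest on Lemma~\ref{lem:key1} and Lemma~\ref{lem:iff}. Once those are granted, the proof of this lemma is a clean application of the universal property together with condition (I).
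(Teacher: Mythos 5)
Your proposal is correct and follows essentially the same route as the paper's proof: invoke the universal property and the uniqueness (under condition (I), via normality of $\Lambda'_2$) of $\OLamptwomin$ to define $\Phi$ on generators by $S_\alpha^\prime \mapsto \widehat{S}_\alpha$, $E_i^{\prime l}\mapsto \widehat{E}_i^l$, with the preceding lemmas supplying the relations $(\LLamptwomin)$, and then verify the gauge intertwining by the degree-one computation $\rhoLamtwo_t(\widehat{S}_\alpha)=e^{2\pi\sqrt{-1}t}\widehat{S}_\alpha$. The paper's own proof is terser—it does not spell out the commutation relation $\widehat{S}_\alpha\widehat{S}_\alpha^*\widehat{E}_i^l=\widehat{E}_i^l\widehat{S}_\alpha\widehat{S}_\alpha^*$ (which, as you note, holds since both elements lie in the commutative algebra $\DLLamtwomin$) or the nonvanishing of the generators needed for the uniqueness clause—so your extra checks are consistent refinements of the same argument rather than a different method.
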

\begin{proof}
By the universal property and its uniqueness of the $C^*$-algebra
 $\OLamptwomin$, the correspondence 
\begin{equation*}
\Phi: S_\alpha^\prime, \, E_i^{\prime l}  \in \OLamptwomin 
\longrightarrow 
\widehat{S}_\alpha, \, \widehat{E}_i^l 
\in \OhatLamtwomin \subset \OLamtwomin
\end{equation*}
yields an isomorphism 
$\Phi: \OLamptwomin \longrightarrow  \OhatLamtwomin$ of $C^*$-algebras such that 
\begin{equation*}
\Phi(\DLLamptwomin) = \DhatLLamtwomin, \qquad
\Phi(\DLamptwo) =\DhatLamtwo.
\end{equation*}
Since
\begin{align*}
(\rhohatLamtwo_t \circ \Phi) (S_\alpha^\prime)
&=\rhohatLamtwo_t (\widehat{S}_\alpha)
=\rhoLamtwo_t(S_{\xi_1 y_{[1,L]}}S_{\xi_2 y_{[2,L]}}^*) \\
&= e^{2\pi\sqrt{-1}t}S_{\xi_1 y_{[1,L]}}S_{\xi_2 y_{[2,L]}}^*
= e^{2\pi\sqrt{-1}t}\widehat{S}_\alpha
=\Phi\circ \rhoLamptwo_t(S_\alpha^\prime),
\end{align*}
the equality
$\Phi\circ \rhoLamptwo_t= \rhohatLamtwo_t \circ \Phi
$ holds.
\end{proof}

\medskip
We will finally prove that 
the $C^*$-subalgebra
$\OhatLamtwomin$ of $\OLamtwomin$
actually coincides with the ambient algebra $\OLamtwomin$,
that is the final step to prove Theorem \ref{thm:main1.4}.

Let 
$\FLamtwomin
$ 
be the canonical AF algebra of $\OLamtwomin$
that is realized as the fixed point subalgebra of $\OLamtwomin$
under the gauge action $\rhoLamtwo_t, t \in \T$ of $\OLamtwomin$. 
Let
$\FhatLamtwomin$
be the $C^*$-subalgebra
of $\OhatLamtwomin$
  generated by elements of the form:
$$
\widehat{S}_\nu \widehat{S}_\mu^* \widehat{S}_\mu \widehat{S}_\gamma^*,
\qquad \mu, \nu,\gamma \in B_*(\Lambda'_2) \text{ with } |\nu| = |\gamma|.
$$
The subalgebra $\FhatLamtwomin$
is nothing but the  $C^*$-subalgebra
of $\OhatLamtwomin$
  generated by elements of the form:
$$
\widehat{S}_\nu \widehat{E}_i^l \widehat{S}_\gamma^*,
\qquad i=1,2,\dots,m'(l), l\in \Zp, \, 
\nu,\gamma \in B_*(\Lambda'_2) \text{ with } |\nu| = |\gamma|.
$$
\begin{lemma}\label{lem:AF}
$\FhatLamtwomin =\FLamtwomin.
$ 
\end{lemma}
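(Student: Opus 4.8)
The plan is to prove the two inclusions separately; $\FhatLamtwomin \subseteq \FLamtwomin$ is immediate, while the reverse inclusion carries all the weight.

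For the easy inclusion I would invoke Lemma~\ref{lem:OLampOlam}: the restriction of the gauge action $\rhoLamtwo_t$ to $\OhatLamtwomin$ is exactly $\rhohatLamtwo_t$, and on the hatted generators one has $\rhoLamtwo_t(\widehat{S}_\alpha)=e^{2\pi\sqrt{-1}t}\widehat{S}_\alpha$ (since $\widehat{S}_\alpha=S_{\xi_1 y_{[1,L]}}S_{\xi_2 y_{[2,L]}}^{*}$ with $|\xi_1 y_{[1,L]}|-|\xi_2 y_{[2,L]}|=1$) while each $\widehat{E}_i^{l}$ is gauge invariant. Hence every generator $\widehat{S}_\nu \widehat{E}_i^{l}\widehat{S}_\gamma^{*}$ of $\FhatLamtwomin$ with $|\nu|=|\gamma|$ is fixed by $\rhoLamtwo_t$, so $\FhatLamtwomin\subseteq\FLamtwomin$. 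The same observation gives the identity $\OhatLamtwomin\cap\FLamtwomin=\FhatLamtwomin$, because $\FhatLamtwomin$ is precisely the fixed-point algebra of $\OhatLamtwomin$ under $\rhoLamtwo_t|_{\OhatLamtwomin}=\rhohatLamtwo_t$. Consequently the reverse inclusion $\FLamtwomin\subseteq\FhatLamtwomin$ is equivalent to the containment $\FLamtwomin\subseteq\OhatLamtwomin$, and it is the latter that I would establish.

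Since $\FLamtwomin$ is the closure of the union of the finite dimensional algebras spanned by the matrix units $S_\mu E_i^{l}S_\nu^{*}$ with $|\mu|=|\nu|=k\le l$, it suffices to place each such element in $\OhatLamtwomin$. First, $\DLamtwo\subseteq\OhatLamtwomin$: by Lemma~\ref{lem:5.7}(ii) every cylinder projection $S_w S_w^{*}$ with $|w|\ge K+L$ equals $\widehat{S}_\nu\widehat{S}_\nu^{*}$ for a suitable $\nu$, and shorter ones are finite sums $S_v S_v^{*}=\sum_{\rho\in B_{K+L-|v|}(\Lambda_2)}S_{v\rho}S_{v\rho}^{*}$ of these. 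Next, using the telescoping identity of Lemma~\ref{lem:5.6}, $\widehat{S}_{\nu'}=S_a S_b^{*}$ with $a=\xi_1 y_{[1,L+n-1]}$ and $b=\xi_{n+1}y_{[n+1,L+n-1]}$, together with Lemma~\ref{lem:key1} (which yields $S_a S_b^{*}S_b=S_a$), a direct computation gives $\widehat{S}_{\nu'}\widehat{S}_{\nu''}^{*}=S_a S_{a'}^{*}$ whenever the two $\tau^{\varphi_1}$-determined tails coincide, $b=b'$. Writing $S_\mu S_\nu^{*}=\sum_{\rho}S_{\mu\rho}S_{\nu\rho}^{*}$ with a common suffix $\rho$, I would deduce $S_\mu S_\nu^{*}\in\OhatLamtwomin$ for all $|\mu|=|\nu|$ once the tail-matching issue below is settled. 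As $\FLamtwomin$ is generated by $\DLLamtwomin$ together with the partial isometries $\{S_\mu S_\nu^{*}:|\mu|=|\nu|\}$ (one has $S_\mu E_i^{l}S_\nu^{*}=(S_\mu S_\nu^{*})(S_\nu E_i^{l}S_\nu^{*})$ on the support of $E_i^{l}$), the remaining task is to place the diagonal $\DLLamtwomin$ — equivalently the vertex projections $E_i^{l}$ of $\LLamtwomin$ given by \eqref{eq:eil} — inside $\OhatLamtwomin$.

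The main obstacle is exactly this last point, and it splits into two intertwined difficulties. The first is tail-matching: the short word $b=b(a)$ is determined from $a$ through the iteration \eqref{eq:tauvarphi1}, so $S_a S_{a'}^{*}$ is produced only when $b(a)=b(a')$; I would resolve this by noting that $\xi_{n+1}=\varphi_1(x_{[n+1,M+n]})$ with $x=h^{-1}(\xi_1 y)$ is computed through a sliding block code of bounded memory and anticipation, so that $b(a)$ depends only on a bounded window at the right-hand end of $a$, whence $b(a)=b(a')$ as soon as $a$ and $a'$ share a long enough suffix — which is guaranteed by the common suffix $\rho$ inserted above. The second, and harder, difficulty is reconstructing the source projections $S_\mu^{*}S_\mu$ (and thus $E_i^{l}$) of $\LLamtwomin$ inside $\OhatLamtwomin$: the telescoping of Lemmas~\ref{lem:5.7}(i) and \ref{lem:Snualpha} delivers them only in the conjugated form $S_b(S_\mu^{*}S_\mu)S_b^{*}$, so I would combine these conjugated source projections with the available partial isometries $S_c S_{c'}^{*}$ and the hatted vertex projections $\widehat{E}_i^{l}$ — exploiting that $\widehat{E}_i^{l}\ge S_{(a_1,\dots,a_k)}S_{(a_1,\dots,a_k)}^{*}$ for launching words $(a_1,\dots,a_k)$, as established in the preceding lemma — to assemble the full diagonal. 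Once $\DLLamtwomin\subseteq\OhatLamtwomin$ is secured, the previous paragraph yields $\FLamtwomin\subseteq\OhatLamtwomin$, hence $\FLamtwomin\subseteq\FhatLamtwomin$, completing the proof.
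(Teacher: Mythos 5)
Your first inclusion and your reduction are sound: each $\widehat{S}_\alpha$ has gauge degree one under $\rhoLamtwo$, so the generators $\widehat{S}_\nu\widehat{E}_i^l\widehat{S}_\gamma^*$ with $|\nu|=|\gamma|$ are gauge-fixed and $\FhatLamtwomin\subset\FLamtwomin$; and, via Lemma~\ref{lem:OLampOlam}, the reverse inclusion is indeed equivalent to $\FLamtwomin\subset\OhatLamtwomin$. Your handling of the off-diagonal partial isometries is also essentially the correct mechanism, and the same one the paper uses: the iterated $\tau^{\varphi_1}$-tail of a long word depends only on a bounded suffix, because it equals $\varphi_1$ applied to a window of $h^{-1}(\,\cdot\,)$ and $\sigma_{\Lambda_1}^{K}\circ h^{-1}$ is a sliding block code.

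The genuine gap is exactly the point you flag and then leave open: you never prove $\DLLamtwomin\subset\OhatLamtwomin$, i.e.\ that the source projections $S_\mu^*S_\mu$ (equivalently, by \eqref{eq:eil}, the vertex projections $E_i^l$ of $\LLamtwomin$) lie in $\OhatLamtwomin$, and the combination you sketch does not close this. All the ingredients you list --- the conjugated projections $\widehat{S}_\nu^*\widehat{S}_\nu=S_b(S_a^*S_a)S_b^*$ from Lemma~\ref{lem:5.7}(i), the tail-matched partial isometries $S_cS_{c'}^*$, and the projections $\widehat{E}_i^l$ (which are vertex projections of $\LLamptwomin$, not of $\LLamtwomin$, and of which you only use a one-sided bound $\widehat{E}_i^l\ge S_{(a_1,\dots,a_k)}S_{(a_1,\dots,a_k)}^*$) --- only generate further elements sandwiched between long words, and no manipulation you indicate strips the conjugation off $S_b(S_\mu^*S_\mu)S_b^*$. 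The paper never performs your splitting into ``partial isometries plus diagonal'': it works directly with the combined generators $S_aS_b^*S_bS_c^*$, $|a|=|c|$, which generate $\FLamtwomin$ since each $S_\mu E_i^l S_\nu^*$ expands by \eqref{eq:eil} into products of such elements. One refines $S_aS_b^*S_bS_c^*=\sum_{\delta}S_{a\delta}S_{b\delta}^*S_{b\delta}S_{c\delta}^*$ by \eqref{eq:Sabc}; every nonzero term admits a common infinite extension $\rho$ of $a\delta,b\delta,c\delta$, and applying the sliding-block-code argument to the \emph{triple simultaneously} forces the three iterated tails $\xi_{n+1}=\eta_{m+1}=\zeta_{n+1}$ (and the corresponding windows of $y,w,z$) to agree beyond a uniform length; the refined term is then exactly $\widehat{S}_\nu\widehat{S}_\mu^*\widehat{S}_\mu\widehat{S}_\gamma^*$ by the computation from the first inclusion together with Lemmas~\ref{lem:key1} and~\ref{lem:5.6}. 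Your missing diagonal then comes out as the special case $a=c=\delta$, namely $S_\mu^*S_\mu=\sum_{\delta}S_\delta S_{\mu\delta}^*S_{\mu\delta}S_\delta^*\in\FhatLamtwomin$. In other words, the inclusion $\DLLamtwomin\subset\OhatLamtwomin$ is a corollary of the lemma rather than an available stepping stone towards it, so a proof organized around establishing it first is left without a proof of its hardest step.
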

\begin{proof}
Let
$\nu =((\xi_1,y_{[1,L]}), \dots, (\xi_n,y_{[n, L+n-1]})),
\mu =((\eta_1,w_{[1,L]}), \dots, (\eta_m,w_{[m, L+m-1]}))
$
and
$
\gamma = ((\zeta_1,z_{[1,L]}), \dots, (\zeta_k,z_{[k,L+k-1]}))
\in B_*(\Lambda'_2)
$
with $k=n$.
Since
$\widehat{S}_{(\xi_1, y_{[1,L]})} = S_{\xi_1 y_{[1,L]}}S_{\xi_2 y_{[2,L]}}^*
\in \OLamtwomin,
$
we have
\begin{align*}
& \widehat{S}_\nu \widehat{S}_\mu^* \widehat{S}_\mu \widehat{S}_\gamma^* \\
= & \widehat{S}_{(\xi_1,y_{[1,L]})} \cdots \widehat{S}_{(\xi_n,y_{[n, L+n-1]})} \cdot
(\widehat{S}_{(\eta_1,w_{[1,L]})} \cdots \widehat{S}_{(\eta_m,w_{[m, L+m-1]})})^* \cdot \\
 & (\widehat{S}_{(\eta_1,w_{[1,L]})} \cdots \widehat{S}_{(\eta_m,w_{[m, L+m-1]})}) \cdot
(\widehat{S}_{(\zeta_1,z_{[1,L]})}\cdots \widehat{S}_{(\zeta_n,z_{[n,L+n-1]})})^* \\
= & S_{\xi_1y_{[1,L+n-1]}} S_{\xi_{n+1}y_{[n+1, L+n-1]}}^* \cdot
 S_{\eta_{m+1}w_{[m+1,L+m-1]}} S_{\eta_1w_{[1, L+m-1]}}^* \cdot \\
 & S_{\eta_1w_{[1, L+m-1]}} S_{\eta_{m+1}w_{[m+1,L+m-1]}}^* \cdot
S_{\zeta_{n+1}z_{[n+1,L+n-1]}}S_{\zeta_1 z_{[1,L+n-1]}}^* \\
= &
\begin{cases}
 S_{\xi_1y_{[1,L+n-1]}}  S_{\eta_1w_{[1, L+m-1]}}^* \cdot
  S_{\eta_1w_{[1, L+m-1]}} S_{\zeta_1 z_{[1,L+n-1]}}^*  & \\
\quad \text{ if } \xi_{n+1} = \eta_{m+1} = \zeta_{n+1}, \, 
y_{[n+1, L+n-1]} = w_{[m+1,L+m-1]} =z_{[n+1,L+n-1]}, \\
0 \quad \text{ otherwise. } &
\end{cases}
\end{align*}
Hence 
$\widehat{S}_\nu \widehat{S}_\mu^* \widehat{S}_\mu \widehat{S}_\gamma^* 
$ belongs to 
$\FLamtwomin$,
so that $\FhatLamtwomin \subset \FLamtwomin.$

Conversely,
for admissible words $a, b, c \in B_*(\Lambda_2)$ with $|a| = |c|$, 
by considering the identity
\begin{equation}
S_a S_b^* S_b S_c^* = 
\sum_{\delta\in B_{N}(\Lambda_2)}
S_{a\delta} S_{b\delta}^* S_{b\delta} S_{c\delta}^*  \label{eq:Sabc}
\end{equation}
for any $N\in \N$, one may assume that 
$|a| (=|c|), |b| >K$.
For $\rho \in \Gamma_\infty^+(a) \cap\Gamma_\infty^+(b) \cap\Gamma_\infty^+(c)$,
we define $(y_i)_{i\in \N}, (w_i)_{i\in \N}, (z_i)_{i\in \N} \in X_{\Lambda_2}$ by setting
\begin{align*}
 y_1 & =a_{K+1}, \, \dots,\,  y_{|a|-K} = a_{|a|},
\quad  y_{|a| -K +i} = \rho_i, \quad i=1,2,\dots,\\
 w_1 & =b_{K+1}, \, \dots, \, w_{|b|-K} = b_{|b|},
\quad  w_{|b| -K +i} = \rho_i, \quad i=1,2,\dots,\\
 z_1 & =c_{K+1}, \, \dots, \, z_{|b|-K} = c_{|c|},
\quad  z_{|c| -K +i} = \rho_i, \quad i=1,2,\dots. 
\end{align*}
Define sequences $(\xi_i)_{i\in \N}, (\eta_i)_{i\in \N},  (\zeta_i)_{i\in \N}$
 of $B_K(\Lambda_2)$
by setting:
\begin{align*}
\xi_1 =& a_{[1,K]}, \qquad \xi_{i+1} = \tau^{\varphi_1}(\xi_i, y_{[i,L+i-1]}), \quad i=1,2 \dots,\\
\eta_1 =& b_{[1,K]}, \qquad \eta_{i+1} = \tau^{\varphi_1}(\eta_i, w_{[i,L+i-1]}), \quad i=1,2 \dots,\\
\zeta_1 =& c_{[1,K]}, \qquad \zeta_{i+1} = \tau^{\varphi_1}(\xi_i, z_{[i,L+i-1]}), \quad i=1,2 \dots.
\end{align*}
Define elements $x=(x_i)_{i\in \N}, x'=(x'_i)_{i\in \N}, 
x^{\prime\prime}=(x^{\prime\prime}_i)_{i\in \N} \in X_{\Lambda_1}$
by setting
\begin{equation*}
x = h^{-1}(\xi_1 y), \qquad 
x' = h^{-1}(\eta_1 w), \qquad 
x^{\prime\prime} = h^{-1}(\zeta_1 z). 
\end{equation*}
By previous discussions, we know that 
\begin{equation*}
\xi_i =\varphi_1(x_{[i, M+i-1]}), \qquad 
\eta_i=\varphi_1(x'_{[i, M+i-1]}), \qquad 
\zeta_i=\varphi_1(x^{\prime\prime}_{[i, M+i-1]}), \qquad 
i=1,2,\dots. 
\end{equation*}
Put
$p = |a| - |b|\in \Z.$
Since 
$\sigma_{\Lambda_1}^{K}\circ h^{-1} :
X_{\Lambda_1}\longrightarrow X_{\Lambda_1}
$
is a sliding block code, there exists $N_1, N_2\in \N$
such that 
$w_{j+p} = y_j (=z_j)$ for all $j \ge N_2$
implies
$x'_{[i+p, M+i+p-1]} =x_{[i, M+i-1]} (= x^{\prime\prime}_{[i, M+i-1]})$
for all $i \ge N_1.$
Hence we have
$$
\eta_i = \xi_{i+p} = \zeta_{i+p} \qquad \text{ for } i \ge N_1.
$$
Let $n =\max\{N_1, N_2\}, m=n +p$.
By putting
\begin{align*}
\nu =& ((\xi_1,y_{[1,L]}), \dots, (\xi_n,y_{[n, L+n-1]})) \in B_n(\Lambda'_2), \\
\mu = &((\eta_1,w_{[1,L]}), \dots, (\eta_m,w_{[m, L+m-1]}))\in B_m(\Lambda'_2), \\
\gamma =& ((\zeta_1,z_{[1,L]}), \dots, (\zeta_n,z_{[k,L+n-1]})) \in B_n(\Lambda'_2),
\end{align*}
we have 
$$
\eta_{m+1} = \xi_{n+1} = \zeta_{n+1}, \qquad
w_{[m, L+m-1]} =y_{[n, L+n-1]} =z_{[k,L+n-1]}.
$$
Let
$\delta = (\rho_1,\dots,\rho_{K+L+n-|a|-1}) \in B_*(\Lambda_2).$
We then have 
\begin{align*}
S_{a\delta} S_{b\delta}^* S_{b\delta} S_{c\delta}^*
& =S_{\xi_1y_{[1,L+n-1]}}  S_{\eta_1w_{[1, L+m-1]}}^* \cdot
  S_{\eta_1w_{[1, L+m-1]}} S_{\zeta_1 z_{[1,L+n+1]}}^*  \\
&= \widehat{S}_\nu \widehat{S}_\mu^* \widehat{S}_\mu \widehat{S}_\gamma^*. 
\end{align*}
By the formula
\eqref{eq:Sabc} for $N = K+L+n-|a|-1$, 
we know that 
$S_a S_b^* S_b S_c^* $
with $|a| = |c|$ 
belongs to the AF-algebra
$\FhatLamtwomin$, so that 
we have
$\FLamtwomin\subset \FhatLamtwomin$. 
\end{proof}

\begin{lemma}\label{lem:OhatOlam}
$\widehat{\mathcal{D}}_{\Lambda'_2} = D_{\Lambda_2}$
and
$\OhatLamtwomin =\OLamtwomin$.
\end{lemma}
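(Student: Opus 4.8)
The plan is to prove the two equalities separately. In each case the inclusion $\subseteq$ is immediate, while the reverse inclusion is obtained by a refinement argument that reduces matters to sufficiently long words, where the structural results Lemma \ref{lem:5.7} and Lemma \ref{lem:AF} apply. Throughout I use that $\OhatLamtwomin \subseteq \OLamtwomin$ and $\DhatLamtwo \subseteq \DLamtwo$ hold trivially, so only the forward inclusions need work.

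For the diagonal identity, the inclusion $\DhatLamtwo \subseteq \DLamtwo$ is clear from Lemma \ref{lem:5.7}(ii), since every generator $\widehat{S}_\mu \widehat{S}_\mu^*$ of $\DhatLamtwo$ equals $S_{\xi_1 y_{[1,L+n-1]}} S_{\xi_1 y_{[1,L+n-1]}}^*$, a generator of $\DLamtwo$. For $\DLamtwo \subseteq \DhatLamtwo$ I would take an arbitrary $w \in B_*(\Lambda_2)$ and apply the refinement identity $S_w S_w^* = \sum_{\delta \in B_N(\Lambda_2)} S_{w\delta} S_{w\delta}^*$ (the special case $a=b=c=w$ of \eqref{eq:Sabc}), choosing $N$ so large that each admissible $w\delta$ has length at least $K+L$. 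For such a word, setting $\xi_1 = (w\delta)_{[1,K]}$, $y_{[1,L+n-1]} = (w\delta)_{[K+1,|w\delta|]}$ with $n = |w\delta|-K-L+1 \ge 1$, and $\xi_{i+1} = \tau^{\varphi_1}(\xi_i, y_{[i,L+i-1]})$, produces $\mu = ((\xi_1,y_{[1,L]}),\dots,(\xi_n,y_{[n,L+n-1]})) \in B_n(\Lambda'_2)$; admissibility of $w\delta$ guarantees it extends to a point of $X_{\Lambda_2}$, so $\mu$ is indeed admissible in $\Lambda'_2$. Lemma \ref{lem:5.7}(ii) then gives $S_{w\delta}S_{w\delta}^* = \widehat{S}_\mu \widehat{S}_\mu^* \in \DhatLamtwo$, and summing yields $S_w S_w^* \in \DhatLamtwo$.

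For the algebra identity, since $\rhohatLamtwo_t$ is the restriction of $\rhoLamtwo_t$, the subalgebra $\OhatLamtwomin$ is invariant under the gauge action, and by Lemma \ref{lem:AF} it contains the entire fixed-point algebra $\FLamtwomin = \FhatLamtwomin$. Writing $\OLamtwomin^{(p)} = \{x \in \OLamtwomin : \rhoLamtwo_t(x) = e^{2\pi\sqrt{-1}pt}x,\ t\in\T\}$ for the spectral subspaces, whose algebraic sum is dense in $\OLamtwomin$, it suffices to prove $\OLamtwomin^{(p)} \subseteq \OhatLamtwomin$ for every $p \in \Z$; taking adjoints reduces this to $p \ge 0$, the case $p=0$ being exactly Lemma \ref{lem:AF}. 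As in the proof of that lemma, $\OLamtwomin^{(p)}$ is the closed linear span of the elements $S_a S_b^* S_b S_c^*$ with $a,b,c \in B_*(\Lambda_2)$ and $|a| = |c| + p$. The plan is to run the construction in the proof of Lemma \ref{lem:AF} verbatim, but starting from words with $|a| = |c| + p$ rather than $|a| = |c|$: choose a common tail $\rho \in \Gamma_\infty^+(a)\cap\Gamma_\infty^+(b)\cap\Gamma_\infty^+(c)$, form the associated sequences $\xi_i, \eta_i, \zeta_i$ through $\tau^{\varphi_1}$, and use that $\sigma_{\Lambda_1}^K \circ h^{-1}$ is a sliding block code to obtain eventual coincidence of these sequences under the appropriate index shifts. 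With $|a| = |c| + p$ the offset between the $\xi$- and $\zeta$-sequences becomes $p$, so the resulting words $\nu, \gamma \in B_*(\Lambda'_2)$ satisfy $|\nu| = |\gamma| + p$, and the computation yields $S_{a\delta}S_{b\delta}^*S_{b\delta}S_{c\delta}^* = \widehat{S}_\nu \widehat{S}_\mu^* \widehat{S}_\mu \widehat{S}_\gamma^* \in \OhatLamtwomin$. The refinement identity \eqref{eq:Sabc} then places $S_a S_b^* S_b S_c^*$ itself in $\OhatLamtwomin$, giving $\OLamtwomin^{(p)} \subseteq \OhatLamtwomin$ and hence $\OLamtwomin = \OhatLamtwomin$.

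I expect the main obstacle to be the index bookkeeping in this last step: carefully tracking how the length difference $p = |a|-|c|$ propagates through the three sequences $(\xi_i), (\eta_i), (\zeta_i)$ so that their eventual-coincidence indices align to give precisely $|\nu| - |\gamma| = p$, together with confirming that the elements $S_a S_b^* S_b S_c^*$ with $|a| = |c| + p$ do span $\OLamtwomin^{(p)}$. Both points are, however, direct extensions of what is already carried out for $p=0$ in Lemma \ref{lem:AF}, so no genuinely new phenomenon should arise and the remaining verifications are routine.
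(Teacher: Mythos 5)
Your treatment of the first equality $\DhatLamtwo = \DLamtwo$ is essentially the paper's: the paper compresses it to ``easily obtained by Lemma \ref{lem:5.7} (ii)'', and your refinement argument via \eqref{eq:Sabc} is exactly the intended verification. For the second equality, however, you take a genuinely different route from the paper, and it is worth comparing the two. The paper never decomposes $\OLamtwomin$ into spectral subspaces. It first reduces to showing that each generator $S_\alpha$, $\alpha \in \Sigma_2$, lies in $\OhatLamtwomin$ (sufficient because, by \eqref{eq:eil}, the projections $E_i^l$ are polynomials in the $S_\alpha$), and then uses a gauge-degree trick: each $\widehat{S}_{(\xi_1,y_{[1,L]})}=S_{\xi_1y_{[1,L]}}S_{\xi_2y_{[2,L]}}^*$ has gauge degree $+1$, so $\widehat{S}_{(\xi_1,y_{[1,L]})}^*S_\alpha$ is fixed by $\rhoLamtwo$ and hence lies in $\FLamtwomin=\FhatLamtwomin\subset\OhatLamtwomin$ by Lemma \ref{lem:AF}; the unit decomposition $\sum_{(\xi_1,y_{[1,L]})\in\Sigma'_2}\widehat{S}_{(\xi_1,y_{[1,L]})}\widehat{S}_{(\xi_1,y_{[1,L]})}^*=1$ of Lemma \ref{lem:5.7} (iii) then gives $S_\alpha=\sum_{(\xi_1,y_{[1,L]})\in\Sigma'_2}\widehat{S}_{(\xi_1,y_{[1,L]})}\cdot\bigl(\widehat{S}_{(\xi_1,y_{[1,L]})}^*S_\alpha\bigr)\in\OhatLamtwomin$. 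Thus the paper needs only the degree-zero statement (Lemma \ref{lem:AF}) plus two lines, whereas you re-run the sliding-block-code construction of Lemma \ref{lem:AF} once for every degree $p$. What your approach buys is a degree-$p$ analogue of Lemma \ref{lem:AF}, i.e.\ a description of every spectral subspace in terms of hatted elements, which is more information than the lemma needs; what the paper's approach buys is brevity and no further bookkeeping.

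Two points in your plan need repair, though neither is fatal. First, $\OLamtwomin^{(p)}$ is \emph{not} the closed linear span of the elements $S_aS_b^*S_bS_c^*$ with $|a|=|c|+p$: the dense subspace of $\OLamtwomin^{(p)}$ is spanned by the $S_\mu E_i^lS_\nu^*$ with $|\mu|-|\nu|=p$, and by \eqref{eq:eil} the projection $E_i^l$ is a polynomial, not a linear combination, in the commuting projections $S_b^*S_b$; expanding it produces \emph{products} of several elementary elements, such as $S_\mu\bigl(\prod_{b\in F}S_b^*S_b\bigr)S_\nu^* = \prod_{b\in F}\bigl(S_\mu S_b^*S_bS_\mu^*\bigr)\cdot S_\mu S_\nu^*S_\nu S_\nu^*$. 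This costs nothing, since $\OhatLamtwomin$ is a closed $*$-subalgebra: it suffices that the elementary elements of all degrees generate $\OLamtwomin$ as a $C^*$-algebra and that each lies in $\OhatLamtwomin$; but ``span'' must be replaced by ``generate'' (the paper's own proof of Lemma \ref{lem:AF} makes the same implicit use of generation when concluding $\FLamtwomin\subset\FhatLamtwomin$). Second, the index bookkeeping you flag does come out right: after refining so that $|a|,|b|,|c|>K$ and choosing a common tail $\rho$, one gets $a\delta=\xi_1y_{[1,L+n-1]}$, $b\delta=\eta_1w_{[1,L+m-1]}$, $c\delta=\zeta_1z_{[1,L+k-1]}$ with a single word $\delta$ precisely when $n-|a|=m-|b|=k-|c|$, so $|\nu|-|\gamma|=n-k=|a|-|c|=p$ automatically; and since all three tails are segments of $\rho$ with these aligned offsets, the nonvanishing conditions $\xi_{n+1}=\eta_{m+1}=\zeta_{k+1}$ and $y_{[n+1,L+n-1]}=w_{[m+1,L+m-1]}=z_{[k+1,L+k-1]}$ follow from the eventual agreement of the sequences exactly as in the $p=0$ case. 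With these two adjustments your argument is correct.
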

\begin{proof}
The equality 
$\widehat{\mathcal{D}}_{\Lambda'_2} = D_{\Lambda_2}$
is easily obtained by Lemma \ref{lem:5.7} (ii).

The inclusion relation
$\OhatLamtwomin \subset \OLamtwomin$
is obvious.
To prove $\OhatLamtwomin = \OLamtwomin$,
it suffices to show that 
for any $\alpha \in \Sigma_2 = B_1(\Lambda_2)$,
the partial isometry $S_\alpha $ belongs to 
$\OhatLamtwomin.$
For $(\xi_1, y_{[1,L]}) \in \Sigma'_2$, we have 
$
\widehat{S}_{(\xi_1, y_{[1,L]})} = S_{\xi_1y_{[1,L]}} S_{\xi_2y_{[2,L]}}^* 
\in  \OLamtwomin,
$
so that  for $t \in \T = \mathbb{R}/\Z$
\begin{equation*}
\rhoLamtwo_t(\widehat{S}_{(\xi_1, y_{[1,L]})}^*S_\alpha)
=
\rhoLamtwo_t( S_{\xi_2y_{[2,L]}} S_{\xi_1y_{[1,L]}}^*S_\alpha)
=
S_{\xi_2y_{[2,L]}} S_{\xi_1y_{[1,L]}}^*S_\alpha
=\widehat{S}_{(\xi_1, y_{[1,L]})}^*S_\alpha.
\end{equation*}
This implies that 
$\widehat{S}_{(\xi_1, y_{[1,L]})}^*S_\alpha
\in \FLamtwomin.
$
By Lemma \ref{lem:AF},
we see that 
$\widehat{S}_{(\xi_1, y_{[1,L]})}^*S_\alpha
$
belongs to
$\FhatLamtwomin$
for any 
$(\xi_1, y_{[1,L]}) \in \Sigma'_2$.
By the identity
\begin{equation*}
S_\alpha =
\sum_{(\xi_1, y_{[1,L]}) \in \Sigma'_2} 
\widehat{S}_{(\xi_1, y_{[1,L]})}\cdot \widehat{S}_{(\xi_1, y_{[1,L]})}^*S_\alpha,
\end{equation*} 
we obtain that 
$S_\alpha$ belongs to $\OhatLamtwomin,$
and hence 
$\OLamtwomin \subset \OhatLamtwomin.$
\end{proof}
We thus have 
\begin{proposition}\label{prop:mainevent2}
There exists an isomorphism
$\Phi_2: \OLamptwomin \longrightarrow  \OLamtwomin$ of $C^*$-algebras such that 
\begin{equation*}
\Phi_2(\DLamptwo) =\DLamtwo, \qquad
\Phi_2\circ \rhoLamptwo_t= \rhoLamtwo_t \circ \Phi_2.
\end{equation*}
\end{proposition}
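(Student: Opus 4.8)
The plan is to assemble the isomorphism $\Phi_2$ directly from the two preceding lemmas, since essentially all of the structural work has already been carried out there. First I would invoke Lemma \ref{lem:OLampOlam}, which produces an isomorphism $\Phi : \OLamptwomin \longrightarrow \OhatLamtwomin$ of $C^*$-algebras satisfying $\Phi(\DLamptwo) = \DhatLamtwo$ and $\Phi \circ \rhoLamptwo_t = \rhohatLamtwo_t \circ \Phi$, where by construction $\rhohatLamtwo_t$ is the restriction of the gauge action $\rhoLamtwo_t$ on $\OLamtwomin$ to the subalgebra $\OhatLamtwomin \subset \OLamtwomin$. At this stage $\Phi$ is only known to be an isomorphism onto the subalgebra $\OhatLamtwomin$, and the diagonal and gauge conditions are phrased in terms of the hatted objects $\DhatLamtwo$ and $\rhohatLamtwo_t$.

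The key observation is then Lemma \ref{lem:OhatOlam}, which supplies the two identities $\OhatLamtwomin = \OLamtwomin$ and $\DhatLamtwo = \DLamtwo$. The first says that the subalgebra generated by the partial isometries $\widehat{S}_{(\xi_1, y_{[1,L]})}$ is in fact the whole of $\OLamtwomin$; combined with the definition of $\rhohatLamtwo_t$ as a restriction, this forces $\rhohatLamtwo_t = \rhoLamtwo_t$ on the full algebra. Viewing $\Phi$ as a map with codomain $\OLamtwomin$, I would therefore set $\Phi_2 := \Phi$.

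It then remains only to read off the stated properties. The map $\Phi_2$ is an isomorphism of $\OLamptwomin$ onto $\OLamtwomin$ because its image $\OhatLamtwomin$ equals $\OLamtwomin$ by Lemma \ref{lem:OhatOlam}. The diagonal condition $\Phi_2(\DLamptwo) = \DLamtwo$ follows from $\Phi(\DLamptwo) = \DhatLamtwo$ together with $\DhatLamtwo = \DLamtwo$. Finally, the intertwining $\Phi_2 \circ \rhoLamptwo_t = \rhoLamtwo_t \circ \Phi_2$ is precisely the relation $\Phi \circ \rhoLamptwo_t = \rhohatLamtwo_t \circ \Phi$ once $\rhohatLamtwo_t$ is recognized as $\rhoLamtwo_t$ under the identification $\OhatLamtwomin = \OLamtwomin$.

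Since the genuinely substantial content has already been discharged upstream --- in particular the AF-algebra identification $\FhatLamtwomin = \FLamtwomin$ of Lemma \ref{lem:AF} and the gauge-fixed-point argument of Lemma \ref{lem:OhatOlam} that forces $\OhatLamtwomin$ to exhaust $\OLamtwomin$ --- the only point requiring care here is bookkeeping: I must verify that the single identification $\OhatLamtwomin = \OLamtwomin$ is simultaneously compatible with the diagonal subalgebras and with the gauge actions. This is exactly what Lemma \ref{lem:OLampOlam} and Lemma \ref{lem:OhatOlam} were engineered to guarantee, so I do not expect any real obstacle at this final step beyond tracking the three conditions in parallel.
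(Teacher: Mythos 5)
Your proposal is correct and is essentially identical to the paper's own proof, which simply observes that the assertion follows from Lemma \ref{lem:OLampOlam} and Lemma \ref{lem:OhatOlam}. You have merely spelled out the bookkeeping the paper leaves implicit: that once $\OhatLamtwomin = \OLamtwomin$ and $\DhatLamtwo = \DLamtwo$ are known, the restricted gauge action $\rhohatLamtwo_t$ is $\rhoLamtwo_t$ itself, so $\Phi_2 := \Phi$ has all three required properties.
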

\begin{proof} The assertion follows from 
Lemma \ref{lem:OLampOlam} and Lemma \ref{lem:OhatOlam}.
\end{proof}
Therefore we reach the following theorem.

\begin{theorem}\label{thm:eventconj}
Let $\Lambda_1$ and $\Lambda_2$ be normal subshifts.
If their one-sided subshifts 
$(X_{\Lambda_1},\sigma_{\Lambda_1})$ 
and 
$(X_{\Lambda_2},\sigma_{\Lambda_2})$ 
are eventually conjugate, then there exists an isomorphism
$\Phi:\OLamonemin\longrightarrow\OLamtwomin$ of $C^*$-algebras such that 
$\Phi({\mathcal{D}}_{\Lambda_1}) ={\mathcal{D}}_{\Lambda_2}$
and
$\Phi\circ\rho^{\Lambda_1}_t = \rho^{\Lambda_2}_t\circ\Phi, t \in \T$.
\end{theorem}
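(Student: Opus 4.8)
The plan is to factor the equivalence through the auxiliary subshift $\Lambda'_2$ that was extracted from the eventual conjugacy data in the discussion preceding Lemma~\ref{lem:LamptwoLamone}. The entire purpose of introducing $\Lambda'_2$ is that its one-sided subshift $X_{\Lambda'_2}$ is genuinely \emph{topologically conjugate}, not merely eventually conjugate, to $X_{\Lambda_1}$; this lets the strong invariance result Theorem~\ref{thm:one-sidedconj4} operate on one side, while the coincidence of the two $C^*$-algebras $\OLamptwomin$ and $\OLamtwomin$ is treated separately on the other side. The final theorem is then obtained by composing the two resulting isomorphisms.

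First I would invoke Lemma~\ref{lem:LamptwoLamone}, which guarantees that $(X_{\Lambda'_2}, \sigma_{\Lambda'_2})$ is topologically conjugate to $(X_{\Lambda_1}, \sigma_{\Lambda_1})$ and that $\Lambda'_2$ is again a normal subshift, so that its minimal presentation $\LLamptwomin$ and the algebra $\OLamptwomin$ are available. Applying Theorem~\ref{thm:one-sidedconj4} to this one-sided conjugacy yields an isomorphism
\begin{equation*}
\Phi_1 : \OLamonemin \longrightarrow \OLamptwomin
\end{equation*}
with $\Phi_1(\D_{\Lambda_1}) = \DLamptwo$ and $\Phi_1 \circ \rho^{\Lambda_1}_t = \rho^{\Lambda'_2}_t \circ \Phi_1$ for all $t \in \T$.

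Next I would call on Proposition~\ref{prop:mainevent2}, the technical centerpiece of the section, which supplies an isomorphism
\begin{equation*}
\Phi_2 : \OLamptwomin \longrightarrow \OLamtwomin
\end{equation*}
satisfying $\Phi_2(\DLamptwo) = \DLamtwo$ and $\Phi_2 \circ \rho^{\Lambda'_2}_t = \rho^{\Lambda_2}_t \circ \Phi_2$. Setting $\Phi := \Phi_2 \circ \Phi_1$, the two required properties follow by a direct composition: for the diagonals, $\Phi(\D_{\Lambda_1}) = \Phi_2(\DLamptwo) = \DLamtwo$, and for the gauge actions, $\Phi \circ \rho^{\Lambda_1}_t = \Phi_2 \circ \rho^{\Lambda'_2}_t \circ \Phi_1 = \rho^{\Lambda_2}_t \circ \Phi$. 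This composition is the whole content of the final step and is entirely routine.

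The genuinely hard part lies not here but in the construction behind $\Phi_2$. There one must realize $\OLamptwomin$ inside $\OLamtwomin$ through the partial isometries $\widehat{S}_{(\xi, y_{[1,L]})} = S_{\xi_1 y_{[1,L]}} S_{\xi_2 y_{[2,L]}}^*$ and then prove, via the chain running from Lemma~\ref{lem:key1} to Lemma~\ref{lem:OhatOlam}, that the embedded copy $\OhatLamtwomin$ actually exhausts $\OLamtwomin$. The decisive ingredient is the AF-algebra coincidence $\FhatLamtwomin = \FLamtwomin$ of Lemma~\ref{lem:AF}, which through gauge invariance forces each generator $S_\alpha$ of $\OLamtwomin$ back into $\OhatLamtwomin$; once this surjectivity is secured, the composition above completes the proof.
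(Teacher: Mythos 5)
Your proposal is correct and follows essentially the same route as the paper's own proof: invoke Lemma \ref{lem:LamptwoLamone} for the topological conjugacy of $(X_{\Lambda'_2},\sigma_{\Lambda'_2})$ with $(X_{\Lambda_1},\sigma_{\Lambda_1})$, apply Theorem \ref{thm:one-sidedconj4} to obtain $\Phi_1:\OLamonemin\longrightarrow\OLamptwomin$, apply Proposition \ref{prop:mainevent2} to obtain $\Phi_2:\OLamptwomin\longrightarrow\OLamtwomin$, and compose. Your remarks locating the real work in the exhaustion $\OhatLamtwomin=\OLamtwomin$ via Lemma \ref{lem:AF} and Lemma \ref{lem:OhatOlam} are likewise consistent with how the paper structures the argument.
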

\begin{proof}
By Lemma \ref{lem:LamptwoLamone},
the one-sided subshifts 
$ (X_{\Lambda'_2}, \sigma_{\Lambda'_2})$ and 
$(X_{\Lambda_1}, \sigma_{\Lambda_1})$
are topologically conjugate, so that by Theorem \ref{thm:one-sidedconj4}
 there exists an isomorphism
$\Phi_1:\OLamonemin\longrightarrow\OLamptwomin$ of $C^*$-algebras such that 
$\Phi_1({\mathcal{D}}_{\Lambda_1}) ={\mathcal{D}}_{\Lambda'_2}$
and
$\Phi_1\circ\rho^{\Lambda_1}_t = \rho^{\Lambda'_2}_t\circ\Phi_1, t \in \T$.
By Proposition \ref{prop:mainevent2},
there exists an isomorphism
$\Phi_2: \OLamptwomin \longrightarrow  \OLamtwomin$ of $C^*$-algebras such that 
\begin{equation*}
\Phi_2(\DLamptwo) =\DLamtwo, \qquad
\Phi_2\circ \rhoLamptwo_t= \rhoLamtwo_t \circ \Phi_2.
\end{equation*}
Therefore we have a desired isomorphism of $C^*$-algebras 
between $\OLamonemin$ and $\OLamtwomin$.
\end{proof}
\medskip

Let ${\frak L}_1,{\frak L}_2$
be left-resolving $\lambda$-graph systems that present subshifts
$\Lambda_1, \Lambda_2$, respectively.
In \cite{MaDynam2020}, the author introduced 
the notion of $({\frak L}_1,{\frak L}_2)$-eventually conjugacy 
between one-sided subshifts 
$(X_{\Lambda_1}, \sigma_{\Lambda_1})$
and
$(X_{\Lambda_2}, \sigma_{\Lambda_2})$.
\begin{definition}[{\cite[Definition 5.1]{MaDynam2020}}]
Let ${\frak L}_1,{\frak L}_2$
be left-resolving $\lambda$-graph systems that present subshifts
$\Lambda_1, \Lambda_2$, respectively.
One sided subshifts 
$(X_{\Lambda_1},\sigma_{\Lambda_1})$
and
$(X_{\Lambda_2},\sigma_{\Lambda_2})$
are said to be $({\frak L}_1,{\frak L}_2)$-{\it eventually conjugate}\/
if 
there exist homeomorphisms
$h_{\frak L}: X_{{\frak L}_1}\longrightarrow X_{{\frak L}_2}$,
$h_{\Lambda}: X_{\Lambda_1}\longrightarrow X_{\Lambda_2}$
and an integer $K\in \Zp$ such that 
\begin{equation} \label{eq:heventfrak}
\begin{cases}
&\sigma_{\L_2}^K(h_\L(\sigma_{\L_1}(x)))
  =\sigma_{\L_2}^{K+1}(h_\L(x)) , \qquad x \in X_{\L_1}, \\ 
&\sigma_{\L_1}^K(h_\L^{-1}(\sigma_{\L_2}(y)))
 =\sigma_{\L_1}^{K+1}(h_\L^{-1}(y)), \qquad y \in X_{\L_2},
\end{cases}
\end{equation}
and 
\begin{equation} \label{eq:heventpi}
\pi_{\L_2} \circ h_\L = h_{\Lambda}\circ \pi_{\L_1}.
\end{equation}
We remark that the equalities \eqref{eq:heventfrak} and \eqref{eq:heventpi}
automatically imply the equalities
\begin{equation} \label{eq:heventlambda}
\begin{cases}
&\sigma_{\Lambda_2}^K(h_\Lambda(\sigma_{\Lambda_1}(a))) 
 =\sigma_{\Lambda_2}^{K+1}(h_\Lambda(a)) , \qquad a \in X_{\Lambda_1}, \\ 
&\sigma_{\Lambda_1}^K(h_\Lambda^{-1}(\sigma_{\Lambda_2}(b)))
 =\sigma_{\Lambda_1}^{K+1}(h_\Lambda^{-1}(b)), \qquad b \in X_{\Lambda_2}.
\end{cases}
\end{equation}
\end{definition}
In {\cite{MaDynam2020}}, the following proposition  was proved.
\begin{proposition}[{\cite[Theorem 1.3]{MaDynam2020}}] \label{prop:eventconj} 
Suppose that two left-resolving $\lambda$-graph systems 
${\L_1}, {\L_2}$ satisfy condition (I). 
Then $(X_{\Lambda_1},\sigma_{\Lambda_1})$
and
$(X_{\Lambda_2},\sigma_{\Lambda_2})$
are 
$({\frak L}_1,{\frak L}_2)$-eventually conjugate
if and only if 
there exists an isomorphism
$\Phi:{\mathcal{O}}_{{\frak L}_1}\longrightarrow {\mathcal{O}}_{{\frak L}_2}
$ 
of $C^*$-algebras such that 
\begin{equation*}
\Phi({\mathcal{D}}_{{\Lambda}_1})={\mathcal{D}}_{{\Lambda}_2}
\quad
\text{ and }
\quad
\Phi \circ \rho_t^{{\frak L}_1} =\rho_t^{{\frak L}_2}\circ \Phi,
\quad
t \in \T.
\end{equation*}
\end{proposition}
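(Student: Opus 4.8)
The plan is to argue entirely through the Deaconu--Renault groupoids $G_{{\frak L}_1}, G_{{\frak L}_2}$ of Section~3, for which $\mathcal{O}_{{\frak L}_i}=C^*(G_{{\frak L}_i})$, and to translate a diagonal- and gauge-preserving $C^*$-isomorphism into a cocycle-preserving groupoid isomorphism, which is in turn equivalent to eventual conjugacy. I would first record that each $G_{{\frak L}_i}$ carries the continuous cocycle $c_i(x,n,z)=n$ into $\Z$, that $\mathcal{D}_{{\frak L}_i}=C(X_{{\frak L}_i})=C(G^0_{{\frak L}_i})$, and that the gauge action $\rho^{{\frak L}_i}$ is the $\T=\widehat{\Z}$-action dual to the $\Z$-grading of $C_c(G_{{\frak L}_i})$ determined by $c_i$. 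The key structural input is that condition~(I) is exactly the groupoid-theoretic hypothesis of effectiveness (topological principality) of $G_{{\frak L}_i}$; this is the content of the aperiodicity reformulation in Lemma~\ref{lem:threecond} and is what underlies the uniqueness clause of Proposition~\ref{prop:lambdaC}. With effectiveness in hand, $(\mathcal{O}_{{\frak L}_i},\mathcal{D}_{{\frak L}_i})$ is a Cartan pair and Renault's reconstruction theorem recovers $G_{{\frak L}_i}$ as its Weyl groupoid.

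For the implication from the isomorphism to eventual conjugacy, I would start from $\Phi$ with $\Phi(\mathcal{D}_{\Lambda_1})=\mathcal{D}_{\Lambda_2}$ and $\Phi\circ\rho^{{\frak L}_1}_t=\rho^{{\frak L}_2}_t\circ\Phi$. Because $\Phi$ is an isomorphism sending $\mathcal{D}_{\Lambda_1}$ onto $\mathcal{D}_{\Lambda_2}$, it sends relative commutants to relative commutants, so Proposition~\ref{prop:relativecom} gives $\Phi(\mathcal{D}_{{\frak L}_1})=\Phi(\mathcal{D}_{\Lambda_1}'\cap\mathcal{O}_{{\frak L}_1})=\mathcal{D}_{\Lambda_2}'\cap\mathcal{O}_{{\frak L}_2}=\mathcal{D}_{{\frak L}_2}$, i.e.\ $\Phi$ is an isomorphism of Cartan pairs. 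Renault's theorem then produces a groupoid isomorphism $\Psi:G_{{\frak L}_1}\to G_{{\frak L}_2}$ restricting to a homeomorphism $h_{\frak L}:X_{{\frak L}_1}\to X_{{\frak L}_2}$ of unit spaces, and the intertwining with the gauge actions forces $c_2\circ\Psi=c_1$. A cocycle-preserving isomorphism of the groupoids of the local homeomorphisms $\sigma_{{\frak L}_1},\sigma_{{\frak L}_2}$ unwinds, by the standard dictionary, to the eventual-conjugacy relations \eqref{eq:heventfrak} for some uniform $K\in\Zp$, the uniformity of $K$ coming from a compactness argument applied to the clopen basic sets $Z(U,k,l,V)$. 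Finally $\Phi(\mathcal{D}_{\Lambda_1})=\mathcal{D}_{\Lambda_2}$ says $h_{\frak L}$ respects the subalgebras pulled back along $\pi_{{\frak L}_1},\pi_{{\frak L}_2}$, so it descends to $h_\Lambda$ with $\pi_{{\frak L}_2}\circ h_{\frak L}=h_\Lambda\circ\pi_{{\frak L}_1}$, which is \eqref{eq:heventpi}.

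For the converse I would build the groupoid isomorphism directly from $h_{\frak L},h_\Lambda,K$ by setting $\Psi(x,n,z):=(h_{\frak L}(x),n,h_{\frak L}(z))$. Iterating the first relation in \eqref{eq:heventfrak} gives $\sigma^{K}_{{\frak L}_2}(h_{\frak L}(\sigma^m_{{\frak L}_1}(x)))=\sigma^{K+m}_{{\frak L}_2}(h_{\frak L}(x))$ for all $m$, so whenever $\sigma^k_{{\frak L}_1}(x)=\sigma^l_{{\frak L}_1}(z)$ with $n=k-l$ one obtains $\sigma^{K+k}_{{\frak L}_2}(h_{\frak L}(x))=\sigma^{K+l}_{{\frak L}_2}(h_{\frak L}(z))$, showing $\Psi$ maps $G_{{\frak L}_1}$ into $G_{{\frak L}_2}$ while preserving the cocycle; the second relation, applied to $h_{\frak L}^{-1}$, gives the inverse, so $\Psi$ is an isomorphism of \'etale groupoids. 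Passing to groupoid $C^*$-algebras, $\Phi:=\Psi_*$ carries $\mathcal{D}_{{\frak L}_1}$ onto $\mathcal{D}_{{\frak L}_2}$ and intertwines the gauge actions since $\Psi$ preserves units and cocycles; and using $\pi_{{\frak L}_1}\circ h_{\frak L}^{-1}=h_\Lambda^{-1}\circ\pi_{{\frak L}_2}$ (a rearrangement of \eqref{eq:heventpi}) one checks that $\Phi$ sends $\pi_{{\frak L}_1}^{*}C(X_{\Lambda_1})=\mathcal{D}_{\Lambda_1}$ onto $\mathcal{D}_{\Lambda_2}$.

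The main obstacle will be the reconstruction step: I must verify precisely that condition~(I) yields effectiveness of $G_{{\frak L}_i}$, so that the Weyl groupoid of the Cartan pair is canonically $G_{{\frak L}_i}$, and that the dual picture of $\rho^{{\frak L}_i}$ genuinely coincides with the grading by $c_i$, so that gauge-intertwining is equivalent to cocycle-preservation rather than merely to preservation up to a coboundary. The remaining delicate points are extracting a single uniform lag $K$ from the \emph{a priori} only locally constant matching of the two shift-tail equivalences, and tracking the descent between $\mathcal{D}_\Lambda$ and $\mathcal{D}_{\frak L}$ through the factor maps; both are controlled by Proposition~\ref{prop:relativecom} together with the effectiveness supplied by condition~(I).
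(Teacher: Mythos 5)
This proposition is not proved in the present paper at all: it is quoted verbatim from \cite[Theorem 1.3]{MaDynam2020}, and (as the title of that cited paper indicates) the proof there is exactly the groupoid-dictionary argument you outline, namely that diagonal- and gauge-preserving $C^*$-isomorphisms correspond to cocycle-preserving isomorphisms of the Deaconu--Renault groupoids $G_{{\frak L}_1}, G_{{\frak L}_2}$, which in turn correspond to $({\frak L}_1,{\frak L}_2)$-eventual conjugacies, so your proposal matches the source's approach in all essentials. The two points you flag as delicate are resolved by material already in this paper together with standard Cartan-pair theory: condition (I) gives $\DLam^\prime \cap \OL = \DL$ (Proposition \ref{prop:relativecom}), hence $\DL$ is a masa and $G_{\frak L}$ is effective, which is precisely what licenses the Weyl-groupoid reconstruction, while the homogeneous-normalizer argument (a gauge-homogeneous normalizer of degree $k$ is supported on a bisection inside $c^{-1}(k)$) upgrades gauge-equivariance to genuine preservation of the canonical cocycle rather than preservation up to coboundary.
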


\medskip

{\it Proof of Theorem \ref{thm:main1.4}.}
Let $\Lambda_1, \Lambda_2$ be two normal susbshifts.
Assume that 
$(X_{\Lambda_1},\sigma_{\Lambda_1})$
and
$(X_{\Lambda_2},\sigma_{\Lambda_2})$
are eventually conjugate.
By Theorem \ref{thm:eventconj},
there exists an isomorphism
$\Phi:\OLamonemin\longrightarrow\OLamtwomin$ of $C^*$-algebras such that 
$\Phi({\mathcal{D}}_{\Lambda_1}) ={\mathcal{D}}_{\Lambda_2}$
and
$\Phi\circ\rho^{\Lambda_1}_t = \rho^{\Lambda_2}_t\circ\Phi, t \in \T$.

Conversely, suppose that 
there exists an isomorphism
$\Phi:\OLamonemin\longrightarrow\OLamtwomin$ of $C^*$-algebras such that 
$\Phi({\mathcal{D}}_{\Lambda_1}) ={\mathcal{D}}_{\Lambda_2}$
and
$\Phi\circ\rho^{\Lambda_1}_t = \rho^{\Lambda_2}_t\circ\Phi, t \in \T$.
Let $\L_1, \L_2$ be their minimal $\lambda$-graph systems
${\frak L}_{\Lambda_1}^{\min}, {\frak L}_{\Lambda_2}^{\min}$, respectively.
The associated $C^*$-algebras 
$\mathcal{O}_{{\frak L}_{\Lambda_1}^{\min}}, \mathcal{O}_{{\frak L}_{\Lambda_2}^{\min}}
$
are nothing but the $C^*$-algebras
$\OLamonemin, \OLamtwomin$, respectively.
By Proposition \ref{prop:eventconj},
we know that 
$(X_{\Lambda_1},\sigma_{\Lambda_1})$
and
$(X_{\Lambda_2},\sigma_{\Lambda_2})$
are 
$({\frak L}_1,{\frak L}_2)$-eventually conjugate,
in particular, 
 $(X_{\Lambda_1},\sigma_{\Lambda_1})$
and
$(X_{\Lambda_2},\sigma_{\Lambda_2})$
are eventually conjugate.
\qed


\section{Two-sided topological conjugacy}
In this section,  we study two-sided topological conjugacy of normal subshifts 
in terms of the associated stabilized $C^*$-algebras with its diagonals and gauge actions.  
Following \cite{MaDynam2020}, we will consider the compact Hausdorff space
\begin{equation*}
\bar{X}_{\frak L} =\{
(\alpha_i,u_i) _{i \in \Z} \in \prod_{i\in \Z}(\Sigma\times\Omega_{\frak L})
\mid
(\alpha_{i+k},u_{i+k}) _{i \in \Z}
\in X_{\frak L} \text{ for all } k \in \Z\} 
\end{equation*}
with the shift homeomorphism $\bar{\sigma}_{\frak L}$
\begin{equation*}
\bar{\sigma}_{\frak L}((\alpha_i,u_i) _{i \in \Z})=
 (\alpha_{i+1},u_{i+1})_{i \in \Z},
\qquad (\alpha_i,u_i) _{i \in \Z} \in \bar{X}_{\frak L} 
\end{equation*}
on 
$\bar{X}_{\frak L}$,
where 
$\bar{X}_{\frak L}$ is endowed with the relative topology from the infinite product topology of $\prod_{i\in \Z}(\Sigma\times\Omega_{\frak L})$.
For
$x =(\alpha_i,u_i)_{i\in \Z}\in \bar{X}_{\frak L}$,
$\alpha =(\alpha_i)_{i\in \Z}\in \Lambda$
and
$k \in \Z$,
we set 
\begin{equation*}
x_{[k,\infty)} =(\alpha_i,u_i)_{i=k}^\infty,\quad
\alpha_{[k,\infty)} =(\alpha_i)_{i=k}^\infty.
\end{equation*}
\begin{definition}[{\cite[Definition 7.1]{MaDynam2020}}]
The topological dynamical systems
$(\bar{X}_{{\frak L}_1}, \bar{\sigma}_{{\frak L}_1})$
and
$(\bar{X}_{{\frak L}_2}, \bar{\sigma}_{{\frak L}_2})$
are said to be {\it right asymptotically conjugate}\/ 
if there exists a homeomorphism
$\psi:\bar{X}_{{\frak L}_1}\longrightarrow \bar{X}_{{\frak L}_2}$
such that  
$\psi\circ\bar{\sigma}_{{\frak L}_1}=\bar{\sigma}_{{\frak L}_2}\circ\psi$
and 
\begin{enumerate}
\renewcommand{\theenumi}{\roman{enumi}}
\renewcommand{\labelenumi}{\textup{(\theenumi)}}
\item
for $m \in \Z$, there exists $M \in \Z$ such that 
$x_{[M,\infty)} = z_{[M,\infty)} $ implies 
$\psi(x)_{[m,\infty)} = \psi(z)_{[m,\infty)} $
for $x, z \in \bar{X}_{{\frak L}_1}$,
\item
for $n \in \Z$, there exists $N \in \Z$ such that 
$y_{[N,\infty)} = w_{[N,\infty)} $ implies 
$\psi^{-1}(y)_{[n,\infty)} = \psi^{-1}(w)_{[n,\infty)} $
for $y,w \in \bar{X}_{{\frak L}_2}$.
\end{enumerate}
We  call
$\psi:\bar{X}_{{\frak L}_1}\longrightarrow \bar{X}_{{\frak L}_2}$
{\it a right asymptotic conjugacy.}\/
\end{definition}
Let us denpte by 
$\bar{\pi}_i:\bar{X}_{{\frak L}_i}\longrightarrow \Lambda_i$
the factor map defined by
$
\bar{\pi}_i ((\alpha_i,u_i) _{i \in \Z}) = (\alpha_i)_{i\in\Z} \in\Lambda_i
$
for $i=1,2.$
\begin{definition}[{\cite[Definition 7.2]{MaDynam2020}}]
Two subshifts 
$\Lambda_1$ and 
$\Lambda_2$
are said to be $({\frak L}_1,{\frak L}_2)$-{\it conjugate}\/ if
there exists a right asymptotic conjugacy
$\psi_{\frak L}:\bar{X}_{{\frak L}_1}\longrightarrow \bar{X}_{{\frak L}_2}$
and a topological conjugacy
$\psi_\Lambda:\Lambda_1\longrightarrow \Lambda_2$
such that 
 $\bar{\pi}_2\circ\psi_{\frak L} = \psi_{\Lambda}\circ\bar{\pi}_1$.
\end{definition}

\begin{proposition}
Let  $\Lambda_1, \Lambda_2$ be normal subshifts
and $\L_1, \L_2$ be their minimal $\lambda$-graph systems, respectively.
Suppose that 
 $\Lambda_1, \Lambda_2$ are topologically conjugate, 
then 
they are $(\L_1, \L_2)$-conjugate.
\end{proposition}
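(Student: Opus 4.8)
The plan is to keep $\psi_\Lambda$ equal to the given topological conjugacy $h:\Lambda_1\to\Lambda_2$ and to manufacture the required right asymptotic conjugacy $\psi_{\L}:\bar{X}_{\L_1}\to\bar{X}_{\L_2}$ by lifting $h$ through the synchronizing vertex data, extending the forward construction of Proposition \ref{prop:conjeventconj} from one-sided paths to bi-infinite paths. First I would record that, since $h$ is a topological conjugacy of two-sided subshifts, both $h$ and $h^{-1}$ are sliding block codes with finite memory and anticipation; I fix block maps realizing them and let $L\in\N$ bound all four windows. Because $\Lambda_1,\Lambda_2$ are normal, their minimal presentations $\L_1=\LLamonemin$ and $\L_2=\LLamtwomin$ are $\lambda$-synchronizing, left-resolving and predecessor-separated, so that, exactly as in Section 7, each vertex $u_{i+L}^{l+L}$ launches a word and the $l$-past equivalence classes $[\nu\gamma]_l$ of synchronizing words describe the vertices occurring along a path.

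For the construction, given $x=(\alpha_i,u_i)_{i\in\Z}\in\bar{X}_{\L_1}$ with $u_i=(u_i^l)_{l\in\Zp}$, I would put $\tilde{\alpha}=h((\alpha_i)_{i\in\Z})\in\Lambda_2$ and, for each $i\in\Z$ and $l\in\Zp$, define $\tilde{u}_i^l:=[\widetilde{\nu\gamma}]_l$ with $\nu=\alpha_{[i+1,i+L]}$ and $\gamma$ a launching word of $u_{i+L}^{l+L}$, using the synchronizing-word correspondence $\mu\mapsto\widetilde{\mu}$ and its well-definedness up to $l$-past equivalence established in the lemmas preceding Proposition \ref{prop:conjeventconj}, in particular Lemma \ref{lem:4.4}. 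This assignment is forward-local: $\tilde{u}_i^l$ depends only on $\tilde{\alpha}$ near $i$ and on the future of $x$, and $\widetilde{\nu\gamma}\underset{l}{\sim}\widetilde{\nu\delta}$ forces $\iota(\tilde{u}_i^{l+1})=\tilde{u}_i^l$, so that the edges $(\tilde{u}_{i-1},\tilde{\alpha}_i,\tilde{u}_i)$ lie in $E_{\L_2}$ and $\psi_{\L}(x):=(\tilde{\alpha}_i,\tilde{u}_i)_{i\in\Z}$ is a genuine point of $\bar{X}_{\L_2}$. The map $\psi_{\L}$ is continuous by construction and, since $h$ commutes with the shift and the vertex rule is shift-equivariant, satisfies $\psi_{\L}\circ\bar{\sigma}_{\L_1}=\bar{\sigma}_{\L_2}\circ\psi_{\L}$ and $\bar{\pi}_2\circ\psi_{\L}=h\circ\bar{\pi}_1$. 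Applying the symmetric construction to $h^{-1}$ produces a two-sided inverse, whence $\psi_{\L}$ is a homeomorphism.

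It then remains to verify the two conditions in the definition of right asymptotic conjugacy, and here the finiteness of the block-map windows is decisive. The symbol $\tilde{\alpha}_i$ depends on $(\alpha_j)_j$ only through a bounded neighborhood of $i$, while $\tilde{u}_i^l$ depends only on $\alpha_{[i+1,i+L]}$ together with the forward vertices $u_{i+L}^{l+L}$, which are themselves determined by the right tail of $x$. Consequently, for each $m$ there is $M$ such that agreement of two points of $\bar{X}_{\L_1}$ on $[M,\infty)$ forces agreement of their images on $[m,\infty)$, and symmetrically for $\psi_{\L}^{-1}$ built from $h^{-1}$. This gives conditions (i) and (ii), so that $\psi_{\L}$ is a right asymptotic conjugacy; together with $\psi_\Lambda=h$ and $\bar{\pi}_2\circ\psi_{\L}=\psi_\Lambda\circ\bar{\pi}_1$, this exhibits $\Lambda_1$ and $\Lambda_2$ as $(\L_1,\L_2)$-conjugate.

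The hard part will be the honest extension of the synchronizing-word machinery of Section 7 to a \emph{genuinely} two-sided conjugacy. In Section 7 the conjugacy $h$ was one-sided, so that both $h$ and $h^{-1}$ read only into the future; for a two-sided $h$ one cannot normalize $h$ and $h^{-1}$ simultaneously to future-only codes (that would collapse $(\L_1,\L_2)$-conjugacy to one-sided conjugacy, which fails in general). Thus I expect the main technical burden to be re-deriving the correspondence $\mu\mapsto\widetilde{\mu}$ and Lemmas \ref{lem:4.4} and its predecessors while allowing the block maps to carry memory: one must show that the $l$-past equivalence class $[\widetilde{\nu\gamma}]_l$ is independent both of the launching-word choices and of the left context fed to the block map, using the stability of $\Gamma^-_l$ on $l$-synchronizing words and the $\lambda$-synchronizing property of $\L_2$ to supply and absorb the finite left context. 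Once this well-definedness is in place, the remaining verifications are the routine bookkeeping of window lengths indicated above.
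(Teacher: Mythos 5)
Your overall architecture (keep $\psi_\Lambda = h$, lift $h$ to a vertex map via synchronizing words, verify the two asymptotic conditions by window bookkeeping) is reasonable, but there is a genuine gap, and you have located it yourself: the entire construction rests on a correspondence $\mu \mapsto \widetilde{\mu}$ between synchronizing words of $\Lambda_1$ and $\Lambda_2$ that is well defined up to $l$-past equivalence, and that correspondence is established in the paper (the lemmas preceding Proposition \ref{prop:conjeventconj}, in particular Lemma \ref{lem:4.4}) only for a \emph{one-sided} conjugacy, where $h$ and $h^{-1}$ are both future-only codes. Those proofs apply $h$ and $h^{-1}$ directly to one-sided points such as $\eta\widetilde{\mu}y$ and $\xi\mu z$; once $h$ carries memory this is not even meaningful, so none of those lemmas can be quoted, and the statements themselves change: the image of $\nu\gamma$ now depends on a left context, and one must prove that different admissible left contexts produce words in the same $l$-past class, that $\iota$-compatibility survives, and that $(\tilde{u}_{i-1},\tilde{\alpha}_i,\tilde{u}_i)$ is really an edge of $\L_2$. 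Your proposal replaces these proofs with the expectation that synchronization will "supply and absorb the finite left context"; that is precisely the hard part of the theorem in this approach, and it is not carried out. As written, the argument is a plan rather than a proof.

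For comparison, the paper takes a different and much shorter route that avoids the memory problem altogether: by Nasu's bipartite-coding theorem (\cite{Nasu}, \cite{Nasu2}) one may assume that $\Lambda_1$ and $\Lambda_2$ are bipartitely related through a bipartite subshift $\widehat{\Lambda}$ over $C \sqcup D$, with specifications $\kappa_1:\Sigma_1 \to C\cdot D$ and $\kappa_2:\Sigma_2 \to D\cdot C$; a general topological conjugacy is a finite chain of such elementary steps. For a single bipartite step the recoding of a bi-infinite path $(\alpha_i,u_i)_{i\in\Z}$ is completely explicit: writing $\kappa_1(\alpha_i)=c_id_i$ and choosing a synchronizing word $\mu_i^l$ with $u_i^l=[\mu_i^l]_l$, one sets $\nu_i^l=\kappa_2^{-1}(d_i\kappa_1(\mu_i^{l+1})c_i^{l+1})$ and $\beta_i=\kappa_2^{-1}(d_{i-1}c_i)$, so the only "memory" ever needed is the single half-symbol $d_{i-1}$, which is read off from the path itself, and the $l$-past equivalences needed for $\iota$-compatibility and the edge condition are immediate. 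This reduction is exactly what your direct argument is missing: to salvage it you should either prove the two-sided analogues of the one-sided lemmas in full (allowing memory), or first invoke Nasu's decomposition so that only elementary bipartite conjugacies, for which the left context is one symbol, have to be treated.
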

\begin{proof}
We may assume that $\Lambda_1$ and $\Lambda_2$ are bipartitely related by a bipartite subshift $\widehat{\Lambda}$ over alphabet 
$\Sigma=C\sqcup D$ (see \cite{Nasu}, \cite{Nasu2}).
Hence  
there exist specifications 
$\kappa_1: \Sigma_1\longrightarrow C\cdot D$
and
$\kappa_2: \Sigma_2\longrightarrow D\cdot C$
such that  the $2$-higher block shift  
$\widehat{\Lambda}^{[2]}$ of 
$\widehat{\Lambda}$ is decomposed into two disjoint subshifts
$\widehat{\Lambda}^{[2]} 
= \widehat{\Lambda}^{CD} \sqcup \widehat{\Lambda}^{DC}$,
where 
\begin{align*}
\widehat{\Lambda}^{[2]} 
&= \{ (x_i x_{i+1})_{i\in \Z} \mid
(x_i)_{i\in \Z} \in \widehat{\Lambda} \}, \\
\widehat{\Lambda}^{CD}  
&= \{ (c_i d_i)_{i\in \Z} \in \widehat{\Lambda}^{[2]}  \mid
c_i \in C, \, d_i \in D, i \in \Z \}, \\
\widehat{\Lambda}^{DC}  
&= \{ (d_i c_{i+1})_{i\in \Z} \in \widehat{\Lambda}^{[2]}  \mid
d_i \in D, \, c_{i+1} \in C, i \in \Z \},
\end{align*}
and specifications
$\kappa_1: \Sigma_1\longrightarrow C\cdot D$,
$\kappa_2: \Sigma_2\longrightarrow D\cdot C$
mean injective maps.
The notion that 
 two subshifts $\Lambda_1, \Lambda_2$ are bipartitely related
 mean that $\Lambda_1, \Lambda_2$ are identified with
$\widehat{\Lambda}^{CD}, \widehat{\Lambda}^{DC}$
through $\kappa_1, \kappa_2$, respectively.

The specifications $\kappa_1$ and $\kappa_2$ naturally extend
to the maps
$B_*(\Lambda_1) \longrightarrow B_*(\widehat{\Lambda}^{CD})$
and
$B_*(\Lambda_2) \longrightarrow B_*(\widehat{\Lambda}^{DC})$,
respectively.
We still denote them by $\kappa_1$ and $\kappa_2$, respectively.
We write 
$\L_i =(V^i, E^i,\lambda^i,\iota^i), i=1,2.$
Let $(\alpha_i, u_i)_{i\in \Z} \in \bar{X}_{\L_1}$.
In the $\lambda$-graph system 
$\L_1$,
take  a vertex $u_i^l \in V^1_l$ such that
$(u_i^l)_{l\in \N} = u_i \in \Omega_{\L_1}, i\in \Zp$.
There exists an $l$-synchronizing word
$\mu_i^l \in S_l(\Lambda_1), i\in \N$
 such that  $u_i^l = [\mu_i^l]_l \in S_l(\Lambda_1)/\underset{l}{\sim}$.
Let $\kappa_1(\alpha_i) = c_i d_i$ for $c_i \in C, d_i \in D$.
As $\kappa_1(\mu_i^l) \in B_*(\widehat{\Lambda}^{CD})$,
take
$c_i^l \in C$ such that $\kappa_1(\mu_i^l) c_i^l \in B_*(\widehat{\Lambda}).$
We put
$\nu_i^l = \kappa_2^{-1}(d_i \kappa_1(\mu_i^{l+1}) c_i^{l+1}) \in V_l^{2}$
and
$\beta_i = \kappa_2^{-1}(d_{i-1} c_i) \in \Sigma_2.$  
We then have 
$\beta_i \nu_i^l \underset{l-1}{\sim} \nu_{i-1}^{l-1}$
and
$ \nu_i^l \underset{l-1}{\sim} \nu_{i-1}^{l-1}.$
Define
$w_i^l = [\nu_i^l] \in S_l(\Lambda_2)$
so that $w_i^l \in V_l^{2}$.
Since $\iota(w_i^{l+1}) =w_i^l$ for $l \in \N$,
we have $w_i =(w_i^l)_{l\in \N} \in \Omega_{\L_2}$ for $i \in \Z$
and
$(\beta_i, w_i)_{i\in \Z} \in \bar{X}_{\L_2}$. 
Under the identification
between $\widehat{\Lambda}^{DC}$ and $\Lambda_2$,
we know that the correspondence
\begin{equation*}
(\alpha_i, u_i)_{i\in \Z} \in \bar{X}_{\L_1} \longrightarrow 
(\beta_i, w_i)_{i\in \Z} \in \bar{X}_{\L_2}
\end{equation*}
written $\psi: \bar{X}_{\L_1} \longrightarrow \bar{X}_{\L_2}$
gives rise to a topological conjugacy 
between
$(\bar{X}_{\L_1}, \bar{\sigma}_{\L_1})$ 
and
$(\bar{X}_{\L_2}, \bar{\sigma}_{\L_2})$ 
such that 
$\psi: \bar{X}_{\L_1} \longrightarrow \bar{X}_{\L_2}$
is a right asymptotic conjugacy 
and
there exists a topological conjugacy 
$\psi_{\Lambda}: \Lambda_1 \longrightarrow  \Lambda_2$
such that 
$\pi_2\circ\psi = \psi_\Lambda\circ \pi_1$.
Therefore 
the two-sided subshifts
$(\Lambda_1,\sigma_{\Lambda_1})$ and
$(\Lambda_2,\sigma_{\Lambda_2})$ are
$(\L_1, \L_2)$-conjugate.
\end{proof}
Therefore we have
\begin{proposition}\label{prop:twosidedconjugate}
Let  $\Lambda_1, \Lambda_2$ be normal subshifts
and $\L_1, \L_2$ be their minimal $\lambda$-graph systems, respectively.
Then the following two conditions are equivalent.
\begin{enumerate}
\renewcommand{\theenumi}{\roman{enumi}}
\renewcommand{\labelenumi}{\textup{(\theenumi)}}
\item
The two-sided subshifts
$(\Lambda_1,\sigma_{\Lambda_1})$ and
$(\Lambda_2,\sigma_{\Lambda_2})$ are
$(\L_1, \L_2)$-conjugate.
\item
$(\Lambda_1,\sigma_{\Lambda_1})$ and
$(\Lambda_2,\sigma_{\Lambda_2})$ are
 topologically conjugate.
\end{enumerate}
\end{proposition}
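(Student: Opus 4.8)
The plan is to observe that the asserted equivalence decomposes into two essentially immediate implications: one of them has already been established in the preceding proposition, and the other is built directly into the definition of $(\L_1,\L_2)$-conjugacy. Thus no new construction is needed here; the statement simply packages these two facts into an equivalence.

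First I would dispose of the implication (ii) $\Longrightarrow$ (i). This is precisely the content of the immediately preceding proposition: assuming that $\Lambda_1$ and $\Lambda_2$ are topologically conjugate, one presents the conjugacy bipartitely (in the sense of Nasu), obtains the specifications $\kappa_1:\Sigma_1\to C\cdot D$ and $\kappa_2:\Sigma_2\to D\cdot C$, and uses them to lift the two-sided conjugacy to a right asymptotic conjugacy $\psi:\bar{X}_{\L_1}\to\bar{X}_{\L_2}$ covering a topological conjugacy $\psi_\Lambda:\Lambda_1\to\Lambda_2$ with $\bar{\pi}_2\circ\psi=\psi_\Lambda\circ\bar{\pi}_1$. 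Hence $(\Lambda_1,\sigma_{\Lambda_1})$ and $(\Lambda_2,\sigma_{\Lambda_2})$ are $(\L_1,\L_2)$-conjugate, and nothing further is required for this direction.

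For the implication (i) $\Longrightarrow$ (ii), I would simply unwind the definition of $(\L_1,\L_2)$-conjugacy. If $(\Lambda_1,\sigma_{\Lambda_1})$ and $(\Lambda_2,\sigma_{\Lambda_2})$ are $(\L_1,\L_2)$-conjugate, then by that definition there exist a right asymptotic conjugacy $\psi_{\frak L}:\bar{X}_{\L_1}\to\bar{X}_{\L_2}$ and, as part of the data, a topological conjugacy $\psi_\Lambda:\Lambda_1\to\Lambda_2$ satisfying $\bar{\pi}_2\circ\psi_{\frak L}=\psi_\Lambda\circ\bar{\pi}_1$. The map $\psi_\Lambda$ is already a topological conjugacy of the two-sided subshifts $(\Lambda_1,\sigma_{\Lambda_1})$ and $(\Lambda_2,\sigma_{\Lambda_2})$, so (ii) holds at once.

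There is no genuine obstacle at this stage. All the substantive work, namely producing the asymptotic conjugacy on the covers $\bar{X}_{\L_1},\bar{X}_{\L_2}$ out of a two-sided topological conjugacy, was carried out in the preceding proposition through the bipartite-code construction; the point worth emphasizing is only that the definition of $(\L_1,\L_2)$-conjugacy carries the datum $\psi_\Lambda$ along with it, which makes the converse implication formal. Consequently the present proposition follows by combining the preceding proposition with this trivial extraction of $\psi_\Lambda$ from the definition.
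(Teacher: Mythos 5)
Your proposal is correct and follows exactly the paper's route: the paper derives this proposition ("Therefore we have") from the immediately preceding proposition, which establishes (ii) $\Longrightarrow$ (i) via the bipartite construction, while (i) $\Longrightarrow$ (ii) is immediate since the definition of $(\L_1,\L_2)$-conjugacy already contains the topological conjugacy $\psi_\Lambda$ as part of its data.
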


\medskip

Let us recall that  $\K$ denotes the $C^*$-algebra of compact operators on the separable 
infinite dimensional Hilbert space $\ell^2(\N)$ and
$\C$denotes  its commutative $C^*$-subalgebra of diagonal operators.

{\it Proof of Theorem \ref{thm:main1.5}.}
Let $\Lambda_1, \Lambda_2$ be two normal susbshifts.
Suppose that the two-sided subshifts
$(\Lambda_1,\sigma_{\Lambda_1})$ and
$(\Lambda_2,\sigma_{\Lambda_2})$ are
 topologically conjugate.
By Proposition \ref{prop:twosidedconjugate},
they are $(\L_1, \L_2)$-conjugate, so that 
\cite[Theorem 1.4]{MaDynam2020} ensures us that 
there exists an isomorphism
$\widetilde{\Phi}:\OLamonemin\otimes\K\longrightarrow\OLamtwomin\otimes\K$ of $C^*$-algebras such that 
$\widetilde{\Phi}({\mathcal{D}}_{\Lambda_1}\otimes\C) ={\mathcal{D}}_{\Lambda_2}\otimes\C$
and
$\widetilde{\Phi}\circ(\rho^{\Lambda_1}_t\otimes\id) 
= (\rho^{\Lambda_2}_t\otimes\id) \circ \widetilde{\Phi}, t \in \T$.

Conversely suppose that 
there exists an isomorphism
$\widetilde{\Phi}:\OLamonemin\otimes\K\longrightarrow\OLamtwomin\otimes\K$ of $C^*$-algebras such that 
$\widetilde{\Phi}({\mathcal{D}}_{\Lambda_1}\otimes\C) ={\mathcal{D}}_{\Lambda_2}\otimes\C$
and
$\widetilde{\Phi}\circ(\rho^{\Lambda_1}_t\otimes\id) 
= (\rho^{\Lambda_2}_t\otimes\id) \circ \widetilde{\Phi}, t \in \T$.
By \cite[Theorem 1.4]{MaDynam2020}
the two-sided subshifts
$(\Lambda_1,\sigma_{\Lambda_1})$ and
$(\Lambda_2,\sigma_{\Lambda_2})$ are
 $(\L_1, \L_2)$-conjugate, and hence 
they are topologically conjugate.
\qed


\medskip


{\it Acknowledgments:}
The author would like to thank Wolfgang Krieger
for various discussions and constant encouragements.  
This work was  supported by 
JSPS KAKENHI Grant Numbers 15K04896, 19K03537.



\end{document}